\newcommand{\N}{{\mathbb N}}
\newcommand{\C}{{\mathbb C}}
\newcommand{\R}{{\mathbb R}}
\newcommand{\Z}{{\mathbb Z}}
\newtheorem{theorem}{Theorem}[section]
\newtheorem{corollary}[theorem]{Corollary}
\newtheorem{definition}[theorem]{Definition}
\newtheorem{example}[theorem]{Example}
\newtheorem{remark}[theorem]{Remark}
\newtheorem{hypothesis}[theorem]{Hypothesis}
\newtheorem{lemma}[theorem]{Lemma}
\newtheorem{question}[theorem]{Question}
\newtheorem{proposition}[theorem]{Proposition}
\newtheorem{claim}[theorem]{Claim}
\newtheorem{assumption}[theorem]{Assumption}
\numberwithin{equation}{section}
\begin{document}

\title{\bf\Large Bifurcations for  Hamiltonian  systems
\footnotetext{\hspace{-0.35cm} 2020
{\it Mathematics Subject Classification}.
Primary 37J20, 34C23.
\endgraf
{\it Key words and phrases.}
Bifurcation;  Hamiltonian system; dual variational principle; Lyapunov-Schmidt reduction; Maslov-type index
}}
\date{}
\author{Guangcun Lu\footnote{
E-mail: \texttt{gclu@bnu.edu.cn}/{December 20, 2021(first), 
May 14, 2026(published online)}.}}

\maketitle

\vspace{-0.7cm}

\begin{center}
\begin{minipage}{13cm}
{\small {\bf Abstract}\quad
 Our assumptions and results include the following special versions:  for a topological space $\Lambda$ and $H\in C(\Lambda\times S^1\times{\R}^{2n})$, $S^1=\mathbb{R}/\mathbb{Z}$,
   suppose that each $H(\lambda,t,\cdot)$ is $C^2$ and all its partial derivatives depend continuously on
 $(\lambda, t, z)$ and that for each $\lambda\in\Lambda$ there exists $1$-periodic solution $v_\lambda:\R\to\mathbb{R}^{2n}$
 of $\dot{v}(t)=J\nabla_z H(\lambda,t, v(t))$ such that
  $\Lambda\times \R\ni (\lambda,t)\mapsto v_\lambda(t)\in\mathbb{R}^{2n}$
  and $\Lambda\times \R\ni (\lambda,t)\mapsto \dot{v}_\lambda(t)\in\mathbb{R}^{2n}$
  are also continuous.
 Let $i(\gamma_\lambda)$ be the Maslov-type index of the fundamental matrix solution $\gamma_\lambda$
of $\dot{Z}(t)=J\nabla^2_zH(\lambda,t, v_\lambda(t))Z(t)$,
 which is equal to the Conley-Zehnder index $\mu_{\rm CZ}(\gamma_\lambda)$ if
 $\nu(\gamma_\lambda):=\dim{\rm Ker}(\gamma_\lambda(1)-I_{2n})$ is zero. Then
  \begin{enumerate}
\item[\rm (i)] For some $\mu\in\Lambda$,
if there exist a sequence $(\lambda_k)\subset\Lambda$ converging to $\mu$
 and $1$-periodic solutions of $\dot{v}(t)=J\nabla_z H(\lambda_k,t, v(t))$,
 $\bar{v}_k\ne v_{\lambda_k}$, $k=1,2,\cdots$, such that $\bar{v}_k\to v_{\mu}$
 in $W^{1,2}(S^1;\R^{2n})$, then $\nu(\gamma_{\mu})>0$.

\item[\rm (ii)] If $\Lambda$ is path-connected and there exist two distinct points $\lambda^+, \lambda^-\in\Lambda$ such that
  $[i(\gamma_{\lambda^-}), i(\gamma_{\lambda^-})+\nu(\gamma_{\lambda^-})]\cap[i(\gamma_{\lambda^+}), i(\gamma_{\lambda^+})+\nu(\gamma_{\lambda^+})]=\emptyset$
 and either $\nu(\gamma_{\lambda^+})=0$ or $\nu(\gamma_{\lambda^-})=0$,
 then there exists $\mu\in \Lambda$ for which the conditions in (i) hold and $\bar{v}_k\to v_{\mu}$
 in $C^1(S^1;\R^{2n})$. If $\Lambda$ is first countable, and  for some $\mu\in\Lambda$
there exist $\lambda^+\ne\lambda^-$ in any deleted neighborhood of $\mu$ satisfying just conditions
then the conditions in (i) hold and $\bar{v}_k\to v_{\mu}$  in $C^1(S^1;\R^{2n})$.

\item[\rm (iii)] Suppose that $\Lambda=(-\epsilon, \epsilon)$, $\mu=0$ and $\gamma_\lambda$ is as above. If
$\nu(\gamma_\lambda)=0$  for each $\lambda\ne 0$, and $\nu(\gamma_0)\ne 0$, and
  $i(\gamma_\lambda)$  takes, respectively, values $i(\gamma_0)$ and $i(\gamma_0)+ \nu(\gamma_0)$
 as $\lambda\in(-\epsilon, \epsilon)$ varies in  two deleted half neighborhoods  of $0$,
then at least one of the following holds:
\begin{enumerate}
\item[$\bullet$] $\dot{v}(t)=J\nabla_z H(\mu,t, v(t))$ has a sequence of solutions, $v_\mu^k\ne v_\mu$, $k=1,2,\cdots$,
such that  $v_\mu^k\to v_\mu$ in $C^1(S^1;\R^{2n})$;

\item[$\bullet$]  for every $\lambda\in\Lambda\setminus\{\mu\}$ near $\mu$ there is a  solution $\bar{v}_\lambda\ne v_\lambda$ of
$\dot{v}(t)=J\nabla_z H(\lambda,t, v(t))$
such that  $\bar{v}_\lambda\to v_\mu$ in $C^1(S^1;\R^{2n})$  as $\lambda\to \mu$;

\item[$\bullet$] for a given neighborhood $\mathcal{W}$ of $v_\mu$ in $C^1(S^1;\R^{2n})$
there is an one-sided  neighborhood $\Lambda^0$ of $\mu$ such that
for any $\lambda\in\Lambda^0\setminus\{\mu\}$, $\dot{v}(t)=J\nabla_z H(\lambda,t, v(t))$
has at least two distinct solutions $v_\lambda^1\ne v_\lambda$ and $v_\lambda^2\ne v_\lambda$  in $\mathcal{W}$,
which can also be required to have different Hamiltonian actions
provided that $\nu(\gamma_0)>1$ and $\dot{v}(t)=J\nabla_z H(\lambda,t, v(t))$
has only finitely many  solutions in $\mathcal{W}$.
\end{enumerate}

\item[\rm (iv)] Under the assumptions of (iii), if $H$ is independent of $t$ and that each $v_\lambda$ is constant (therefore
 $\gamma_\lambda(t)=\exp(tJ\nabla_z^2H({\lambda}, v_\lambda))$) and
  ${\rm Ker}(\nabla_z^2H({0},v_0))=\{0\}$,
 then either $\dot{v}(t)=J\nabla_z H(0, v(t))$
 has a sequence of geometrically distinct solutions, $\bar{v}_k\notin\mathbb{R}\cdot v_0$, $k=1,2,\cdots$,
which  converges to $v_0$ in $C^1(S^1;\mathbb{R}^{2n})$, or
there exist left and right  neighborhoods $\Lambda^-$ and $\Lambda^+$ of $0$ in $\Lambda$
and integers $n^+, n^-\ge 0$, such that $n^++n^-\ge \nu(\gamma_0)/2$,
and for $\lambda\in\Lambda^-\setminus\{0\}$ (resp. $\lambda\in\Lambda^+\setminus\{0\}$),
$\dot{v}(t)=J\nabla_z H(\lambda, v(t))$  has at least $n^-$ (resp. $n^+$) $S^1$-distinct
 solutions, $v_\lambda^i\notin S^1\cdot v_\lambda$, $i=1,\cdots,n^-$ (resp. $n^+$),
which  converge to  $v_0$ in $C^1(S^1;\mathbb{R}^{2n})$.
\item[\rm (v)] If $H$ in (i)-(iii) is independent of $t$ and all $v_\lambda$ are equal to a nonconstant $v_\mu$,
 corresponding bifurcation results about $S^1$-distinct solutions with (i)-(iii) are given.
\item[\rm (vi)] If $H$ in (i)-(iii) also satisfies $H(\lambda,-t, (-p,q))=H(\lambda,t, (p,q))$ for
all $(\lambda,t)\in\Lambda\times\mathbb{R}$ and $(p,q)\in\mathbb{R}^{2n}$,
corresponding results about brake orbits of $\dot{v}(t)=J\nabla_z H(\lambda, t, v(t))$ with (i)-(iii) are given.
  \end{enumerate}
In addition, similar results are also proved for bifurcations for Hamiltonian paths connecting affine Lagrangian subspaces. }
\end{minipage}
\end{center}

\vspace{0.2cm}
	
\tableofcontents

\vspace{0.2cm}

\section{Introduction and main results}\label{sec:Intro0}
\setcounter{equation}{0}

This work continues our program on variational bifurcations beginning at \cite{Lu8}, \cite{Lu9}.
The current manuscript studies bifurcations of the following Hamiltonian boundary value problem
\begin{equation}\label{e:Hboundary-}
\dot{u}(t)=J\nabla_z H({\lambda,t}, u(t))\;\forall t\in [0,\tau]\quad\hbox{and}\quad (u(0), u(\tau))\in{\bf N}
\end{equation}
with respect to a continuous family $\{u_\lambda\,|\,\lambda\in\Lambda\}$ of solutions of this problem  parameterized by a topological space $\Lambda$,
where ${\bf N}\subset\mathbb{R}^{2n}\times\mathbb{R}^{2n}$ is a submanifold, $J$ is given by (\ref{e:standcompl})
and $H:\Lambda\times [0,\tau]\times{\R}^{2n}\to\R$ is as in Assumption~\ref{ass:BasiAss1}. We answer the following questions:
\begin{description}
\item[$\bullet$] Under what conditions there exists a point $\mu\in\Lambda$ such that every neighborhood of
$(\mu, u_\mu)$ in  $\Lambda\times C^1([0,\tau];\R^{2n})$ contains a point $(\lambda, v_\lambda)\notin\{(\lambda,u_\lambda)\,|\,\lambda\in\Lambda\}$
satisfying (\ref{e:Hboundary-})?
\item[$\bullet$] What are the necessary (resp. sufficient) condition for a point $(\mu, u_\mu)$ to satisfy the above properties ?
\item[$\bullet$] How are the solutions of (\ref{e:Hboundary-}) distributed near the point $(\mu, u_\mu)$ as above ?
\end{description}

There is a vast amount of literature concerning bifurcations for periodic solutions  of Hamiltonian systems,
i.e., the case that ${\bf N}$ is the diagonal in $\mathbb{R}^{2n}\times\mathbb{R}^{2n}$ and $H:\Lambda\times \mathbb{R}\times{\R}^{2n}\to\R$
is $\tau$-periodic in the $\mathbb{R}$-variable,  see, e.g.
\cite{Ba1}, \cite{BaSz}, \cite{FiPeRe00},  \cite{Han}, \cite{IJWa},
\cite{MeHa92}, \cite{Rab86}, \cite{Rad10}, \cite{RadRy10}, \cite{Szu}, \cite{Wat15} and references therein.
Most of them consider a family of Hamiltonian functions parameterized affinely by a real number
 or invariant for the action of a Lie group
on $\mathbb{R}^{2n}$, and used either
Liapunov-Schmidt reductions (\cite{Ba1}, \cite{BaSz}, \cite{Rab86}, \cite{Rad10},
\cite{Szu}) or spectral flow methods (\cite{FiPeRe00}, \cite{IJWa}, \cite{Rad10},
\cite{RadRy10}, \cite{Wat15})
or the Poincar\'e map  (\cite{MeHa92}, \cite{Han}).

 In this work, using the abstract bifurcation theory recently developed by the author in \cite{Lu8},
 \cite{Lu10} and Appendix~\ref{app:Th3.5}
we can prove many new bifurcation results for solutions of four types of non-autonomous Hamiltonian boundary value problems
nonlinearly depending on parameters. Our methods are also used to derive some bifurcation results  starting at
 a non-equilibrium (i.e., nonconstant) periodic solution of an autonomous Hamiltonian system.
Though  some necessary or sufficient conditions for
bifurcations of these Hamiltonian problems may also be derived from previous abstract theories,
{\it the most interesting and important alternative results in this paper} can only be proved with our
generalizations in \cite{Lu8}, \cite{Lu10} and Appendix~\ref{app:Th3.5} for the famous
 Rabinowitz's alternative bifurcation theorem \cite{Rab77}.
Before precisely stating these we begin with  the following.\\

\noindent{\textsf{Notation and conventions}}. Let $S_\tau:=\R/\tau\Z$ for $\tau>0$.
All vectors in $\R^m$ will be understood as column vectors. The transpose of a matrix $M\in\R^{m\times m}$
is denoted by $M^T$. ${\rm Ker}(M)=\{x\in\R^m\,|\,Mx=0\}$.
We denote by
$(\cdot,\cdot)_{\mathbb{R}^m}$  the standard Euclidean inner product in $\R^m$
and by $|\cdot|$ the corresponding norm.
Let $\mathcal{L}_s(\mathbb{R}^{m})$ be the set of all real symmetric
matrixes of order $m$, and let ${\rm Sp}(2n,\mathbb{R})$ be the symplectic
group of real symplectic matrixes  of order $2n$, i.e., ${\rm Sp}(2n,\mathbb{R})=\{M\in{\rm GL}(2n,\mathbb{R})\,|\, M^TJ_nM=J_n\}$,
where
\begin{equation}\label{e:standcompl}
J_n=\left(\begin{array}{cc}
             0 & -I_n \\
             I_n & 0 \\
           \end{array}
         \right)
\end{equation}
with the identity matrix $I_n$ of order $n$, which gives  the standard complex structure  on $\mathbb{R}^{2n}$,
$$
 (q_1,\cdots,q_n, p_1,\cdots, p_n)^T\mapsto J_n(q_1,\cdots,q_n, p_1,\cdots, p_n)^T=(-p_1,\cdots,-p_n, q_1,\cdots, q_n)^T.
$$

 For a map between Banach spaces $f:X\to Y$,
 $Df(x)$ (resp. $df(x)$ or $f'(x)$) denotes the G\^ateaux (resp. Fr\'echet) derivative of $f$ at $x\in X$,
 which is an element in $\mathscr{L}(X,Y)$. Of course, we also use $f'(x)$ to denote $Df(x)$
 when no confusion occurs.
 When $Y=\mathbb{R}$, $f'(x)\in \mathscr{L}(X,\mathbb{R})=X^\ast$,
 and if $X$ is a Hilbert space $\mathcal{H}$ we call the Riesz representation of $f'(x)$ in $\mathcal{H}$
 gradient of $f$ at $x$, denoted by $\nabla f(x)$. The Fr\'echet (or G\^ateaux) derivative of $\nabla f$ at $x\in \mathcal{H}$
 is denoted by $f''(x)$ or $\nabla^2 f(x)$, which is an element in $\mathscr{L}_s(\mathcal{H})$.
(Precisely, $f''(x)=(f')'(x)\in \mathscr{L}(\mathcal{H};\mathscr{L}(\mathcal{H};\mathbb{R}))$ is a symmetric bilinear form on $\mathcal{H}$,
 and is identified with $D(\nabla f)(x)$ after $\mathscr{L}(\mathcal{H},\mathbb{R})=\mathcal{H}^\ast$ is identified with $\mathcal{H}$ via the Riesz representation theorem.)
For $\varepsilon>0$ and a point $x$ in a Banach space $X$ we write $B_X(x,\varepsilon)=\{y\in X\,|\,\|x-y\|<\varepsilon\}$.\\

We will give a detailed description of our main results in four subsections.
Section~1.1 lists bifurcation results for solutions of two types of non-autonomous Hamiltonian boundary value problems.
Results in the second type immediately follows from that of the first type.
Section~1.2  concerns with bifurcations for generalized periodic solutions of autonomous Hamiltonian systems.
 In Section~1.3 we consider bifurcation for brake orbits of periodic Hamiltonian systems.
In Section~1.4 we study bifurcation for Hamiltonian paths connecting affine Lagrangian subspaces.\\

\noindent{1.1. \; \bf  Bifurcations for solutions of two types of non-autonomous Hamiltonian systems}.
We begin with the following two closely related assumptions.
\begin{assumption}\label{ass:BasiAss1}
{\rm  For a real $\tau>0$,  a symplectic matrix $M\in{\rm Sp}(2n,\mathbb{R})$,
 and a topological space $\Lambda$,
let $H:\Lambda\times [0,\tau]\times{\R}^{2n}\to\R$ be a continuous function satisfying:
\begin{itemize}
\item[(i)] $H(\lambda, \tau, Mz)=H(\lambda, 0, z)$ for all $(\lambda, z)\in \Lambda\times {\R}^{2n}$.
\item[(ii)] For each $(\lambda,t)\in\Lambda\times [0,\tau]$, $H(\lambda,t,\cdot):{\R}^{2n}\to\R$ is $C^2$.
\item[(iii)] The Euclidean gradient and the Hessian of  $H(\lambda,t, z)$ with respect to
$z\in{\R}^{2n}$ are denoted by $\nabla_zH(\lambda,t, z)$ and $\nabla^2_zH(\lambda,t, z)$,
 respectively. Here, $\nabla^2_zH(\lambda,t, z)=D_z(\nabla_zH(\lambda,t,z))\in \mathcal{L}_s(\mathbb{R}^{2n})$. They depend continuously on the variables
 $(\lambda, t, z)\in\Lambda\times [0,\tau]\times\mathbb{R}^{2n}$.
 \end{itemize}
  For each $\lambda\in\Lambda$, let $u_\lambda$ be a solution of the Hamiltonian boundary value problem
\begin{equation}\label{e:Hboundary}
\dot{u}(t)=J\nabla_z H({\lambda,t}, u(t))\;\forall t\in [0,\tau]\quad\hbox{and}\quad u(\tau)=Mu(0),
\end{equation}
 and $\Lambda\times [0,\tau]\ni (\lambda,t)\mapsto u_\lambda(t)\in\mathbb{R}^{2n}$ is also continuous. }
\end{assumption}

  Under this assumption it follows from (\ref{e:Hboundary}) that
 $\Lambda\times [0,\tau]\ni (\lambda,t)\mapsto \dot{u}_\lambda(t)\in\mathbb{R}^{2n}$ is also continuous.
 For any given $\mu\in\Lambda$ using nets we can prove $\|u_\lambda-u_\mu\|_{C^1}\to 0$ as $\lambda\to\mu$.
Note that Assumption~\ref{ass:BasiAss1} cannot guarantee that
 each $u_\lambda:[0,\tau]\to\R^{2n}$ is $C^2$ since we have not assumed that
 each $H(\lambda,\cdot,\cdot):[0, \tau]\times {\R}^{2n}\to\R$, $\lambda\in\Lambda$, is $C^1$.

\begin{assumption}\label{ass:BasiAss}
{\rm For a $\tau>0$,  a symplectic matrix $M\in{\rm Sp}(2n,\mathbb{R})$,
 and a topological space $\Lambda$,  let
$H:\Lambda\times\R\times{\R}^{2n}\to\R$ be a continuous function such that:
\begin{itemize}
\item[(i)] For each  $(\lambda, t)\in\Lambda\times \R$, the map $H(\lambda,t,\cdot):{\R}^{2n}\to\R$
is of class $C^2$.
\item[(ii)] The Euclidean gradient $\nabla_zH(\lambda,t, z)$ (with respect to
$z\in{\R}^{2n}$) is continuous in  $(\lambda, t, z)\in\Lambda\times \R\times\mathbb{R}^{2n}$.
\item[(iii)] The following identity holds for all  $(\lambda, t, z)\in\Lambda\times \R\times\mathbb{R}^{2n}$:
\begin{eqnarray}\label{e:M-invariant1}
H(\lambda, t+\tau, Mz)=H(\lambda, t, z).
\end{eqnarray}
\end{itemize}
For each $\lambda\in\Lambda$ let $v_\lambda:\R\to\mathbb{R}^{2n}$ satisfy the following generalized periodic Hamiltonian system
\begin{equation}\label{e:PPer}
\dot{v}(t)=J\nabla_z H(\lambda,t, v(t))\quad\hbox{and}\quad v(t+\tau)=Mv(t)\;\;\forall t\in \R,
\end{equation}
 and $\Lambda\times \R\ni (\lambda,t)\mapsto v_\lambda(t)\in\mathbb{R}^{2n}$ is also continuous.
}
\end{assumption}

(In some papers solutions of (\ref{e:PPer}) are called $(M,\tau)$-\textsf{periodic} or \textsf{affine $\tau$-periodic}.)
Clearly, a solution $v$ of (\ref{e:PPer}) gives one of (\ref{e:Hboundary}), $v|_{[0,\tau]}$.
Conversely, since (\ref{e:M-invariant1}) implies
\begin{eqnarray}\label{e:M-invariant2}
\nabla_z H(\lambda, t+\tau, Mz)=(M^{-1})^T\nabla_z H(\lambda, t, z)\quad\forall (\lambda,t,z)\in\Lambda\times\R\times\mathbb{R}^{2n},
\end{eqnarray}
every solution $u$ of (\ref{e:Hboundary}) gives rise to a solution $u^M$ of (\ref{e:PPer})
 defined by
\begin{equation}\label{e:extend}
u^M(t):=M^ku(t-k\tau)\;\;\hbox{for $k\tau<t\le (k+1)\tau$},\;k\in\Z.
\end{equation}
Moreover, this correspondence between solutions of (\ref{e:Hboundary}) and (\ref{e:PPer}) is one-to-one.

The linearized problems of (\ref{e:Hboundary}) and (\ref{e:PPer}) along $u_\lambda$ and $v_\lambda$ are, respectively,
\begin{eqnarray}\label{e:linear1}
&&\dot{u}(t)=J\nabla^2_zH(\lambda,t, u_\lambda(t))u(t)\; \forall t\in [0,\tau]\quad\hbox{and}\quad u(\tau)=Mu(0),\\
&&\dot{v}(t)=J\nabla^2_zH(\lambda,t, v_\lambda(t))v(t)\quad\hbox{and}\quad v(t+\tau)=Mv(t)\;\forall t\in \R.\label{e:linear2*}
\end{eqnarray}

Under Assumption~\ref{ass:BasiAss1}, for some $\mu\in\Lambda$, we say that
 $(\mu, u_\mu)$  is a \textsf{bifurcation point along sequences} of (\ref{e:Hboundary})
 with respect to the branch $\{(\lambda, u_\lambda)\,|\,\lambda\in\Lambda\}$
if there exists a sequence $(\lambda_k)\subset\Lambda$ converging to $\mu\in\Lambda$
 and corresponding solutions $\bar{u}_k\ne u_{\lambda_k}$  of (\ref{e:Hboundary})
 (for $\lambda=\lambda_k\in\Lambda$),  $k=1,2,\cdots$, such that
 ${\bar{u}_k}\to {u_{\mu}}$  in $C^1([0,\tau];\R^{2n})$ (or equivalently in $C^{0}([0,\tau];\R^{2n})$).

Under Assumption~\ref{ass:BasiAss}, consider a sequence
$(\lambda_k)\subset\Lambda$ converges to $\mu\in\Lambda$, and  $\bar{v}_k:\R\to\R^{2n}$ is a solution of (\ref{e:PPer}) (for $\lambda=\lambda_k\in\Lambda$), $k=1,2,\cdots$.
Then $\bar{v}_k|_{[0,\tau]}\to v_\mu|_{[0,\tau]}$ in
$C^1([0,\tau];\mathbb{R}^{2n})$ (resp. $C^{0}([0,\tau];\mathbb{R}^{2n})$)
 if and only if $\bar{v}_k|_{[a, b]}\to v_\mu|_{[a, b]}$ in $C^1([a, b];\mathbb{R}^{2n})$ (resp. $C^{0}([a, b];\mathbb{R}^{2n})$) for any bounded interval $[a,b]\subset\mathbb{R}$.
Accordingly, in this paper we define $\bar{v}_k\to v_\mu$ in $C^1_{\rm loc}$ (resp. $C^{0}_{\rm loc}$)
 whenever the latter condition holds.
Consequently, in the sense of Definition~\ref{def:seqbifur} and by Proposition~\ref{prop:threeBifu}(i),
 we say that  $(\mu, v_\mu)$ is a \textsf{bifurcation point along sequences} of (\ref{e:PPer})
 with respect to the branch  $\{(\lambda, v_\lambda)\,|\,\lambda\in\Lambda\}$
if there exists a sequence $(\lambda_k)\subset\Lambda$ converging to $\mu\in\Lambda$
 and corresponding distinct solutions $\bar{v}_k\ne v_{\lambda_k}$  of (\ref{e:PPer})
 for $\lambda=\lambda_k\in\Lambda$,  $k=1,2,\cdots$, such that
 $\bar{v}_k\to v_\mu$ in $C^1_{\rm loc}$ (or equivalently in $C^{0}_{\rm loc}$).

 \begin{proposition}\label{prop:threeBifu}
 \begin{enumerate}
\item[\rm (i)] Under Assumption~\ref{ass:BasiAss1},  $(\mu, u_\mu)$ is a bifurcation point
 along sequences of the problem (\ref{e:Hboundary}) in $\Lambda\times C^0([0,\tau];\R^{2n})$
with respect to $\{(\lambda,u_\lambda)\,|\,\lambda\in\Lambda\}$ if and only if
it is also one in  $\Lambda\times C^1([0,\tau];\R^{2n})$ with respect to $\{(\lambda,u_\lambda)\,|\,\lambda\in\Lambda\}$.
\item[\rm (ii)]   Let $\mathbb{E}:=E^{\frac{1}{2}}_M$ be the Hilbert space as in Appendix~\ref{app:threeBifu},
 which is the fractional Sobolev space $H^{1/2}(S_\tau;\R^{2n})$ if $M=I_{2n}$. Under Assumption~\ref{ass:BasiAss1}, if $H$ also satisfies
\begin{equation}\label{e:growth}
|\nabla_z H(\lambda,t, z)|\le c_1+ c_2|z|^r\quad
\forall (\lambda, t, z)\in\Lambda\times [0,\tau]\times\mathbb{R}^{2n},
\end{equation}
where $c_1, c_2\ge 0$ and $r\in (1, 2]$ (resp. $r>1$) are constants if $M\ne I_{2n}$ (resp. $M=I_{2n}$),
then $(\mu, u_\mu)$ is a bifurcation point along sequences of the problem (\ref{e:Hboundary}) in
$\Lambda\times \mathbb{E}$ with respect to $\{(\lambda,u_\lambda)\,|\,\lambda\in\Lambda\}$ if and only if it
 is also one in  $\Lambda\times C^1([0,\tau];\R^{2n})$ with respect to
 $\{(\lambda,u_\lambda)\,|\,\lambda\in\Lambda\}$.
\end{enumerate}
\end{proposition}

This result will be proved in Appendix~\ref{app:threeBifu}.
It also shows that, under Assumption~\ref{ass:BasiAss} and the condition (\ref{e:growth}),
 the property of being a bifurcation point along sequences for problem (\ref{e:PPer}) with
 $M=I_{2n}$ is intrinsic (i.e., independent of the choice of the function spaces
 $H^{1/2}(S_\tau;\R^{2n})$, $W^{1,2}(S_\tau;\R^{2n})$ and $C^i(S_\tau;\R^{2n})$ for $i=0,1$).

Let $\gamma_\lambda:[0,\tau]\to {\rm Sp}(2n,\mathbb{R})$ be the fundamental matrix solution of
\begin{equation}\label{e:fundaSolution}
\dot{Z}(t)=J\nabla^2_zH(\lambda,t, u_\lambda(t))Z(t).
\end{equation}
The Maslov type index of $\gamma_\lambda$ relative to $M$ is  a pair of integers
$(i_{\tau,M}(\gamma_\lambda), \nu_{\tau,M}(\gamma_\lambda))$
(cf. Appendix~\ref{app:Index} for details),
 where $\nu_{\tau, M}(\gamma_\lambda)=\dim{\rm Ker}(\gamma_\lambda(\tau)-M)$ is the dimension
of solution space of (\ref{e:linear1}).
When $M=I_{2n}$,
$(i_{\tau,M}(\gamma_\lambda), \nu_{\tau,M}(\gamma_\lambda))$ is just the Maslov type index
$(i_{\tau}(\gamma_\lambda), \nu_{\tau}(\gamma_\lambda))$
defined in \cite{Long02},
in particular, $i_\tau(\gamma_\lambda)$ is the Conley-Zehnder index $i_{\rm CZ}(\gamma_\lambda)$ of $\gamma_\lambda$
 if $\nu_\tau(\gamma_\lambda)=0$ (\cite{CoZe}, \cite{LongZeh}, \cite{SaZe2}).
 Consider the Banach subspaces of $C^i([0,\tau];\R^{2n})$,
 $$
 C^i_{M}([0,\tau];\R^{2n})=\{u\in C^i([0,\tau];\R^{2n})\,|\,u(\tau)=Mu(0)\},\quad i\in\N\cup\{0\}.
 $$
 For bifurcations of the boundary value problem (\ref{e:Hboundary}) we have:

\begin{theorem}\label{th:bif-ness}
Under Assumption~\ref{ass:BasiAss1}, let  $\gamma_\lambda$ be as above.
\begin{enumerate}
\item[\rm (I)]{\rm (\textsf{Necessary condition}):}
$\nu_{\tau,M}(\gamma_{\mu})\ne 0$ if $(\mu, u_\mu)$ is a bifurcation point along sequences of
 (\ref{e:Hboundary}) in $\Lambda\times C^0_{M}([0,\tau];\R^{2n})$
 with respect to the branch $\{(\lambda,u_\lambda)\,|\,\lambda\in\Lambda\}$, i.e.,
 there exists a sequence $(\lambda_k)\subset\Lambda$ converging to $\mu$ and
solutions $u^k\ne u_{\lambda_k}$ of (\ref{e:Hboundary}) with $\lambda=\lambda_k$
such that $u^k\to u_\mu$ in $C^0([0,\tau], \mathbb{R}^{2n})$.

\item[\rm (II)]{\rm (\textsf{Sufficient condition}):}
Let $\Lambda$ be first countable.
Suppose for some $\mu\in\Lambda$ that
there exist two sequences in  $\Lambda$ converging to $\mu$, $(\lambda_k^-)$ and
$(\lambda_k^+)$,  such that
for each $k\in\mathbb{N}$,
$$
[i_{\tau,M}(\gamma_{\lambda_k^-}), i_{\tau,M}(\gamma_{\lambda_k^-})+\nu_{\tau,M}(\gamma_{\lambda_k^-})]\cap[i_{\tau,M}(\gamma_{\lambda_k^+}), i_{\tau,M}(\gamma_{\lambda_k^+})+\nu_{\tau,M}(\gamma_{\lambda_k^+})]=\emptyset
$$
 and either $\nu_{\tau,M}(\gamma_{\lambda_k^+})=0$ or $\nu_{\tau,M}(\gamma_{\lambda_k^-})=0$.
Let $\hat{\Lambda}:=\{\mu,\lambda^+_k, \lambda^-_k\,|\,k\in\mathbb{N}\}$.
  Then  $(\mu, u_\mu)$ is a bifurcation point  of (\ref{e:Hboundary})
 in $\hat\Lambda\times C^1_M([0,\tau];\R^{2n})$ with respect to the branch $\{(\lambda,u_\lambda)\,|\,\lambda\in\hat\Lambda\}$.
 In particular, $(\mu, u_\mu)$ is a bifurcation point along sequences of (\ref{e:Hboundary})
 in $\Lambda\times C^1_M([0,\tau];\R^{2n})$ with respect to the branch $\{(\lambda,u_\lambda)\,|\,\lambda\in\Lambda\}$.

\item[\rm (III)]{\rm (\textsf{Existence for bifurcations}):}
   Let $\Lambda$ be path-connected. If
  there exist two  points $\lambda^+, \lambda^-\in\Lambda$ such that
  $[i_{\tau,M}(\gamma_{\lambda^-}), i_{\tau,M}(\gamma_{\lambda^-})+\nu_{\tau,M}(\gamma_{\lambda^-})]\cap[i_{\tau,M}(\gamma_{\lambda^+}), i_{\tau,M}(\gamma_{\lambda^+})+\nu_{\tau,M}(\gamma_{\lambda^+})]=\emptyset$
 and either $\nu_{\tau,M}(\gamma_{\lambda^+})=0$ or $\nu_{\tau,M}(\gamma_{\lambda^-})=0$,
 then for any path $\alpha:[0,1]\to\Lambda$ connecting $\lambda^+$ to $\lambda^-$
 there exists a sequence $(t_k)\subset [0, 1]$ converging to some $\bar{t}$
   and solutions $u^k\ne u_{\alpha(t_k)}$ of (\ref{e:Hboundary}) with $\lambda=\alpha(t_k)$, $k=1,2,\cdots$,
  such that $\|u^k-u_{\alpha(t_k)}\|_{C^1}\to 0$ (and so $\|u^k-u_{\alpha(\bar{t})}\|_{C^1}\to 0$).
     Moreover,  $\alpha(\bar{t})$ is not equal to $\lambda^+$ (resp. $\lambda^-$) if $\nu_{\tau,M}(\gamma_{\lambda^+})=0$ (resp. $\nu_{\tau,M}(\gamma_{\lambda^-})=0$).
In particular, $(\alpha(\bar{t}), u_{\alpha(t_k)})$  is  a bifurcation point along sequences of (\ref{e:Hboundary})
  in $\Lambda\times C^1_M([0,\tau];\mathbb{R}^{2n})$.
  \end{enumerate}
\end{theorem}

From this theorem we immediately obtain:

\begin{theorem}\label{th:bif-per1}
Under Assumption~\ref{ass:BasiAss}, let  $\gamma_\lambda$ be
 the fundamental matrix solution of
 $$
 \dot{Z}(t)=J\nabla_z^2H(\lambda,t, v_\lambda(t))Z(t).
 $$
 \begin{enumerate}
\item[\rm (I)]{\rm (\textsf{Necessary condition}):}
$\nu_{\tau,M}(\gamma_{\mu})\ne 0$ if $(\mu, v_\mu)$ is a bifurcation point along sequences of (\ref{e:PPer})
 with respect to the branch $\{(\lambda, v_\lambda)\,|\,\lambda\in\Lambda\}$.

\item[\rm (II)]{\rm (\textsf{Sufficient condition}):}
Let $\Lambda$ be first countable.
Suppose that for some $\mu\in\Lambda$,
there exist two sequences $(\lambda_k^-)$ and
$(\lambda_k^+)$ in  $\Lambda$ converging to $\mu$  such that
for each $k\in\mathbb{N}$,
$$
[i_{\tau,M}(\gamma_{\lambda_k^-}), i_{\tau,M}(\gamma_{\lambda_k^-})+\nu_{\tau,M}(\gamma_{\lambda_k^-})]\cap[i_{\tau,M}(\gamma_{\lambda_k^+}), i_{\tau,M}(\gamma_{\lambda_k^+})+\nu_{\tau,M}(\gamma_{\lambda_k^+})]=\emptyset,
$$
 and either $\nu_{\tau,M}(\gamma_{\lambda_k^+})=0$ or $\nu_{\tau,M}(\gamma_{\lambda_k^-})=0$.
Let $\hat{\Lambda}:=\{\mu,\lambda^+_k, \lambda^-_k\,|\,k\in\mathbb{N}\}$.
  Then  $(\mu, v_\mu)$ is a bifurcation point  of (\ref{e:PPer})
  with respect to the branch $\{(\lambda, v_\lambda)\,|\,\lambda\in\hat\Lambda\}$
  (and so $\{(\lambda, v_\lambda)\,|\,\lambda\in\Lambda\}$).

\item[\rm (III)]{\rm (\textsf{Existence for bifurcations}):}
 Let $\Lambda$ be path-connected. If there exist two  points $\lambda^+, \lambda^-\in\Lambda$ such that
  $[i_{\tau,M}(\gamma_{\lambda^-}), i_{\tau,M}(\gamma_{\lambda^-})+\nu_{\tau,M}(\gamma_{\lambda^-})]\cap[i_{\tau,M}(\gamma_{\lambda^+}), i_{\tau,M}(\gamma_{\lambda^+})+\nu_{\tau,M}(\gamma_{\lambda^+})]=\emptyset$
 and either $\nu_{\tau,M}(\gamma_{\lambda^+})=0$ or $\nu_{\tau,M}(\gamma_{\lambda^-})=0$,
   then for any path $\alpha:[0,1]\to\Lambda$ connecting $\lambda^+$ to $\lambda^-$
   there exists a sequence $(t_k)\subset [0, 1]$ converging to some $\bar{t}$
     and solutions $v^k\ne v_{\alpha(t_k)}$ of (\ref{e:PPer}) with $\lambda=\alpha(t_k)$, $k=1,2,\cdots$,
  such that $(v^k)$ converges to  $v_{\alpha(\bar{t})}$ on any compact
  interval $I\subset\R$ in the $C^1$-topology as $k\to\infty$.
   Moreover,  $\alpha(\bar{t})$ is not equal to $\lambda^+$ (resp. $\lambda^-$) if $\nu_{\tau,M}(\gamma_{\lambda^+})=0$ (resp. $\nu_{\tau,M}(\gamma_{\lambda^-})=0$).
  \end{enumerate}
\end{theorem}

\begin{remark}\label{rm:bif-suffict1}
{\rm \begin{enumerate}
\item[(i)] Let us compare Theorem~\ref{th:bif-per1}(I) with previous related results.
If $M=I_{2n}$, $\tau=2\pi$ and  $\Lambda$ is an open subset in a real finite dimensional Banach space,
the conclusion of Theorem~\ref{th:bif-per1}(I) can also be derived from \cite[Proposition~26.1]{Am90} provided that
$\nabla_zH$ and mappings  $\Lambda\times [0, 2\pi]\ni (\lambda,t)\mapsto v_\lambda(t)\in\mathbb{R}^{2n}$
and  $\Lambda\times [0, 2\pi]\ni (\lambda,t)\mapsto \dot{v}_\lambda(t)\in\mathbb{R}^{2n}$ are also $C^1$.
In fact, under our conditions  we can apply \cite[Proposition~26.1]{Am90} to $X=E=\mathbb{R}^{2n}$, $k=1$ and $f(\lambda,t,\xi)=J\nabla_zH(\lambda,t,\xi+v_\lambda(t))-\dot{v}_\lambda(t)$
to conclude $\nu_{2\pi,M}(\gamma_{\mu})\ne 0$ because the assumption guarantees that
 $(\mu,0)$ is also a bifurcation point of $\dot{x}=f(\lambda,t, x)$ in
the sense of \cite[page 369]{Am90}.

Let $H\in C^2([a,b]\times\mathbb{R}\times\mathbb{R}^{2n})$ be $2\pi$-periodic in the second variable
and satisfy $H(\lambda,t,0)\equiv 0$.
(Therefore it satisfies Assumption~\ref{ass:BasiAss} with $\Lambda=[a,b]$ and $M=I_{2n}$.)  Suppose that
\begin{description}
\item[(H1)] There exist constants $r>1$ and $c_i>0$, $i=1,2$, such that for all $(\lambda, t, z)\in\Lambda\times \mathbb{R}\times\mathbb{R}^{2n}$,
$$
|\nabla_z H(\lambda,t, z)|\le c_1+ c_2|z|^r\quad\hbox{and}\quad
|\nabla^2_z H(\lambda,t, z)|\le c_1+ c_2|z|^r.
$$
\item[(H2)] There is a path $\lambda\mapsto A_\lambda$ of  time-independent symmetric $2n\times 2n$
real matrices such that
$$
H(\lambda,t,z)=(A_\lambda z,z)_{\mathbb{R}^{2n}}/2+ R(\lambda,t, z)
$$
where $\nabla_z R(\lambda,t, z)=o(|z|)$ as $|z|\to 0$. (Without using $H(\lambda,t,0)\equiv 0$, this implies that
$\nabla_z H(\lambda,t, 0)=0$ and $\nabla^2_z H(\lambda,t, 0)=A_\lambda$ for all $(\lambda,t)$. In particular, $\lambda\mapsto A_\lambda$
is continuous because $H$ is $C^2$.)
\end{description}
Let $\gamma_\lambda$ be the fundamental matrix solution of $\dot{Z}(t)=JA_\lambda Z(t)$.
It was claimed in \cite[page~22]{FiPeRe00} that $\nu_{2\pi}(\gamma_{\mu})=\dim{\rm Ker}(\gamma_{\mu}(2\pi)-I_{2n})\ne 0$ is a necessary condition
for $(\mu, 0)$ to be a bifurcation point of (\ref{e:PPer}) with respect to the trivial branch in
$[a,b]\times H^{1/2}(S_{2\pi};\R^{2n})$ [or equivalently in $[a,b]\times C^1(S_{2\pi};\R^{2n})$
by Proposition~\ref{prop:threeBifu}(ii)]. Clearly, this result is contained in Theorem~\ref{th:bif-per1}(I).

\item[(ii)]  Theorem~\ref{th:bif-per1}(III)  with $M=I_{2n}$ and $\tau=2\pi$
generalizes \cite[Theorem~1.1]{FiPeRe00} and \cite[Theorem~2.2]{FiPeRe00}.
Let $H\in C^\infty([a,b]\times\mathbb{R}\times\mathbb{R}^{2n})$ be $2\pi$-periodic in the second variable
and satisfy the above growth condition (H1).
Assume that $\dot{v}(t)=J\nabla_z H(\lambda,t, v(t))$ possesses a known family $\{v_\lambda\}$ of $2\pi$-periodic solutions
smoothly parametrized by $\lambda$ in $[a,b]$. Denote by $\gamma_\lambda$  the fundamental matrix solution of $\dot{Z}(t)=J\nabla^2_zH(\lambda,t, v_\lambda(t)) Z(t)$.
For each $\lambda\in [a, b]$, we define
$L_\lambda\in\mathscr{L}_s(H^{1/2}(S_{2\pi};\R^{2n}))$  by
\begin{eqnarray*}
 (L_\lambda v,w)_{H^{1/2}}=\int^{2\pi}_0(J\dot{v}(t)+\nabla^2_zH(\lambda,t, v_\lambda(t))v(t), w(t))_{\mathbb{R}^{2n}} dt
 \end{eqnarray*}
 as in the final line of \cite[page 24]{FiPeRe00},
and $\mathcal{A}_\lambda:H^{1}(S_{2\pi};\R^{2n})\to L^2(S_{2\pi};\R^{2n})$
by
$$
(\mathcal{A}_\lambda v)(t)=J\dot{v}(t)+\nabla^2_zH(\lambda,t, v_\lambda(t))v(t).
$$
Let ${\rm sf}(\mathcal{A})$ and ${\rm sf}(L)$ be the spectral flows of the families $\{\mathcal{A}_\lambda\,|\, a\le\lambda\le b\}$
and $\{L_\lambda\,|\, a\le\lambda\le b\}$, respectively.
Consider a symplectic path $\Gamma_{2\pi}:[a, b]\to {\rm Sp}(2n,\mathbb{R})$ given by $\Gamma_{2\pi}(\lambda)=\gamma_\lambda(2\pi)$.
Let $\mu_{\rm CZ}(\Gamma_{2\pi})$ be defined by (\ref{e:RobbinSaCZ}).
We assert the following formula:
\begin{equation}\label{e:RobbinSaCZ*}
\mu_{\rm CZ}(\Gamma_{2\pi})=\mu_{\rm CZ}(\gamma_{b})-\mu_{\rm CZ}(\gamma_{a}).
\end{equation}
In fact, consider the continuous map
$$
\Lambda:[a, b]\times [0, 2\pi]\to {\rm Sp}(2n,\mathbb{R}),\; (\lambda, t)\mapsto \gamma_\lambda(t),
$$
and paths
\begin{eqnarray*}
&&\alpha_1: [0, 1]\to [a, b]\times [0, 2\pi],\;t\mapsto (a,2\pi t),\\
&&\beta_1: [0, 1]\to [a, b]\times [0, 2\pi],\;t\mapsto (a+(b-a)t, 2\pi),\\
&&\alpha_2: [0, 1]\to [a, b]\times [0, 2\pi],\;t\mapsto (a+(b-a)t, 0),\\
&&\beta_2: [0, 1]\to [a, b]\times [0, 2\pi],\;t\mapsto (2\pi t, b).
\end{eqnarray*}

Then $\alpha_1\ast\beta_1$ and $\alpha_2\ast\beta_2$ are two paths with fixed end points
in the convex subset $[a, b]\times [0, 2\pi]\subset\mathbb{R}^2$. Therefore,
we have a homotopy $\{h_s\}_{0\le s\le 1}$ from the paths $\alpha_1\ast\beta_1$ to
$\alpha_2\ast\beta_2$ with fixed end points in $[a, b]\times [0, 2\pi]$.
It follows that $\{\Lambda\circ h_s\}_{0\le s\le 1}$ is a homotopy
from the paths $\Lambda\circ(\alpha_1\ast\beta_1)$ to
$\Lambda\circ(\alpha_2\ast\beta_2)$ with fixed end points in
${\rm Sp}(2n,\mathbb{R})$. Let
$$
\mu_{\rm CZ}(\Lambda\circ(\alpha_1\ast\beta_1))\quad\hbox{and}\quad
\mu_{\rm CZ}(\Lambda\circ(\alpha_2\ast\beta_2))
$$
 be the Conley-Zehnder index  of $\Lambda\circ(\alpha_1\ast\beta_1)$ and
 $\Lambda\circ(\alpha_2\ast\beta_2)$  defined  by
(\ref{e:RobbinSaCZ}) (or \cite[(2.7)]{FiPeRe00}), respectively.
By formula (\ref{e:RobbinSa5}) we obtain
$$
\mu_{\rm CZ}(\Lambda\circ(\alpha_1\ast\beta_1))=
\mu_{\rm CZ}(\Lambda\circ(\alpha_2\ast\beta_2)).
$$
Clearly, $\Lambda\circ(\alpha_i\ast\beta_i)=
(\Lambda\circ\alpha_i)\ast(\Lambda\circ\beta_i)$ for $i=1,2$.
Since  $\mu_{\rm CZ}$ is additive  under concatenation of paths, it follows that
\begin{equation}\label{e:RobbinSaCZ*+}
\mu_{\rm CZ}(\Lambda\circ\alpha_1)+\mu_{\rm CZ}(\Lambda\circ\beta_1))
=\mu_{\rm CZ}(\Lambda\circ\alpha_2)+
\mu_{\rm CZ}(\Lambda\circ\beta_2)
\end{equation}
Note that
 $\mu_{\rm CZ}(\Lambda\circ\alpha_2)=0$ by the vanishing property of the Conley-Zehnder index
 (since $\Lambda\circ\alpha_2$ is a constant path).
Moreover, $\mu_{\rm CZ}(\Lambda\circ\alpha_1)=\mu_{\rm CZ}(\gamma_{a})$,
$\mu_{\rm CZ}(\Lambda\circ\beta_1))=\mu_{\rm CZ}(\Gamma_{2\pi})$
and $\mu_{\rm CZ}(\Lambda\circ\beta_2)=\mu_{\rm CZ}(\gamma_{b})$
because  paths $\Lambda\circ\alpha_1$, $\Lambda\circ\beta_1$ and
$\Lambda\circ\beta_2$ are  reparameterizations of $\gamma_a$, $\Gamma_{2\pi}$ and $\gamma_b$, respectively.
 Substituting these equalities into (\ref{e:RobbinSaCZ*+}) yields
  the desired equality (\ref{e:RobbinSaCZ*}).
By  \cite{SaZe2} (see also  \cite[Theorem~11.1]{CLM}) and \cite[Proposition~2.1]{FiPeRe00} there holds
$$
{\rm sf}(\mathcal{A})=\mu_{\rm CZ}(\Gamma_{2\pi})
={\rm sf}(L)
$$
(see also \cite[Theorem~4.1]{Wat15}). Combining this formula with (\ref{e:RobbinSaCZ*}) and
(\ref{e:RobbinSa3}) we directly obtain the following claim.

\noindent{\bf Claim}. {\it If $\nu_{2\pi}(\gamma_{a})=\nu_{2\pi}(\gamma_{b})=0$, then
$$
\mu_{\rm CZ}(\gamma_{\star})=i_{\rm CZ}(\gamma_{\star})=i_{2\pi}(\gamma_{\star}),\quad\star=a,b,
$$
and consequently,
\begin{eqnarray}\label{e:RobbinSaCZ**}
&&\mu_{\rm CZ}(\Gamma_{2\pi})=i_{2\pi}(\gamma_{b})-i_{2\pi}(\gamma_{a}),\\
&&i_{2\pi}(\gamma_{b})\ne i_{2\pi}(\gamma_{a}) \Longleftrightarrow\mu_{\rm CZ}(\Gamma_{2\pi})\ne 0 \Longleftrightarrow {\rm sf}(L)\ne 0.\label{e:RobbinSaCZ***}
\end{eqnarray}}

In view of this claim, the results of \cite[Theorem~1.1]{FiPeRe00} and \cite[Theorem~2.2]{FiPeRe00} can be restated as follows:

\noindent{\bf Theorem}(\cite[Theorem~1.1]{FiPeRe00}). {\it Let $H\in C^2([a,b]\times\mathbb{R}\times\mathbb{R}^{2n})$ be $2\pi$-periodic in the second variable
and satisfy (H1)-(H2) in (i) above and
\begin{description}
\item[(H3)] The matrices $JA_a$ and $JA_b$ have no eigenvalues that are integral multiples
of $\sqrt{-1}$, and therefore $\nu_{2\pi}(\gamma_{a})=\nu_{2\pi}(\gamma_{b})=0$.
\end{description}
If $i_{2\pi}(\gamma_{b})\ne i_{2\pi}(\gamma_{a})$, then there exists  $\mu\in (a,b)$ such that
$(\mu, 0)$ is a bifurcation point of (\ref{e:PPer}) with $(\Lambda, M, \tau)=([a,b], I_{2n}, 2\pi)$
 in $[a,b]\times H^{1/2}(S_{2\pi};\R^{2n})$ with respect to the trivial branch.}

\noindent{\bf Theorem}(\cite[Theorem~2.2]{FiPeRe00}). {\it Let $H\in C^\infty([a,b]\times\mathbb{R}\times\mathbb{R}^{2n})$
 be $2\pi$-periodic in the second variable and satisfy (H1) in (i), and let $\{v_\lambda\,|\,\lambda\in [a,b]\}$
 be a family  of $2\pi$-periodic solutions of $\dot{v}(t)=J\nabla_z H(\lambda,t, v(t))$
smoothly parameterized by $\lambda$ in $[a,b]$. If $\nu_{2\pi}(\gamma_{a})=\nu_{2\pi}(\gamma_{b})=0$ and
$i_{2\pi}(\gamma_{b})\ne i_{2\pi}(\gamma_{a})$, then there exists $\mu\in (a,b)$ such that
$(\mu, v_\mu)$ is a bifurcation point of (\ref{e:PPer}) with $(\Lambda, M, \tau)=([a,b], I_{2n}, 2\pi)$
 in $[a,b]\times H^{1/2}(S_{2\pi};\R^{2n})$ with respect to the branch $\{(\lambda, v_\lambda)\,|\,\lambda\in\Lambda\}$.}

When $v_\lambda\equiv 0$ in this result, the condition ``$H\in C^\infty([a,b]\times\mathbb{R}\times\mathbb{R}^{2n})$''
was weakened as
``\textsf{$H\in C^0([a,b]\times\mathbb{R}\times\mathbb{R}^{2n})$, each $H(\lambda,\cdot,\cdot)$, $\lambda\in [a,b]$, is $C^2$
and all possible partial derivatives of $H$ depend continuously on the parameter $\lambda$}''
in lines 1-4 of \cite[page 743]{Wat15}.

Clearly, the assumptions on $H$ in the above two theorems are stronger than those in Assumption~\ref{ass:BasiAss}
with $(\Lambda, M, \tau)=([a,b], I_{2n}, 2\pi)$.
 \end{enumerate} }
\end{remark}

To the best of the author's knowledge, the following alternative bifurcation result
(and so its consequence, Theorem~\ref{th:bif-per2}) is completely new,
 and no similar results have been found.

\begin{theorem}[\textsf{Alternative bifurcations of Rabinowitz's type and of Fadell-Rabinowitz's type}]\label{th:bif-suffict}
Let Assumption~\ref{ass:BasiAss1} with $\Lambda$ being a real interval be  satisfied,
and let $\gamma_\lambda$ be as in Theorem~\ref{th:bif-ness} for each $\lambda\in\Lambda$.
Suppose for some interior point $\mu$ of $\Lambda$ that
 $\dim{\rm Ker}(\gamma_\mu(\tau)-M)\ne 0$ and  $\dim{\rm Ker}(\gamma_\lambda(\tau)-M)=0$
 for each $\lambda\in\Lambda\setminus\{\mu\}$ near $\mu$, and that
  $i_{\tau,M}(\gamma_\lambda)$ takes, respectively, values $i_{\tau,M}(\gamma_\mu)$ and
  $i_{\tau,M}(\gamma_\mu)+ \nu_{\tau,M}(\gamma_\mu)$
 as $\lambda\in\Lambda$ varies in
 two deleted half neighborhoods  of $\mu$.
 Then at least one of the following assertions holds:
\begin{enumerate}
\item[\rm (i)]
The problem (\ref{e:Hboundary}) with $\lambda=\mu$ has a sequence of solutions, $u_\mu^k\ne u_\mu$, $k=1,2,\cdots$,
which converges to $u_\mu$ in $C^1_M([0,\tau];\R^{2n})$.

\item[\rm (ii)]  For every $\lambda\in\Lambda\setminus\{\mu\}$ near $\mu$ there is a  solution $\bar{u}_\lambda\ne u_\lambda$ of
(\ref{e:Hboundary}) with parameter value $\lambda$, such that
$\bar{u}_\lambda-u_\lambda$ converges to zero in $C^1_M([0,\tau];\R^{2n})$ as $\lambda\to \mu$.

\item[\rm (iii)] For a given neighborhood $\mathcal{W}$ of $u_\mu$ in $C^1_M([0,\tau];\R^{2n})$
there is a one-sided  neighborhood $\Lambda^0$ of $\mu$ such that
for any $\lambda\in\Lambda^0\setminus\{\mu\}$, (\ref{e:Hboundary}) with parameter value $\lambda$
has at least two distinct solutions $u_\lambda^1\ne u_\lambda$ and $u_\lambda^2\ne u_\lambda$ in $\mathcal{W}$,
which can also be required to satisfy
\begin{eqnarray*}
\int^{\tau}_0\left[\frac{1}{2}(J\dot{u}_\lambda^1(t),u^1_\lambda(t))_{\mathbb{R}^{2n}}+ H(\lambda, t, {u}_\lambda^1(t))\right]dt
\ne \int^{\tau}_0\left[\frac{1}{2}(J\dot{u}^2_\lambda(t),u_\lambda^2(t))_{\mathbb{R}^{2n}}+ H(\lambda, t,
{u}_\lambda^2(t))\right]dt
\end{eqnarray*}
provided that $\nu_{\tau,M}(\gamma_\mu)>1$ and (\ref{e:Hboundary}) with parameter value $\lambda$
has only finitely many  solutions in $\mathcal{W}$.
\end{enumerate}
Moreover, if $u_\lambda=0\;\forall\lambda$, and all $H(\lambda,t,\cdot)$ are even, then
at least one of  (i) and (iv) occurs:
\begin{enumerate}
\item[\rm (iv)] There exist left and right  neighborhoods $\Lambda^-$ and $\Lambda^+$ of $\mu$ in $\Lambda$
and integers $n^+, n^-\ge 0$, such that $n^++n^-\ge \nu_{\tau,M}(\gamma_\mu)$,
and for $\lambda\in\Lambda^-\setminus\{\mu\}$ (resp. $\lambda\in\Lambda^+\setminus\{\mu\}$),
(\ref{e:Hboundary}) with parameter value $\lambda$  has at least $n^-$ (resp. $n^+$) distinct pairs of nontrivial solutions,
$\{u_\lambda^i, -u_\lambda^i\}$, $i=1,\cdots,n^-$ (resp. $n^+$),
which  converge to zero in $C^1_M([0,\tau];\R^{2n})$  as $\lambda\to\mu$.
\end{enumerate}
\end{theorem}

As above this directly leads to the following bifurcation result for the system (\ref{e:PPer}).

\begin{theorem}[\textsf{Alternative bifurcations of Rabinowitz's type and of Fadell-Rabinowitz's type}]\label{th:bif-per2}
Let Assumption~\ref{ass:BasiAss} with $\Lambda$ being a real interval be  satisfied,
and let $\gamma_\lambda$ be as in Theorem~\ref{th:bif-per1} for each $\lambda\in\Lambda$.
Suppose for some interior point $\mu$ of $\Lambda$ that
 $\dim{\rm Ker}(\gamma_\mu(\tau)-M)\ne 0$ and  $\dim{\rm Ker}(\gamma_\lambda(\tau)-M)=0$
 for each $\lambda\in\Lambda\setminus\{\mu\}$ near $\mu$, and that
  $i_{\tau,M}(\gamma_\lambda)$ takes, respectively, values $i_{\tau,M}(\gamma_\mu)$ and $i_{\tau,M}(\gamma_\mu)+ \nu_{\tau,M}(\gamma_\mu)$
 as $\lambda\in\Lambda$ varies in
 two deleted half neighborhoods  of $\mu$.
 Then at least one of the following assertions holds:
\begin{enumerate}
\item[\rm (i)]
The problem  (\ref{e:PPer}) with $\lambda=\mu$ has a sequence of solutions, $v_\mu^k\ne v_\mu$, $k=1,2,\cdots$,
such that  $v_\mu^k|_{[0,\tau]}\to v_\mu|_{[0,\tau]}$ in $C^1([0,\tau];\R^{2n})$.

\item[\rm (ii)]  For every $\lambda\in\Lambda\setminus\{\mu\}$ near $\mu$ there is a  solution $\bar{v}_\lambda\ne v_\lambda$ of
(\ref{e:PPer}) with parameter value $\lambda$
such that  $\bar{v}_\lambda|_{[0,\tau]}-v_\lambda|_{[0,\tau]}\to 0$ in $C^1([0,\tau];\R^{2n})$  as $\lambda\to \mu$.

\item[\rm (iii)] For a given neighborhood $\mathcal{W}$ of $v_\mu|_{[0,\tau]}$ in $C^1_M([0,\tau];\R^{2n})$
there is an one-sided  neighborhood $\Lambda^0$ of $\mu$ such that
for any $\lambda\in\Lambda^0\setminus\{\mu\}$,  (\ref{e:PPer}) with parameter value $\lambda$
has at least two distinct solutions $v_\lambda^1\ne v_\lambda$ and $v_\lambda^2\ne v_\lambda$ such that
$v_\lambda^1|_{[0,\tau]}$ and $v_\lambda^2|_{[0,\tau]}$ are in $\mathcal{W}$,
which can also be required to satisfy
\begin{eqnarray*}
\int^{\tau}_0\left[\frac{1}{2}(J\dot{v}_\lambda^1(t),v^1_\lambda(t))_{\mathbb{R}^{2n}}+ H(\lambda, t, {v}_\lambda^1(t))\right]dt
\ne \int^{\tau}_0\left[\frac{1}{2}(J\dot{v}^2_\lambda(t), v_\lambda^2(t))_{\mathbb{R}^{2n}}+ H(\lambda, t,
{v}_\lambda^2(t))\right]dt
\end{eqnarray*}
provided that $\nu_{\tau,M}(\gamma_\mu)>1$ and (\ref{e:PPer}) with parameter value $\lambda$
has only finitely many  solutions whose restrictions to $[0,\tau]$ belong to $\mathcal{W}$.
\end{enumerate}
Moreover, if $v_\lambda=0\;\forall\lambda$, and all $H(\lambda,t,\cdot)$ are even, then
at least one of  (i) and (iv) occurs:
\begin{enumerate}
\item[\rm (iv)] There exist left and right  neighborhoods $\Lambda^-$ and $\Lambda^+$ of $\mu$ in $\Lambda$
and integers $n^+, n^-\ge 0$, such that $n^++n^-\ge \nu_{\tau,M}(\gamma_\mu)$,
and for $\lambda\in\Lambda^-\setminus\{\mu\}$ (resp. $\lambda\in\Lambda^+\setminus\{\mu\}$),
(\ref{e:PPer}) with parameter value $\lambda$  has at least $n^-$ (resp. $n^+$) distinct pairs of nontrivial solutions,
$\{v_\lambda^i, -v_\lambda^i\}$, $i=1,\cdots,n^-$ (resp. $n^+$),
such that their restrictions to $[0,\tau]$  converge to zero in $C^1([0,\tau];\R^{2n})$  as $\lambda\to\mu$.
\end{enumerate}
\end{theorem}

The first part is an alternative bifurcation result of Rabinowitz's type,
and the second part is of alternative bifurcations of Fadell-Rabinowitz's type.

Theorem~\ref{th:bif-suffict} has the following consequence:

\begin{corollary}\label{cor:necess-suffi}
 For a real $\tau>0$, and a symplectic matrix $M\in{\rm Sp}(2n,\mathbb{R})$,
let $H_0, \hat{H}: [0,\tau]\times{\R}^{2n}\to\R$
two continuous functions satisfying:
\begin{itemize}
\item[\rm (i)] $H_0(\tau, Mz)=H_0(0, z)$ and $\hat{H}(\tau, Mz)=\hat{H}(0, z)$ for all $z\in {\R}^{2n}$.
\item[\rm (ii)] For each $t\in [0,\tau]$, $H_0(t,\cdot)$ and $\hat{H}(t,\cdot)$ are
  $C^2$ on ${\R}^{2n}$.
\item[\rm (iii)] Both the Euclidean gradients, $\nabla_zH_0(t, z)$ and $\nabla_z\hat{H}(t, z)$,
 and the Hessian matrices, $\nabla^2_zH_0(t, z)$ and $\nabla^2_z\hat{H}(t, z)$,
  depend continuously on $(t, z)\in [0,\tau]\times\mathbb{R}^{2n}$.
 \end{itemize}
Let  $\bar{u}:[0,\tau]\to\R^{2n}$ be a solution of
$$
\dot{u}(t)=J\nabla_z H_0(t,u(t))\quad\text{and}\quad u(\tau)=Mu(0).
$$
Suppose that $\nabla_z\hat{H}(t,\bar{u}(t))=0$ for all $t\in [0,\tau]$, and that
 the Hessian $\nabla^2_z\hat{H}(t,\bar{u}(t))$ is either positive definite for all
 $t$ or negative definite for all $t$.
Define
$$
H(\lambda,t, z)=H_0(t,z)+\lambda\hat{H}(t,z),\quad\forall (\lambda,t,z)\in\mathbb{R}\times [0,\tau]\times\mathbb{R}^{2n}.
$$
Then for the fundamental matrix solution $\gamma_\lambda$ of
$\dot{Z}(t)=J\nabla^2_zH(\lambda,t, \bar{u}(t))Z(t)$,
$$
\Sigma:=\{\lambda\in\R\,|\, \nu_{\tau,M}(\gamma_\lambda)>0\}
$$
is a discrete set in $\R$,
and $(\mu, \bar{u})\in\R\times W^{1,2}_M([0,\tau];\R^{2n})$
is a bifurcation point  for (\ref{e:Hboundary}) with this $H$
if and only if $\nu_{\tau,M}(\gamma_\mu)>0$.
Moreover, for each $\mu\in\Sigma$
and a small enough $\rho>0$ it holds that
\begin{eqnarray}\label{e:case1}
i_{\tau,M}(\gamma_{\lambda})=\left\{
\begin{array}{ll}
 i_{\tau,M}(\gamma_{\mu})\;&\forall\lambda\in [\mu-\rho,\mu),\\ 
 i_{\tau,M}(\gamma_{\mu})+ \nu_{\tau,M}(\gamma_{\mu}) \;&\forall\lambda\in (\mu, \mu+\rho]
\end{array}\right.
\end{eqnarray}
if $\nabla_z^2\hat{H}(t, \bar{u}(t))>0\;\forall t\in [0,\tau]$, and
\begin{eqnarray}\label{e:case2}
i_{\tau,M}(\gamma_{\lambda})=\left\{
\begin{array}{ll}
 i_{\tau,M}(\gamma_{\mu})+ \nu_{\tau,M}(\gamma_{\mu}) \;&\forall\lambda\in [\mu-\rho,\mu),\\
 i_{\tau,M}(\gamma_{\mu})\;&\forall\lambda\in (\mu, \mu+\rho]
\end{array}\right.
\end{eqnarray}
if $\nabla_z^2\hat{H}(t, \bar{u}(t))<0\;\forall t\in [0,\tau]$.
Consequently, for each $\mu\in\Sigma$,
the conclusions in the first part of Theorem~\ref{th:bif-suffict}
holds, and the second one of Theorem~\ref{th:bif-suffict} is also true provided that
$\bar{u}=0$ and all $H_0(t,\cdot), \hat{H}(t,\cdot)$ are even.
\end{corollary}

If $\bar{u}$ is constant and $H_0=0$, Proposition~\ref{prop:Index}
gives stronger results than (\ref{e:case1}) and (\ref{e:case2}).

As a consequence of Theorem~\ref{th:bif-suffict}
we can also obtain a  result corresponding to Theorem~5.6 of \cite{Lu9}.

\begin{corollary}\label{cor:bif-deform}
Let $M\in{\rm Sp}(2n,\mathbb{R})$ be a symplectic matrix, and
let $H: {\R}^{2n}\to\R$ be a $C^2$ function satisfying $H(Mz)=H(z)$
for all $z\in\mathbb{R}^{2n}$. Suppose that
 $\bar{u}\in{\rm Ker}(M-I_{2n})$ satisfies $\nabla H(\bar{u})=0$. Let
$B=\nabla^2{H}(\bar{u})$. Then the fundamental matrix solution of $\dot{Z}(t)=JBZ(t)$
is $\Upsilon_B(t)=\exp(tJB)$.
Let $\xi_{2n}(t)=I_{2n}$ for $t\in \mathbb{R}$.
For a given  real $\tau>0$, we have:
\begin{description}
\item[(I)]
Suppose  that $B>0$ and $i_{\tau,M}(\Upsilon_B)\ne i_{\tau,M}(\xi_{2n}|_{[0,\tau]})+ \dim{\rm Ker}(I_{2n}-M)$.
Then there exists at least one and at most  finitely many numbers
in $(0, \tau)$, $\tau_1<\cdots<\tau_l$, such that
$$
\nu_{\tau_k,M}(\Upsilon_B):=\dim{\rm Ker}(\Upsilon_B(\tau_k)-M)\ne 0,\quad k=1,\cdots, l,
$$
and  for each $\tau_k$, at least  one of the following assertions holds:
\begin{enumerate}
\item[\rm (I-1)]
The boundary value problem
\begin{equation*}
\hbox{$\dot{u}(t)=J\nabla H(u(t))\;\forall t\in [0,\tau_k]$ \quad and\quad $u(\tau_k)=Mu(0)$}
\end{equation*}
 has a sequence of solutions $u_m$ converging to $\bar{u}$ in $C^1_M([0, \tau_k];\R^{2n})$, $u_m\ne \bar{u}$, $m=1,2,\cdots$.

\item[\rm (I-2)]  For every $s\ne\tau_k$ near $\tau_k$,
the boundary value problem
\begin{equation}\label{e:Deform-Equ}
\hbox{$\dot{u}(t)=J\nabla H(u(t))\;\forall t\in [0,s]$ \quad and\quad $u(s)=Mu(0)$}
\end{equation}
has a  solution $u_s\ne \bar{u}$, which
converges to $\bar{u}$ in $C^1_M([0,s];\R^{2n})$ as $s\to \tau_k$.

\item[\rm (I-3)] There is a one-sided  neighborhood $\Delta^0$ of $\tau_k$ in $[0, \tau]$ such that
for any $s\in\Delta^0\setminus\{\tau_k\}$, (\ref{e:Deform-Equ})
has at least two distinct solutions $u_s^1\ne \bar{u}$ and $u_s^2\ne \bar{u}$ near $\bar{u}$,
 which converges to $\bar{u}$ in $C^1_M([0,s];\R^{2n})$ as $s\to \tau_k$; besides,
$u_s^1$ and $u_s^2$ can also be required to satisfy
{\small
$$
\int^{s}_0\left[\frac{1}{2}(J\dot{u}_s^1(t),u^1_s(t))_{\mathbb{R}^{2n}}+ H({u}_s^1(t))\right]dt
\ne \int^{s}_0\left[\frac{1}{2}(J\dot{u}^2_s(t),u_s^2(t))_{\mathbb{R}^{2n}}+
 H({u}_s^2(t))\right]dt
$$}
provided that $\nu_{\tau_k,M}(\Upsilon_B)>1$ and (\ref{e:Deform-Equ})
has only finitely many  solutions near $\bar{u}$.
\end{enumerate}
Furthermore, if $\bar{u}=0$ and  $H(\cdot)$ is even, then at least one of (I-1) and (I-4)
holds:
\begin{enumerate}
\item[\rm (I-4)] There exist left and right  neighborhoods $\Delta^-$ and $\Delta^+$ of $\tau_k$ in $[0, \tau]$
and integers $n^+, n^-\ge 0$, such that $n^++n^-\ge \nu_{\tau_k,M}(\Upsilon_B)$,
and for $s\in\Delta^-\setminus\{\tau_k\}$ (resp. $s\in\Delta^+\setminus\{\tau_k\}$),
(\ref{e:Deform-Equ})  has at least $n^-$ (resp. $n^+$) distinct pairs of nontrivial solutions,
$\{u_s^i, -u_s^i\}$, $i=1,\cdots,n^-$ (resp. $n^+$),
which  converge to zero in $C^1_M([0, s];\R^{2n})$  as $s\to\tau_k$.
\end{enumerate}
\item[(II)] Suppose that $B<0$ and
$i_{\tau,M}(\Upsilon_{\hat{B}})\ne i_{\tau,M}(\xi_{2n}|_{[0,\tau]})+ \dim{\rm Ker}(I_{2n}-M)$, where
$\Upsilon_{\hat{B}}(t)=\exp(-tJB)$.
Then there exists  at least one and at most  finitely many numbers
in $(0, \tau)$, $\delta_1<\cdots<\delta_l$, such that
$$
\nu_{\delta_k,M}(\Upsilon_{\hat{B}}):=\dim{\rm Ker}(\Upsilon_{\hat{B}}(\delta_k)-M)\ne 0,\quad k=1,\cdots, l,
$$
and  for each $\delta_k$, at least  one of the following assertions holds:
\begin{enumerate}
\item[\rm (II-1)]
The boundary value problem
\begin{equation*}
\hbox{$\dot{w}(t)=\frac{\tau}{\delta_k}J\nabla_z {H}(w(t))\;\forall t\in [\tau-\delta_k^2/\tau,\tau]$ \quad and\quad $Mw(\tau)=w(\tau-\delta_k^2/\tau)$}
\end{equation*}
 has a sequence of solutions, $w_m\ne \bar{u}$, $m=1,2,\cdots$,
which converges to $\bar{u}$ in $C^1_M([\tau-\delta_k^2/\tau,\tau];\R^{2n})$.

\item[\rm (II-2)]  For every $\rho\ne\delta_k$ near $\delta_k$,
the boundary value problem
\begin{equation}\label{e:Deform-EquNew}
\hbox{$\dot{w}(t)=\frac{\tau}{\rho}J\nabla_z {H}(w(t))\;\forall t\in [\tau-\rho^2/\tau,\tau]$ \quad and\quad $Mw(\tau)=w(\tau-\rho^2/\tau)$}
\end{equation}
has a  solution $w_\rho\ne \bar{u}$, which
converges to $\bar{u}$ in $C^1_M([\tau-\rho^2/\tau,\tau];\R^{2n})$ as $\rho\to \delta_k$.

\item[\rm (II-3)] There is a one-sided  neighborhood $\Delta^0$ of $\delta_k$ in $[0, \tau]$ such that
for any $\rho\in\Delta^0\setminus\{\tau_k\}$, (\ref{e:Deform-EquNew})
has at least two distinct solutions $w_\rho^1\ne \bar{u}$ and $w_\rho^2\ne \bar{u}$ near $\bar{u}$,
 which converges to $\bar{u}$ in $C^1_M([\tau-\rho^2/\tau,\tau];\R^{2n})$ as $\rho\to \tau_k$; besides,
$w_\rho^1$ and $w_\rho^2$ can also be required to satisfy
{\small
\begin{eqnarray*}
&&\int^{\tau}_{\tau-\rho^2/\tau}\left[\frac{1}{2}(J\dot{w}_\rho^1(t),w^1_\rho(t))_{\mathbb{R}^{2n}}+ \frac{\tau}{\rho}H({w}_\rho^1(t))\right]dt\\
&&\ne \int^{\tau}_{\tau-\rho^2/\tau}\left[\frac{1}{2}(J\dot{w}^2_\rho(t),w_\rho^2(t))_{\mathbb{R}^{2n}}+ \frac{\tau}{\rho}H({u}_s^2(t))\right]dt
\end{eqnarray*}}
provided that $\nu_{\delta_k,M}(\Upsilon_{\hat{B}})>1$ and (\ref{e:Deform-EquNew})
has only finitely many  solutions near $\bar{u}$.
\end{enumerate}
Furthermore, if $\bar{u}=0$ and  $H(\cdot)$ is even, then at least one
 (II-1) and (II-4) holds:
\begin{enumerate}
\item[\rm (II-4)] There exist left and right  neighborhoods $\Delta^-$ and $\Delta^+$ of $\delta_k$ in $[0, \tau]$
and integers $n^+, n^-\ge 0$, such that $n^++n^-\ge \nu_{\delta_k,M}(\Upsilon_{\hat{B}})$,
and for $\rho\in\Delta^-\setminus\{\delta_k\}$ (resp. $\rho\in\Delta^+\setminus\{\delta_k\}$),
(\ref{e:Deform-EquNew})  has at least $n^-$ (resp. $n^+$) distinct pairs of nontrivial solutions,
$\{w_\rho^i, -u_\rho^i\}$, $i=1,\cdots,n^-$ (resp. $n^+$),
which  converge to zero in $C^1_M([\tau-\rho^2/\tau,\tau];\R^{2n})$   as $\rho\to\delta_k$.
\end{enumerate}
\end{description}
\end{corollary}

Clearly, (I) of Corollary~\ref{cor:bif-deform} always implies that there exists a sequence $(s_m)\subset (0,\tau)$ such that
(\ref{e:Deform-Equ}) with $s=s_m$ has a solution $u_m\ne \bar{u}$ for each $m$, and that
$\|u_m-\bar{u}\|_{C^1([0,s_m];\R^{2n})}\to 0$ as $m\to\infty$. By (II) of Corollary~\ref{cor:bif-deform}
we have also  a similar conclusion.

\begin{remark}\label{rm:bif-deform}
{\rm  By \cite[Remark~4.3]{Do06},  the conditions
in Corollary~\ref{cor:bif-deform}
\begin{eqnarray*}
&&\hbox{``$i_{\tau,M}(\Upsilon_B)\ne i_{\tau,M}(\xi_{2n})+ \dim{\rm Ker}(I_{2n}-M)$''\quad\hbox{and}}\\
&& \hbox{``$i_{\tau,M}(\Upsilon_{\hat{B}})\ne i_{\tau,M}(\xi_{2n})+ \dim{\rm Ker}(I_{2n}-M)$''}
\end{eqnarray*}
 respectively  become
 \begin{center}
 ``$i_{\tau}(\Upsilon_B)\ne n$'' and ``$i_{\tau}(\Upsilon_{\hat{B}})\ne n$''
 \end{center}
if $M=I_{2n}$, and
 \begin{center}
 ``$i_{\tau,M}(\Upsilon_B)\ne \kappa$'' and ``$i_{\tau,M}(\Upsilon_{\hat{B}})\ne \kappa$''
 \end{center}
if $M={\rm diag}\{-I_{n-\kappa}, I_{\kappa},  -I_{n-\kappa}, I_{\kappa}\}$.
}
\end{remark}

Corollary~\ref{cor:necess-suffi} implies

\begin{corollary}\label{cor:bif-per2}
 For a real $\tau>0$, and a symplectic matrix $M\in{\rm Sp}(2n,\mathbb{R})$,
let $H_0, \hat{H}: \R\times{\R}^{2n}\to\R$ be two continuous functions satisfying:
\begin{itemize}
\item[\rm (i)] $H_0(t+\tau, Mz)=H_0(t, z)$ and $\hat{H}(t+\tau, Mz)=\hat{H}(t,z)$ for all $(t,z)\in\R\times\mathbb{R}^{2n}$.

\item[\rm (ii)] For each $t\in\mathbb{R}$, $H_0(t,\cdot)$ and $\hat{H}(t,\cdot)$ are
  $C^2$ on ${\R}^{2n}$.
\item[\rm (iii)] Both the Euclidean gradients, $\nabla_zH_0(t, z)$ and $\nabla_z\hat{H}(t, z)$,
 and the Hessian matrices, $\nabla^2_zH_0(t, z)$ and $\nabla^2_z\hat{H}(t, z)$,
  depend continuously on $(t, z)\in \mathbb{R}\times\mathbb{R}^{2n}$.
 \end{itemize}
Let $\bar{v}:\R\to\R^{2n}$ satisfy
\begin{enumerate}
\item[\rm (c)] $\dot{v}(t)=J\nabla_zH_0(t, v(t))$ and $v(t+\tau)=Mv(t)\;\forall t$.
\item[\rm (d)]
$\nabla_z\hat{H}(t, \bar{v}(t))=0$ for all $t\in \R$, and
 either $\nabla^2_z\hat{H}(t, \bar{v}(t))>0\;\forall t$
 or $\nabla^2_z\hat{H}(t, \bar{v}(t))<0\;\forall t$.
\end{enumerate}
Take $H(\lambda,t, z)=H_0(t,z)+\lambda\hat{H}(t,z)$  in (\ref{e:M-invariant1}).
Then
\begin{enumerate}
\item[\rm (A)] $\Sigma:=\{\lambda\in\R\,|\, \nu_{\tau,M}(\gamma_\lambda)>0\}$
is a discrete set in $\R$.
\item[\rm (B)] $(\mu, \bar{v})$ with $\mu\in\R$ is a bifurcation point  for (\ref{e:PPer})
if and only if $\nu_{\tau,M}(\gamma_\mu)>0$.
\item[\rm (C)] For each $\mu\in\Sigma$ and a small enough $\rho>0$, (\ref{e:case1}) and (\ref{e:case2})
hold, and therefore the conclusions in the first part of Theorem~\ref{th:bif-per2}
holds, and the second one of Theorem~\ref{th:bif-per2} is also true provided that
$\bar{v}=0$ and all $H_0(t,\cdot), \hat{H}(t,\cdot)$ are even.
\end{enumerate}
\end{corollary}

\noindent{1.2. \;\bf  Bifurcations for generalized periodic solutions of autonomous Hamiltonian systems}
\begin{assumption}\label{ass:BasiAss2}
{\rm Let $M\in{\rm Sp}(2n,\mathbb{R})$ be an orthogonal symplectic matrix and $\Lambda$ be
a topological space. Moreover, let
$H:\Lambda\times{\R}^{2n}\to\R$ be a continuous function such that each
$H_\lambda:{\R}^{2n}\to\R$, $\lambda\in\Lambda$, is $M$-invariant and $C^2$, and all its partial derivatives depend continuously on
the parameter $\lambda\in\Lambda$. For some fixed real $\tau>0$
and each $\lambda\in\Lambda$ let $v_\lambda:\R\to\mathbb{R}^{2n}$ satisfy the following generalized periodic Hamiltonian system
\begin{equation}\label{e:PPer1}
\dot{v}(t)=J\nabla_z H(\lambda, v(t))\quad\hbox{and}\quad v(t+\tau)=Mv(t)\;\;\forall t\in \R,
\end{equation}
 and $\Lambda\times \R\ni (\lambda,t)\mapsto v_\lambda(t)\in\mathbb{R}^{2n}$ is also continuous.}
\end{assumption}

Under this assumption,  each $v_\lambda$ is $C^2$, and
$\Lambda\times \R\ni (\lambda,t)\mapsto \dot{v}_\lambda(t)\in\mathbb{R}^{2n}$ and $\Lambda\times \R\ni (\lambda,t)\mapsto \ddot{v}_\lambda(t)\in\mathbb{R}^{2n}$ are also continuous.
Moreover, each element in $\R\cdot v_\lambda:=\{v_\lambda(\theta+\cdot)\,|\,\theta\in\R\}$ (\textsf{$\R$-orbit})
 also satisfies (\ref{e:PPer1}).  It follows that
 \begin{equation}\label{e:linear3*0}
\hbox{$\ddot{v}_\lambda(t)=J\nabla_z^2H({\lambda}, v_\lambda)\dot{v}_\lambda(t)$\quad and\quad $\dot{v}_\lambda(t+\tau)=M\dot{v}_\lambda(t)\;\forall t$}.
\end{equation}
  Thus if $v_\mu$ is nonconstant (i.e., $\dot{v}_\mu\ne 0$) for some $\mu\in\Lambda$,
then $(\mu, v_\mu)$  is a bifurcation point of (\ref{e:PPer1})
in the sense stated in the paragraph above Proposition~\ref{prop:threeBifu}.
In order to give an exact description for bifurcation pictures of solutions of
 (\ref{e:PPer1}) near $\R\cdot v_\lambda$
some concepts are needed. Two solutions $v_1$ and $v_2$ of (\ref{e:PPer1})
 with parameter value $\lambda$ is said to be \textsf{$\R$-distinct} if they belong to different $\R$-orbits.
We call $(\mu, \R\cdot v_\mu)$ a \textsf{bifurcation $\R$-orbit along sequences} of (\ref{e:PPer1})
with respect to the branch $\{(\lambda, \R\cdot v_\lambda)\,|\,\lambda\in\Lambda\}$
if there exists a sequence $(\lambda_k)\subset\Lambda$ converging to $\mu$, and a solution $\bar{v}_k$ of (\ref{e:PPer1}) with parameter value $\lambda_k$ for each $k$,
such that: (i) $\bar{v}_k\notin\R\cdot v_{\lambda_k}\;\forall k$, (ii)  all $\bar{v}_k$ are $\R$-distinct,
(iii) $\bar{v}_k\to v_\mu$ in $C^1_{\rm loc}$.

When each $v_\lambda$ is constant,  we have the following
 Theorem~\ref{th:bif-per3} and Corollaries~\ref{cor:bif-per4},~\ref{cor:bif-per5}.

\begin{theorem}[\textsf{Alternative bifurcations of Fadell-Rabinowitz's type and of Rabinowitz's type}]\label{th:bif-per3}
Let Assumption~\ref{ass:BasiAss2}, with $\Lambda$ being a real interval, be satisfied.
Suppose  that the orthogonal symplectic matrix $M\in{\rm Sp}(2n,\mathbb{R})$
satisfies $M^l=I_{2n}$ for some integer $l\ge 1$, and that
each $v_\lambda$ is constant, (hence viewed as a constant vector in
${\rm Ker}(M-I_{2n})\cap\{dH_\lambda=0\}\subset\R^{2n}$).
Let $\gamma_\lambda(t)=\exp(tJ\nabla_z^2H({\lambda}, v_\lambda))$, i.e.,
 the fundamental matrix solution of $\dot{v}(t)=J\nabla^2_zH({\lambda},v_\lambda)v(t)$.
 For some interior point $\mu$ of $\Lambda$, assume that the following two conditions are satisfied:
\begin{enumerate}
\item[\rm (a)] The following problem
\begin{equation}\label{e:linear3*}
\hbox{$\dot{v}(t)=J\nabla_z^2H({\mu}, v_\mu)v(t)$\quad and\quad $v(t+\tau)=Mv(t)\;\forall t$}
\end{equation}
has no nonzero constant solutions, i.e.,  ${\rm Ker}(M-I_{2n})\cap{\rm Ker}(\nabla_z^2H({\mu},v_\mu))=\{0\}$.
\item[\rm (b)]  $\dim{\rm Ker}(\gamma_\mu(\tau)-M)\ne 0$,  $\dim{\rm Ker}(\gamma_\lambda(\tau)-M)=0$
 for each $\lambda\in\Lambda\setminus\{\mu\}$ near $\mu$, and
  $i_{\tau,M}(\gamma_\lambda)$ takes, respectively, values $i_{\tau,M}(\gamma_\mu)$ and $i_{\tau,M}(\gamma_\mu)+ \nu_{\tau,M}(\gamma_\mu)$
 as $\lambda\in\Lambda$ varies in  two deleted half neighborhoods  of $\mu$.
 \end{enumerate}
 Then one of the following alternatives occurs for (\ref{e:PPer1}):
 \begin{enumerate}
\item[\rm (i)] The problem (\ref{e:PPer1})  with $\lambda=\mu$ has a sequence of $\R$-distinct solutions,
$\bar{v}_k\notin\mathbb{R}\cdot v_\mu$, $k=1,2,\cdots$,
which  converges to $v_\mu$ on any compact interval $I\subset\R$ in $C^1$-topology.
\item[\rm (ii)] There exist left and right  neighborhoods $\Lambda^-$ and $\Lambda^+$ of $\mu$ in $\Lambda$
and integers $n^+, n^-\ge 0$, such that $n^++n^-\ge \nu_{\tau,M}(\gamma_\mu)/2$,
and for $\lambda\in\Lambda^-\setminus\{\mu\}$ (resp. $\lambda\in\Lambda^+\setminus\{\mu\}$),
(\ref{e:PPer1}) with parameter value $\lambda$  has at least $n^-$ (resp. $n^+$) $\R$-distinct
 solutions, $v_\lambda^i\notin\mathbb{R}\cdot v_\lambda$, $i=1,\cdots,n^-$ (resp. $n^+$)
such that all $v_\lambda^i-v_\lambda$  converge to zero on any compact interval
 $I\subset\R$ in $C^1$-topology as $\lambda\to\mu$.
\end{enumerate}
In particular,  $(\mu, v_\mu)=(\mu, \R\cdot v_\mu)$ is a bifurcation $\R$-orbit of (\ref{e:PPer1}).
Moreover, if $\nu_{\tau,M}(\gamma_\mu)\ge 3$, 
then at least one
of (i) and the following (iii) and (iv) holds:
\begin{enumerate}
\item[\rm (iii)]  For every $\lambda\in\Lambda\setminus\{\mu\}$ near $\mu$ there is a
 solution $\bar{v}_\lambda\notin \mathbb{R}\cdot v_\lambda$  of
(\ref{e:PPer1}) with parameter value $\lambda$, such that $\bar{v}_\lambda-v_\lambda$ converges to zero on any compact interval $I\subset\R$ in $C^1$-topology as $\lambda\to\mu$.

\item[\rm (iv)] For a given $\varepsilon>0$  there is an one-sided  neighborhood $\Lambda^0$ of $\mu$ in $\Lambda$ such that
for any $\lambda\in\Lambda^0\setminus\{\mu\}$, (\ref{e:PPer1}) with parameter value $\lambda$
has either infinitely many $\mathbb{R}$-distinct solutions
$\bar{v}_\lambda^k\notin\mathbb{R}\cdot v_\lambda$
such that $\|\bar{v}_\lambda^k|_{[0,\tau]}-v_\lambda|_{[0,\tau]}\|_{C^1}<\varepsilon$,
 $k=1,2,\cdots$, or
at least two $\mathbb{R}$-distinct solutions $\hat{v}_\lambda^1\notin\mathbb{R}\cdot v_\lambda$ and $\hat{v}_\lambda^2\notin\mathbb{R}\cdot v_\lambda$ such that $\|\hat{v}_\lambda^i|_{[0,\tau]}-v_\lambda|_{[0,\tau]}\|_{C^1}<\varepsilon$, $i=1,2$,
and that
\begin{eqnarray*}
\int^{\tau}_0\left[\frac{1}{2}(J\dot{\hat{v}}_\lambda^1(t),\hat{v}^1_\lambda(t))_{\mathbb{R}^{2n}}+
 H(\lambda, \hat{v}_\lambda^1(t))\right]dt
\ne \int^{\tau}_0\left[\frac{1}{2}(J\dot{\hat{v}}^2_\lambda(t), \hat{v}_\lambda^2(t))_{\mathbb{R}^{2n}}+ H(\lambda, \hat{v}_\lambda^2(t))\right]dt.
\end{eqnarray*}
\end{enumerate}
\end{theorem}

The first part is an alternative bifurcation result of Fadell-Rabinowitz's type,
and the second part is that of  Rabinowitz's type.

The following corollary, which follows directly from Theorem~\ref{th:bif-per3},
is essentially a restatement of Corollary~\ref{cor:necess-suffi}.

\begin{corollary}\label{cor:bif-per4}
Let $M\in{\rm Sp}(2n,\mathbb{R})$ be an orthogonal symplectic matrix,
let $\bar{v}\in{\rm Ker}(M-I_{2n})$, and let $H_0, \hat{H}: {\R}^{2n}\to\R$ be $M$-invariant $C^2$-functions.
Suppose that $dH_0(\bar{v})=d\hat{H}(\bar{v})=0$  and
that either $\hat{H}''(\bar{v})>0$ or $\hat{H}''(\bar{v})<0$.
Then for $\gamma_\lambda(t)=\exp(tJH_0''(\bar{v})+ \lambda tJ\hat{H}''(\bar{v}))$ and any $\tau>0$,
$\Sigma_\tau:=\{\lambda\in\R\,|\, \nu_{\tau,M}(\gamma_\lambda)>0\}$ is a discrete set in $\R$;
and for each $\mu\in\Sigma_\tau$ such that
 (\ref{e:linear3*}) with, $H(\mu, x)=H_0(x)+\mu\hat{H}(x)$ and $v_\mu=\bar{v}$,
  has no nonzero constant solutions,
  the conclusions of Theorem~\ref{th:bif-per3}
 holds  for (\ref{e:PPer1}) with $H(\lambda, x)=H_0(x)+\lambda\hat{H}(x)$ and $v_\lambda\equiv \bar{v}\;\forall\lambda$.
\end{corollary}

In particular, taking $\tau=1$, $H_0=0$ and $H=\hat{H}$ we obtain
the following existence result of $M$-rotating periodic orbits of
$\dot{v}=J\nabla H(v)$ near a $M$-equilibruim $\bar{v}$.

\begin{corollary}\label{cor:bif-per5}
 Let $M\in{\rm Sp}(2n,\mathbb{R})$ be an orthogonal symplectic matrix,
 let $\bar{v}\in{\rm Ker}(M-I_{2n})$,
  and let $H: {\R}^{2n}\to\R$ be a $M$-invariant $C^2$-function satisfying
 $dH(\bar{v})=0$. Suppose that either ${H}''(\bar{v})>0$ or ${H}''(\bar{v})<0$.
 Then
 \begin{description}
  \item[(A)] $\Gamma(H, \bar{v}, M):=\{\lambda\in\mathbb{R}\setminus\{0\}\,|\, \nu_{1,M}(\gamma_\lambda)=\dim{\rm Ker}(\exp(\lambda J{H}''(\bar{v}))-M)>0\}$
is a discrete set.

\item[(B)] If ${H}''(\bar{v})>0$ and $\Gamma(H, \bar{v}, M)\cap(0, \infty)\ne\emptyset$
(resp. ${H}''(\bar{v})<0$ and $\Gamma(H, \bar{v}, M)\cap(-\infty, 0)\ne\emptyset$),
then for $\mu\in \Gamma(H, \bar{v}, M)\cap(0, \infty)$ (resp. $\Gamma(H, \bar{v}, M)\cap(-\infty, 0)$),
 one of the following alternatives occurs:
  \begin{enumerate}
\item[\rm (B.i)] The problem
 \begin{equation}\label{e:PPer3.5}
\dot{v}(t)=J\nabla H(v(t))\quad\hbox{and}\quad v(t+\mu)=Mv(t),\;\forall t\in \R
\end{equation}
 has a sequence of $\R$-distinct solutions, $v_k$, $k=1,2,\cdots$,
such that each $v_k$ is $\mathbb{R}$-distinct from $\bar{v}$, and that $(v_k)$  converges to
$\bar{v}$ on any compact interval $I\subset\R$ in $C^1$-topology.
\item[\rm (B.ii)] There exist left and right  neighborhoods $\Lambda^-$ and $\Lambda^+$ of $\mu$ in $\mathbb{R}\setminus\{0\}$
and integers $n^+, n^-\ge 0$, such that $n^++n^-\ge \frac{1}{2}\dim{\rm Ker}(\exp(\mu{H}''(\bar{v}))-M)$,
and for $\lambda\in\Lambda^-\setminus\{\mu\}$ (resp. $\lambda\in\Lambda^+\setminus\{\mu\}$)
the problem
 \begin{equation}\label{e:PPer3.5.1}
\dot{v}(t)=J\nabla H(v(t))\quad\hbox{and}\quad v(t+\lambda)=Mv(t),\;\forall t\in \R
\end{equation}
 has at least $n^-$ (resp. $n^+$) $\R$-distinct solutions
 solutions, $v_\lambda^i$, $i=1,\cdots,n^-$ (resp. $n^+$),
such that each of them is $\R$-distinct from $\bar{v}$ and converges to  $\bar{v}$ on any compact interval
$I\subset\R$ in $C^1$-topology as $\lambda\to\mu$.
\end{enumerate}
Moreover, if $\dim{\rm Ker}(\exp(\mu{H}''(\bar{v}))-M)\ge 3$,
then either (B.i) holds or one of the following alternatives occurs:
\begin{enumerate}
\item[\rm (B.iii)]  For every $\lambda\in\mathbb{R}\setminus\{\mu\}$ near $\mu$,
the problem (\ref{e:PPer3.5.1}) has a  solution $v_\lambda$, which is $\R$-distinct from $\bar{v}$ and converges to $\bar{v}$
on any compact interval $I\subset\R$ in $C^1$-topology as $\lambda\to\mu$.

\item[\rm (B.iv)]
For a given $\varepsilon>0$  there is an one-sided  neighborhood $\Lambda^0$ of $\mu$ in $\R\setminus\{0\}$ such that
for any $\lambda\in\Lambda^0\setminus\{\mu\}$, the problem (\ref{e:PPer3.5.1}) with parameter value $\lambda$
has either infinitely many $\mathbb{R}$-distinct solutions $\bar{v}_\lambda^k$
such that each of them is $\R$-distinct from $\bar{v}$ and $\|\bar{v}_\lambda^k|_{[0,|\lambda|]}-\bar{v}\|_{C^1}<\varepsilon$, $k=1,2,\cdots$, or
at least two $\mathbb{R}$-distinct solutions $\hat{v}_\lambda^1$ and $\hat{v}_\lambda^2$
such that: {\rm a)} they are $\R$-distinct from $\bar{v}$, {\rm b)}
$\|\hat{v}_\lambda^i-\bar{v}\|_{C^0([0,|\lambda|])}
+|\lambda|\|\hat{v}_\lambda^i\|_{C^0([0,|\lambda|])}
<\varepsilon$ for $i=1,2$,
{\rm c)}
{\small
\begin{eqnarray*}
\int^{\lambda}_0\left[\frac{1}{2}(J\dot{\hat{v}}_\lambda^1(t),\hat{v}^1_\lambda(t))_{\mathbb{R}^{2n}}+ H(\lambda, \hat{v}_\lambda^1(t))\right]dt
\ne \int^{\lambda}_0\left[\frac{1}{2}(J\dot{\hat{v}}^2_\lambda(t), \hat{v}_\lambda^2(t))_{\mathbb{R}^{2n}}+ H(\lambda, \hat{v}_\lambda^2(t))\right]dt.
\end{eqnarray*}}
\end{enumerate}
  \end{description}
\end{corollary}

\begin{remark}\label{rm:twoBifu}
{\rm
Suppose  $H''(\bar{v})>0$. By Williamson theorem (cf. \cite[page 41]{HoZe94})
 there exists a symplectic matrix $S\in{\rm Sp}(2n,\R)$ such that
$S^TH''(\bar{v})S=D_{H''(\bar{v})}={\rm diag}(\varrho_1,\cdots,\varrho_n, \varrho_1,\cdots,\varrho_n)$,
where $0<\varrho_j\le\varrho_k$ for $j\le k$, are the symplectic eigenvalues of $H''(\bar{v})$, namely $(\pm\sqrt{-1}\varrho_j)$ for $1\le j\le n$
are all eigenvalues of $JH''(\bar{v})$. Therefore we can reduce the study of the Hamiltonian system $\dot{v}=J\nabla H(v)$
with $M$-rotating periods near the $M$-equilibruim $\bar{v}$
to that of the Hamiltonian system $\dot{v}=J\nabla G(v)$ with $S^{-1}MS$-rotating periods near $\bar{w}=S^{-1}\bar{v}$, where $G(z)=H(Sz)$
and so $G'(\bar{w})=S^TH'(S\bar{w})=0$ and
$G''(\bar{w})=S^TH''(\bar{v})S=D_{H''(\bar{v})}$. Since $S$ is symplectic,
$JG''(\bar{w})=S^{-1}JH''(\bar{v})S$ and
$\exp(\lambda JH''(\bar{v}))=S\exp(\lambda JD_{H''(\bar{v})})S^{-1}$ for any $\lambda\in\mathbb{R}$.
The former implies that both $G''(\bar{w})$ and $H''(\bar{v})$ have the same symplectic eigenvalues.
The latter  leads to
\begin{eqnarray*}
\Gamma(H, \bar{v}, M)\cap(0,\infty)&=&\left\{\lambda>0\,|\, \dim{\rm Ker}(\exp(\lambda JH''(\bar{v}))-M)>0\right\}\\
&=&\Gamma(G, \bar{w}, S^{-1}MS)\cap(0,\infty).
\end{eqnarray*}
In particular, from this and  (\ref{e:ex.4-}) we derive
\begin{eqnarray*}
\Gamma(H, \bar{v}, I_{2n})\cap(0,\infty)
=\cup^n_{j=1}\left(\frac{2\pi}{\varrho_j}\mathbb{N}\right).
\end{eqnarray*}
because $\dim{\rm Ker}(\exp(\lambda JH''(\bar{v}))-I_{2n})$ is equal to
\begin{eqnarray}\label{e:ex.14-}
&&\dim{\rm Ker}\left(\left(\begin{array}{cc}
             {\rm diag} (\cos(\varrho_1\lambda), . . . , \cos(\varrho_n\lambda)) &  -{\rm diag} (\sin(\varrho_1\lambda), . . . , \sin(\varrho_n\lambda)) \\
             {\rm diag} (\sin(\varrho_1\lambda), . . . , \sin(\varrho_n\lambda)) &  {\rm diag} (\cos(\varrho_1\lambda), . . . ,
             \cos(\varrho_n\lambda)) \nonumber\\
           \end{array}
         \right)-I_{2n}\right)\\
         &=&\sum^n_{j=1}\dim{\rm Ker}\left(\left(\begin{array}{cc}
            \cos(\varrho_j\lambda) &  -\sin(\varrho_j\lambda) \\
             \sin(\varrho_j\lambda) &   \cos(\varrho_j\lambda)
           \end{array}
         \right)-I_{2}\right).
\end{eqnarray}
This is exactly the set of periods of all periodic solutions of $\dot{v}=JH''(\bar{v})v$.

Let us compare previous results with Corollary~\ref{cor:bif-per5}.
Very recently, J. Xing, X. Yang and Y. Li \cite[Corollary~1.2]{XYL}
gave a Lyapunov type theorem on $M$-rotating periodic orbits for an
orthogonal $M\in{\rm Sp}(2n,\mathbb{R})$,
which can be stated as follows in our notations:

\begin{center}
  \textsf{Let   $\bar{v}\in{\rm Ker}(M-I_{2n})$,
  and let $H: {\R}^{2n}\to\R$ be a $M$-invariant $C^2$-function satisfying
 $H'(\bar{v})=0$ and ${H}''(\bar{v})>0$. Suppose that  the symplectic eigenvalues
 of $H''(\bar{v})$ as above,
 $\varrho_1,\cdots,\varrho_n$, are nonresonant, i.e.,
 $\varrho_i/\varrho_j\notin\mathbb{Q}$ for $1\le i,j\le n$, $i\ne j$. Then
 for each sufficiently small $\varepsilon>0$, the energy surface
$H(v) = H(\bar{v}) +\varepsilon^2$ contains at least $n$ $M$-rotating periodic orbits of $\dot{v}=JH'(v)$
with periods close to the ones of the linear system $\dot{v}=JH''(\bar{v})v$.}
\end{center}
Note that the set of $M$-rotating periods of the linear system $\dot{v}=JH''(\bar{v})v$ is exactly
the discrete set $\Gamma(H, \bar{v}, M)$ in Corollary~\ref{cor:bif-per5}(A).
This theorem and Corollary~\ref{cor:bif-per5} belong to
 two categories of fixed energy and fixed period problems about $M$-rotating periodic orbits, respectively.

Previous other related results require $M=I_{2n}$.
 To compare with them we may assume  $\bar{v}=0$ and
$H''(0)={\rm diag}(\varrho_1,\cdots,\varrho_n, \varrho_1,\cdots,\varrho_n)$ by the above arguments.
Then for $T>0$,  $\dot{v}=JH''(0)v$ has a nontrivial $T$-periodic solution if and only if
$T$ belongs to $\cup^n_{j=1}\left(\frac{2\pi}{\varrho_j}\mathbb{N}\right)$. For $1\le i\le n$ let
$$
\mathbb{R}^{2}_i=\{(q_1,\cdots,q_n, p_1,\cdots, p_n)^T\in \mathbb{R}^{2n}\,|\,
q_j=p_j=0\;\forall j\ne i\}.
$$
It is an invariant subspace of $JH''(0)$. For a given
$T\in \cup^n_{j=1}\left(\frac{2\pi}{\varrho_j}\mathbb{N}\right)$,
$E=\oplus_{T\varrho_i\in 2\pi\mathbb{N}}\mathbb{R}^{2}_i$ and $F=\oplus_{T\varrho_i\notin 2\pi\mathbb{N}}\mathbb{R}^{2}_i$
are invariant subspaces of $JH''(0)$, and $\mathbb{R}^{2n}=E\oplus F$. It is easy to see:
\begin{description}
\item[$\bullet$] $\dim E=2\sharp\{j\in\{1,\cdots,n\}\,|\, T\varrho_j\in 2\pi\mathbb{N}\}=
\dim{\rm Ker}(\exp(TJH''(0))-I_{2n})$ [by (\ref{e:ex.14-})].

\item[$\bullet$] Each solution of $\dot{v}=JH''(0)v$ in $E$ is $T$-periodic, and
no solutions of $\dot{v}=JH''(0)v$ with initial data in $F\setminus\{0\}$ is $T$-periodic.
\end{description}
Since $H''(0)>0$, $\dot{v}=JH''(0)v$ has no constant solutions in $E\setminus\{0\}$ and
the quadratic form $E\ni z\mapsto (H''(0)z,z)$ has signature equal to $\dim E$. From
\cite[Theorem~8.4]{FaRa2} (or Theorem on page 88 of \cite{FlZe})  we can only derive:

\textsf{either (B.i) or (B.ii)  holds with $(\bar{v}, \mu, M)=(0, T, I_{2n})$}.
But when $2\sharp\{j\in\{1,\cdots,n\}\,|\, T\varrho_j\in 2\pi\mathbb{N}\}\ge 3$,
Corollary~\ref{cor:bif-per5}  also gives
\textsf{either (B.i) or (B.iii)  or (B.iv) with $(\bar{v}, \mu, M)=(0, T, I_{2n})$.}
Moreover,  \cite[Corollaries~4.2,4.3]{Szu}  leads to: \textsf{for each $1\le j\le n$
there exists a sequence $(v_k)$ of nonconstant
periodic solutions of $\dot{v}= J\nabla H(v)$ with (not necessarily minimal)
period $T_k$ such that $\|z_k\|_{L^\infty}\to 0$ and $T_k\to 2\pi/\varrho_j$}.

Assume $\{\varrho_1,\cdots, \varrho_n\}=\{\beta_1,\cdots,\beta_p\}$ with
$\beta_1<\cdots<\beta_p$. For $\beta\in\{\beta_1,\cdots,\beta_p\}$ let
$H_\beta$ be the sum of the generalized eigenspaces of $JH''(0)$ in $\mathbb{C}^{2n}$
associated to the eigenvalues of the form $\pm ik\beta$, $k\in\mathbb{N}$, i.e.,
$H_\beta=\sum_{\lambda\in\Gamma_\beta}\cup^\infty_{l=1}{\rm Ker}(\lambda I_{2n}-JH''(0))$,
where $\Gamma_\beta=\{ik\beta\,|\,k\in\mathbb{Z}\setminus\{0\}\}\cap\{-i\beta_1,\cdots, -i\beta_p, i\beta_1,\cdots, i\beta_p\}$.
Put
$$
\Xi_\beta=\{\xi,\eta\in\mathbb{R}^{2n}\,|\, \xi+i\eta\in H_\beta\}.
$$
They are invariant subspaces of $JH''(0)$ in $\mathbb{C}^{2n}$ and $\mathbb{R}^{2n}$, respectively.
$\dim \Xi_\beta$ is even and $H''(0)|_{\Xi_\beta}>0$ implies: a)
each solution of $\dot{v}=JH''(0)v$ with initial value in $\Xi_\beta$ has (not necessary minimal) period $2\pi/\beta$ and
no solutions of $\dot{v}=JH''(0)v$ with initial data in $\mathbb{R}^{2n}\setminus\Xi_\beta$ has the period $2\pi/\beta$;
b) $\dot{v}=JH''(0)v$ has $\dim\Xi_\beta/2$ linearly independent periodic solutions with period $2\pi/\beta$
as claimed below (H2) on the page 128 of \cite{Ba1}. The latter sentence shows $\dim\Xi_\beta=\dim{\rm Ker}(\exp(TJH''(0))-I_{2n})$
with period $T=2\pi/\beta$. Therefore
using \cite[Theorem~9.5]{Ba1} we  obtain:

\textsf{For each $\beta\in\{\beta_1,\cdots,\beta_p\}$ and $T=2\pi/\beta$,
either (B.i)  or (B.ii)  holds with $(\bar{v}, \mu, M)=(0, T, I_{2n})$.}

\cite[Theorem~9.5]{Ba1} gives no conclusions for $T\in \cup^n_{j=1}\left(\frac{2\pi}{\varrho_j}\mathbb{N}\right)\setminus\{2\pi/\beta_k\,|\,k=1,\cdots,p\}$.

 In summary, Corollary~\ref{cor:bif-per5} with $M=I_{2n}$  cannot be included by the previous results.}
\end{remark}

Most of the  works on bifurcations of Hamiltonian systems have very little
on the case where all $v_\lambda$
in Assumption~\ref{ass:BasiAss2} are equal to a nonequilibrium (i.e., nonconstant) solution $\bar{v}$.
Using the iso-energetic Poincar\'e mapping some results were obtained, see \cite{MeHa92} and \cite{Han}.
Here are our three bifurcation results of $M$-rotating periodic solutions from  $\bar{v}$.

\begin{theorem}[\hbox{\textsf{Necessary condition}}]\label{th:bif-ness-orbit}
Let $\Lambda, M$ and $H$ be as in Assumption~\ref{ass:BasiAss2}, and let
$\bar{v}:\R\to\R^{2n}$ be a nonconstant solution of (\ref{e:PPer1})  for each $\lambda\in\Lambda$.
Suppose that for some  $\mu\in\Lambda$
there exists a sequence $(\lambda_k)\subset\Lambda$ converging to $\mu$ such that
(\ref{e:PPer1}) with $\lambda=\lambda_k$ has  solutions $v_k$, $k=1,2,\cdots$,
which are $\R$-distinct each other and satisfies
$v_k|_{[0,\tau]}\to \bar{v}|_{[0,\tau]}$ in $C^0([0,\tau];\R^{2n})$.
Then the following problem
\begin{equation}\label{e:linear4}
\hbox{$\dot{v}(t)=J\nabla^2_zH({\mu}, \bar{v}(t))v(t)$\quad and\quad $v(t+\tau)=Mv(t)\;\forall t$}
\end{equation}
 has at least two linearly independent  nontrivial solutions, i.e.,
$\dim{\rm Ker}(\gamma_\mu(\tau)-M)\ge 2$, where $\gamma_\lambda(t)$ is  the fundamental matrix solution of $\dot{v}(t)=J\nabla^2_zH(\lambda, \bar{v}(t))v(t)$.
\end{theorem}

Under the assumptions of Theorem~\ref{th:bif-ness-orbit}, since $\dot{\bar{v}}(0)\ne 0$ and
 $\gamma_\lambda(\tau)\dot{\bar{v}}(0)=\dot{\bar{v}}(\tau)=M\dot{\bar{v}}(0)$ we get $\dim{\rm Ker}(\gamma_{\lambda}(\tau)-M)\ge 1$.
 (Though $1$ and $-1$ as eigenvalues of a symplectic matrix $S\in{\rm Sp}(2n;\mathbb{R})$ always have even algebraic multiplicity it is possible
 that the geometric multiplicity of $1$ (or $-1$) is equal to one.)
When $\dim{\rm Ker}(\gamma_{\lambda}(\tau)-M)=1$ we call $\bar{v}$ a \textsf{transversally nondegenerate}
$M$-rotating $\tau$-periodic solution of (\ref{e:PPer1}). Therefore the conclusion of Theorem~\ref{th:bif-ness-orbit}
says $\bar{v}$ to be a transversally degenerate $M$-rotating $\tau$-periodic solution of $\dot{v}(t)=J\nabla_zH({\mu}, v(t))$.

Clearly, Theorem~\ref{th:bif-ness-orbit} is  a strengthen version of Theorem~\ref{th:bif-per1}(I).
With respect to (II) and (III) of Theorem~\ref{th:bif-per1} and (B) and (C) of
 Corollary~\ref{cor:bif-per2} respectively, we have also:

\begin{theorem}[\hbox{\textsf{Sufficient condition}}]\label{th:bif-suffict1-orbit}
Let $\Lambda, M$, $H$, $\bar{v}$ and $\gamma_\lambda$ be as in Theorem~\ref{th:bif-ness-orbit},
and let $\Lambda$ be first countable.
Suppose that the following conditions are satisfied:
\begin{enumerate}
\item[\rm (a)]  The orbit ${\cal O}:=\R\cdot \bar{v}$ is an embedded circle
(i.e., $\mathbb{R}_{\bar{v}}$ is an infinite cyclic subgroup of $\R$ with generator $p>0$).
\item[\rm (b)] For some $\mu\in\Lambda$, $\dim{\rm Ker}(\gamma_{\mu}(\tau)-M)\ge 2$ and
there exist two sequences in  $\Lambda$ converging to $\mu$, $(\lambda_k^-)$ and
$(\lambda_k^+)$,  such that for each $k\in\mathbb{N}$,
$$
[i_{\tau,M}(\gamma_{\lambda_k^-}), i_{\tau,M}(\gamma_{\lambda_k^-})+\nu_{\tau,M}(\gamma_{\lambda_k^-})-1]\cap[i_{\tau,M}(\gamma_{\lambda_k^+}), i_{\tau,M}(\gamma_{\lambda_k^+})+\nu_{\tau,M}(\gamma_{\lambda_k^+})-1]=\emptyset,
$$
 and either $\nu_{\tau,M}(\gamma_{\lambda_k^+})=1$ or $\nu_{\tau,M}(\gamma_{\lambda_k^-})=1$.
 \item[\rm (c)] For any solution $v$ of (\ref{e:PPer1})  with $\lambda=\mu$,
 if there exists a sequence $(s_k)$ of reals such that  $s_k\cdot v$
  converges to  $\bar{v}$ on any compact interval $I\subset\R$ in $C^0$-topology (and so
  in the $C^1$-topology by Proposition~\ref{prop:threeBifu}), then $v$ is periodic.
  (Clearly, this holds if $M^l=I_{2n}$ for some $l\in\mathbb{N}$.)
  \end{enumerate}
   Then  there exists a sequence $(\lambda_k)\subset\hat{\Lambda}:=\{\mu,\lambda^+_k, \lambda^-_k\,|\,k\in\mathbb{N}\}$ converging to $\mu$ and
  solutions $v_k$ of (\ref{e:PPer1}) with $\lambda=\lambda_k$, $k=1,2,\cdots$,
  such that any two of these $v_k$ are $\R$-distinct and that
  $(v_k)$ converges to  $\bar{v}$ on any compact interval $I\subset\R$ in $C^1$-topology as $k\to\infty$.
    \end{theorem}

\begin{theorem}[\hbox{\textsf{Existence for bifurcations}}]\label{th:bif-existence-orbit}
Let $\Lambda, M$, $H$, $\bar{v}$ and $\gamma_\lambda$ be as in Theorem~\ref{th:bif-ness-orbit}.
Suppose that $\Lambda$ is path-connected, the conditions (a) and (c) in Theorem~\ref{th:bif-suffict1-orbit}
and the following are satisfied:
\begin{enumerate}
\item[\rm (d)] There exist two  points $\lambda^+, \lambda^-\in\Lambda$ such that
  $$
  [i_{\tau,M}(\gamma_{\lambda^+}), i_{\tau,M}(\gamma_{\lambda^+})+\nu_{\tau,M}(\gamma_{\lambda^+})-1]\cap[i_{\tau,M}(\gamma_{\lambda^-}), i_{\tau,M}(\gamma_{\lambda^-})+\nu_{\tau,M}(\gamma_{\lambda^-})-1]=\emptyset,
  $$
 and either $\nu_{\tau,M}(\gamma_{\lambda^+})=1$ or $\nu_{\tau,M}(\gamma_{\lambda^-})=1$.
  \end{enumerate}
 Then for any path $\alpha:[0,1]\to\Lambda$ connecting $\lambda^+$ to $\lambda^-$
   there exists a sequence $(t_k)\subset [0, 1]$ converging to some $\bar{t}$
   and solutions $v_k\ne \bar{v}$ of (\ref{e:PPer1}) with $\lambda=\alpha(t_k)$, $k=1,2,\cdots$,
  such that any two of these $v_k$ are $\R$-distinct and that
  $(v_k)$ converges to  $\bar{v}$ on any compact interval $I\subset\R$ in $C^1$-topology as $k\to\infty$.
     Moreover,  $\alpha(\bar{t})$ is not equal to $\lambda^+$ (resp. $\lambda^-$) if $\nu_{\tau,M}(\gamma_{\lambda^+})=1$ (resp. $\nu_{\tau,M}(\gamma_{\lambda^-})=1$).
 \end{theorem}

\begin{theorem}[\textsf{Alternative bifurcations of Rabinowitz's type and of Fadell-Rabinowitz's type}]\label{th:bif-suffict-orbit}
Let $\tau>0$ and  $\Lambda$ be a real interval, and let
$H:\Lambda\times{\R}^{2n}\to\R$ be a continuous function such that each
$H_\lambda:{\R}^{2n}\to\R$, $\lambda\in\Lambda$, is  $C^2$ and all its
partial derivatives depend continuously on
the parameter $\lambda\in\Lambda$.
Let $\bar{v}:\R\to\R^{2n}$ be a nonconstant $\tau$-periodic solution of
$\dot{v}(t)=J\nabla_z H(\lambda, v(t))$  for each $\lambda\in\Lambda$,
(with minimal period $\tau/p$, $p\in\mathbb{N}$),
and let $\gamma_\lambda(t)$ be  the fundamental matrix solution of
$\dot{v}(t)=JH''_{\lambda}(\bar{v}(t))v(t)$.
For some interior point $\mu$ of $\Lambda$, suppose that
 $\nu_{\tau}(\gamma_\mu):=\dim{\rm Ker}(\gamma_\mu(\tau)-I_{2n})>1$,
 $\nu_{\tau}(\gamma_\lambda):=\dim{\rm Ker}(\gamma_\lambda(\tau)-I_{2n})=1$
 for each $\lambda\in\Lambda\setminus\{\mu\}$ near $\mu$, and that
  $i_{\tau}(\gamma_\lambda)$ takes values $i_{\tau}(\gamma_\mu)$ and $i_{\tau}(\gamma_\mu)+ \nu_{\tau}(\gamma_\mu)-1$
  as $\lambda\in\Lambda$ varies in  two deleted half neighborhoods of $\mu$. Then
\begin{description}
\item[(I)] The conclusions of Theorem~\ref{th:bif-suffict1-orbit} (after $\hat\Lambda$ is replaced by $\Lambda$) hold.

\item[(II)] If  $\nu_{\tau}(\gamma_\mu)>2$, one of the following alternatives occurs:
\begin{enumerate}
\item[\rm (II.1)] Equation  (\ref{e:PPer1}) with $\lambda=\mu$ has a sequence of $\R$-distinct solutions, $v_k$, $k=1,2,\cdots$,
such that $(v_k)$ converges to $\bar{v}$ on any compact interval $I\subset\R$ in $C^1$-topology as $k\to\infty$.

\item[\rm (II.2)]  For every $\lambda\in\Lambda\setminus\{\mu\}$ near $\mu$ there is a  solution $v_\lambda$ of
(\ref{e:PPer1}) with parameter value $\lambda$, which is $\R$-distinct with $\bar{v}$ and
converges to $\bar{v}$ on any compact interval $I\subset\R$ in $C^1$-topology as $\lambda\to \mu$.

\item[\rm (II.3)] There is an one-sided  neighborhood $\Lambda^0$ of $\mu$ such that
for any $\lambda\in\Lambda^0\setminus\{\mu\}$, (\ref{e:PPer1}) with parameter value $\lambda$
has either infinitely many $\R$-distinct solutions $\bar{v}_\lambda^k\notin\mathbb{R}\cdot \bar{v}$, $k=1,2,\cdots$,
or at least two $\R$-distinct solutions, $v_\lambda^1\notin\mathbb{R}\cdot \bar{v}$ and $v_\lambda^2\notin\mathbb{R}\cdot \bar{v}$,
such that
{\small
\begin{eqnarray*}
\int^{\tau}_0\left[\frac{1}{2}(J\dot{v}_\lambda^1(t),v^1_\lambda(t))_{\mathbb{R}^{2n}}+ H(\lambda, {v}_\lambda^1(t))\right]dt
\ne \int^{\tau}_0\left[\frac{1}{2}(J\dot{v}^2_\lambda(t), v_\lambda^2(t))_{\mathbb{R}^{2n}}+ H(\lambda, {v}_\lambda^2(t))\right]dt;
\end{eqnarray*}}
and these $\bar{v}_\lambda^k$, $v^1_\lambda$ and $v^2_\lambda$ converge to $\bar{v}$ on any compact interval $I\subset\R$ in $C^1$-topology as $\lambda\to \mu$.
\end{enumerate}
\item[(III)] If $\nu_{\tau/p}(\gamma_\mu)=1$, (which implies $p\ge 2$ because $\nu_{\tau}(\gamma_\mu)>1$), and $p$ is equal to $2$ (resp. a prime greater than $2$), then
  one of (II.1) and the following (III.1) [resp. (III.2)] occurs:
  \begin{enumerate}
\item[\rm (III.1)] There exist left and right  neighborhoods $\Lambda^-$ and $\Lambda^+$ of $\mu$ in $\Lambda$
and integers $n^+, n^-\ge 0$, such that $n^++n^-\ge \nu_{\tau}(\gamma_\mu)$,
and for $\lambda\in\Lambda^-\setminus\{\mu\}$ (resp. $\lambda\in\Lambda^+\setminus\{\mu\}$),
(\ref{e:PPer1}) with parameter value $\lambda$  has at least $n^-$ (resp. $n^+$) $\R$-distinct
 solutions, $v_\lambda^i\notin\mathbb{R}\cdot \bar{v}$, $i=1,\cdots,n^-$ (resp. $n^+$),
which
converge to  $\bar{v}$ on any compact interval $I\subset\R$ in $C^1$-topology as $\lambda\to\mu$.

\item[\rm (III.2)] Replace $\nu_{\tau}(\gamma_\mu)$ with $\nu_{\tau}(\gamma_\mu)/2$ in (III.1).
\end{enumerate}
\end{description}
\end{theorem}

\noindent{1.3. \;\bf  Bifurcations for brake orbits of periodic Hamiltonian systems}

\begin{assumption}\label{ass:brake}
{\rm
For a real $\tau>0$ and a topological space $\Lambda$,
let $H:\Lambda\times\R\times{\R}^{2n}\to\R$ be a continuous function such that
each $H(\lambda,t,\cdot):{\R}^{2n}\to\R$, $(\lambda,t)\in\Lambda\times \R$, is $C^2$,
 and all its partial derivatives of $H$ depend continuously on
 $(\lambda, t, z)\in\Lambda\times \R\times\mathbb{R}^{2n}$,
and that
\begin{eqnarray}\label{e:brake-invariant0}
H(\lambda, t+\tau, z)=H(\lambda, t, z)\quad\forall (\lambda,t,z)\in\Lambda\times\R\times\mathbb{R}^{2n},\\
H(\lambda, -t, Nz)=H(\lambda, t, z)\quad\forall (\lambda,t,z)\in\Lambda\times\R\times\mathbb{R}^{2n},\label{e:brake-invariant1}
\end{eqnarray}
where $N=\left(\begin{array}{cc}-I_n & 0\\0& I_n\end{array}\right)$.
For each $\lambda\in\Lambda$ let $v_\lambda:\R\to\mathbb{R}^{2n}$ satisfy  the following  periodic Hamiltonian system
\begin{equation}\label{e:Pbrake}
\dot{v}(t)=J\nabla_z H(\lambda,t, v(t)),\quad v(t+\tau)=v(t)\quad\hbox{and}\quad v(-t)=Nv(t)\;\forall t\in\R,
\end{equation}
 and $\Lambda\times [0,\tau]\ni (\lambda,t)\mapsto v_\lambda(t)\in\mathbb{R}^{2n}$ is  continuous.
}
\end{assumption}

As pointed out below Assumption~\ref{ass:BasiAss1} it follows from Assumption~\ref{ass:brake} that
$\Lambda\times [0,\tau]\ni (\lambda,t)\mapsto \dot{v}_\lambda(t)\in\mathbb{R}^{2n}$ is  continuous.
When $H$ is independent of $t$,  solutions of (\ref{e:Pbrake}) are called \textsf{brake orbits}.
An even brake orbit  $\bar v$ must be \emph{constant}, hence an equilibrium point (i.e.,
a critical point of $H(\lambda,\cdot)$).
In fact, let $\bar{v}(t)=(x(t)^\top, y(t)^\top)^\top$. Then $\bar{v}(-t)=\bar{v}(t)$ and
 $\bar{v}(-t)=N\bar{v}(t)$ lead to $\bar{v}(t)=N\bar{v}(t)$ and so $x(t)\equiv 0$.
 Moreover, (\ref{e:brake-invariant1}) implies $\nabla_zH(Nz)=N\nabla_zH(z)$, and hence
 $$
 \dot{\bar{v}}(t)=N\dot{\bar{v}}(t)=J\nabla_zH(N\bar{v}(t))=JN\nabla_zH(\bar{v}(t))=
-NJ\nabla_zH(\bar{v}(t))=-N\dot{\bar{v}}(t).
 $$
It follows that $\dot{y}(t)\equiv 0$. Therefore, $\bar v$ is constant.

 We call $(\mu, v_\mu)$  with some $\mu\in\Lambda$ a \textsf{bifurcation point} of (\ref{e:Pbrake})
if each neighborhood of it in $\Lambda\times W^{1,2}(S_\tau; \R^{2n})$
(or equivalently $\Lambda\times C^{1}(S_\tau; \R^{2n})$)
 contains elements $(\lambda, v)$, where $v\ne v_\lambda$ is a solution of
 (\ref{e:Pbrake})  for the parameter value $\lambda$.
Similarly, we can define a \textsf{bifurcation point along sequence} of
 (\ref{e:Pbrake}) in the sense of Definition~\ref{def:seqbifur}.

Under Assumption~\ref{ass:brake} the  linearized problem of (\ref{e:Pbrake}) along $v_\lambda$ is
\begin{equation}\label{e:linearbrake}
\dot{v}(t)=J\nabla^2_zH(\lambda,t, v_\lambda(t))v(t),\quad v(t+\tau)=v(t)\quad\hbox{and}\quad v(-t)=Nv(t)
\end{equation}
for all $t\in \R$. Note that (\ref{e:brake-invariant1}) implies
$$
N^T\nabla_z H(\lambda,-t, Nz)=\nabla_z H(\lambda,t, z)\quad\hbox{and}\quad
N^T\nabla^2_z H(\lambda,-t, Nz)N=\nabla^2_z H(\lambda,t, z).
$$
 Hence each
 $B_\lambda(t):= \nabla^2_zH(\lambda,t, v_\lambda(t))$
satisfies Assumption~\ref{ass:brakeB}.
Let $\gamma_\lambda:\R\to {\rm Sp}(2n,\mathbb{R})$ be the fundamental matrix solution of
$\dot{Z}(t)=JB_\lambda(t)Z(t)$. It has the Maslov-type index  $(\mu_{1,\tau}(\gamma_{\lambda}),\nu_{1,\tau}(\gamma_{\lambda}))$ defined by
(\ref{e:2.3M}) and (\ref{e:2.4M}).
For a solution of (\ref{e:Pbrake}), since $NJ=-JN$ and $N^T=N$, it is not hard to see that
$Nu$ does not necessarily satisfy (\ref{e:Pbrake}) [even if $H$ is independent of $t$].

Corresponding to Theorems~\ref{th:bif-ness}, ~\ref{th:bif-suffict}  we have:

\begin{theorem}\label{th:bif-nessbrake}
Let Assumption~\ref{ass:brake} be satisfied.
\begin{enumerate}
\item[\rm (I)]{\rm (\textsf{Necessary condition}):}
 If $(\mu, v_\lambda)$ is a bifurcation point along sequences of
(\ref{e:Pbrake}), that is,
 there exists a sequence $(\lambda_k)\subset\Lambda$ converging to $\mu$ and
solutions $v^k\ne v_{\lambda_k}$ of (\ref{e:Pbrake}) with $\lambda=\lambda_k$
such that $v^k\to v_\mu$ on any compact interval $I\subset\R$ in $C^0$-topology as $k\to\infty$,
then  $\nu_{1,\tau}(\gamma_{\mu})\ne 0$.

\item[\rm (II)]{\rm (\textsf{Sufficient condition}):}
Let $\Lambda$ be first countable.
Suppose that some $\mu\in\Lambda$ 
there exist two sequences in  $\Lambda$ converging to $\mu$, $(\lambda_k^-)$ and
$(\lambda_k^+)$,  such that
for each $k\in\mathbb{N}$,
$$
[\mu_{1,\tau}(\gamma_{\lambda_k^-}), \mu_{1,\tau}(\gamma_{\lambda_k^-})+\nu_{1,\tau}(\gamma_{\lambda_k^-})]\cap[\mu_{1,\tau}(\gamma_{\lambda_k^+}), \mu_{1,\tau}(\gamma_{\lambda_k^+})+\nu_{1,\tau}(\gamma_{\lambda_k^+})]=\emptyset
$$
 and either $\nu_{1,\tau}(\gamma_{\lambda_k^+})=0$ or $\nu_{1,\tau}(\gamma_{\lambda_k^-})=0$.
Let $\hat{\Lambda}:=\{\mu,\lambda^+_k, \lambda^-_k\,|\,k\in\mathbb{N}\}$.
  Then  $(\mu, v_\mu)$ is a bifurcation point  of (\ref{e:Pbrake})
  with respect to the branch $\{(\lambda, v_\lambda)\,|\,\lambda\in\hat\Lambda\}$
  (and so $\{(\lambda, v_\lambda)\,|\,\lambda\in\Lambda\}$).

  \item[\rm (III)]{\rm (\textsf{Existence for bifurcations}):} Let $\Lambda$ be path-connected. Suppose that
  there exist two  points $\lambda^+, \lambda^-\in\Lambda$ such that
  $[\mu_{1,\tau}(\gamma_{\lambda^+}), \mu_{1,\tau}(\gamma_{\lambda^+})+\nu_{1,\tau}(\gamma_{\lambda^+})]\cap[\mu_{1,\tau}(\gamma_{\lambda^-}), \mu_{1,\tau}(\gamma_{\lambda^-})+\nu_{1,\tau}(\gamma_{\lambda^-})]=\emptyset$
 and either $\nu_{1,\tau}(\gamma_{\lambda^+})=0$ or $\nu_{1,\tau}(\gamma_{\lambda^-})=0$.
 Then for any path $\alpha:[0,1]\to\Lambda$ connecting $\lambda^+$ to $\lambda^-$
   there exists a sequence $(t_k)\subset [0, 1]$ converging to some $\bar{t}$
   and solutions $v^k\ne v_{\alpha(t_k)}$ of (\ref{e:Pbrake})  with $\lambda=\alpha(t_k)$, $k=1,2,\cdots$,
  such that  $(v^k)$ converges to  ${v}_{\alpha(\bar{t})}$ on any compact interval $I\subset\R$ in $C^1$-topology as $k\to\infty$.
   Moreover,  $\alpha(\bar{t})$ is not equal to $\lambda^+$ (resp. $\lambda^-$) if $\nu_{1,\tau}(\gamma_{\lambda^+})=0$ (resp. $\nu_{1,\tau}(\gamma_{\lambda^-})=0$).
 \end{enumerate}
\end{theorem}

\begin{theorem}[\textsf{Alternative bifurcations of Rabinowitz's type and of Fadell-Rabinowitz's type}]\label{th:bif-suffictbrake}
Let Assumption~\ref{ass:brake} with $\Lambda$ being a real interval be satisfied.
Suppose that for some interior point $\mu$ of $\Lambda$,
 $\nu_{1,\tau}(\gamma_\mu)\ne 0$,  $\nu_{1,\tau}(\gamma_\lambda)=0$
 for each $\lambda\in\Lambda\setminus\{\mu\}$ near $\mu$, and
 $\mu_{1,\tau}(\gamma_\lambda)$ takes, respectively, values $\mu_{1,\tau}(\gamma_\mu)$ and $\mu_{1,\tau}(\gamma_\mu)+ \nu_{1,\tau}(\gamma_\mu)$
 as $\lambda\in\Lambda$ varies in  two deleted half neighborhoods  of $\mu$.
  Then at least one of the following three claims occurs:
\begin{enumerate}
\item[\rm (i)] Equation (\ref{e:Pbrake}) with $\lambda=\mu$ has a sequence of solutions,
$\bar{v}_k\ne v_\mu$, $k=1,2,\cdots$,
which converges to $v_\mu$ in $C^1(S_\tau;\R^{2n})$.

\item[\rm (ii)]  For every $\lambda\in\Lambda\setminus\{\mu\}$ near $\mu$ there is a  solution $\bar{v}_\lambda\ne v_\lambda$ of
(\ref{e:Pbrake}) with parameter value $\lambda$,
such that $\bar{v}_\lambda-v_\lambda$ converges to zero
 in $C^1(S_\tau;\R^{2n})$ as $\lambda\to \mu$.

\item[\rm (iii)]
For a given neighborhood $\mathcal{W}$ of $v_\mu$ in $C^1(S_\tau;\R^{2n})$
there is an one-sided  neighborhood $\Lambda^0$ of $\mu$ such that
for any $\lambda\in\Lambda^0\setminus\{\mu\}$, (\ref{e:Pbrake}) with parameter value $\lambda$
has at least two distinct solutions $v_\lambda^1\ne v_\lambda$ and $v_\lambda^2\ne v_\lambda$ in $\mathcal{W}$,
which can also be required to satisfy
\begin{eqnarray*}
\int^{\tau}_0\left[\frac{1}{2}(J\dot{v}_\lambda^1(t),v^1_\lambda(t))_{\mathbb{R}^{2n}}+ H(\lambda, t, {v}_\lambda^1(t))\right]dt
\ne \int^{\tau}_0\left[\frac{1}{2}(J\dot{v}^2_\lambda(t), v_\lambda^2(t))_{\mathbb{R}^{2n}}+ H(\lambda, t,
{v}_\lambda^2(t))\right]dt
\end{eqnarray*}
provided that $\nu_{1,\tau}(\gamma_\mu)>1$ and (\ref{e:Pbrake}) with parameter value $\lambda$
has only finitely many  solutions in $\mathcal{W}$.
\end{enumerate}
Moreover, if $v_\lambda=0\;\forall\lambda$, and all $H(\lambda,t,\cdot)$ are even,
then either (i) holds or the following  occurs:
\begin{enumerate}
\item[\rm (iv)] There exist left and right  neighborhoods $\Lambda^-$ and $\Lambda^+$ of $\mu$ in $\Lambda$
and integers $n^+, n^-\ge 0$, such that $n^++n^-\ge \nu_{1,\tau}(\gamma_\mu)$,
and for $\lambda\in\Lambda^-\setminus\{\mu\}$ (resp. $\lambda\in\Lambda^+\setminus\{\mu\}$),
(\ref{e:Pbrake}) with parameter value $\lambda$  has at least $n^-$ (resp. $n^+$) distinct pairs of nontrivial solutions,
$\{v_\lambda^i, -v_\lambda^i\}$, $i=1,\cdots,n^-$ (resp. $n^+$),
which  converge to zero in $C^1(S_\tau;\R^{2n})$  as $\lambda\to\mu$.
\end{enumerate}
\end{theorem}

Corresponding to Corollary~\ref{cor:necess-suffi} or \ref{cor:bif-per2}, we have also

\begin{corollary}\label{cor:bif-per2brake}
Let $C^1$ functions $H_0, \hat{H}: \R\times{\R}^{2n}\to\R$ satisfy the following conditions:
\begin{enumerate}
\item[\rm (A)] For all $(t,z)\in\Lambda\times\R\times\mathbb{R}^{2n}$ it holds that
$$
H_0(-t, Nz)=H_0(t,z)=H_0(t+\tau, z)\quad\hbox{and}\quad \hat{H}(-t, Nz)=\hat{H}(t,z)=\hat{H}(t+\tau,z).
$$
\item[\rm (B)]  All $H_0(t,\cdot), \hat{H}(t,\cdot):{\R}^{2n}\to\R$ are $C^2$ and all their
partial derivatives depend continuously on  $(t, z)\in \R\times\mathbb{R}^{2n}$.
\end{enumerate}
Suppose that  $\bar{v}:\R\to\R^{2n}$ satisfy
$$
\hbox{$\dot{v}(t)=J\nabla_zH_0(t, v(t))$,\quad\hbox{$v(t+\tau)=v(t)$\quad\hbox{and}\quad $v(-t)=Nv(t)\;\forall t$,}}
$$
and that $\nabla_z\hat{H}(t, \bar{v}(t))=0$ for all $t\in \R$ and
$$
\hbox{either $\nabla^2_z\hat{H}(t, \bar{v}(t))>0\;\forall t$\quad or\quad $\nabla^2_z\hat{H}(t,\bar{v}(t))<0\;\forall t$.}
$$
Then, taking $H(\lambda,t, z)=H_0(t,z)+\lambda\hat{H}(t,z)$  in (\ref{e:Pbrake})
and letting 
$\gamma_\lambda:\R\to {\rm Sp}(2n,\mathbb{R})$ be the fundamental matrix solution of
$\dot{Z}(t)=JB_\lambda(t)Z(t)$ with $B_\lambda(t):= \nabla^2_zH(\lambda,t, \bar{v}(t))$,
the following holds:
\begin{enumerate}
\item[\rm (i)]  $\Sigma_1:=\{\lambda\in\R\,|\, \nu_{1,\tau}(\gamma_\lambda)>0\}$ is a discrete set in $\R$.
\item[\rm (ii)] $(\mu, \bar{v})$ with $\mu\in\R$ is a bifurcation point  for (\ref{e:Pbrake})
if and only if $\nu_{1,\tau}(\gamma_\mu)>0$.
\item[\rm (iii)] For each $\mu\in\Sigma_1$ and a small enough $\rho>0$ it holds that
\begin{eqnarray*}
\mu_{1,\tau}(\gamma_{\lambda})=\left\{
\begin{array}{ll}
 \mu_{1,\tau}(\gamma_{\mu})\;&\forall\lambda\in [\mu-\rho,\mu),\\
 \mu_{1,\tau}(\gamma_{\mu})+ \nu_{1,\tau}(\gamma_{\mu}) \;&\forall\lambda\in (\mu, \mu+\rho]
\end{array}\right.
\end{eqnarray*}
if $\nabla^2_z\hat{H}(t, \bar{v}(t))>0$ for all $t$, and
\begin{eqnarray*}
\mu_{1,\tau}(\gamma_{\lambda})=\left\{
\begin{array}{ll}
 \mu_{1,\tau}(\gamma_{\mu})+ \nu_{1,\tau}(\gamma_{\mu}) \;&\forall\lambda\in [\mu-\rho,\mu),\\
 \mu_{1,\tau}(\gamma_{\mu})\;&\forall\lambda\in (\mu, \mu+\rho]
\end{array}\right.
\end{eqnarray*}
if $\nabla^2\hat{H}(t, \bar{v}(t))<0$ for all $t$.
\item[\rm (iv)]  One of (i)-(iii) in Theorem~\ref{th:bif-suffictbrake} with this $H$ and $v_\lambda\equiv\bar{v}$ occurs.
\item[\rm (v)] If $\bar{v}=0$ and all $H_0(t,\cdot), \hat{H}(t,\cdot)$ are even, then
for this $H$, either (i) or (iv) in Theorem~\ref{th:bif-suffictbrake} holds;
in the case of (iv), the solution is trivial, i.e., $v_\lambda\equiv 0$.
\end{enumerate}
\end{corollary}

\begin{theorem}[\textsf{Alternative bifurcations of Rabinowitz's type and of Fadell-Rabinowitz's type}]\label{th:bif-per3brake}
Let Assumption~\ref{ass:brake} with $\Lambda$ being a real interval  be satisfied.
Suppose that $H$ is independent of time $t$, and that $\bar{v}\in\R^{2n}$ is
a common critical point of all functions $H(\lambda,\cdot)$.
Assume also that $N\bar{v}=\bar{v}$.
Consequently,
for any $\tau>0$ and all $\lambda\in\Lambda$, the constant function
$\bar{v}(t)\equiv\bar{v}$ is an even solution of
\begin{equation}\label{e:Pbrake++}
\dot{v}(t)=J\nabla_z H(\lambda, v(t)),\quad v(t+\tau)=v(t)\quad\hbox{and}\quad v(-t)=Nv(t)\;\forall t\in\R.
\end{equation}
Let $\gamma_\lambda(t)=\exp(tJ\nabla^2_zH(\lambda, \bar{v}))$, i.e.,
 the fundamental matrix solution of the linearized system
$\dot{v}(t)=J\nabla^2_zH(\lambda, \bar{v})v(t)$.
Suppose that for some interior point $\mu$ of $\Lambda$  and some $\tau>0$, the following  holds.
\begin{enumerate}
\item[\rm (a)]  $\nu_{1,\tau}(\gamma_\mu)\ne 0$,  $\nu_{1,\tau}(\gamma_\lambda)=0$
 for each $\lambda\in\Lambda\setminus\{\mu\}$ near $\mu$, and
 $\mu_{1,\tau}(\gamma_\lambda)$ takes, respectively, values $\mu_{1,\tau}(\gamma_\mu)$ and $\mu_{1,\tau}(\gamma_\mu)+ \nu_{1,\tau}(\gamma_\mu)$
 as $\lambda\in\Lambda$ varies in  two deleted half neighborhoods  of $\mu$.
\item[\rm (b)] The following problem
\begin{equation}\label{e:linear3*brake}
\hbox{$\dot{v}(t)=J\nabla^2_zH(\mu, \bar{v}(t))v(t)$,\quad\hbox{$v(t+\tau)=v(t)$\quad\hbox{and}\quad $v(-t)=Nv(t)\;\forall t$}}
\end{equation}
has no nonzero even solutions.
 \end{enumerate}
 Then one of the following alternatives occurs:
 \begin{enumerate}
\item[\rm (i)] Equation (\ref{e:Pbrake++})  with $\lambda=\mu$ has a sequence of solution pairs, $\{v_k, Nv_k\}$, $k=1,2,\cdots$,
such that $v_k$ and $Nv_k$  converge to $\bar{v}$ on any compact interval $I\subset\R$ in $C^1$-topology.
\item[\rm (ii)] There exist left and right  neighborhoods $\Lambda^-$ and $\Lambda^+$ of $\mu$ in $\Lambda$
and integers $n^+, n^-\ge 0$, such that $n^++n^-\ge \nu_{1,\tau}(\gamma_\mu)$,
and for $\lambda\in\Lambda^-\setminus\{\mu\}$ (resp. $\lambda\in\Lambda^+\setminus\{\mu\}$),
(\ref{e:Pbrake++}) with parameter value $\lambda$  has at least $n^-$ (resp. $n^+$) distinct solution pairs
  $\{v_\lambda^i, Nv_\lambda^i\}$, $i=1,\cdots,n^-$ (resp. $n^+$),
such that all $v_\lambda^i$ and $Nv_\lambda^i$  converge to  $\bar{v}$ on any compact interval $I\subset\R$ in $C^1$-topology as $\lambda\to\mu$.
\end{enumerate}
Furthermore,  if $\bar{v}=0$,  each $H(\lambda,\cdot)$ is even
and the assumption (b) is removed, then
one of the following alternatives occurs:
 \begin{enumerate}
\item[\rm (iii)] Equation (\ref{e:Pbrake++})  with $\lambda=\mu$ has a sequence of solution quadruples, $\{v_k, -v_k, Nv_k, -Nv_k\}$, $k=1,2,\cdots$,
such that $v_k$  converges to $0$ on any compact interval $I\subset\R$ in $C^1$-topology.
\item[\rm (iv)] There exist left and right  neighborhoods $\Lambda^-$ and $\Lambda^+$ of $\mu$ in $\Lambda$
and integers $n^+, n^-\ge 0$, such that $n^++n^-\ge \nu_{1,\tau}(\gamma_\mu)$,
and for $\lambda\in\Lambda^-\setminus\{\mu\}$ (resp. $\lambda\in\Lambda^+\setminus\{\mu\}$),
(\ref{e:Pbrake++}) with parameter value $\lambda$  has at least $n^-$ (resp. $n^+$) distinct solution quadruples
  $\{v_\lambda^i, -v_\lambda^i, Nv_\lambda^i, -Nv_\lambda^i\}$, $i=1,\cdots,n^-$ (resp. $n^+$),
such that  $v_\lambda^i$  converges to  $0$ on any compact interval $I\subset\R$ in $C^1$-topology as $\lambda\to\mu$.
\end{enumerate}
\end{theorem}

\begin{remark}\label{rm:bif-per3brake+}
{\rm  For the first part in Theorem~\ref{th:bif-per3brake},  if we do not assume the condition (b),
 using \cite[Theorem~3.6]{Lu10} we can deduce that at least one among (i), (ii') and (iii') holds:
 \begin{enumerate}
\item[\rm (ii')]  For every $\lambda\in\Lambda\setminus\{\mu\}$ near $\mu$ there is a  solution ${v}_\lambda\ne \bar{v}$ of
(\ref{e:Pbrake++}) with parameter value $\lambda$, which
converges to $\bar{v}$ in $C^1(S_\tau;\R^{2n})$ as $\lambda\to \mu$.

\item[\rm (iii')]
For a given neighborhood $\mathcal{W}$ of $\bar{v}$ in $C^1(S_\tau;\R^{2n})$
there is a one-sided  neighborhood $\Lambda^0$ of $\mu$ such that
for any $\lambda\in\Lambda^0\setminus\{\mu\}$, (\ref{e:Pbrake++}) with parameter value $\lambda$
has at least two distinct solutions $v_\lambda^1\ne \bar{v}$ and $v_\lambda^2\ne \bar{v}$ in $\mathcal{W}$,
which can also be required to satisfy
\begin{eqnarray*}
\int^{\tau}_0\left[\frac{1}{2}(J\dot{v}_\lambda^1(t),v^1_\lambda(t))_{\mathbb{R}^{2n}}+ H(\lambda, {v}_\lambda^1(t))\right]dt
\ne \int^{\tau}_0\left[\frac{1}{2}(J\dot{v}^2_\lambda(t), v_\lambda^2(t))_{\mathbb{R}^{2n}}+ H(\lambda, {v}_\lambda^2(t))\right]dt
\end{eqnarray*}
provided that $\nu_{1,\tau}(\gamma_\mu)>1$ and (\ref{e:Pbrake++}) with parameter value $\lambda$
has only finitely many  solutions in $\mathcal{W}$.
\end{enumerate}
Clearly, these have already been implied in the first part of Theorem~\ref{th:bif-suffictbrake}.
Of course, for $v_\lambda$ in (ii'), $Nv_\lambda$ is also a solution of (\ref{e:Pbrake++})
with parameter value $\lambda$.
But we cannot affirm $Nv_\lambda\ne v_\lambda$. Similarly, if $\nu_{1,\tau}(\gamma_\mu)=1$
it is uncertain that
$v_\lambda^1\ne Nv_\lambda^2$ in (iii'). If $\nu_{1,\tau}(\gamma_\mu)>1$, (iii') implies that
(\ref{e:Pbrake++}) with parameter value $\lambda$  has at least two distinct solution pairs
 $\{v_\lambda^1, Nv_\lambda^2\}$ and
$\{v_\lambda^2, Nv_\lambda^2\}$ in $\mathcal{W}$.
}
\end{remark}

\begin{corollary}\label{cor:bif-per4brake}
 Let  $H_0, \hat{H}: {\R}^{2n}\to\R$ be $N$-invariant $C^2$-functions.
Suppose that $dH_0(0)=d\hat{H}(0)=0$  and
that either $\hat{H}''(0)>0$ or $\hat{H}''(0)<0$.
Then for $\gamma_\lambda(t)=\exp(tJH_0''(0)+ \lambda tJ\hat{H}''(0))$ and any $\tau>0$,
$\Sigma_\tau:=\{\lambda\in\R\,|\, \nu_{1,\tau}(\gamma_\lambda)>0\}$ is a discrete set in $\R$.
Moreover, for each $\mu\in\Sigma_\tau$
and a small enough $\rho>0$ it holds that
\begin{eqnarray}\label{e:case1brake}
\mu_{1,\tau}(\gamma_{\lambda})=\left\{
\begin{array}{ll}
 \mu_{1,\tau}(\gamma_{\mu})\;&\forall\lambda\in [\mu-\rho,\mu),\\
 \mu_{1,\tau}(\gamma_{\mu})+ \nu_{1,\tau}(\gamma_{\mu}) \;&\forall\lambda\in (\mu, \mu+\rho]
\end{array}\right.
\end{eqnarray}
if $\hat{H}''(0)>0\;\forall t\in [0,\tau]$, and
\begin{eqnarray}\label{e:case2brake}
\mu_{1,\tau}(\gamma_{\lambda})=\left\{
\begin{array}{ll}
 \mu_{1,\tau}(\gamma_{\mu})+ \nu_{1,\tau}(\gamma_{\mu}) \;&\forall\lambda\in [\mu-\rho,\mu),\\
 \mu_{1,\tau}(\gamma_{\mu})\;&\forall\lambda\in (\mu, \mu+\rho]
\end{array}\right.
\end{eqnarray}
if $\hat{H}''(0)<0\;\forall t\in [0,\tau]$.
Consequently, for each $\mu\in\Sigma_\tau$,
if (\ref{e:linear3*brake}) with $H(\mu, x)=H_0(x)+\mu\hat{H}(x)$ and $\bar{v}=0$ has no nonzero even solution,
 then the first conclusion of Theorem~\ref{th:bif-per3brake}
 holds  for (\ref{e:Pbrake++}) with $H(\lambda, x)=H_0(x)+\lambda\hat{H}(x)$.
 Suppose that both $H_0$ and $\hat{H}$ are also even. Then
 the second conclusion of Theorem~\ref{th:bif-per3brake}
 holds  for (\ref{e:Pbrake++}) with $H(\lambda, x)=H_0(x)+\lambda\hat{H}(x)$.
\end{corollary}

Corresponding to Corollary~\ref{cor:bif-per5} we have

\begin{corollary}\label{cor:bif-per5brake}
 Let $H: {\R}^{2n}\to\R$ be a $N$-invariant $C^2$-function satisfying
 $dH(0)=0$. Suppose that either ${H}''(0)>0$ or ${H}''(0)<0$.
 Then   $\Delta(H):=\{\lambda\in\mathbb{R}\setminus\{0\}\,|\, \nu_{1,1}(\gamma^H_\lambda)>0\}$
is a discrete set, where $\gamma^H_\lambda(t)=\exp(\lambda tJ{H}''(0))$.

 If ${H}''(0)>0$ and $\mu\in\Delta(H)\cap(0, \infty)$ is such that
 the following problem
\begin{equation}\label{e:linear3*brake+}
\hbox{$\dot{v}(t)=\mu JH''(0)v(t)$,\quad\hbox{$v(t+1)=v(t)$\quad\hbox{and}\quad $v(-t)=Nv(t)\;\forall t$}}
\end{equation}
has no nonzero even solutions, then  one of the following alternatives occurs:
 \begin{enumerate}
\item[\rm (i)] The problem
 \begin{equation}\label{e:PPer3.5brake}
\dot{v}(t)=J\nabla H(v(t)),\quad\hbox{$v(t+\mu)=v(t)$\quad\hbox{and}\quad $v(-t)=Nv(t)\;\forall t$}
\end{equation}
 has a sequence of solution pairs, $\{v_k, Nv_k\}$, $k=1,2,\cdots$,
such that $v_k$ and $Nv_k$  converge to $0$ on any compact interval $I\subset\R$ in $C^1$-topology.
\item[\rm (ii)] There exist left and right  neighborhoods $\Lambda^-$ and $\Lambda^+$ of $\mu$ in $\Lambda$
and integers $n^+, n^-\ge 0$, such that $n^++n^-\ge \nu_{1,1}(\gamma^H_\mu)$,
and for $\lambda\in\Lambda^-\setminus\{\mu\}$ (resp. $\lambda\in\Lambda^+\setminus\{\mu\}$),
the problem
 \begin{equation}\label{e:PPer3.5brake+}
\dot{v}(t)=J\nabla H(v(t)),\quad\hbox{$v(t+\lambda)=v(t)$\quad\hbox{and}\quad $v(-t)=Nv(t)\;\forall t$}
\end{equation}
 has at least $n^-$ (resp. $n^+$) distinct solution pairs
  $\{v_\lambda^i, Nv_\lambda^i\}$, $i=1,\cdots,n^-$ (resp. $n^+$),
such that all $v_\lambda^i$ and $Nv_\lambda^i$  converge to  $0$ on any compact interval $I\subset\R$ in $C^1$-topology as $\lambda\to\mu$.
\end{enumerate}
If ${H}''(0)<0$ and $\mu\in\Delta(H)\cap(-\infty,0)$ is such that
(\ref{e:linear3*brake+}) has no nonzero even solutions, then  one of the above (i) and (ii) with this $\mu$ holds.

Suppose further that $H$ is also even.
Then ``$\{v_k, Nv_k\}$'' in (i) and  ``$\{v_\lambda^i, Nv_\lambda^i\}$'' in (ii)
are replaced by ``$\{v_k, Nv_k, -v_k, -Nv_k\}$''  and  ``$\{v_\lambda^i, Nv_\lambda^i, -v_\lambda^i, -Nv_\lambda^i\}$'',
respectively.
\end{corollary}

As in Remark~\ref{rm:twoBifu} we can show that this result cannot be included in \cite[Theorem~9.12]{Ba1}.

\begin{example}\label{ex:twoBifuBrake}
{\rm Let $0<\varrho_1\le\cdots\le\varrho_n$ and $H:\R^{2n}\to\R$ be given by
$$
H(x_1,\cdots,x_n,y_1,\cdots,y_n)=\sum^n_{j=1}\frac{\varrho_j}{2}(x_j^2+ y_j^2).
$$
Then by \cite[Example~3.2]{LoZZ} $\gamma^H_\lambda(t):=\exp(\lambda tJ{H}''(0))$ is equal to
\begin{equation}\label{e:ex.4-}
\left(\begin{array}{cc}
             {\rm diag} (\cos(\varrho_1\lambda t), . . . , \cos(\varrho_n\lambda t)) &  -{\rm diag} (\sin(\varrho_1\lambda t), . . . , \sin(\varrho_n\lambda t)) \\
             {\rm diag} (\sin(\varrho_1\lambda t), . . . , \sin(\varrho_n\lambda t)) &  {\rm diag} (\cos(\varrho_1\lambda t), . . . , \cos(\varrho_n\lambda t)) \\
           \end{array}
         \right)
\end{equation}
and
\begin{equation}\label{e:ex.4}
\mu_{1,\tau}(\gamma^H_\lambda)=\mu_{2,\tau}(\gamma^H_\lambda)=n-k+ \sum^n_{i=1}\left[\frac{\lambda\varrho_i\tau}{\pi}\right],
\end{equation}
where $k=\sharp\{i\in\{1,\cdots,n\}\,|\, \lambda\varrho_i\tau=0\;{\rm mod}\;\pi\}$.
Moreover, by (\ref{e:2.4M}) it is easily proved that
\begin{eqnarray*}
\nu_{1,\tau}(\gamma^H_\lambda)=\nu_{2,\tau}(\gamma^H_\lambda)&=&\dim{\rm Ker}({\rm diag} (\sin(\varrho_1\lambda \tau/2), . . . , \sin(\varrho_n\lambda \tau/2)))\\
&=&\sharp\{i\in\{1,\cdots,n\}\,|\, \lambda\varrho_i\tau=0\;{\rm mod}\;2\pi\}.
\end{eqnarray*}
Taking $\tau=1$ we obtain $\Delta(H)\cap(0,\infty)=\cup^n_{i=1}\{\lambda>0\,|\, \lambda\varrho_i=0\;{\rm mod}\;2\pi\}$.
For $\mu\in \Delta(H)\cap(0,\infty)$, all nontrivial solutions of the problem
$$
\dot{v}(t)=\mu JH''(0)v(t)=\mu J{\rm diag}(\varrho_1,\cdots,\varrho_n, \varrho_1,\cdots,\varrho_n)v(t)\quad\hbox{and}\quad v(t+1)=v(t)
$$
have forms $v(t)=\gamma^H_\mu(t) c$, where components $c_i$ and $c_{n+i}$ in $c=(c_1,\cdots,c_{2n})^T\in\mathbb{R}^{2n}$
are equal to zeros if $\mu\varrho_i\notin 2\pi\mathbb{Z}$.
Since $NJ=-JN$ and $NH''(0)=H''(0)N$ we deduce that $Nv(t)=N\gamma^H_\mu(t) c=\gamma^H_\mu(-t)Nc$.
This implies that $Nv(t)=v(-t)\;\forall t$ if and only $Nc=c$, i.e., $c_1=\cdots=c_n=0$.
Thus if the above $v(t)$ satisfies $Nv(t)=v(-t)\;\forall t$, then
$$
v(t)=\big(-c_{n+1}\sin(\lambda\varrho_1t),\cdots, -c_{2n}\sin(\lambda\varrho_nt),
c_{n+1}\cos(\lambda\varrho_1t),\cdots, c_{2n}\cos(\lambda\varrho_1t)\big)^T,
$$
where $c_{n+i}=0$ if $\mu\varrho_i\notin 2\pi\mathbb{Z}$. Clearly, if it is also even then $v\equiv 0$.
Hence the problem (\ref{e:linear3*brake+}) with this $H$ has no nonzero even solutions.
Suppose that $H$ is also $N$-invariant. By Corollary~\ref{cor:bif-per5brake},
for each $\mu\in\Delta(H)\cap(0,\infty)$ one of the statements (i) and (ii) in Corollary~\ref{cor:bif-per5brake}
holds after $\nu_{1,1}(\gamma_\mu)$ is replaced by $\nu_{1,1}(\gamma^H_\mu)$.}
\end{example}

\begin{question}\label{que:Will}
{\rm For a positive definite matrix $A$ of order $2n$,
Williamson theorem gives rise to a symplectic matrix $S\in{\rm Sp}(2n,\R)$ such that
$S^TAS={\rm diag}(\varrho_1,\cdots,\varrho_n, \varrho_1,\cdots,\varrho_n)$,
where $0<\varrho_j\le\varrho_k$ for $j\le k$. Suppose $AN=NA$. Can the above symplectic matrix $S$ be chosen
to satisfy $NS=SN$ ?}
\end{question}

If this question is solved in an affirmative way, the conclusions of
Corollary~\ref{cor:bif-per5brake} in the case  ${H}''(0)>0$ are the same as those of
Example~\ref{ex:twoBifuBrake}.\\

%

\noindent{1.4. \;\bf Bifurcations for Hamiltonian trajectories connecting affine Lagrangian subspaces.}\;
Recall that an \textsf{\textsf{affine Lagrangian}} subspace of $\mathbb{R}^{2n}$ is a
subset of $\mathbb{R}^{2n}$ of the form $w + L$, where $w\in \mathbb{R}^{2n}$ is a fixed point
and $L$ is a Lagrangian subspace of $\mathbb{R}^{2n}$.

\begin{assumption}\label{ass:TwoPoint}
{\rm  For a real $\tau>0$ and a topological space $\Lambda$,
 let $H:\Lambda\times [0, \tau]\times{\R}^{2n}\to\R$ be a continuous function such that
   each $H(\lambda,t,\cdot):{\R}^{2n}\to\R$, $(\lambda,t)\in\Lambda\times [0, \tau]$,
   is $C^2$ and all  possible partial derivatives of $H$ depend continuously on
 $(\lambda, t, z)\in\Lambda\times [0, \tau]\times\mathbb{R}^{2n}$.
 Let $L$ and $L'$ be two  Lagrangian subspaces of $\mathbb{R}^{2n}$,
 and let $\Lambda\ni\lambda\mapsto w_\lambda\in\mathbb{R}^{2n}$ and $\Lambda\ni\lambda\mapsto w'_\lambda\in\mathbb{R}^{2n}$
 be two continuous maps.
For each $\lambda\in\Lambda$ let $u_\lambda:[0, \tau]\to\R^{2n}$ be a differentiable path
  satisfying the Hamiltonian boundary value problem
  \begin{equation}\label{e:TwoPoint1}
\left.\begin{array}{ll}
\dot{u}(t)=J\nabla_z H(\lambda,t, u(t))\;\forall t\in [0, \tau],\\
u(0)\in w_\lambda+L,\quad u(\tau)\in w'_\lambda+L'.
\end{array}\right\}
\end{equation}
 Suppose also that the map $\Lambda\times [0,\tau]\ni(\lambda,t)\to u_\lambda(t)\in \mathbb{R}^{2n}$
 is continuous.}
  \end{assumption}

As noted below Assumption~\ref{ass:BasiAss1} the above assumption implies that
 $\Lambda\times [0,\tau]\ni (\lambda,t)\mapsto \dot{u}_\lambda(t)\in\mathbb{R}^{2n}$ is also continuous.
But $u_\lambda$ cannot be assured to be $C^2$.

Under Assumptions~\ref{ass:TwoPoint}, we say
$(\mu, u_\mu)$  to be a \textsf{ bifurcation point along sequences} of the problem (\ref{e:TwoPoint1})
with respect to the trivial branch $\{(\lambda, u_\lambda)\,|\,\lambda\in\Lambda\}$
if there exists a sequence $(\lambda_k)\subset\Lambda$ converging to $\mu$ and
solutions $v_k\ne u_{\lambda_k}$ of (\ref{e:TwoPoint1}) with $\lambda=\lambda_k$ for each $k=1,2,\cdots$,
such that $v_k\to u_\mu$ in $C^1([0, \tau], \mathbb{R}^{2n})$.

 Let $\gamma_\lambda:[0,\tau]\to{\rm Sp}(2n)$ be the fundamental matrix solution of
$\dot{u}(t)=J\nabla^2_zH(\lambda,t, u_\lambda(t))u(t)$.
For any symplectic path $\gamma:[0,\tau]\to{\rm Sp}(2n)$ starting at the identity,
Liu-Wang-Lin \cite{LiuWL11}  introduced a $(L, L')$-index
 \begin{equation}\label{e:(L,L')-index}
 (i^{L'}_L(\gamma), \nu^{L'}_L(\gamma))\in\mathbb{Z}\times\{0,1,\cdots,2n\}
 \end{equation}
  where
$\nu^{L'}_L(\gamma)=\dim(\gamma(\tau)L\cap L')$.
When $L'=L$, $(i^{L'}_L(\gamma), \nu^{L'}_L(\gamma))$ is equal to $L$-index
$(i_L(\gamma), \nu_L(\gamma))$ of $\gamma$ introduced by Liu \cite{Liu07}. Let $L_0=\{0\}\times\mathbb{R}^n\subset\mathbb{R}^{2n}$ and let $O\in {\rm Sp}(2n,\mathbb{R})$
be an orthogonal symplectic matrix such that $OL_0=L$. According to Liu \cite{Liu07}
\begin{equation}\label{e:L-index}
i_L(\gamma)=i_{L_0}\left(\{O^{-1}\gamma(t)O\}_{0\le t\le\tau}\right).
\end{equation}
If this $\gamma$ is also piecewise smooth, the equality above
\cite[Remark~2.14]{Liu17} and \cite[(5.99)]{Liu17}
show that the right side is equal to
$$
\mu^{\rm CLM}(f)-n=\mu^{\rm CLM}(\tilde{f})-n,
$$
where $f(t)=(L_0,  O^{-1}\gamma(t)OL_0)$ and $\tilde{f}(t)=(L,  \gamma(t)L)$
for $0\le t\le\tau$, where  $\mu^{\rm CLM}(f)$ is the Cappell-Lee-Miller index of $f$ characterized by properties I-VI of \cite[pp. 127-128]{CLM}.

\begin{theorem}\label{th:bif-nessHam}
Under Assumption~\ref{ass:TwoPoint}, let  $\gamma_\lambda$ be as above.
\begin{description}
\item[(I)]{\rm (\textsf{Necessary condition}):}
If $(\mu, u_\mu)$  is a bifurcation point along sequences of the problem (\ref{e:TwoPoint1})
with respect to the {trivial branch} $\{(\lambda, u_\lambda)\,|\,\lambda\in\Lambda\}$,
i.e.,  there exists a sequence $(\lambda_k)\subset\Lambda$ converging to $\mu$ and
solutions $u^k\ne u_{\lambda_k}$ of (\ref{e:TwoPoint1}) with $\lambda=\lambda_k$
such that $u^k\to u_\mu$ in $C^0([0,\tau], \mathbb{R}^{2n})$,
then $\nu^{L'}_L(\gamma_\mu)\ne 0$.

  \item[(II)]{\rm (\textsf{Sufficient condition}):}
Let $\Lambda$ be first countable.
Suppose that for some $\mu\in\Lambda$,
there exist two sequences in  $\Lambda$ converging to $\mu$, $(\lambda_k^-)$ and
$(\lambda_k^+)$,  such that for each $k\in\mathbb{N}$,
$$
[i^{L'}_L(\gamma_{\lambda_k^-}), i^{L'}_L(\gamma_{\lambda_k^-})+\nu^{L'}_L(\gamma_{\lambda_k^-})]\cap[i^{L'}_L(\gamma_{\lambda_k^+}), i^{L'}_L(\gamma_{\lambda_k^+})+\nu^{L'}_L(\gamma_{\lambda_k^+})]=\emptyset
$$
 and either $\nu^{L'}_L(\gamma_{\lambda_k^+})=0$ or $\nu^{L'}_L(\gamma_{\lambda_k^-})=0$.
Let $\hat{\Lambda}:=\{\mu,\lambda^+_k, \lambda^-_k\,|\,k\in\mathbb{N}\}$.
  Then  $(\mu, u_\mu)$ is a bifurcation point  of (\ref{e:TwoPoint1})
  with respect to the branch $\{(\lambda, u_\lambda)\,|\,\lambda\in\hat\Lambda\}$
  (and so $\{(\lambda, u_\lambda)\,|\,\lambda\in\Lambda\}$).

  \item[(III)]{\rm (\textsf{Existence for bifurcations}):} Let $\Lambda$ be path-connected. Suppose that
  there exist two points $\lambda^+, \lambda^-\in\Lambda$ such that
  $[i^{L'}_L(\gamma_{\lambda^+}), i^{L'}_L(\gamma_{\lambda^+})+\nu^{L'}_L(\gamma_{\lambda^+})]\cap[i^{L'}_L(\gamma_{\lambda^-}), i^{L'}_L(\gamma_{\lambda^-})+\nu^{L'}_L(\gamma_{\lambda^-})]=\emptyset$,
 and either $\nu^{L'}_L(\gamma_{\lambda^+})=0$ or $\nu^{L'}_L(\gamma_{\lambda^-})=0$.
  Then for any path $\alpha:[0,1]\to\Lambda$ connecting $\lambda^+$ to $\lambda^-$
   there exists a sequence $(t_k)\subset [0, 1]$ converging to some $\bar{t}$
   and solutions $u^k\ne u_{\alpha(t_k)}$ of (\ref{e:TwoPoint1})
   with $\lambda=\alpha(t_k)$, $k=1,2,\cdots$,
  such that  $(u^k)$ converges to  ${u}_{\alpha(\bar{t})}$  in $C^1$-topology as $k\to\infty$.
   Moreover,  $\alpha(\bar{t})$ is not equal to $\lambda^+$ (resp. $\lambda^-$)
    if $\nu^{L'}_L(\gamma_{\lambda^+})=0$ (resp. $\nu^{L'}_L(\gamma_{\lambda^-})=0$).
 \end{description}
\end{theorem}

\begin{theorem}[\textsf{Alternative bifurcations of Rabinowitz's type and of Fadell-Rabinowitz's type}]\label{th:bif-suffHam}
Let Assumption~\ref{ass:TwoPoint} with $\Lambda$ being a real interval be  satisfied,
and let $\gamma_\lambda$ be as in Theorem~\ref{th:bif-nessHam} for each $\lambda\in\Lambda$.
Suppose that for some $\mu\in{\rm Int}(\Lambda)$,
  $\nu^{L'}_L(\gamma_\mu)\ne 0$
   and  $\nu^{L'}_L(\gamma_\lambda)=0$  for each $\lambda\in\Lambda\setminus\{\mu\}$ near $\mu$, and that
  $i^{L'}_L(\gamma_{\lambda})$ take, respectively, values $i^{L'}_L(\gamma_{\mu})$ and
  $i^{L'}_L(\gamma_{\mu})+ \nu^{L'}_L(\gamma_{\mu})$
 as $\lambda\in\Lambda$ varies in two deleted half neighborhoods  of $\mu$.
Then  one of the following alternatives occurs:
\begin{enumerate}
\item[\rm (i)] The problem (\ref{e:TwoPoint1})
 with $\lambda=\mu$ has a sequence of solutions, $v_k\ne u_\mu$, $k=1,2,\cdots$,
which converges to $u_\mu$ in $C^1([0, \tau], \mathbb{R}^{2n})$.

\item[\rm (ii)]  For every $\lambda\in\Lambda\setminus\{\mu\}$ near $\mu$ there is a
solution $v_\lambda\ne u_\lambda$ of
the problem (\ref{e:TwoPoint1}) with parameter value $\lambda$, such that
 $v_\lambda-u_\lambda$ converges to zero in  $C^1([0, \tau], \mathbb{R}^{2n})$ as $\lambda\to \mu$.

\item[\rm (iii)] For a given neighborhood $\mathcal{W}$ of $u_\mu$ in $C^1([0, \tau];\R^{2n})$ there is an one-sided
 neighborhood $\Lambda^0$ of $\mu$ such that for any $\lambda\in\Lambda^0\setminus\{\mu\}$, the problem (\ref{e:TwoPoint1})
  with parameter value $\lambda$ has  at least two distinct solutions, $v_\lambda^1\ne u_\lambda$ and $v_\lambda^2\ne u_\lambda$ in $\mathcal{W}$,
 which can also be required to satisfy
 \begin{eqnarray*}
&&\int^{\tau}_0\left[\frac{1}{2}(J\dot{v}_\lambda^1(t), v^1_\lambda(t))_{\mathbb{R}^{2n}}+ H(\lambda, t, {v}_\lambda^1(t))\right]dt\\
&&\ne \int^{\tau}_0\left[\frac{1}{2}(J\dot{v}^2_\lambda(t),v_\lambda^2(t))_{\mathbb{R}^{2n}}+ H(\lambda, t,
{v}_\lambda^2(t))\right]dt;
\end{eqnarray*}
 provided that  $\nu^{L'}_L(\gamma_\mu)>1$ and (\ref{e:TwoPoint1}) with parameter value $\lambda$
has only finitely many  solutions in $\mathcal{W}$.
\end{enumerate}
Moreover, if $u_\lambda=0\;\forall\lambda$, and all $H(\lambda,t,\cdot)$ are even,
then either (i) holds or the following  occurs:
\begin{enumerate}
\item[\rm (iv)] There exist left and right  neighborhoods $\Lambda^-$ and $\Lambda^+$ of $\mu$ in $\Lambda$
and integers $n^+, n^-\ge 0$, such that $n^++n^-\ge \nu^{L'}_L(\gamma_\mu)$,
and for $\lambda\in\Lambda^-\setminus\{\mu\}$ (resp. $\lambda\in\Lambda^+\setminus\{\mu\}$),
(\ref{e:TwoPoint1}) with parameter value $\lambda$  has at least $n^-$ (resp. $n^+$) distinct pairs of nontrivial solutions,
$\{u_\lambda^i, -u_\lambda^i\}$, $i=1,\cdots,n^-$ (resp. $n^+$),
which  converge to zero in $C^1([0, \tau];\R^{2n})$  as $\lambda\to\mu$.
\end{enumerate}
\end{theorem}

By \cite[Lemma~1.6]{LiuWL11} Theorems~\ref{th:bif-nessbrake},\ref{th:bif-suffictbrake}
can also be derived from the above two results.

\begin{example}[\textsf{Bifurcation of the Sturm-Liouville problem}]\label{ex:Sturm}
{\rm Let $\Lambda$ be a topological space, let $P \in C\left(\Lambda\times [0,\tau], \mathbb{R}^{n\times n}\right)$
and $V\in C(\Lambda\times [0, \tau]\times{\R}^{n})$ satisfy the following conditions:
\begin{enumerate}
\item[\rm (i)] Each $P(\lambda,t)$, $(\lambda,t)\in\Lambda\times [0,\tau]$, is positive definite.
 \item[\rm (ii)] For each  $(\lambda,t)\in\Lambda\times [0, \tau]$,
 $V(\lambda,t,\cdot):{\R}^{n}\to\R$ is $C^2$ and all
  possible partial derivatives of $V$ depend continuously on
 $(\lambda, t, q)\in\Lambda\times [0, \tau]\times\mathbb{R}^{n}$.
\end{enumerate}
Given two reals $0 \leqslant \alpha, \beta \leqslant \pi$,
and $\lambda\in\Lambda$,
a differentiable path $q:[0, \tau]\to\R^{n}$ satisfies the following Sturm-Liouville problem
\begin{equation}\label{e:Sturm}
\left.\begin{array}{l}
\frac{d}{dt}\left(P(\lambda, t) \dot{q}\right)-\nabla_qV(\lambda, t, q)=0, \\
 (\cos\alpha)q(0)- (\sin\alpha)P(\lambda, 0) \dot{q}(0)=0, \\
(\cos\beta)q(\tau) - (\sin\beta)P(\lambda, \tau) \dot{q}(\tau)=0
\end{array}\right\}
\end{equation}
if and only if it is a critical point of the functional
$$
\Phi_\lambda(q)=\int^\tau_0\left[\frac{1}{2}(P(\lambda,t)\dot{q}(t),\dot{q}(t))-V(\lambda,t,q(t))\right]dt
$$
on the Hilbert subspace $W^{1,2}_{\alpha,\beta}([0,\tau],\mathbb{R}^n)$ consisting of
$q\in W^{1,2}([0,\tau],\mathbb{R}^n)$ satisfying the boundary conditions in (\ref{e:Sturm}).
(In this case we may deduce that $t\mapsto P(\lambda,t)\dot{q}(t)$  (and thus $q$) is $C^1$.
But we cannot obtain that $q$ is $C^2$ if $t\mapsto P(\lambda,t)\dot{q}(t)$ is not $C^1$.)
These functionals $\Phi_\lambda$ are $C^2$ and have finite Morse indexes and nullities
at all critical points of them.
For each $\lambda\in\Lambda$,  let $q_\lambda:[0, \tau]\to\R^{n}$ be
a  differentiable path  satisfying  (\ref{e:Sturm}).
(From the conditions (i)-(ii) and (\ref{e:Sturm}) we derive that
$t\mapsto P(\lambda, t) \dot{q}(t)$ is $C^1$. This can only lead to
the $C^1$-smoothness of $q$ since we have not assumed that $t\mapsto P(\lambda,t)$
is differentiable.) Denote by $m^-(\Phi_\lambda, q_\lambda)$ and $m^0(\Phi_\lambda, q_\lambda)$
the Morse index and nullity of $\Phi_\lambda$ at $q_\lambda$.
For some $\mu\in\Lambda$ we call $(\mu, q_\mu)$  to be a \textsf{ bifurcation point along sequences}
of the problem (\ref{e:Sturm}) with respect to the trivial branch $\{(\lambda, q_\lambda)\,|\,\lambda\in\Lambda\}$ if there exists a sequence $(\lambda_k)\subset\Lambda$
converging to $\mu$ and solutions $y_k\ne q_{\lambda_k}$ of (\ref{e:TwoPoint1}) with $\lambda=\lambda_k$
for $k=1,2,\cdots$, such that $y_k\to q_\mu$ in $C^1([0, \tau], \mathbb{R}^{2n})$.

Suppose also that $(\lambda,t)\mapsto q_\lambda(t)$ is continuous.
From Theorems~\ref{th:bif-nessHam},~\ref{th:bif-suffHam}
we can obtain:
\begin{description}
\item[(I)]{\rm (\textsf{Necessary condition}):}
If $(\mu, q_\mu)$  is a bifurcation point along sequences of the problem (\ref{e:Sturm})
with respect to the {trivial branch} $\{(\lambda, q_\lambda)\,|\,\lambda\in\Lambda\}$,
then $m^0(\Phi_\mu, q_\mu)>0$.

\item[(II)]{\rm (\textsf{Sufficient condition}):}
Let $\Lambda$ be first countable.
Suppose that for some $\mu\in\Lambda$,
there exist two sequences in  $\Lambda$ converging to $\mu$, $(\lambda_k^-)$ and
$(\lambda_k^+)$,  such that
for each $k\in\mathbb{N}$,
{\small
$$
[m^-(\Phi_{\lambda_k^-}, q_{\lambda_k^-}), m^-(\Phi_{\lambda_k^-}, q_{\lambda_k^-})+
m^0(\Phi_{\lambda_k^-}, q_{\lambda_k^-})]\cap
[m^-(\Phi_{\lambda_k^+}, q_{\lambda_k^+}), m^-(\Phi_{\lambda_k^+}, q_{\lambda_k^+})+
m^0(\Phi_{\lambda_k^+}, q_{\lambda_k^+})]=\emptyset
$$}
 and either $m^0(\Phi_{\lambda_k^+}, q_{\lambda_k^+})=0$ or $m^0(\Phi_{\lambda_k^-}, q_{\lambda_k^-})=0$.
Let $\hat{\Lambda}:=\{\mu,\lambda^+_k, \lambda^-_k\,|\,k\in\mathbb{N}\}$.
  Then  $(\mu, q_\mu)$ is a bifurcation point  of (\ref{e:Sturm})
  with respect to the branch $\{(\lambda, q_\lambda)\,|\,\lambda\in\hat\Lambda\}$
  (and so $\{(\lambda, q_\lambda)\,|\,\lambda\in\Lambda\}$).

   \item[(III)]{\rm (\textsf{Existence for bifurcations}):} Let $\Lambda$ be path-connected. Suppose that
  there exist two  points $\lambda^+, \lambda^-\in\Lambda$ such that
 {\small
  $$
[m^-(\Phi_{\lambda^+}, q_{\lambda^+}), m^-(\Phi_{\lambda^+}, q_{\lambda^+})+
m^0(\Phi_{\lambda^+}, q_{\lambda^+})]\cap
[m^-(\Phi_{\lambda^-}, q_{\lambda^-}), m^-(\Phi_{\lambda^-}, q_{\lambda^-})+
m^0(\Phi_{\lambda^-}, q_{\lambda^-})]=\emptyset
$$}
  and either $m^0(\Phi_{\lambda^+}, q_{\lambda^+})=0$ or $m^0(\Phi_{\lambda^-}, q_{\lambda^-})=0$.
   Then for any path $\alpha:[0,1]\to\Lambda$ connecting $\lambda^+$ to $\lambda^-$
   there exists a sequence $(t_k)\subset [0, 1]$ converging to some $\bar{t}$
   and solutions $q^k\ne q_{\alpha(t_k)}$ of (\ref{e:Sturm})  with $\lambda=\alpha(t_k)$, $k=1,2,\cdots$,
  such that  $(q^k)$ converges to  ${q}_{\alpha(\bar{t})}$  in $C^1$-topology as $k\to\infty$.
   Moreover,  $\alpha(\bar{t})$ is not equal to $\lambda^+$ (resp. $\lambda^-$)
    if $m^0(\Phi_{\lambda^+}, q_{\lambda^+})=0$ (resp. $m^0(\Phi_{\lambda^-}, q_{\lambda^-})=0$).

  \item[(IV)]{\rm (\textsf{Alternative bifurcations of Rabinowitz's type and of Fadell-Rabinowitz's type}):}
   If $\Lambda$ is a real interval,  $\mu\in{\rm Int}(\Lambda)$ is such that
 $m^0(\Phi_\mu, q_\mu)>0$  and $m^0(\Phi_\lambda, q_\lambda)=0$  for each $\lambda\in\Lambda\setminus\{\mu\}$ near $\mu$, and that
  $m^-(\Phi_\lambda, q_\lambda)$ take, respectively, values $m^-(\Phi_\mu, q_\mu)$ and
  $m^-(\Phi_\mu, q_\mu)+ m^0(\Phi_\mu, q_\mu)$
 as $\lambda\in\Lambda$ varies in two deleted half neighborhoods  of $\mu$.
Then at least one of the following alternatives occurs:
\begin{enumerate}
\item[\rm (IV.1)] The problem (\ref{e:Sturm})
 with $\lambda=\mu$ has a sequence of solutions, $y_k\ne q_\mu$, $k=1,2,\cdots$,
which converges to $q_\mu$ in $C^1([0, \tau], \mathbb{R}^{n})$.

\item[\rm (IV.2)]  For every $\lambda\in\Lambda\setminus\{\mu\}$ near $\mu$ there is a
solution $y_\lambda\ne q_\lambda$ of
the problem (\ref{e:Sturm}) with parameter value $\lambda$, such that
 $y_\lambda-q_\lambda$ converges to zero in  $C^1([0, \tau], \mathbb{R}^{n})$ as $\lambda\to \mu$.

\item[\rm (IV.3)] For a given neighborhood $\mathcal{W}$ of $q_\mu$ in $C^1([0, \tau];\R^{n})$
there is a one-sided
 neighborhood $\Lambda^0$ of $\mu$ such that for any $\lambda\in\Lambda^0\setminus\{\mu\}$, the problem (\ref{e:Sturm}) with parameter value $\lambda$
has  at least two distinct solutions, $y_\lambda^1\ne q_\lambda$ and $y_\lambda^2\ne q_\lambda$ in $\mathcal{W}$,
which can also be required to satisfy $\Phi_\lambda(y_\lambda^1)\ne\Phi_\lambda(y_\lambda^2)$
  provided that  $m^0(\Phi_\mu, q_\mu)>1$ and (\ref{e:Sturm}) with parameter value $\lambda$
has only finitely many  solutions in $\mathcal{W}$.
\end{enumerate}
Moreover, if $q_\lambda=0\;\forall\lambda$, and all $V(\lambda,t,\cdot)$ are even,
then either (IV.1) holds or the following  occurs:
\begin{enumerate}
\item[\rm (IV.4)] There exist left and right  neighborhoods $\Lambda^-$ and $\Lambda^+$ of $\mu$ in $\Lambda$
and integers $n^+, n^-\ge 0$, such that $n^++n^-\ge m^0(\Phi_\mu, q_\mu)$,
and for $\lambda\in\Lambda^-\setminus\{\mu\}$ (resp. $\lambda\in\Lambda^+\setminus\{\mu\}$),
(\ref{e:Sturm}) with parameter value $\lambda$  has at least $n^-$ (resp. $n^+$) distinct pairs of nontrivial solutions,
$\{u_\lambda^i, -u_\lambda^i\}$, $i=1,\cdots,n^-$ (resp. $n^+$),
which  converge to zero in $C^1([0, \tau];\R^{2n})$  as $\lambda\to\mu$.
\end{enumerate}
   \end{description}
Indeed, let $H(\lambda, t, p,q)=1 / 2\left(P(\lambda, t)^{-1}p, p\right)_{\mathbb{R}^{n}}-V(\lambda, t, q)$.
Then the conditions (i)-(ii) implies that $H$ satisfies Assumption~\ref{ass:TwoPoint}.
Clearly, $q(t)$ satisfies (\ref{e:Sturm}) if and only if $u(t):=(p(t)^T, q(t)^T)^T$ with $p(t)=P(\lambda, t)\dot{q}(t)$ satisfies
 \begin{equation}\label{e:Sturm1}
\left.\begin{array}{ll}
\dot{u}(t)=J\nabla_z H(\lambda,t, u(t))\;\forall t\in [0, \tau],\\
u(0)\in L_\alpha,\quad u(\tau)\in L_\beta,
\end{array}\right\}
\end{equation}
 where $L_\xi=\left\{(p^T, q^T)^T\in\mathbb{R}^{2n}
  \,\big|\, (\cos\xi)q- (\sin\xi)p=0, p, q\in \mathbb{R}^{n}\right\}$ for $\xi\in\mathbb{R}$.
 In particular, $u_\lambda(t):=(p_\lambda(t)^T, q_\lambda(t)^T)^T$ with $p_\lambda(t)=P(\lambda, t)\dot{q}_\lambda(t)$
 satisfies (\ref{e:Sturm1}), and $(\mu, q_\mu)$ is
 a  bifurcation point of the problem
(\ref{e:Sturm}) with respect to the trivial branch $\{(\lambda, q_\lambda)\,|\,\lambda\in\Lambda\}$
if and only if $(\mu, u_\mu)$ is that of (\ref{e:Sturm1}) with respect to the trivial branch $\{(\lambda, u_\lambda)\,|\,\lambda\in\Lambda\}$.
Let $\gamma_\lambda:[0,\tau]\to{\rm Sp}(2n)$ be the fundamental matrix solution of
$\dot{u}(t)=J\nabla^2_zH(\lambda,t, u_\lambda(t))u(t)$. It was stated in \cite[page 66, line 2]{LiuWL11}
that $i^{L_\beta}_{L_\alpha}(\gamma_\lambda)$ and $\nu^{L_\beta}_{L_\beta}(\gamma_\lambda)$
are equal to $m^-(\Phi_\lambda, q_\lambda)$ and $m^0(\Phi_\lambda, q_\lambda)$, respectively.
The above conclusions in (I)-(III) immediately follow from Theorems~\ref{th:bif-nessHam}, \ref{th:bif-suffHam}.
}
\end{example}

{\it Note}: Since the Maslov-type index in (\ref{e:(L,L')-index}) is homotopy invariant
under certain path deformations, it is easier to compute
$i^{L_\beta}_{L_\alpha}(\gamma_\lambda)$ and $\nu^{L_\beta}_{L_\beta}(\gamma_\lambda)$
 than to compute $m^-(\Phi_\lambda, q_\lambda)$ and $m^0(\Phi_\lambda, q_\lambda)$.
Therefore, it is better to replace the latter with the former
in the  statements (I)--(IV) of Example~\ref{ex:Sturm}.

\begin{example}[\textsf{Bifurcation of the Bolza Problem}]\label{ex:Bolza}
{\rm Split $z\in\mathbb{R}^{2n}$ into $(p^T, q^T)^T$, with $p$ and $q \in \mathbb{R}^{n}$. Fix two points $q_{0}, q_{1} \in \mathbb{R}^{n}$.
In Assumption~\ref{ass:TwoPoint} choose $w=(0, q_0^T)^T$, $w'=(0, q_1^T)^T$ and $L=L'=\{(p^T, q^T)^T\,|\,p\in\mathbb{R}^n, q=0\}$.
(\ref{e:TwoPoint1}) becomes  the following Bolza bifurcation problem:
$$\left.\begin{array}{l}
\dot{p}=-H_{q}^{\prime}(\lambda, t, p, q) \\
\dot{q}=H_{p}^{\prime}(\lambda, t, p, q) \\
q(0)=q_{0} \\
q(\tau)=q_{1}.
\end{array}\right\}
$$
}
\end{example}

Our final result is an analogy of Corollary~\ref{cor:necess-suffi}.
Let $H:{\R}^{2n}\to\R$ be $C^2$, and let $u:[0,\tau]\to {\R}^{2n}$ satisfy $\dot{u}(t)=J\nabla_z H(u(t))$.
For two Lagrangian subspaces $L$ and $L'$ of $\mathbb{R}^{2n}$, and $0<s\le\tau$,
 we say $u(s)$ to be \textsf{$(L,L')$-conjugate to $u(0)$ along $u$} if
$\nu^{L'}_L(\gamma)>0$ for the fundamental matrix solution  $\gamma:[0, s]\to{\rm Sp}(2n)$  of
$\dot{v}(t)=J\nabla^2_zH(u(t))v(t)$ on $[0,s]$. The number $\nu^{L'}_L(\gamma)$
is called the \textsf{multiplicity} of $u(s)$ as a $(L,L')$-conjugate point to $u(0)$ along $u$.
In particular, when $L'=L$,  $u(s)$ is said to be \textsf{$L$-conjugate to $u(0)$ along $u$} if
$\nu_L(\gamma)>0$ for the above $\gamma$, and the number $\nu_L(\gamma)$ is
called the \textsf{multiplicity} of $u(s)$.
We call $\mu\in (0, \tau]$ a \textsf{bifurcation instant for $(H, u, L)$} if there exists a sequence $(\tau_k)\subset (0,\tau]$ converging to $\mu$
such that for each $k\in\mathbb{N}$ the boundary value problem
\begin{equation}\label{e:TwoPointMu}
\left.\begin{array}{ll}
\dot{v}(t)=J\nabla_z H(v(t))\;\forall t\in [0, \tau_k],\\
v(0)\in u(0)+L,\quad v(\tau_k)\in u(\tau_k)+ L
\end{array}\right\}
\end{equation}
has a solution $v_k\ne u|_{[0,\tau_k]}$ such that $\|v_k-u\|_{C^1([0,\tau_k],\mathbb{R}^{2n})}\to 0$ as $k\to\infty$.

\begin{theorem}\label{th:two-point}
Let $H:{\R}^{2n}\to\R$ be $C^2$,  let $u:[0,\tau]\to {\R}^{2n}$ satisfy $\dot{u}(t)=J\nabla_z H(u(t))$,
 let  $\gamma:[0, \tau]\to{\rm Sp}(2n)$ be the fundamental matrix solution   of
$\dot{v}(t)=J\nabla^2_zH(u(t))v(t)$ on $[0, \tau]$, and let $L$ be a Lagrangian subspace  of $\mathbb{R}^{2n}$. Then:
 \begin{description}
  \item[(A)] If $\mu\in (0, \tau]$ is a bifurcation instant for $(H, u, L)$, then $u(\mu)$ is $L$-conjugate to $u(0)$ along $u$.
  \item[(B)] Suppose that $\nabla^2_zH(u(t))$ is positive definite for all $t\in [0, \tau]$.
Then
$$
\Gamma(H, u, L):=\{\lambda\in (0, \tau)\,|\, \dim (\gamma(\lambda)L)\cap L>0\}
$$
contains at most finitely many points, that is, there exist at most finitely
 many $L$-conjugate points to $u(0)$ along $u$; and  each $\mu\in \Gamma(H, u, L)$
is a bifurcation instant for $(H, u, L)$, precisely
  one of the following alternatives occurs:
  \begin{enumerate}
\item[\rm (B.i)] The problem
\begin{equation}\label{e:TwoPointMu1}
\left.\begin{array}{ll}
\dot{v}(t)=J\nabla_z H(v(t))\;\forall t\in [0, \mu],\\
v(0)\in u(0)+L,\quad v(\mu)\in u(\mu)+ L
\end{array}\right\}
\end{equation}
 has a sequence of distinct solutions, $v_k\ne u|_{[0,\mu]}$, $k=1,2,\cdots$,
such that $v_k\to u|_{[0,\mu]}$ in $C^1([0,\mu],\mathbb{R}^{2n})$ as $k\to\infty$.

\item[\rm (B.ii)]  For every $\lambda\in (0, \tau)\setminus\{\mu\}$ near $\mu$, the problem
\begin{equation}\label{e:TwoPointMu2}
\left.\begin{array}{ll}
\dot{v}(t)=J\nabla_z H(v(t))\;\forall t\in [0, \lambda],\\
v(0)\in u(0)+L,\quad v(\lambda)\in u(\lambda)+ L
\end{array}\right\}
\end{equation}
 has a solutions $v_\lambda\ne u|_{[0,\mu]}$
such that $\|v_\lambda-u\|_{C^1([0,\lambda],\mathbb{R}^{2n})}\to 0$ as $\lambda\to\mu$.

\item[\rm (B.iii)] There is an one-sided  neighborhood $\Lambda^0$ of $\mu$ in $[0, \tau]$ such that
for any $\lambda\in\Lambda^0\setminus\{\mu\}$  the problem (\ref{e:TwoPointMu2})
has at least two distinct solutions, $v_\lambda^i\ne u|_{[0,\lambda]}$, $i=1,2$,
to satisfy the condition that  $\|v^i_\lambda-u|_{[0,\lambda]}\|_{C^1([0,\lambda],\mathbb{R}^{2n})}\to 0$  as $\lambda\to \mu$, $i=1,2$.
Moreover, if $\dim (\gamma(\lambda)L)\cap L>1$ and the problem (\ref{e:TwoPointMu2})
has only finitely many distinct solutions near $u|_{[0,\lambda]}$ in $C^1([0,\lambda],\mathbb{R}^{2n})$
then the above two distinct solutions $v_\lambda^i$ can also be chosen to satisfy
 \begin{eqnarray*}
\int^{\lambda}_0\left[\frac{1}{2}(J\dot{v}_\lambda^1(t), v^1_\lambda(t))_{\mathbb{R}^{2n}}+ H({v}_\lambda^1(t))\right]dt
\ne \int^{\lambda}_0\left[\frac{1}{2}(J\dot{v}^2_\lambda(t),v_\lambda^2(t))_{\mathbb{R}^{2n}}+ H({v}_\lambda^2(t))\right]dt.
\end{eqnarray*}
\end{enumerate}
\end{description}
Finally, if $L$ is equal to $L_0:=\{0\}\times\mathbb{R}^n\subset\mathbb{R}^{2n}$ (resp. $L_1:=\mathbb{R}^n\times\{0\}\subset\mathbb{R}^{2n}$),
the condition ``$\nabla^2_zH(u(t))$ is positive definite for all $t\in [0, \tau]$'' in (B)
 can be substituted with the condition ``$B_{22}(t)$ (resp. $B_{11}(t)$)
 is positive definite for all $t\in [0, \tau]$'', where
 $\nabla^2_zH(u(t))=\left(\begin{array}{cc}
             B_{11}(t) & B_{12}(t) \\
            B_{21}(t) & B_{22}(t) \\
           \end{array}\right)$,  $B_{ij}(t)\in\mathbb{R}^{n\times n}$, $i,j=1, 2$.
\end{theorem}

\begin{remark}\label{rm:free-time}
{\rm Let $L$ and $L'$ be two  Lagrangian subspaces of $\mathbb{R}^{2n}$,
 and  let $G:{\R}^{2n}\to\R$ be a $C^2$ function.
  Motivated by the notion of an orbit cylinder (\cite[Definition~1.5]{MerPat})
 a Hamiltonian path $v:[0, \tau]\to\mathbb{R}^{2n}$ of $X_G=J\nabla G$ connecting $L$ and $L'$
 is said to admit a \textsf{path trapezoid}
if there exist $\varepsilon>0$ together with a family $\mathcal{O}=(v_\lambda)_{\lambda\in(-\varepsilon,\varepsilon)}$ of
Hamiltonian paths $v_\lambda:[0, T(\lambda)]\to \mathbb{R}^{2n}$ of $X_G$ connecting $L$ and $L'$
 with $v_0=v$ and $T(0)=\tau$ such that
$(-\varepsilon, \varepsilon)\ni \lambda\mapsto T(\lambda)\in\mathbb{R}$
is continuous and $G(v_\lambda)=G(v)+\lambda$ for all $\lambda\in (-\varepsilon, \varepsilon)$. Define
$$
{H}:(-\varepsilon,\varepsilon)\times\mathbb{R}^{2n}\to\mathbb{R},\;(\lambda,z)\mapsto \frac{T(\lambda)}{\tau}G(z)
$$
and $u_\lambda:[0, \tau]\to\mathbb{R}^{2n}$ by $u_\lambda(t)=v_\lambda(\frac{T(\lambda)}{\tau}t)$ for $\lambda\in(-\varepsilon, \varepsilon)$.
Then $H$, $u_\lambda$  satisfy Assumption~\ref{ass:TwoPoint} with
 $\Lambda=(-\varepsilon,\varepsilon)$ and $w_\lambda=0=w'_\lambda$ for all $\lambda$.
Applying Theorems~\ref{th:bif-nessHam},~\ref{th:bif-suffHam} to them some interesting results may be obtained.}
\end{remark}

\noindent{1.5. \;\bf Research methods.}\;
Our proofs for the above results in Sections 1.1-1.4 are based on
the abstract bifurcation theory recently developed by author in \cite{Lu8} and \cite{Lu10}
through Morse theory methods.
These theorems cannot be used for the associated functionals as in (\ref{e:1.5.1})
because  all $\Phi_\lambda$ are strongly indefinite, that is, each critical point of them
 has infinite Morse index and co-index. One cannot directly study bifurcations of
 $\nabla\Phi_\lambda=0$  by changes of the Morse indexes of the Hessians $\nabla^2\Phi_\lambda$
 at given solutions.  There are some ways to overcome this difficulty.
 Using spectral flow methods as in  \cite{FiPeRe00}, \cite{IJWa}, \cite{Rad10},
 \cite{RadRy10}, \cite{Szu} and \cite{Wat15},
 we can only obtain necessary conditions and sufficient criteria,
 and cannot get alternative results of Rabinowitz type.
   There exists a powerful way to arrive at the latter aim,
   the saddle point reduction by Amann and Zehnder (\cite{Am}, \cite{AmZe}).
   This method  is to reduce the original problem to a corresponding one in the finite dimension space
 via a Lyapunov-Schmidt reduction.   For example, references \cite{Ba1}, \cite{Ba2}
 and \cite{MaWi} studied bifurcations of periodic solutions of
 Hamiltonian systems via this method. A detailed process will be demonstrated
  in the proofs of Theorems~\ref{th:bif-nessHam}, \ref{th:bif-suffHam} and \ref{th:two-point}
  in Section~\ref{sec:twoLagr}.

Though the same methods can be used to prove the results in Sections 1.1-1.3,
 because of the lower smoothness for variational functionals in our
  abstract bifurcation theorems in \cite{Lu8} and \cite{Lu10}
it is possible to prove them via another way-----the  dual variational principle by Clarke and Ekeland.
In order to show our ideas let us consider the case of Theorem~\ref{th:bif-ness}.
Under Assumption~\ref{ass:BasiAss1} each $u_\lambda$ is $C^2$ and continuously depends on $\lambda$.
As done above the proof of Theorem~\ref{th:bif-ness}(I) in Section~\ref{sec:HamBif}
the problem is reduced to the case:
\begin{equation}\label{e:reductionCase}
 \hbox{$u_\lambda\equiv 0\;\forall\lambda$ and $H$ satisfies (\ref{e:modifyH}) and so (\ref{e:modifyH1}).}
\end{equation}
 Consider the Hilbert subspace
\begin{equation}\label{e:M-SobolevSpace}
W^{1,2}_{M}([0,\tau];\R^{2n}):=\{u\in W^{1,2}([0,\tau];\mathbb{R}^{2n})\,|\, u(\tau)=Mu(0)\}
\end{equation}
of  $W^{1,2}([0,\tau];\mathbb{R}^{2n})$,
and $C^2$  functionals $\Phi_\lambda: W^{1,2}_{M}([0,\tau];\R^{2n})\to\mathbb{R}$ defined by
\begin{eqnarray}\label{e:1.5.1}
\Phi_\lambda(v)=\int^{\tau}_0\left[\frac{1}{2}(J\dot{v}(t),v(t))_{\mathbb{R}^{2n}}+ H(\lambda,t, {v}(t))\right]dt
\end{eqnarray}
under the assumption (\ref{e:reductionCase}).
The critical set of $\Phi_\lambda$ is exactly the solution set of (\ref{e:Hboundary}).
Let us choose $\kappa\in\mathbb{R}$ such that
 each $H_\kappa(\lambda, t, z):=H(\lambda, t, z)- \frac{\kappa}{2}|z|^2$ is strictly convex in $z$
 and so that we may get  the dual action of $\Phi_\lambda$, $\Psi_K(\lambda,\cdot):
W^{1,2}_{M}([0,\tau];\R^{2n})\to\R$ defined by
\begin{equation*}
\Psi_\kappa(\lambda, v)=\int^{\tau}_0\left[\frac{1}{2}(J\dot{v}(t)+\kappa v(t),v(t))_{\mathbb{R}^{2n}}+
H_\kappa^\ast(\lambda, t;-J\dot{v}(t)-\kappa v(t))\right]dt.
\end{equation*}
The latter functional is $C^1$, twice G\^ateaux-differentiable,  and
has the same critical set as that of $\Phi_\lambda$ (Corollary~\ref{cor:MWTh.2.6}).
However, $\Psi_\kappa(\lambda, \cdot)$ is not $C^2$ in general.
It is this reason that the Morse theory method for this dual functional has not been used well before.
 Our abstract bifurcation theorems in \cite{Lu8} and \cite{Lu10}, which were developed on the base of the author's Morse theory  for non-twice continuously differentiable functionals
on Hilbert spaces  (\cite{Lu1}, \cite{Lu3} and \cite{Lu7}),
do not require that potential operators are $C^1$, and therefore make  this way to become possible.
With a Banach space isomorphism
$\Lambda_{M,\tau,\kappa I_{2n}}:W^{1,2}_{M}([0,\tau];\R^{2n})\to L^{2}([0,\tau];\R^{2n})$
in (\ref{e:Bisom}) we consider the functional
$$
\psi_\kappa(\lambda, \cdot)=\Psi_\kappa(\lambda, \cdot)\circ(-\Lambda_{M,\tau,\kappa I_{2n}})^{-1}:
L^2([0,\tau];\R^{2n})\to\R
$$
given by (\ref{e:HAction**}). Then the bifurcation problem of $\nabla\Phi_\lambda=0$ near $(\mu,0)\in\Lambda\times W^{1,2}_{M}([0,\tau];\R^{2n})$
 is reduced to the bifurcation one of
 $\nabla\psi_\kappa(\lambda, \cdot)=0$ near $(\mu,0)\in\Lambda\times L^{2}([0,\tau];\R^{2n})$.
At each critical point $w$ of $\psi_\kappa(\lambda, \cdot)$,
$\nabla^2\psi_\kappa(\lambda, w)$ has finite Morse index and nullity,
$m^-(\psi_\kappa(\lambda, \cdot), w)$ and
$m^0(\psi_\kappa(\lambda, \cdot), w)$.
 By (\ref{e:psi-MorseIndex+}) and (\ref{e:psi-Nullity+}) it holds that
\begin{eqnarray*}
  m^-(\psi_\kappa(\lambda, \cdot), 0) &=&i_{\tau,M}(\gamma_\lambda)-i_{\tau,M}(\Upsilon_{\kappa I_{2n}})-\nu_{\tau,M}(\Upsilon_{\kappa I_{2n}}),\\
  m^0(\psi_\kappa(\lambda, \cdot), 0)&=&\nu_{\tau,M}(\gamma_\lambda)=\dim{\rm Ker}(\gamma_\lambda(\tau)-M),
 \end{eqnarray*}
 where $\Upsilon_{\kappa I_{2n}}(t)=\exp(t\kappa J)$,
 $\gamma_\lambda:[0,\tau]\to {\rm Sp}(2n,\mathbb{R})$ is the fundamental matrix solution of
$\dot{Z}(t)=J\nabla^2_zH(\lambda,t, 0)Z(t)$, and $(i_{\tau,M}(\gamma_\lambda), \nu_{\tau,M}(\gamma_\lambda))$
(resp. $(i_{\tau,M}(\Upsilon_{\kappa I_{2n}}), \nu_{\tau,M}(\Upsilon_{\kappa I_{2n}}))$) is
the $M$-Maslov-type index of $\gamma_\lambda$ (resp. $\Upsilon_{\kappa I_{2n}}$) defined in \cite{Do06}.
 When $M$ is an orthogonal symplectic matrix, using these  we can prove that  $\mathcal{F}_\lambda(\cdot)=\psi_\kappa(\lambda, \cdot)$, ${\rm H}=L^2([0,\tau];\R^{2n})$ and $X=L^2([0,\tau];\R^{2n})$
satisfy conditions of \cite[Theorem~3.1]{Lu8} (Theorem~\ref{th:A.10}) under the assumptions of  Theorem~\ref{th:bif-ness}(I).
We can also prove that  $\mathcal{L}_\lambda(\cdot)=\psi_\kappa(\lambda, \cdot)$, ${\rm H}=L^2([0,\tau];\R^{2n})$ and $X=C^0_M([0,\tau];\R^{2n})$
satisfy conditions of \cite[Theorem~3.6]{Lu8} (resp. \cite[Theorem~3.6]{Lu10}) under the assumptions of Theorem~\ref{th:bif-ness}(I) (resp. Theorem~\ref{th:bif-suffict}).
For the case of a general symplectic matrix  $M$,
we show, via a simple transformation, that these results can be derived from their
already-proved special cases with $M=I_{2n}$.\\

\noindent{1.6. \;\bf Further researches.}\;
There are a variety of natural continuations to this work:
\begin{enumerate}
\item[(i)] In order to study the existence of periodic solutions to certain class of
scalar delay differential equations,  Kaplan and Yorke \cite{KaYo74} introduced a new technique,
that is,  the original problem was reduced to finding periodic solutions to an associated (generalized) Hamiltonian system. Our theorems and methods in this paper can be used to derive some bifurcation results for delay (Hamiltonian) equations (\cite{Lu11}),
 and for homoclinic orbits of Hamiltonian systems
and solutions of other nonlinear (e.g. Dirac and wave) equations. 

\item[(ii)]  As done by  Ciriza \cite{Cir}, we may generalize results in this paper to
Hamiltonian systems on general symplectic manifolds with a parametrized version of Darboux's theorem (cf. \cite[\S9]{SaZe2}).
We can also combine our methods with that of \cite{KKK}.
They  would appear elsewhere.
\end{enumerate}

\noindent{1.7.  \bf  Organization of the article.}\;
In Section~\ref{sec:pre}  we collect some  preliminaries, including
a review about properties of a perturbation Hamiltonian operator (Section~2.1)
and  statements of Clarke-Ekeland dual variational principles and their versions used in this paper (Section~2.2). Section~\ref{sec:HamBif}  proves Theorems~\ref{th:bif-ness},~\ref{th:bif-suffict}
and Corollaries~\ref{cor:necess-suffi},~\ref{cor:bif-deform}.
 Section~\ref{sec:orbit} is devoted to the proofs of Theorems~\ref{th:bif-per3},~\ref{th:bif-ness-orbit},
\ref{th:bif-suffict1-orbit}, \ref{th:bif-suffict-orbit} and Corollary~\ref{cor:bif-per5}.
 Section~\ref{sec:brake} proves Theorems~\ref{th:bif-nessbrake},~\ref{th:bif-suffictbrake},~\ref{th:bif-per3brake}
and their Corollaries~\ref{cor:bif-per2brake}--\ref{cor:bif-per5brake}.
In Section~\ref{sec:twoLagr} we complete the proofs of Theorems~\ref{th:bif-nessHam}, \ref{th:bif-suffHam} and \ref{th:two-point}.
For completeness, we add three appendixes. In Appendix~\ref{app:Index} we first briefly review various related
Maslov-type indexes for symplectic paths and their relations to Morse indexes,
and then give some relations  (Theorems~\ref{th:brakeIndex},~\ref{th:brakeIndexMono}) between Morse indexes and the Maslov-type indexes defined by
Long, Zhang and Zhu \cite{LoZZ}. In Appendix~\ref{app:threeBifu} we present the proof of Proposition~\ref{prop:threeBifu}.
In Appendix~\ref{app:Th3.5} we first give a remark to clarify how the compactness assumption of
the parameter topological space $\Lambda$ \cite[Theorems~A.1,A.2]{Lu3} was precisely used.
Then we prove generalizations and refinements  of \cite[Theorems~3.2, 3.5]{Lu10} in great detail.\\

\section{Preliminaries}\label{sec:pre}
\setcounter{equation}{0}

\noindent{2.1. \bf Sobolev spaces and a perturbation Hamiltonian operator.}
We begin with the following elementary fact of linear functional analysis.

\begin{proposition}\label{prop:abstract}
Let $\mathscr{H}$ be a Hilbert space with norm $\|\cdot\|$ and let ${A}: D(A)\subset
\mathscr{H}\to \mathscr{H}$ be an unbounded linear operator that is densely defined and closed.
Suppose that ${A}$ is self-adjoint and that there exist a complete norm $|\cdot|$ on $D({A})$
for which the following three conditions are satisfied:
\begin{itemize}
\item[\rm (i)] The inclusion $\iota:(D({A}),|\cdot|)\hookrightarrow \mathscr{H}$ is compact.
\item[\rm (ii)]  ${A}:(D({A}), |\cdot|)\to \mathscr{H}$ is a bounded linear operator.
\item[\rm (iii)] There exists a constant $C>0$ such that
$|x|\le C(\|x\|+\|{A}x\|)$ for all $x\in D({A})$.
\end{itemize}
Then $\dim{\rm Ker}({A})<\infty$, $R({A})$ is closed in $\mathscr{H}$  and  there exists an orthogonal decomposition
$\mathscr{H}={\rm Ker}({A})\oplus R({A})$. Moreover
$$
\hat{{A}}:(D({A})\cap R({A}), |\cdot|)\to (R({A}),\|\cdot\|),\;x\mapsto {A}x
$$
is  invertible and $\iota\circ \hat{{A}}^{-1}$ as an operator on the Hilbert subspace $R({A})$ of $\mathscr{H}$ is compact and self-adjoint.
\end{proposition}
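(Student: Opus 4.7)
The plan is to exploit the closedness of $A$ together with the open mapping theorem to identify $|\cdot|$ with the graph norm, after which the remaining assertions follow from standard Hilbert space arguments.

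First I would introduce the graph norm $\|x\|_{gr}:=\|x\|+\|Ax\|$ on $D(A)$. Because $A$ is closed, $(D(A),\|\cdot\|_{gr})$ is a Banach space, and the hypothesis that both $\iota:(D(A),|\cdot|)\to H$ and $A:(D(A),|\cdot|)\to H$ are continuous says exactly that the identity map $(D(A),|\cdot|)\to(D(A),\|\cdot\|_{gr})$ is bounded. Since $|\cdot|$ is also complete, the open mapping theorem upgrades this to an equivalence of the two norms on $D(A)$. This equivalence is the technical engine of the whole proposition, since it allows me to use the convenient graph norm in arguments and then translate back to $|\cdot|$ when formulating conclusions.

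Next I would prove that $\ker(A)$ is finite-dimensional: on $\ker(A)$ the graph norm coincides with $\|\cdot\|$, so by the equivalence above, the unit ball of $\ker(A)$ in the induced topology of $H$ is bounded in $|\cdot|$, hence relatively compact in $H$ by the compactness of $\iota$. Thus $\ker(A)$ is locally compact in the $\|\cdot\|$-norm and so finite-dimensional. To prove that $R(A)$ is closed, I would establish a Poincar\'e-type inequality $\|Ax\|\ge c\|x\|$ for $x\in D(A)\cap\ker(A)^\perp$ by contradiction: a sequence $x_n$ in $D(A)\cap\ker(A)^\perp$ with $\|x_n\|=1$ and $\|Ax_n\|\to 0$ has bounded graph norm, hence bounded $|\cdot|$-norm, so after passing to a subsequence $x_n\to x$ in $H$ by compactness of $\iota$; closedness of $A$ forces $x\in D(A)$ with $Ax=0$, so $x\in\ker(A)\cap\ker(A)^\perp=\{0\}$, contradicting $\|x\|=1$. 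Self-adjointness yields $\overline{R(A)}=\ker(A)^\perp$, and the estimate just proved shows $R(A)$ is closed, hence $H=\ker(A)\oplus R(A)$ orthogonally.

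Finally, the Poincar\'e inequality gives that $\hat A:D(A)\cap R(A)\to R(A)$ is a bijection with $\hat A^{-1}$ bounded from $(R(A),\|\cdot\|)$ to $(D(A)\cap R(A),\|\cdot\|_{gr})$, and therefore also to $(D(A)\cap R(A),|\cdot|)$ by norm equivalence. Consequently $\iota\circ\hat A^{-1}:R(A)\to R(A)$ is the composition of a bounded map with the compact map $\iota$, so it is compact. Self-adjointness of $\iota\circ\hat A^{-1}$ on $R(A)$ follows from self-adjointness of $A$: for $x,y\in R(A)$, setting $u=\hat A^{-1}x$ and $v=\hat A^{-1}y$, one has $(\iota\hat A^{-1}x,y)=(u,Av)=(Au,v)=(x,\iota\hat A^{-1}y)$. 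The main obstacle I anticipate is purely a matter of bookkeeping at the norm-equivalence step, since once $|\cdot|$ and $\|\cdot\|_{gr}$ are identified up to equivalence, each subsequent claim reduces to a textbook argument about closed symmetric operators with compact resolvent.
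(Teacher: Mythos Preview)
Your proof is correct and complete. The paper's own treatment is extremely terse: it cites \cite[Exercise~6.9]{Bre} for the finite-dimensionality of ${\rm Ker}(A)$ and closedness of $R(A)$, invokes \cite[Theorem~2.19]{Bre} (i.e., $({\rm Ker}(A))^\bot=\overline{R(A^\ast)}$) together with self-adjointness for the orthogonal decomposition, and declares the remaining assertions obvious. Your argument is essentially a self-contained unpacking of those citations; the key device---identifying $|\cdot|$ with the graph norm via the open mapping theorem so that compactness of $\iota$ becomes compactness in the graph norm---is exactly what underlies the Brezis exercise, and the contradiction argument for the Poincar\'e inequality is the standard route to closed range in that setting. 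So the approaches coincide in substance, with yours having the advantage of being explicit and independent of external references.
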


Applying [10, Exercise 6.9] to $F=\mathscr{H}$, $G=\mathscr{H}$, $E=(D(A), |\cdot|)$, $K=\iota$ and $T={A}$,
we obtain the first two claims immediately.
Since $A$ is self-adjoint, by \cite[Theorem~2.19]{Bre}
we have $({\rm Ker}(A))^\bot=R(A^\ast)=R(A)$. Other conclusions are obvious.

Let $L^{2}([0,\tau];\mathbb{R}^{2n})=(L^{2}([0,\tau];\mathbb{R}))^{2n}$
and $W^{1,2}([0,\tau];\mathbb{R}^{2n})=(W^{1,2}([0,\tau];\mathbb{R}))^{2n}$
be the Hilbert spaces equipped with $L^{2}$-inner product and $W^{1,2}$-inner product
\begin{eqnarray}\label{e:innerP}
(u,v)_{2}&=&\int^\tau_0(u(t),v(t))_{\mathbb{R}^{2n}}dt,\\
(u,v)_{1,2}&=&\int^\tau_0[(u(t),v(t))_{\mathbb{R}^{2n}}+ (\dot{u},\dot{v})_{\mathbb{R}^{2n}}]dt,\label{e:innerP2}
\end{eqnarray}
respectively. The corresponding norms are denoted by $\|\cdot\|_{2}$ and $\|\cdot\|_{1,2}$, respectively.
(As usual each $u\in L^{2}([0,\tau];\R^{2n})$ will be identified with any fixed representative of it; in particular,
we do not distinguish $u\in W^{1,2}([0,\tau];\mathbb{R}^{2n})$ with its unique continuous representation.)

For $A\in L^\infty([0,\tau];\mathcal{L}_s(\mathbb{R}^{2n}))$ and a symplectic matrix $M\in{\rm Sp}(2n,\mathbb{R})$
 let $\Lambda_{M,\tau, A}$  be the operator on $L^2([0,\tau];\mathbb{R}^{2n})$
defined by
\begin{equation}\label{e:LambdaKA}
(\Lambda_{M,\tau, A}u)(t):=J\frac{d}{dt}u(t)+A(t)u(t)
\end{equation}
 with domain
$$
{\rm dom}(\Lambda_{M,\tau, A})=W^{1,2}_{M}([0,\tau]; \mathbb{R}^{2n}).
$$
Denote by $\Upsilon_A$  the fundamental solution of the linear Hamiltonian system
\begin{equation}\label{e:fundamental}
\dot{x}=JA(t)x\quad\forall t\in [0,\tau].
\end{equation}
Clearly, ${\rm Ker}(\Lambda_{M,\tau, A})=\{\Upsilon_A(\cdot)\xi\,|\,\xi\in\mathcal{E}_{M,\tau,A}\}$, where
 $$
 \mathcal{E}_{M,\tau,A}={\rm Ker}(\Upsilon_A(\tau)-M)\quad\hbox{and}\quad
\mathcal{E}_{M,\tau,A}^\bot=\{u\in \mathbb{R}^{2n}\,|\, (u,v)_{\mathbb{R}^{2n}}=0\;\forall v\in\mathcal{E}_{M,\tau,A}\}.
$$
Note that  $\Upsilon_{\kappa I_{2n}}(t)=e^{\kappa tJ}$ for a real constant $\kappa$.

 It was proved in Lemma 4 of \cite[page 102]{Ek90} that
the operator $\Lambda_{M,\tau, 0}$ is closed and self-adjoint.
But $\Lambda_{M,\tau, A}$ is the sum of  $\Lambda_{M,\tau, 0}$
and the following continuous linear self-adjoint operator
 $$
 L^{2}([0,\tau];\R^{2n})\to L^{2}([0,\tau];\R^{2n}),\;u\mapsto A(\cdot)u.
 $$
An elementary functional analysis fact (cf. \cite[Exercise 2.20]{Bre})
shows that $\Lambda_{M,\tau, A}$ is also closed and self-adjoint.
Since $A\in L^\infty([0,\tau];\mathcal{L}_s(\mathbb{R}^{2n}))$,
for any $u\in W^{1,2}_{M}([0,\tau]; \mathbb{R}^{2n})$,
 (\ref{e:LambdaKA}) yields
$$
\|\Lambda_{M,\tau, A}u\|_2\ge \|\dot{u}\|_2-\|A\|_{L^\infty}\|u\|_2
$$
and hence
$$
\|u\|_{1,2}\le \|\dot{u}\|_2+\|u\|_2\le (1+\|A\|_{L^\infty})(\|u\|_2+ \|\Lambda_{M,\tau, A}u\|_2).
$$
That is, the conditions in Proposition~2.1 are satisfied with $\mathscr{H}=L^{2}([0,\tau];\R^{2n})$
and ${A}=\Lambda_{M,\tau, A}$.
Hence Proposition~\ref{prop:abstract} leads to  the following result, which was directly proved
by Dong \hbox{\cite[Proposition~3.12]{Do06}}.

\begin{proposition}\label{prop:Dong}
$\Lambda_{M,\tau, A}$ has the closed  range $R(\Lambda_{M,\tau, A})$ in $L^2([0,\tau];\mathbb{R}^{2n})$,
 the kernel ${\rm Ker}(\Lambda_{M,\tau, A})$ is of dimension at most $2n$, and there exists an orthogonal decomposition
$$
L^2([0,\tau];\mathbb{R}^{2n})={\rm Ker}(\Lambda_{M,\tau, A})\oplus R(\Lambda_{M,\tau, A}).
$$
Moreover, the restriction $\hat{\Lambda}_{M,\tau, A}$ of $\Lambda_{M,\tau, A}$
to $R(\Lambda_{M,\tau, A})\cap{\rm dom}(\Lambda_{M,\tau, A})$
 is invertible and the inverse $(\hat{\Lambda}_{M,\tau, A})^{-1}:
 R(\Lambda_{M,\tau, A})\to R(\Lambda_{M,\tau, A})$
is compact and self-adjoint if  $R(\Lambda_{M,\tau, A})$ is  endowed with the $L^2$-norm.
\end{proposition}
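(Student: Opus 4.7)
\medskip
\noindent\textbf{Proof proposal.} The plan is to verify that $\Lambda_{M,\tau,A}$ together with the Hilbert space $H=L^2([0,\tau];\mathbb{R}^{2n})$ and the norm $|\cdot|=\|\cdot\|_{1,2}$ on
$$
D(\Lambda_{M,\tau,A})=W^{1,2}_M([0,\tau];\mathbb{R}^{2n})
$$
satisfies all the hypotheses of Proposition~\ref{prop:abstract}, and then to read off each conclusion of Proposition~\ref{prop:Dong} from the conclusions of that abstract statement, refining only the bound on the dimension of the kernel.

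First I would record that $\Lambda_{M,\tau,A}$ is densely defined, closed and self-adjoint. Density is clear since smooth compactly supported functions lie in $W^{1,2}_M$. Closedness and self-adjointness of $\Lambda_{M,\tau,0}$ are proved by Ekeland (Lemma 4, p.~102 of \cite{Ek90}); the zero-order multiplication operator $u\mapsto A(\cdot)u$ is bounded and self-adjoint on $L^2$, and therefore the sum $\Lambda_{M,\tau,A}=\Lambda_{M,\tau,0}+A(\cdot)$ is again closed and self-adjoint by the usual bounded-perturbation lemma (cf.\ \cite[Exercise~2.20]{Bre}).

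Next I would check the two analytic properties required of $(D(\Lambda_{M,\tau,A}),\|\cdot\|_{1,2})$. Completeness is automatic because $W^{1,2}_M([0,\tau];\mathbb{R}^{2n})$ is the kernel of the continuous evaluation map $u\mapsto u(\tau)-Mu(0)$ on $W^{1,2}([0,\tau];\mathbb{R}^{2n})$, hence a closed subspace of a Hilbert space. Compactness of the inclusion into $L^2$ is the Rellich--Kondrachov embedding. Continuity of
$$
\Lambda_{M,\tau,A}:(W^{1,2}_M([0,\tau];\mathbb{R}^{2n}),\|\cdot\|_{1,2})\to (L^2([0,\tau];\mathbb{R}^{2n}),\|\cdot\|_2)
$$
follows from the estimate $\|J\dot u+A(\cdot)u\|_2\le \|\dot u\|_2+\|A\|_{L^\infty}\|u\|_2\le(1+\|A\|_{L^\infty})\|u\|_{1,2}$.

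With these verifications in place, Proposition~\ref{prop:abstract} immediately supplies: (i)~$R(\Lambda_{M,\tau,A})$ is closed in $L^2$; (ii)~the orthogonal decomposition $L^2=\mathrm{Ker}(\Lambda_{M,\tau,A})\oplus R(\Lambda_{M,\tau,A})$; (iii)~the restriction $\hat\Lambda_{M,\tau,A}$ is invertible and $(\hat\Lambda_{M,\tau,A})^{-1}:R(\Lambda_{M,\tau,A})\to R(\Lambda_{M,\tau,A})$ is compact and self-adjoint for the $L^2$-norm. To sharpen ``$\dim\mathrm{Ker}<\infty$'' to ``$\dim\mathrm{Ker}\le 2n$'' I would observe that $u\in\mathrm{Ker}(\Lambda_{M,\tau,A})$ is equivalent to $u$ being an (absolutely continuous) solution of the linear Hamiltonian system $\dot u=JA(t)u$ with boundary condition $u(\tau)=Mu(0)$, so
$$
\mathrm{Ker}(\Lambda_{M,\tau,A})=\{\gamma_A(\cdot)\xi\,|\,\xi\in\mathcal{E}_{M,\tau,A}\},
$$
which has dimension $\dim\mathcal{E}_{M,\tau,A}=\dim\mathrm{Ker}(\gamma_A(\tau)-M)\le 2n$.

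I do not expect any serious obstacle: the argument is a straightforward packaging of the abstract Proposition~\ref{prop:abstract} once self-adjointness of $\Lambda_{M,\tau,0}$ is invoked, and the sharp kernel bound is an ODE-theoretic remark. The only point that might require a moment of care is the identification of the inverse: one must distinguish the graph norm on $D(\Lambda_{M,\tau,A})\cap R(\Lambda_{M,\tau,A})$ (with respect to which $\hat\Lambda_{M,\tau,A}^{-1}$ is bounded) from the $L^2$ norm on $R(\Lambda_{M,\tau,A})$, and then use the Rellich compactness to pass from boundedness to compactness of the $L^2$-to-$L^2$ operator.
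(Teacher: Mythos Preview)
Your proposal is correct and follows essentially the same route as the paper: establish that $\Lambda_{M,\tau,0}$ is closed and self-adjoint via Ekeland's lemma, perturb by the bounded self-adjoint multiplication operator $A(\cdot)$ using \cite[Exercise~2.20]{Bre}, and then apply Proposition~\ref{prop:abstract}; the sharp bound $\dim\mathrm{Ker}\le 2n$ comes from the identification $\mathrm{Ker}(\Lambda_{M,\tau,A})=\{\gamma_A(\cdot)\xi:\xi\in\mathcal{E}_{M,\tau,A}\}$, exactly as the paper records just before stating the proposition.
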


 For $v(t)=\Upsilon_A(t)\xi$ with $\xi\in\mathbb{R}^{2n}$,
and $u\in L^2([0,\tau];\mathbb{R}^{2n})$,
$$
(u,v)_2=-\int^\tau_0(J\Upsilon_A(t)^{-1}Ju(t),\xi)_{\mathbb{R}^{2n}}dt=
-\left(\int^\tau_0J\Upsilon_A(t)^{-1}Ju(t)dt,\xi\right)_{\mathbb{R}^{2n}}
$$
implies
\begin{eqnarray*}
\tilde{L}^{2}_{M, A}([0, \tau];\R^{2n}):=R(\Lambda_{M,\tau, A})
=\left\{u\in L^2([0,\tau];\mathbb{R}^{2n})\;\Big|\;  \int^\tau_0J\Upsilon_A(t)^{-1}Ju(t)dt\in \mathcal{E}_{M,\tau,A}^\bot\right\}.
\end{eqnarray*}
Moreover,  for  $w\in W^{1,2}([0,\tau];\mathbb{R}^{2n})$ and $v(t)=\Upsilon_A(t)\xi$ with $\xi\in\mathbb{R}^{2n}$, it holds that
\begin{eqnarray*}
(w,v)_{1,2}&=&\int^\tau_0(w(t),\Upsilon_A(t)\xi)_{\mathbb{R}^{2n}}dt+\int^\tau_0(\dot{w}(t),JA(t)\Upsilon_A(t)\xi)_{\mathbb{R}^{2n}}dt\\
&=&\int^\tau_0(-A(t)J\dot{w}(t)+ w(t),\Upsilon_A(t)\xi)_{\mathbb{R}^{2n}}dt\\
&=&\int^\tau_0(J\Upsilon_A(t)^{-1}J^{-1}[-A(t)J\dot{w}(t)+ w(t)],\xi)_{\mathbb{R}^{2n}}dt\\
&=&\int^\tau_0(-J\Upsilon_A(t)^{-1}J[-A(t)J\dot{w}(t)+ w(t)],\xi)_{\mathbb{R}^{2n}}dt\\
&=&\int^\tau_0(J\Upsilon_A(t)^{-1}J[A(t)J\dot{w}(t)- w(t)],\xi)_{\mathbb{R}^{2n}}dt.
\end{eqnarray*}
Let
$$
\tilde{W}^{1,2}_{M, A}([0, \tau];\R^{2n})=\left\{w\in{W}^{1,2}_{M}([0, \tau];\R^{2n})\,\Big|\,
\int^\tau_0J\Upsilon_A(t)^{-1}J[A(t)J\dot{w}(t)- w(t)]dt\in \mathcal{E}_{M,\tau,A}^\bot\right\},
$$
thus, we have an orthogonal  decomposition of Hilbert spaces
\begin{equation}\label{e:orthCom}
W^{1,2}_{M}([0,\tau]; \mathbb{R}^{2n})={\rm Ker}(\Lambda_{M,\tau, A})\oplus
\tilde{W}^{1,2}_{M, A}([0, \tau];\R^{2n}).
\end{equation}
Clearly, $\Lambda_{M,\tau, A}$ restricts to a Banach space isomorphism
$$
\tilde{\Lambda}_{M,\tau, A}:\tilde{W}^{1,2}_{M, A}([0, \tau];\R^{2n})\to\tilde{L}^{2}_{M, A}([0, \tau];\R^{2n}).
$$
It was explicitly written out in \cite{Do06}. Indeed,  by the orthogonal splitting
$$
\R^{2n}=(I_{2n}-\Upsilon_A(\tau)^{-1}M)(\R^{2n})\oplus J{\rm Ker}(\Upsilon_A(\tau)-M)
$$
(\cite[(3.8)]{Do06}), $I_{2n}-\Upsilon_A(\tau)^{-1}M$ restricts to an invertible operator from $(I_{2n}-\Upsilon_A(\tau)^{-1}M)(\R^{2n})$
to itself. Therefore we can define
$$
\mathfrak{J}:\R^{2n}\to\R^{2n},\;x\mapsto \mathfrak{J}x=(I_{2n}-\Upsilon_A(\tau)^{-1}M)x_1+ x_2,
$$
where $x=x_1+ x_2$ with $x_1\in(I_{2n}-\Upsilon_A(\tau)^{-1}M)(\R^{2n})$ and $x_2\in J{\rm Ker}(\Upsilon_A(\tau)-M)$. Then
$\mathfrak{J}$ is invertible. By \cite[(3.6) and (3.7)]{Do06} we have for $u\in\tilde{L}^{2}_{M,A}([0,\tau];\R^{2n})$,
\begin{equation}\label{e:inver}
[(\tilde{\Lambda}_{M,\tau, A})^{-1}u](t)=\Upsilon_A(t)\mathfrak{J}^{-1}\int^\tau_0\Upsilon_A(s)^{-1}Ju(s)ds-
\Upsilon_A(t)\int^t_0\Upsilon_A(s)^{-1}Ju(s)ds.
\end{equation}
Clearly, if $\mathcal{E}_{M,\tau,A}=\{0\}$, i.e., $\det(\Upsilon_A(\tau)-M)\ne 0$,
then $\Lambda_{M,\tau,A}$ is a Banach space isomorphism from ${W}^{1,2}_{M}([0,\tau];\R^{2n})$ onto ${L}^{2}([0,\tau];\R^{2n})$.\\

\noindent{2.2. \bf Clarke-Ekeland adjoint-action principle.}
Let $X$ be a Banach space, and $X^\ast$ its dual, the duality pairing being
denoted by $\langle f,x\rangle$ for $(x,f)\in X\times X^\ast$.
For a function $G:X\to\R\cup\{+\infty\}$
 which is proper, i.e., it is  not identically $+\infty$,
its Legendre-Fenchel conjugate is the  function $G^\ast:X^\ast\to\R\cup\{+\infty\}$  defined by
  \begin{eqnarray}\label{e:FenchelConjugate}
G^\ast(u^\ast)=\sup\{\langle u^\ast, u\rangle-G(u)\,|\, u\in X\}
\end{eqnarray}
with effective domain  $D(G^\ast)=\{v^\ast\in X^\ast\,|\, G^*(v^*)<+\infty\}$.
It is convex and lower semi-continuous.

From now on, we assume that $X$ is a reflexive Banach space, i.e.,
the canonical embedding (or the James map) $J_X: X\to X^{\ast\ast}$ is a Banach space isomorphism.
Let $\textsf{A} : D(\textsf{A})\subset X\to X^\ast$ be a  closed, densely defined,
 unbounded, and  self-adjoint linear operator.
  Further,  let $F : X\to\mathbb{R}\cup\{ +\infty\}$ be a
  proper, convex and lower semi-continuous (l.s.c.) function.
Define a functional $\Phi: D(\textsf{A})\to\mathbb{R}\cup\{+\infty\}$
and its dual functional $\Psi:D(\textsf{A})\to\R\cup\{+\infty\}$ by
\begin{eqnarray*}
 \Phi(v)=\frac{1}{2}\langle \textsf{A}v,v\rangle+ F(v)\quad\hbox{and}\quad
 \Psi(x)=\frac{1}{2}\langle \textsf{A}x,x\rangle+ [F^\ast\circ(-\textsf{A})](x),
\end{eqnarray*}
where $F^\ast\circ(-\textsf{A}):X\to\R\cup\{+\infty\}$ is defined by
$$
(F^\ast\circ(-\textsf{A}))(x)=\left\{\begin{array}{ll}
F^\ast( -\textsf{A}x)\;\hbox{if $x\in D(\textsf{A})$},\\
+\infty\;\hbox{otherwise}.
\end{array}\right.
$$
If $x\in D(\textsf{A})$ satisfies the condition that $0\in \textsf{A}x+\partial F(x)$ (resp.
$0\in \textsf{A}x+ \partial[F^\ast\circ(-\textsf{A})](x)$), it is called
a \textsf{critical point} of $\Phi$ (resp. $\Psi$), where $\partial F(x)$
is the subgradient of $F$ at $x\in X$  defined by
  \begin{eqnarray*}
\partial F(x)=\{u^\ast\in X^\ast\,|\, F(u)\ge F(x)+\langle u^\ast, u-x\rangle\;\forall u\in X\}.
\end{eqnarray*}

The Clarke duality was introduced by Clarke \cite{Cl79}
and developed by Clarke-Ekeland \cite{ClEk80}.

\begin{theorem}[\hbox{\cite[page 100, Theorem~2]{Ek90}}]\label{th:Ek}
Let the above assumptions be satisfied. If $\bar{u}\in D(\textsf{A})$ is a critical point of $\Phi$,
then all $u\in{\rm Ker}(\textsf{A})+\bar{u}$ are  critical points of
$\Psi$ and $\Psi({u})=-\Phi(\bar{u})$. Conversely, if $\bar{v}\in D(\textsf{A})$
is a critical point of $\Psi$, and
\begin{equation}\label{e:interior}
0\in {\rm Int}(D(F^\ast)-R(\textsf{A}))
 \end{equation}
 then there is some $\bar{w}\in{\rm Ker}(\textsf{A})$ such that
$\bar{u}:=\bar{v}-\bar{w}$
is a critical point of $\Phi$ with $\Phi(\bar{u})=-\Psi(\bar{v})$.
\end{theorem}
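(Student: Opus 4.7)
My plan rests on the Fenchel--Moreau duality: since $F$ is convex, proper and lower semi-continuous, one has $F^{\ast\ast}=F$ and the chain of equivalences
\begin{equation*}
y\in\partial F(x)\Longleftrightarrow x\in\partial F^\ast(y)\Longleftrightarrow F(x)+F^\ast(y)=\langle y,x\rangle_{X^\ast,X}.
\end{equation*}
I would use this to translate the defining inclusions for critical points of $\Phi$ and $\Psi$ back and forth, together with the self-adjointness of $\textsf{A}$ to handle the kernel term.

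For the forward direction, I would start from $-\textsf{A}\bar u\in\partial F(\bar u)$, dualize to $\bar u\in\partial F^\ast(-\textsf{A}\bar u)$, and extract the Fenchel equality $F(\bar u)+F^\ast(-\textsf{A}\bar u)=-\langle \textsf{A}\bar u,\bar u\rangle$. For any $u=\bar u+w$ with $w\in\mathrm{Ker}(\textsf{A})\subset D(\textsf{A})$, self-adjointness gives $\langle \textsf{A}\bar u,w\rangle=\langle \bar u,\textsf{A}w\rangle=0$, so $\textsf{A}u=\textsf{A}\bar u$ and $\langle \textsf{A}u,u\rangle=\langle \textsf{A}\bar u,\bar u\rangle$. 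Substituting yields the value identity $\Psi(u)=-\Phi(\bar u)$. To verify that $u$ is a critical point of $\Psi$, I unfold $-\textsf{A}u\in\partial[F^\ast\circ(-\textsf{A})](u)$ as the subgradient inequality $F^\ast(-\textsf{A}y)\ge F^\ast(-\textsf{A}u)+\langle -\textsf{A}u,y-u\rangle$ for all $y\in D(\textsf{A})$; because $\textsf{A}u=\textsf{A}\bar u$ and $\langle \textsf{A}\bar u,u-\bar u\rangle=\langle \textsf{A}\bar u,w\rangle=0$, this is exactly the subgradient inequality satisfied by $\bar u\in\partial F^\ast(-\textsf{A}\bar u)$ evaluated at the point $z=-\textsf{A}y$.

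For the converse, the critical point condition reads $0\in \textsf{A}\bar v+\partial[F^\ast\circ(-\textsf{A})](\bar v)$. The heart of the matter is the subdifferential chain rule
\begin{equation*}
\partial[F^\ast\circ(-\textsf{A})](\bar v)=-\textsf{A}\,\partial F^\ast(-\textsf{A}\bar v),
\end{equation*}
in which the self-adjointness of $\textsf{A}$ turns the adjoint of $-\textsf{A}$ back into $-\textsf{A}$. The inclusion $\supseteq$ is elementary; for the reverse inclusion I plan to invoke an Attouch--Brezis-type qualification theorem, for which the hypothesis $0\in\mathrm{Int}(D(F^\ast)-R(\textsf{A}))$ is exactly tailored. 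Granted the equality, there exists $\bar u\in\partial F^\ast(-\textsf{A}\bar v)\cap D(\textsf{A})$ with $\textsf{A}\bar u=\textsf{A}\bar v$; setting $\bar w:=\bar v-\bar u\in\mathrm{Ker}(\textsf{A})$ and dualizing $\bar u\in\partial F^\ast(-\textsf{A}\bar u)$ back via Fenchel--Moreau gives $-\textsf{A}\bar u\in\partial F(\bar u)$, i.e., $\bar u$ is a critical point of $\Phi$. The value equality $\Phi(\bar u)=-\Psi(\bar v)$ then follows by applying the forward direction to $\bar u$ and the point $\bar v=\bar u+\bar w\in\bar u+\mathrm{Ker}(\textsf{A})$.

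The main obstacle is precisely the chain rule above. Because $\textsf{A}$ is unbounded and $F^\ast$ can fail to be finite everywhere, the automatic inclusion only goes one way; without a qualification condition one could construct critical points of $\Psi$ whose defining subgradient element does not arise in the factored form $-\textsf{A}\xi$ with $\xi\in\partial F^\ast(-\textsf{A}\bar v)$. This is exactly what the interior condition \eqref{e:interior} rules out, and it is the reason this hypothesis appears in the converse half of the statement but is unneeded in the forward half.
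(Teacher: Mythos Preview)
The paper does not supply its own proof of this theorem: it is quoted verbatim as \cite[page 100, Theorem~2]{Ek90} and used as a black box. Your outline is essentially the standard argument from Ekeland's book and is correct in its structure: Fenchel--Moreau reciprocity for the forward half, and the subdifferential chain rule $\partial[F^\ast\circ(-\textsf{A})](\bar v)\supseteq -\textsf{A}\bigl(\partial F^\ast(-\textsf{A}\bar v)\cap D(\textsf{A})\bigr)$ together with the Attouch--Brezis qualification (which is exactly what the interior condition \eqref{e:interior} provides) for the reverse inclusion in the converse half. Your identification of the chain rule as the delicate step, and of the interior hypothesis as the remedy, matches Ekeland's treatment; the only caution is that the standard statements of that chain rule assume a bounded $L$, so one either restricts the composite to $D(\textsf{A})$ and works there (as Ekeland effectively does), or checks that the Attouch--Brezis argument survives when the linear map is merely closed and densely defined.
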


Let us state three convenient corollaries of this theorem.

By \cite[page 92, Proposition 9]{Ek90}  and Theorem~\ref{th:Ek} we may deduce:

\begin{corollary}\label{th:Ek9}
Under the assumptions of Theorem~\ref{th:Ek}, suppose that
 $F : X\to\R\cup\{+\infty\}$
(resp. $F^\ast : X\to\R\cup\{+\infty\}$) is
Gateaux-differentiable at each point in $D(\textsf{A})$ (resp. $R(\textsf{A})$),
and that $D(\textsf{A})$ itself is a Banach space with norm $|\cdot|$ such that
 $$
 \iota:(D(\textsf{A}),|\cdot|)\hookrightarrow X\quad\hbox{and}\quad \textsf{A}: (D(\textsf{A}),|\cdot|)\to X^\ast
 $$
are continuous, which implies
\begin{enumerate}
\item[\rm (i)]  $F:(D(\textsf{A}),|\cdot|)\to\R$
(and so $\Phi:(D(\textsf{A}),|\cdot|)\to\R$) is Gateaux-differentiable,
\item[\rm (ii)] $F^\ast\circ(-\textsf{A}):(D(\textsf{A}),|\cdot|)\to\R$ (and so $\Psi:(D(\textsf{A}),|\cdot|)\to\R$)
 is Gateaux-differentiable.
\end{enumerate}
Then  for any critical point $\bar{u}$ of $\Phi$ on $(D(\textsf{A}),|\cdot|)$, each $u\in{\rm Ker}(\textsf{A})+\bar{u}$
is a critical point of
$\Psi$ on $(D(\textsf{A}),|\cdot|)$ and $\Psi({u})=-\Phi(\bar{u})$.
Conversely, if $0\in {\rm Int}(D(F^\ast)-R(\textsf{A}))$, then for any critical point $\bar{v}$ of $\Psi$ on $(D(\textsf{A}),|\cdot|)$,
 there is some $\bar{w}\in{\rm Ker}(\textsf{A})$ such that
$\bar{u}:=\bar{v}-\bar{w}$ is a critical point of $\Phi$ on $(D(\textsf{A}),|\cdot|)$ with $\Phi(\bar{u})=-\Psi(\bar{v})$.
\end{corollary}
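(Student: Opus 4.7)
The plan is to reduce the Gateaux-differentiable setting on $(D(\textsf{A}),|\cdot|)$ to the subgradient framework of Theorem~\ref{th:Ek}, the bridge being Ekeland's \cite[page~92, Proposition~9]{Ek90}: for a convex l.s.c.\ function that is Gateaux-differentiable at a point, the subdifferential at that point equals the singleton consisting of the Gateaux derivative.

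First I would verify (i) and (ii). Since $\iota:(D(\textsf{A}),|\cdot|)\hookrightarrow X$ is continuous linear, the chain rule yields that $F\circ\iota$ is Gateaux-differentiable on $(D(\textsf{A}),|\cdot|)$ whenever $F$ is Gateaux-differentiable along $D(\textsf{A})$; combined with the continuity of $\textsf{A}:(D(\textsf{A}),|\cdot|)\to X^\ast$, which makes the quadratic $v\mapsto\frac12\langle\textsf{A}v,v\rangle$ Gateaux-differentiable with derivative $\textsf{A}v$ (using self-adjointness of $\textsf{A}$), this gives (i). Similarly, $-\textsf{A}$ is continuous linear from $(D(\textsf{A}),|\cdot|)$ into $X^\ast$, so for $x\in D(\textsf{A})$ the chain rule yields
\[
D_G[F^\ast\circ(-\textsf{A})](x)\,[h]=\bigl\langle -\textsf{A}h,\,D_GF^\ast(-\textsf{A}x)\bigr\rangle_{X^\ast,X},\quad h\in D(\textsf{A}),
\]
which gives (ii).

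For the forward direction I would argue as follows. If $\bar{u}\in D(\textsf{A})$ is a Gateaux critical point of $\Phi$, then $\langle \textsf{A}\bar{u}+D_GF(\bar{u}),h\rangle=0$ for all $h\in D(\textsf{A})$; since $D(\textsf{A})$ is dense in $X$ and $\textsf{A}\bar{u}+D_GF(\bar{u})\in X^\ast$, this forces $-\textsf{A}\bar{u}=D_GF(\bar{u})$. Because $F$ is convex l.s.c.\ and Gateaux-differentiable at $\bar{u}$, \cite[page~92, Proposition~9]{Ek90} gives $\partial F(\bar{u})=\{D_GF(\bar{u})\}$, so $0\in\textsf{A}\bar{u}+\partial F(\bar{u})$. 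Theorem~\ref{th:Ek} then yields, for every $u\in{\rm Ker}(\textsf{A})+\bar{u}$, the subdifferential condition $0\in\textsf{A}u+\partial[F^\ast\circ(-\textsf{A})](u)$ and the value identity $\Psi(u)=-\Phi(\bar{u})$. Applying Proposition~9 once more to the convex l.s.c.\ function $F^\ast\circ(-\textsf{A}):X\to\R\cup\{+\infty\}$, which by (ii) is Gateaux-differentiable at $u$, converts this subdifferential condition into $D_G\Psi(u)=0$ on $(D(\textsf{A}),|\cdot|)$. The converse direction is entirely parallel, using the extra hypothesis $0\in{\rm Int}(D(F^\ast)-R(\textsf{A}))$ required by Theorem~\ref{th:Ek}.

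The main obstacle is the compatibility of the two notions of criticality: the Gateaux derivative lives naturally as a continuous linear form on $(D(\textsf{A}),|\cdot|)$, whereas the subdifferential of $F^\ast\circ(-\textsf{A})$ is taken in $X^\ast$. The key technical point is to show that Proposition~9 still identifies these: the subgradient inequality for $F^\ast\circ(-\textsf{A})$ on $X$ need only be tested on $v\in D(\textsf{A})$ (the function being $+\infty$ outside), and testing along $v=u+th$ with $h\in D(\textsf{A})$, $t\to 0^\pm$, pins the subgradient to the Gateaux derivative. Once this identification is established, everything else is just a straightforward transcription of Theorem~\ref{th:Ek}.
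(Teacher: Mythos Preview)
Your approach is correct and is exactly the route the paper takes: the paper's entire proof is the single sentence ``By \cite[page 92, Proposition 9]{Ek90} and Theorem~\ref{th:Ek} we may deduce'', and your proposal simply unpacks what this means. In particular, your ``main obstacle'' paragraph correctly identifies and resolves the only genuine subtlety, namely that the subdifferential of $F^\ast\circ(-\textsf{A})$ lives in $X^\ast$ while the Gateaux derivative is a priori only a functional on $(D(\textsf{A}),|\cdot|)$, and that testing the subgradient inequality along $v=u+th$ with $h\in D(\textsf{A})$ pins down $\langle\xi,h\rangle=-\langle\textsf{A}h,D_GF^\ast(-\textsf{A}u)\rangle$.
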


Suppose that $F\in C^1(X)$ is strictly convex, i.e.,
$\langle v-w, F'(v)-F'(w)\rangle>0$ if $v\ne w$.
Then $F'$ is injective, $F^\ast$ is finite on the range of $F'$, and $\partial F^\ast(F'(v))=\{v\}$
for any $v\in X$. If, in addition, $F'$ is strongly monotone and coercive in the sense that
$$
\langle v-w, F'(v)-F'(w)\rangle\ge \alpha(\|v-w\|)\|v-w\|\quad\forall v,w\in X,
$$
where $\alpha:[0,\infty)\to [0,\infty)$ is a non-decreasing function vanishing only at $0$ and such that
$\alpha(r)\to\infty$ as $r\to\infty$, then $F^\ast\in C^1(X^\ast)$ and $F'$ is a homeomorphism of $X$ onto $X^\ast$.

Similarly,  \cite[page 92, Proposition 9]{Ek90}  and Theorem~\ref{th:Ek}  give:

\begin{corollary}\label{th:Ek10}
  Let $\textsf{A} : D(\textsf{A})\subset X\to X^\ast$  be as in Theorem~\ref{th:Ek},
 and let $F\in C^1(X)$ be strictly convex and $F'$ be strongly monotone
 and coercive (so that $R(F') = X$ and $\textsf{A}(D(\textsf{A}))\subset R(F')$).
 Suppose that $D(\textsf{A})$ itself is a Banach space with norm $|\cdot|$ and
 $$
 \iota:(D(\textsf{A}),|\cdot|)\hookrightarrow X\quad\hbox{and}\quad \textsf{A}: (D(\textsf{A}),|\cdot|)\to X^\ast
 $$
are continuous, which implies that $F^\ast\circ(-\textsf{A}):(D(\textsf{A}),|\cdot|)\to\R$ (and so $\Psi:(D(\textsf{A}),|\cdot|)\to\R$)
 is $C^1$.
Then for a critical point $\bar{u}$ of $\Phi$ on $(D(\textsf{A}),|\cdot|)$,  each $u\in{\rm Ker}(\textsf{A})+\bar{u}$
is a  critical point of
$\Psi$ on $(D(\textsf{A}),|\cdot|)$ and $\Psi({u})=-\Phi(\bar{u})$. Conversely, if $\bar{v}$ is a critical point of $\Psi$ on $(D(\textsf{A}),|\cdot|)$,
 then there is some $\bar{w}\in{\rm Ker}(\textsf{A})$ such that
$\bar{u}:=\bar{v}-\bar{w}$
is a critical point of $\Phi$ on $(D(\textsf{A}),|\cdot|)$ with $\Phi(\bar{u})=-\Psi(\bar{v})$.
\end{corollary}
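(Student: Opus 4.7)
The plan is to obtain Corollary~\ref{th:Ek10} by combining Theorem~\ref{th:Ek} with an upgrade of the Gateaux regularity (as in Corollary~\ref{th:Ek9}) to Fr\'echet ($C^1$) regularity, the latter arising directly from the stronger convexity/monotonicity hypothesis on $F$. The overall structure mirrors the passage from Theorem~\ref{th:Ek} to Corollary~\ref{th:Ek9}.

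First I would invoke the classical convex analysis result recalled immediately before the corollary: strict convexity of $F\in C^1(X)$ together with strong monotonicity and coercivity of $F'$ yields $F^\ast\in C^1(X^\ast)$ with $(F^\ast)'=(F')^{-1}:X^\ast\to X$ a homeomorphism. A key byproduct is $D(F^\ast)=X^\ast$, so the interior condition (\ref{e:interior}) of Theorem~\ref{th:Ek} is trivially satisfied because $D(F^\ast)-R(\textsf{A})=X^\ast$.

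The $C^1$ regularity of $\Psi$ on $(D(\textsf{A}),|\cdot|)$ then follows by the chain rule. Indeed, $-\textsf{A}:(D(\textsf{A}),|\cdot|)\to X^\ast$ is continuous linear (hence $C^\infty$) by hypothesis, and $F^\ast:X^\ast\to\R$ is $C^1$ by the preceding step, so the composition $F^\ast\circ(-\textsf{A}):(D(\textsf{A}),|\cdot|)\to\R$ is $C^1$. The quadratic form $x\mapsto\frac12\langle \textsf{A}x,x\rangle$ is $C^\infty$ on $(D(\textsf{A}),|\cdot|)$ because both $\textsf{A}:(D(\textsf{A}),|\cdot|)\to X^\ast$ and $\iota:(D(\textsf{A}),|\cdot|)\hookrightarrow X$ are continuous. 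Hence $\Psi\in C^1(D(\textsf{A}),|\cdot|)$; the same argument applied to $F\circ\iota$ yields $\Phi\in C^1(D(\textsf{A}),|\cdot|)$.

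Finally, I would reconcile the two notions of critical point. Since $F\in C^1(X)$ one has $\partial F(x)=\{F'(x)\}$, so the subgradient condition $0\in \textsf{A}\bar u+\partial F(\bar u)$ used in Theorem~\ref{th:Ek} coincides with vanishing of the Fr\'echet derivative of $\Phi$ on $(D(\textsf{A}),|\cdot|)$; the identification of the two criticality notions for $\Psi$ is analogous since $\partial F^\ast=\{(F^\ast)'\}$. With the interior hypothesis automatic (Step~1) and the criticality notions reconciled, Theorem~\ref{th:Ek} delivers the claimed bijective correspondence between critical points of $\Phi$ and $\Psi$ modulo $\mathrm{Ker}(\textsf{A})$, together with the identity $\Psi(u)=-\Phi(\bar u)$. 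The only non-formal ingredient is the fact $F^\ast\in C^1(X^\ast)$, a purely convex-analytic input; beyond that the argument is routine, and I do not anticipate any serious obstacle.
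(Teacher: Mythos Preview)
Your proposal is correct and follows essentially the same approach as the paper, which derives the corollary by the same mechanism as Corollary~\ref{th:Ek9}: invoke \cite[page 92, Proposition 9]{Ek90} to identify subgradient critical points with (Gateaux/Fr\'echet) critical points, then apply Theorem~\ref{th:Ek}. Your explicit observation that $D(F^\ast)=X^\ast$ renders the interior condition~(\ref{e:interior}) automatic is exactly the point that distinguishes this corollary from Corollary~\ref{th:Ek9}, and the paper leaves this implicit in its one-line justification.
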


Using \cite[Lemma~2.1]{Ch81}  we can also derive from Theorem~\ref{th:Ek}:

\begin{corollary}\label{cor:Ek1}
Under the assumptions of Theorem~\ref{th:Ek} suppose that $D(\textsf{A})$ itself is a Banach space with norm $|\cdot|$ such that
 the inclusion $\iota:(D(\textsf{A}),|\cdot|)\hookrightarrow X$ is continuous. Assume also  that both $F$ and $F^\ast$
are convex and locally Lipschitz continuous functions on $X$. Then for a critical point
 $\bar{u}$ of $\Phi$ on $(D(\textsf{A}),|\cdot|)$,  all $u\in{\rm Ker}(\textsf{A})+\bar{u}$ are
  critical points of $\Psi$ on $(D(\textsf{A}),|\cdot|)$ and satisfy $\Psi({u})=-\Phi(\bar{u})$.
  Conversely, if $\bar{v}\in D(\textsf{A})$ is a critical point of $\Psi$ on $(D(\textsf{A}),|\cdot|)$,
and (\ref{e:interior}) holds,  then there is some $\bar{w}\in{\rm Ker}(\textsf{A})$ such that
$\bar{u}:=\bar{v}-\bar{w}$
is a critical point of $\Phi$ on $(D(\textsf{A}),|\cdot|)$ with $\Phi(\bar{u})=-\Psi(\bar{v})$.
\end{corollary}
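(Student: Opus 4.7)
The plan is to reduce the corollary to Theorem~\ref{th:Ek} by matching the notion of ``critical point on $(D(\textsf{A}), |\cdot|)$'' with the subdifferential inclusion that appears in Theorem~\ref{th:Ek}. Since $F$ is locally Lipschitz on $X$ and $\iota : (D(\textsf{A}), |\cdot|) \hookrightarrow X$ is continuous linear, the composite $F \circ \iota$ is locally Lipschitz on $(D(\textsf{A}), |\cdot|)$; analogously $F^\ast \circ (-\textsf{A})$ is locally Lipschitz on $(D(\textsf{A}), |\cdot|)$ because $-\textsf{A} : (D(\textsf{A}), |\cdot|) \to X^\ast$ is continuous linear and $F^\ast$ is locally Lipschitz on $X^\ast$. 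The bilinear term $\frac12 \langle \textsf{A}\cdot, \cdot\rangle$ is of class $C^1$ on $(D(\textsf{A}), |\cdot|)$ with derivative $\textsf{A}u$ at $u$. Hence $\Phi$ and $\Psi$ are both locally Lipschitz on $(D(\textsf{A}), |\cdot|)$, so the Clarke subgradients $\partial \Phi$ and $\partial \Psi$ are well defined there, and a critical point is understood in the sense $0 \in \partial \Phi(\bar u)$ (resp.\ $0 \in \partial \Psi(\bar v)$). The standard sum rule for Clarke subgradients of a $C^1$ plus a locally Lipschitz functional then gives
\[
\partial \Phi(\bar u) = \textsf{A} \bar u + \partial (F \circ \iota)(\bar u), \qquad \partial \Psi(\bar v) = \textsf{A} \bar v + \partial (F^\ast \circ (-\textsf{A}))(\bar v)
\]
as subsets of $(D(\textsf{A}), |\cdot|)^\ast$.

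Next I would invoke \cite[Lemma~2.1]{Ch81}. Applied to the locally Lipschitz function $F$ on $X$ and the continuous dense inclusion $\iota$ (density coming from the assumption that $\textsf{A}$ is densely defined), it gives the identity of generalized directional derivatives $(F\circ\iota)^0(\bar u; h) = F^0(\bar u; h)$ for every $h \in D(\textsf{A})$, and hence the subgradient relation $\partial(F \circ \iota)(\bar u) = \iota^\ast(\partial F(\bar u))$ in the sense that $\zeta \in \partial(F\circ\iota)(\bar u)$ iff there exists $\xi \in \partial F(\bar u) \subset X^\ast$ with $\iota^\ast \xi = \zeta$. Combined with $\textsf{A}\bar u \in X^\ast$ and the fact that two elements of $X^\ast$ which agree on the dense subspace $D(\textsf{A})$ must coincide, the condition $0 \in \partial \Phi(\bar u)$ becomes equivalent to the existence of $\xi \in \partial F(\bar u)$ with $\textsf{A} \bar u + \xi = 0$ in $X^\ast$, i.e.\ to $0 \in \textsf{A} \bar u + \partial F(\bar u)$ -- which is precisely Theorem~\ref{th:Ek}'s notion of critical point. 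The same argument applied to $F^\ast$ and $-\textsf{A}$ shows that $0 \in \partial \Psi(\bar v)$ on $(D(\textsf{A}), |\cdot|)$ is equivalent to $0 \in \textsf{A}\bar v + \partial(F^\ast \circ (-\textsf{A}))(\bar v)$ in the ambient sense of Theorem~\ref{th:Ek}.

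With the two equivalences at hand, the first statement of the corollary follows immediately from the first statement of Theorem~\ref{th:Ek}: translate the hypothesis on $\bar u$ to the ambient subdifferential form, apply the theorem to obtain that every $u \in {\rm Ker}(\textsf{A}) + \bar u$ is a critical point of $\Psi$ with $\Psi(u) = -\Phi(\bar u)$, and translate back using the second equivalence. The converse direction is handled identically, invoking the extra hypothesis (\ref{e:interior}) exactly as in Theorem~\ref{th:Ek}.

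The main obstacle is the careful handling of Chang's lemma: one needs the Clarke subgradient of the restriction $F\circ \iota$ on the subspace $(D(\textsf{A}), |\cdot|)$ to be related to $\partial F$ on $X$ through $\iota^\ast$, and this requires both the continuity of $\iota$ (for $F\circ\iota$ to be locally Lipschitz on $(D(\textsf{A}), |\cdot|)$) and the density of $\iota$ (to promote an identity on $D(\textsf{A})$ to an identity in $X^\ast$). The same delicacy appears for $\Psi$, but there it is $-\textsf{A} : (D(\textsf{A}), |\cdot|) \to X^\ast$ that plays the role of $\iota$, and the density condition to exploit is instead the fact that $\textsf{A} \bar v \in X^\ast$ together with the density of $D(\textsf{A})$ in $X$. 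Once these technical identifications are secured, the corollary reduces mechanically to Theorem~\ref{th:Ek}.
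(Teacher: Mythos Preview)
Your approach is exactly what the paper intends: the paper's entire proof indication is the single sentence ``Using \cite[Lemma~2.1]{Ch81} we can also derive from Theorem~\ref{th:Ek},'' and you have correctly unpacked this by identifying the Clarke-subgradient framework, invoking Chang's lemma to transfer the critical-point condition from $(D(\textsf{A}),|\cdot|)$ back to the ambient convex subdifferential on $X$ (here convexity makes the Clarke and convex subgradients coincide), and then applying Theorem~\ref{th:Ek} directly. Your treatment of the $\Psi$-direction is slightly sketchier than the $\Phi$-direction, but this mirrors the paper, which supplies no further detail.
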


By \cite[Exercise~2.20]{Bre} it is easy to deduce the following result, which
can be used to enlarge applications of the above theorems and corollaries.

\begin{proposition}\label{prop:Bre8}
Let $X$ be a reflexive Banach space, $\textsf{T}\in\mathscr{L}(X,X^\ast)$ and let $\textsf{A} : D(\textsf{A})\subset X\to X^\ast$
 be an unbounded linear operator that is densely defined and closed. Suppose that
 $\textsf{T}$ and $\textsf{A}$ are self-adjoint. Then the
operator $\textsf{B}: D(\textsf{B})\subset X\to X^\ast$ defined by $D(\textsf{B})=D(\textsf{A})$ and $\textsf{B}=\textsf{A}+\textsf{T}$
is also closed, and self-adjoint.
\end{proposition}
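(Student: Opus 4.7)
The plan is to verify the two claims separately: first that $\textsf{B}$ is closed, then that $\textsf{B}=\textsf{B}^\ast$ under the James identification $J_X: X\to X^{\ast\ast}$. Both reductions parallel the classical Hilbert-space argument for bounded perturbations of self-adjoint operators; the role of reflexivity is to ensure that the adjoint of a densely defined operator between $X$ and $X^\ast$ can again be viewed as an operator from a subspace of $X$ into $X^\ast$, so that the equation $\textsf{B}^\ast=\textsf{B}$ even makes sense.

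For closedness of $\textsf{B}$, I would take a sequence $(u_n)\subset D(\textsf{B})=D(\textsf{A})$ with $u_n\to u$ in $X$ and $\textsf{B}u_n\to w$ in $X^\ast$. Since $\textsf{T}\in\mathscr{L}(X,X^\ast)$, one has $\textsf{T}u_n\to \textsf{T}u$, and therefore $\textsf{A}u_n=\textsf{B}u_n-\textsf{T}u_n\to w-\textsf{T}u$. Because $\textsf{A}$ is closed, this forces $u\in D(\textsf{A})=D(\textsf{B})$ and $\textsf{A}u=w-\textsf{T}u$, i.e.\ $\textsf{B}u=w$. Density of $D(\textsf{B})$ in $X$ is inherited from density of $D(\textsf{A})$.

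For self-adjointness, recall that for any densely defined operator $\textsf{S}:D(\textsf{S})\subset X\to X^\ast$ one sets
\[
D(\textsf{S}^\ast)=\{v\in X \mid \exists\, \eta\in X^\ast\text{ with }\langle \textsf{S}u,v\rangle=\langle \eta,u\rangle\;\forall u\in D(\textsf{S})\},\qquad \textsf{S}^\ast v:=\eta,
\]
where $v\in X$ is identified with $J_Xv\in X^{\ast\ast}$ via reflexivity; density of $D(\textsf{S})$ in $X$ makes $\eta$ unique. I would then verify by direct substitution that $v\in D(\textsf{B}^\ast)$ precisely when there exists $\zeta\in X^\ast$ with $\langle \textsf{A}u,v\rangle=\langle \zeta-\textsf{T}v,u\rangle$ for all $u\in D(\textsf{A})$, the key input being that the bounded adjoint of $\textsf{T}$ coincides with $\textsf{T}$ itself. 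This last condition is exactly the statement that $v\in D(\textsf{A}^\ast)$ with $\textsf{A}^\ast v=\zeta-\textsf{T}v$, i.e.\ $\zeta=\textsf{A}v+\textsf{T}v=\textsf{B}v$, once the self-adjointness $\textsf{A}^\ast=\textsf{A}$ is invoked. Consequently $D(\textsf{B}^\ast)=D(\textsf{A})=D(\textsf{B})$ and $\textsf{B}^\ast=\textsf{B}$.

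The chief obstacle is not depth but bookkeeping: one must navigate the two distinct duality pairings $\langle\cdot,\cdot\rangle_{X^\ast,X}$ and $\langle\cdot,\cdot\rangle_{X^\ast,X^{\ast\ast}}$ carefully, and confirm that the equality $D(\textsf{B}^\ast)=D(\textsf{A}^\ast)$ really does follow from the fact that $\textsf{T}$ is everywhere-defined and continuous. Both of these are guaranteed because $D(\textsf{A})$ is dense in $X$ and $J_X$ is a surjective isometry, and with these identifications in place the argument collapses to the well-known bounded-perturbation lemma for self-adjoint operators, exactly as in \cite[Exercise~2.20]{Bre}.
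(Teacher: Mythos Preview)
Your proposal is correct and is exactly the standard bounded-perturbation argument. The paper does not actually give a proof of this proposition; it simply states that the result follows from \cite[Exercise~2.20]{Bre}, and your write-up is precisely the computation that this citation points to.
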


Let $\Omega$ be a Borel subset of  $\R^n$ with finite Lebesgue measure, and let
 $f:\Omega\times\R^N\to\R\cup\{+\infty\}$ be a Borel function satisfying the following conditions:
 \begin{eqnarray}
&&f\;\hbox{is bounded from below},\label{e:borel1}\\
&&f(\omega,\cdot)\;\hbox{is lower semi-continuous (l.s.c.) on}\;
\R^N\; \text{for every } \omega \in \Omega,\label{e:borel2}\\
&&f(\omega,\cdot)\;\hbox{is convex on}\;\R^N\;
\text{for every } \omega \in \Omega.\label{e:borel3}
\end{eqnarray}

Denote by $f^\ast(\omega;\xi)=(f(\omega,\cdot))^\ast(\xi)$
the convex conjugate with respect to the second variable.
As noted in the first two lines of \cite[p. 96]{Ek90}, we have:

\begin{proposition}\label{prop:Ek12}
Let \(\Omega \subset \mathbb{R}^n\) be as above, and let
 \(f: \Omega \times \mathbb{R}^N \to \mathbb{R} \cup \{+\infty\}\)
 be a Borel function satisfying (\ref{e:borel1}).
 Let \(1 \le \alpha \le +\infty\) and let \(\beta = \alpha/(\alpha-1)\) be its H\"older conjugate exponent,
  with the conventions \(1/\infty = 0\) and \(1/0 = \infty\).
Define the functional \(F: L^{\alpha}(\Omega; \mathbb{R}^N) \to \mathbb{R} \cup \{+\infty\}\) by
\[
F(u) = \int_\Omega f(x, u(x)) \, dx, \qquad u \in L^{\alpha}(\Omega; \mathbb{R}^N),
\]
where the integral is taken in the sense of Lebesgue. Then the following holds:
\begin{enumerate}
\item[\rm (1)] If $f$ also satisfies (\ref{e:borel2}), then $F$
is lower semi-continuous \cite[Proposition~11, p. 96]{Ek90}.

\item[\rm (2)] If $f$ also satisfies (\ref{e:borel3}),  then $F$ is convex
\cite[line 1, p. 94]{Ek90}.

\item[\rm (3)] If $f$ also satisfies (\ref{e:borel2})--(\ref{e:borel3}) and if
\begin{equation}\label{e:borel4}
    \int_\Omega f(x, \bar{u}(x)) \, dx < +\infty
\end{equation}
for some $\bar{u} \in L^\infty(\Omega; \mathbb{R}^N)$, then
\begin{equation*}
    F^*(u) = \int_\Omega f^*(x; u(x)) \, dx, \qquad \forall\, u \in L^\beta(\Omega;\mathbb{R}^N),
\end{equation*}
as stated in \cite[Theorem~2, p. 94]{Ek90}.

\item[\rm (4)] Under the assumptions of (3), if
\begin{equation}\label{e:borel5}
    \int_\Omega |f^*(x; \bar{v}(x))| \, dx < +\infty
\end{equation}
for some $\bar{v} \in L^\beta(\Omega; \mathbb{R}^N)$, then
\begin{equation*}
    \partial F(u) = \bigl\{ x^* \in L^\beta(\Omega; \mathbb{R}^N) \mid x^*(\omega) \in \partial_\xi f(\omega, u(\omega)) \ \text{a.e.} \bigr\},
\end{equation*}
as stated in \cite[Corollary~2, p. 96]{Ek90}.
\end{enumerate}
 \end{proposition}

\begin{proposition}\label{prop:Ek13}
Let $H : [0,\tau]\times \mathbb{R}^{2n}\to\mathbb{R}, (t, z)\to H(t, z)$
  be measurable in $t$ for each $z\in \mathbb{R}^{2n}$ and strictly convex and continuously differentiable
in $z$ for almost every $t\in [0,\tau]$. Denote by $H^\ast(t;\cdot)$
the Fenchel conjugate of $H(t,\cdot)$ as defined by (\ref{e:FenchelConjugate}).
(It is also strictly convex and continuously differentiable.)
Assume that there exists
 $\alpha>0$, $\delta>0$, and
$\beta,\gamma\in L^2([0,\tau];\mathbb{R}^+)$,
   such that for a.e. $t\in [0,\tau]$ and every $z\in \mathbb{R}^{2n}$, one has
\begin{eqnarray}\label{e:H-condition}
\delta|z|^2-\beta(t)\le H(t,z)\le \alpha|z|^2+ \gamma(t).
\end{eqnarray}
Then the dual  of the functional
  $$
 \mathcal{H}:L^2([0,\tau];\R^{2n})\to\mathbb{R},\;u\mapsto \int^T_0H(t,u(t))dt
 $$
 is given by
$$
\mathcal{H}^\ast:L^2([0,\tau]; \R^{2n})\to\R\cup\{+\infty\},\;u\mapsto \int^\tau_0H^\ast(t; u(t))dt.
$$
Moreover, both  $\mathcal{H}$ and  $\mathcal{H}^\ast$
 are  convex,  bounded on any bounded subsets,  of class $C^1$, and
 \begin{eqnarray}\label{e:H-gradient}
\nabla\mathcal{H}(u)=\nabla_z H(\cdot,u(\cdot))\quad\hbox{and}\quad
\nabla\mathcal{H}^\ast(u)=\nabla_z H^\ast(\cdot;u(\cdot)).
\end{eqnarray}
\end{proposition}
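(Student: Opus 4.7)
The plan is to establish the four claims in sequence: the integral representation of $\mathcal{H}^{\ast}$, convexity together with boundedness on bounded subsets, $C^1$-regularity, and finally the explicit gradient formulas (\ref{e:H-gradient}). The key ingredients will be Proposition~\ref{prop:Ek12}(3), Fenchel duality used pointwise in $t$, and a careful convexity argument yielding linear growth of $\nabla_{\xi}H$.

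\textbf{Integral formula for $\mathcal{H}^{\ast}$.} The lower bound in (\ref{e:H-condition}) gives $H(t,\xi)\ge-\beta(t)$, so $H(t,\cdot)$ is bounded below a.e.\ and (\ref{e:borel1}) holds; lower semi-continuity and convexity are assumed. Choosing $\bar u\equiv 0$, the estimate $|H(t,0)|\le\beta(t)+\gamma(t)\in L^1$ verifies (\ref{e:borel4}). Then Proposition~\ref{prop:Ek12}(3) immediately yields $\mathcal{H}^{\ast}(u)=\int_0^{\tau}H^{\ast}(t;u(t))\,dt$ on $L^2([0,\tau];\R^{2n})$.

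\textbf{Dual bounds, convexity, and boundedness.} Applying $f\mapsto f^{\ast}$ pointwise in $t$ to (\ref{e:H-condition}) and using $(\alpha|\cdot|^2)^{\ast}(\eta)=\frac{1}{4\alpha}|\eta|^2$ together with the order-reversing property of Fenchel conjugation, I obtain
\begin{equation*}
\frac{1}{4\alpha}|\eta|^2-\gamma(t)\le H^{\ast}(t;\eta)\le \frac{1}{4\delta}|\eta|^2+\beta(t).
\end{equation*}
Thus $\mathcal{H}$ and $\mathcal{H}^{\ast}$ are both convex and satisfy two-sided quadratic bounds of the form $C_1\|u\|_{L^2}^2-C_2\le \mathcal{H}(u)\le C_3\|u\|_{L^2}^2+C_4$ (and analogously for $\mathcal{H}^{\ast}$), so both are bounded on bounded sets.

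\textbf{Linear growth of gradients and $C^1$-regularity.} For a.e.\ $t$ and $\xi$ with $\nabla_{\xi}H(t,\xi)\ne 0$, set $e=\nabla_{\xi}H(t,\xi)/|\nabla_{\xi}H(t,\xi)|$. The convexity inequality with increment $s=|\xi|+1$ gives
\begin{equation*}
(|\xi|+1)|\nabla_{\xi}H(t,\xi)|\le H(t,\xi+(|\xi|+1)e)-H(t,\xi)\le \alpha(2|\xi|+1)^2-\delta|\xi|^2+\beta(t)+\gamma(t),
\end{equation*}
whence $|\nabla_{\xi}H(t,\xi)|\le c_1+c_2|\xi|+\beta(t)+\gamma(t)$ for constants $c_1,c_2>0$ depending only on $\alpha,\delta$. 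Since $\beta,\gamma\in L^2$, the Nemytski operator $u\mapsto\nabla_{\xi}H(\cdot,u(\cdot))$ maps $L^2$ continuously into $L^2$ (\cite[Corollary~1.16]{Ch1}). The dual bound of the previous paragraph gives the same estimate for $\nabla_{\xi}H^{\ast}$. Dominated convergence then shows that $\mathcal{H}$ is Gateaux-differentiable with Gateaux derivative equal to the continuous Nemytski operator; this upgrades to Fr\'echet-differentiability and $C^1$-regularity, and (\ref{e:H-gradient}) follows. The argument for $\mathcal{H}^{\ast}$ is identical.

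\textbf{Main obstacle.} The delicate step is the linear growth bound on $\nabla_{\xi}H$: the naive substitution $s=1$ in the convexity inequality yields only a quadratic bound on $|\nabla_{\xi}H|$, which is insufficient for an $L^2\to L^2$ Nemytski operator. The choice $s=|\xi|+1$ is what allows the denominator to absorb one power of $|\xi|$ from the leading $\alpha(2|\xi|+1)^2$ term and produce a genuinely linear estimate with an $L^2$ remainder, precisely matching the $L^2$-integrability of $\beta$ and $\gamma$.
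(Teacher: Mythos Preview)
Your proof is correct and follows essentially the same route as the paper's: use Proposition~\ref{prop:Ek12}(3) for the integral representation of $\mathcal H^\ast$, derive two-sided quadratic bounds on $H^\ast$ from those on $H$, establish linear growth of $\nabla_\xi H$ and $\nabla_\xi H^\ast$, and then invoke Nemytski-operator continuity to obtain $C^1$-regularity. The only differences are cosmetic---the paper cites \cite[Proposition~2.4]{MaWi} and the proof of \cite[Theorem~2.3]{MaWi} for the dual bounds and the linear gradient estimate (which you derive explicitly via the $s=|\xi|+1$ trick), uses Proposition~\ref{prop:Ek12}(4) together with \cite[page~92, Proposition~9]{Ek90} to identify the subdifferential as a singleton (whereas you use dominated convergence directly), and appeals to Krasnosel'skii's theorem from \cite[page~98, Corollary~5]{Ek90} rather than \cite[Corollary~1.16]{Ch1} for Nemytski continuity.
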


\begin{proof}[\bf Proof]
By (\ref{e:H-condition}) and \cite[Proposition 2.4]{MaWi},
$H^\ast(t;z)$ for a.e. $t\in [0,\tau]$ is strictly convex and continuously differentiable in $z\in \mathbb{R}^{2n}$, and also satisfies
\begin{eqnarray}\label{e:H-condition*}
\alpha^{-1}|z|^2-\gamma(t)\le H^\ast(t;z)\le\delta^{-1}|z|^2+ \beta(t).
\end{eqnarray}
Note that  (\ref{e:H-condition}) and  (\ref{e:H-condition*})  imply
$$
|H(t,z)|\le \beta(t)+ \gamma(t)+ \alpha |z|^2\quad\hbox{and}\quad |H^\ast(t;z)|\le \beta(t)+\gamma(t)+\delta^{-1}|z|^2,
$$
 respectively. Thus
$$
 \int^\tau_0|H(t,0)|\le \int^\tau_0(\beta(t)+\gamma(t))dt<+\infty\quad\hbox{and}\quad
 \int^\tau_0|H^\ast(t;0)|\le \int^\tau_0(\beta(t)+\gamma(t))dt<+\infty.
$$
For any $u\in L^2([0,\tau];\R^{2n})$ we derive from (3) to (4) in Proposition~\ref{prop:Ek12} that
\begin{eqnarray*}
&\mathcal{H}^\ast(u)=\int^\tau_0H^\ast(t; u(t))dt,\\
&\partial \mathcal{H}(u)=\{x^\ast\in L^2([0,\tau];\R^{2n})\,|\,x^\ast(t)\in\partial_z H(t,u(t))\;a.e.\}=\{\nabla_{z}H(\cdot,u(\cdot))\},\\
&\partial \mathcal{H}^\ast(u)=\{x^\ast\in L^2([0,\tau];\R^{2n})\,|\,x^\ast(t)\in\partial_z H^\ast(t;u(t))\;a.e.\}=\{\nabla_z H^\ast(\cdot;u(\cdot))\}.
\end{eqnarray*}
By these and \cite[page 92, Proposition 9]{Ek90} we obtain that both $\mathcal{H}$ and $\mathcal{H}^\ast$
are Gateaux-differentiable at each $u\in L^2([0,\tau];\R^{2n})$ and their gradients
are given by (\ref{e:H-gradient}).

 It follows from (\ref{e:H-condition}), (\ref{e:H-condition*})  and
 a theorem of Krasnosel'skii (cf. \cite[page 98, Corollary 5]{Ek90}) that the operators
  ${\bf H}, {\bf H}_\ast:L^2([0,\tau];\R^{2n})\to L^1([0,\tau];\R)$ defined by
$$
{\bf H}(u)(t)=H(t, u(t))\quad\hbox{and}\quad{\bf H}_\ast(u)(t)=H^\ast(t; u(t))
$$
are continuous and maps bounded sets into bounded ones. Hence the functionals
  $\mathcal{H}$ and $\mathcal{H}^\ast$ are convex,  continuous and bounded on any bounded subsets.

Finally, as in the proof of \cite[Theorem~2.3]{MaWi},
 (\ref{e:H-condition}) and (\ref{e:H-condition*})  lead to
\begin{eqnarray}\label{e:H-condition***}
 |\nabla_z H(t, z)|&\le& 1+ \alpha|z|+ \alpha(\beta(t)+\gamma(t)),\\
 |\nabla_z H^\ast(t; z)|&\le& 1+ \delta^{-1}|z|+ \delta^{-1}(\beta(t)+\gamma(t))],\label{e:H-condition**}
\end{eqnarray}
respectively, where constants $\alpha$ and $\delta$ are as in (\ref{e:H-condition})
and (\ref{e:H-condition*}).
As above we derive that
both $\nabla\mathcal{H}$ and $\nabla\mathcal{H}^\ast$ are continuous. Hence
$\mathcal{H}$ and $\mathcal{H}^\ast$ are of class $C^1$.
\end{proof}

\begin{proposition}\label{prop:Ek14}
Under the assumptions of Proposition~\ref{prop:Ek13}, suppose also that $H$
is twice  continuously differentiable in $z$ for almost every $t\in [0,\tau]$, and there
exist constants $C_2>C_1>0$ such that for a.e. $t\in [0,\tau]$,
\begin{equation}\label{e:Hess}
C_1|\eta|^2\le (\nabla^2_{z}H(t, \xi)\eta,\eta)_{\mathbb{R}^{2n}}\le C_2|\eta|^2,\quad\forall\xi,\eta\in\mathbb{R}^{2n}.
\end{equation}
Then both $\nabla\mathcal{H}$ and $\nabla\mathcal{H}^\ast$ are Gateaux differentiable,
strongly monotone and coercive, and therefore
 homeomorphisms from $L^2([0,\tau];\mathbb{R}^{2n})$ to itself. In particular,
 $\mathcal{H}$ and $\mathcal{H}^\ast$ are twice Gateaux  differentiable, and
 \begin{eqnarray*}
 &&\mathcal{H}''(x)[u,v]=(D(\nabla\mathcal{H})(x)[u], v)_{2}=\int^\tau_0(\nabla^2_{z}H(t,x(t))u(t), v(t))_{\mathbb{R}^{2n}} dt,\\
 &&(\mathcal{H}^\ast)''(x)[u,v]=(D(\nabla\mathcal{H}^\ast)(x)[u], v)_{2}=\int^\tau_0(\nabla^2_{z}H^\ast(t;x(t))u(t), v(t))_{\mathbb{R}^{2n}} dt
 \end{eqnarray*}
for all $x,u,v\in L^2([0,\tau];\mathbb{R}^{2n})$. Clearly, both $D(\nabla\mathcal{H})(x)$ and $D(\nabla\mathcal{H}^\ast)(x)$
are positive definite.
\end{proposition}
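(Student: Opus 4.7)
The plan is to bootstrap everything from dual bounds on the Hessian of the Fenchel conjugate $H_t^\ast$, then run the arguments for $\nabla\mathcal{H}$ and $\nabla\mathcal{H}^\ast$ in parallel. First I would show that the hypothesis $C_1I\le H_t''(\xi)\le C_2I$ implies, for a.e.\ $t$, that $\xi\mapsto\nabla_\xi H(t,\xi)$ is a global $C^1$-diffeomorphism of $\mathbb{R}^{2n}$ (it is strictly monotone with modulus $C_1$ and coercive, hence bijective, and has non-singular Jacobian everywhere). Its inverse is $\nabla_\xi H^\ast(t;\cdot)$, and the inverse function theorem gives
\begin{equation*}
(H_t^\ast)''(\eta)=\bigl[H_t''(\nabla_\xi H^\ast(t;\eta))\bigr]^{-1},
\end{equation*}
from which matrix inversion yields the dual pointwise bound $C_2^{-1}I\le (H_t^\ast)''(\eta)\le C_1^{-1}I$.

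Next I would establish Gateaux differentiability of $\nabla\mathcal{H}:L^2([0,\tau];\mathbb{R}^{2n})\to L^2([0,\tau];\mathbb{R}^{2n})$ with the claimed formula. For $x,u\in L^2$ and $s\ne 0$ the fundamental theorem of calculus gives, pointwise,
\begin{equation*}
\frac{1}{s}\bigl[\nabla_\xi H(t,x(t)+su(t))-\nabla_\xi H(t,x(t))\bigr]=\int_0^1 H_t''(x(t)+\theta s u(t))u(t)\,d\theta.
\end{equation*}
The integrand is dominated in operator norm by $C_2|u(t)|\in L^2$ and converges a.e.\ to $H_t''(x(t))u(t)$ by continuity of $H_t''$ in $\xi$; dominated convergence gives the $L^2$-limit, which is precisely the prescribed formula for $D(\nabla\mathcal{H})(x)[u]$. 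The dual bounds from Step~1 allow the identical argument for $\nabla\mathcal{H}^\ast$, yielding the formula for $D(\nabla\mathcal{H}^\ast)(x)$.

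Strong monotonicity and coercivity then come for free. For $u,v\in L^2$, writing $w=u-v$ and integrating along the line segment,
\begin{equation*}
(\nabla\mathcal{H}(u)-\nabla\mathcal{H}(v),u-v)_2=\int_0^\tau\!\!\int_0^1\bigl(H_t''(v(t)+\theta w(t))w(t),w(t)\bigr)_{\mathbb{R}^{2n}}d\theta\,dt\ge C_1\|u-v\|_2^2,
\end{equation*}
which is strong monotonicity with modulus $\alpha(r)=C_1r$ in the sense used just before Corollary~\ref{th:Ek10}. By Cauchy-Schwarz this forces $\|\nabla\mathcal{H}(u)\|_2\ge C_1\|u\|_2-\|\nabla\mathcal{H}(0)\|_2$, giving coercivity. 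Since $\nabla\mathcal{H}$ is continuous (Proposition~\ref{prop:Ek13}), the classical fact quoted there (a continuous strongly monotone coercive operator on a reflexive Banach space is a homeomorphism onto its dual) makes $\nabla\mathcal{H}$ a homeomorphism of $L^2$ onto itself. The parallel argument, now using the lower bound $(H_t^\ast)''\ge C_2^{-1}I$, handles $\nabla\mathcal{H}^\ast$.

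Twice Gateaux differentiability of $\mathcal{H}$ and $\mathcal{H}^\ast$ with the stated formulas is now a direct rewriting of the Gateaux differentiability of their gradients. Positive definiteness of $D(\nabla\mathcal{H})(x)$ and $D(\nabla\mathcal{H}^\ast)(x)$ is read off from the pointwise lower bounds $H_t''\ge C_1I$ and $(H_t^\ast)''\ge C_2^{-1}I$ respectively. The main obstacle is really just the verification of the dominated-convergence step uniformly in $t$, together with the bookkeeping needed to turn the pointwise two-sided bound on $H_t''$ into the corresponding bound on $(H_t^\ast)''$; everything else is automatic from the uniform ellipticity hypothesis \eqref{e:Hess}.
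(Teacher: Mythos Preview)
Your proposal is correct and follows essentially the same route as the paper: derive the dual two-sided Hessian bound $C_2^{-1}I\le (H_t^\ast)''\le C_1^{-1}I$ from the inverse relation $(H_t^\ast)''(\xi^\ast)H_t''(\xi)=I_{2n}$, prove Gateaux differentiability of $\nabla\mathcal{H}$ (and then $\nabla\mathcal{H}^\ast$) by a mean-value/fundamental-theorem-of-calculus argument plus dominated convergence with dominating function $C_2|u(\cdot)|$, and read off strong monotonicity, coercivity and positive definiteness directly from the uniform ellipticity bounds. The only cosmetic differences are that the paper uses the mean-value theorem (a single $\theta(s,t)$) rather than your integral over $\theta\in[0,1]$, and cites \cite[p.~92, Prop.~10]{Ek90} for the inverse relation where you invoke the inverse function theorem.
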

\begin{proof}[\bf Proof]
For $s\in (-1,1)\setminus\{0\}$, using the mean value theorem we get
 \begin{eqnarray*}
&&\left(\frac{1}{s}\big(\nabla\mathcal{H}(x+su)-\nabla\mathcal{H}(x)\big)-\nabla^2_z H(\cdot, x(\cdot))u(\cdot), v\right)_2\\
&=&\int^\tau_0\left(\frac{1}{s}(\nabla_z H(t, x(t)+su(t))-\nabla_z H(t, x(t)))-\nabla^2_z H(t, x(t))u(t), v(t)\right)_{\mathbb{R}^{2n}}dt\\
&=&\int^\tau_0\left(\nabla^2_z H(t, x(t)+\theta(s,t)su(t))u(t)-\nabla^2_z H(t, x(t))u(t), v(t)\right)_{\mathbb{R}^{2n}}dt\\
\end{eqnarray*}
for some function $(s,t)\mapsto\theta(s,t)\in (0,1)$. This leads to
 \begin{eqnarray}\label{e:Hess+}
&&\left\|\frac{1}{s}\big(\nabla\mathcal{H}(x+su)-\nabla\mathcal{H}(x)\big)-\nabla^2_z H(\cdot, x(\cdot))u(\cdot)\right\|_2\nonumber\\
&\le &\left(\int^\tau_0\left|\nabla^2_z H(t, x(t)+\theta(s,t)su(t))u(t)-\nabla^2_z H(t, x(t))u(t)\right|^2_{\mathbb{R}^{2n}}dt\right)^{1/2}.
\end{eqnarray}
Since $|\nabla^2_z H(t, x(t)+\theta(s,t)su(t))u(t)-\nabla^2_z H(t, x(t))u(t)|^2_{\mathbb{R}^{2n}}\le 4C_2^2|u(t)|^2_{\mathbb{R}^{2n}}$,
by the Lebesgue dominated convergence theorem the integration in (\ref{e:Hess+})
converges to zero as $s\to 0$. Hence
$$
D(\nabla\mathcal{H})(x)[u]=\nabla^2_z H(\cdot, x(\cdot))u(\cdot).
$$

Moreover, (\ref{e:Hess}) implies that for a.e. $t\in [0,\tau]$, $\nabla_z H(t,\cdot)$
 is strongly monotone and coercive, and therefore $\nabla_z H(t,\cdot):\mathbb{R}^{2n}\to \mathbb{R}^{2n}$
 is a homeomorphism. By  \cite[page 92, Proposition 10]{Ek90}, for $\xi\in \mathbb{R}^{2n}$ and $\xi^\ast=\nabla_z H(t,\xi)$
 it holds that $\xi=\nabla_z H^\ast(t;\xi^\ast)$ and $I_{2n}=\nabla^2_{z} H^\ast(t;\xi^\ast)\nabla^2_z H(t,\xi)$.
 These and (\ref{e:Hess}) imply
 \begin{equation}\label{e:Hess*}
C_2^{-1}|\eta|^2\le (\nabla^2_{z} H^\ast(t;\xi^\ast)\eta,\eta)_{\mathbb{R}^{2n}}\le C_1^{-1}|\eta|^2,\quad\forall\xi,\eta\in\mathbb{R}^{2n}.
\end{equation}
As above, we can derive from this that $\nabla\mathcal{H}^\ast$ is G\^ateaux differentiable
and
$$
D(\nabla\mathcal{H}^\ast)(x)[u]=\nabla^2_z H^\ast(\cdot; x(\cdot))u(\cdot).
$$

Other claims follow from (\ref{e:Hess}) and (\ref{e:Hess*}).
 \end{proof}

The following is a generalization of \cite[Theorem~2.3]{MaWi}.

\begin{theorem}\label{th:MWTh.2.3}
Let $H : [0,\tau]\times \mathbb{R}^{2n}\to\mathbb{R}$ be as in Proposition~\ref{prop:Ek14},
 $M\in{\rm Sp}(2n,\mathbb{R})$, and $K\in L^\infty([0,\tau];\mathscr{L}_s(\mathbb{R}^{2n}))$.
 Then  functionals
\begin{eqnarray}\label{e:Action}
&&\Phi_{H,K}(v)=\int^\tau_0\left[\frac{1}{2}(J\dot{v}(t)+K(t)v(t),v(t))_{\mathbb{R}^{2n}}+ H(t,{v}(t))\right]dt,\\
&&\Psi_{H,K}(v)=\int^\tau_0\left[\frac{1}{2}(J\dot{v}(t)+K(t)v(t),v(t))_{\mathbb{R}^{2n}}+ H^\ast(t;-J\dot{v}(t)-K(t)v(t))\right]dt\label{e:Action1}
\end{eqnarray}
are $C^1$ and twice G\^ateaux  differentiable on $W^{1,2}_{M}([0,\tau],\R^{2n})$, and the following holds:
\begin{enumerate}
\item[\rm (i)] If $\bar{u}\in W^{1,2}_{M}([0,\tau],\R^{2n})$ is a critical point of $\Phi_{H,K}$,
i.e., a solution of
\begin{equation}\label{e:Ham}
J\dot{u}(t)=-K(t)u(t)-\nabla_z H(t, u(t))\;\forall t\in [0,\tau]\quad\hbox{and}\quad u(\tau)=Mu(0),
\end{equation}
then each $u\in \bar{u}+{\rm Ker}(\Lambda_{M,\tau, K})$
 is also a critical point of $\Psi_{H,K}$ on $W^{1,2}_{M}([0,\tau],\R^{2n})$ and $\Psi_{H,K}({u})=-\Phi_{H,K}(\bar{u})$.
Conversely, if $\bar{v}\in W^{1,2}_{M}([0,\tau],\R^{2n})$ is a critical point  of $\Psi_{H,K}$,
  then there exists a unique $\xi\in\mathcal{E}_{M,\tau,K}$ such that $\bar{u}:=\bar{v}-v_\xi$, where $v_\xi(t)=\Upsilon_K(t)\xi$,
   is a  critical point  of $\Phi_{H,K}$ on $W^{1,2}_{M}([0,\tau],\R^{2n})$
 and satisfies  $\Phi_{H,K}(\bar{u})=-\Psi_{H,K}(\bar{v})$; this unique $\xi$ is equal to
 $(\Upsilon_K(t))^{-1}\big(\bar{v}(t)-\nabla_z H^\ast(t; -J\dot{\bar{v}}(t)-K(t)\bar{v}(t))\big)$
 for any $t\in [0,\tau]$ and therefore
 $\bar{u}(t)=\nabla_z H^\ast(t;-J\dot{\bar{v}}(t)-K(t)\bar{v}(t))$.

\item[\rm (ii)] If $\det(\Upsilon_K(\tau)-M)\ne 0$, $\Phi_{H,K}$ and $\Psi_{H,K}$  have same critical point sets
on  ${W}^{1,2}_{M}([0,\tau];\R^{2n})$ and  $\Phi_{H,K}(u)=-\Psi_{H,K}(u)$ for each critical point $u$ of them;
in fact, $v\mapsto\Gamma(v)$, where $\Gamma(v)(t)=\nabla_z H^\ast(t; -J\dot{v}(t)-K(t)v(t))$,
 is a map from ${\rm Crit}({\Psi}_{H,K})={\rm Crit}(\Phi_{H,K})$ to itself.

\item[\rm (iii)] The restrictions of $\Phi_{H,K}$ and $\Psi_{H,K}$ to $\tilde{W}^{1,2}_{M,K}([0,\tau];\R^{2n})$,
denoted by $\tilde{\Phi}_{H,K}$ and $\tilde{\Psi}_{H,K}$, have same critical point sets and
 $\Phi_{H,K}(u)=-\Psi_{H,K}(u)$ for each critical point $u$ of them.

 \item[\rm (iv)]
 Any critical point of  $\tilde{\Psi}_{H,K}$
 is also one of $\Psi_{H,K}$ on ${W}^{1,2}_{M}([0,\tau];\R^{2n})$.
 \end{enumerate}
    \end{theorem}
\begin{proof}[\bf Proof]
By Propositions~\ref{prop:Ek13}, \ref{prop:Ek14}, the first two claims  can be derived.
  Taking $X=L^2([0,\tau];\R^{2n})$, $\textsf{A}=\Lambda_{M,\tau,K}$, $D(\textsf{A})=W^{1,2}_M([0,\tau];\R^{2n})$ and
  \begin{eqnarray}\label{e:Action2}
&&\Phi=\Phi_{H,K}(v)=\frac{1}{2}\langle \Lambda_{M,\tau,K} v, v\rangle_{L^2}+ \mathcal{H}(v),\\
&&\Psi=\Psi_{H,K}(v)=\frac{1}{2}\langle \Lambda_{M,\tau,K} v, v\rangle_{L^2}+ \mathcal{H}^\ast(-\Lambda_{M,\tau,K} v),\label{e:Action3}
\end{eqnarray}
the desired conclusions in (i), (ii) and (iii)  follow from Proposition~\ref{prop:Dong} and
Corollary~\ref{th:Ek9} immediately. For example, the second claim in (i) can be proved as follows.
Using Proposition~\ref{prop:Dong} and Corollary~\ref{th:Ek9}
 we can get some $\xi\in\mathcal{E}_{M,\tau,K}$ such that  $\bar{u}:=\bar{v}-v_\xi$,
where $v_\xi(t)=\Upsilon_K(t)\xi$,  is a  critical point  of $\Phi_{H,K}$ on $W^{1,2}_{M}([0,\tau],\R^{2n})$
 and satisfies  $\Phi_{H,K}(\bar{u})=-\Psi_{H,K}(\bar{v})$.
Since this $\bar{u}$ satisfies (\ref{e:Ham}) and $\Lambda_{M,\tau,K}\bar{v}-\Lambda_{M,\tau,K}\bar{u}=\Lambda_{M,\tau,K} v_\xi=0$, we derive that for each $t\in [0,\tau]$,
$$
-J\dot{\bar{v}}(t)-K(t)\bar{v}(t)=-J\dot{\bar{u}}(t)-K(t)\bar{u}(t)=\nabla_z H(t, \bar{u}(t))
$$
and so $\bar{v}(t)-v_{\xi}(t)=\bar{u}(t)=\nabla_z H^\ast(t;-J\dot{\bar{v}}(t)-K(t)\bar{v}(t))$
by strict convexity of $H(t,\cdot)$.

({\bf Note}: Since $\mathcal{H}$ and $\mathcal{H}^\ast$
are convex and of class $C^1$ on $L^2([0,\tau];\R^{2n})$, by
 \cite[page 92, Proposition 9]{Ek90} we may also use Corollary~\ref{cor:Ek1} instead of Corollary~\ref{th:Ek9}.)

As to (iv), let $v$ be a critical point of the restriction of
 $\Psi_{H,K}$ to $\tilde{W}^{1,2}_{M,K}([0,\tau];\R^{2n})$.
 For any $h\in W^{1,2}_{M}([0,\tau],\R^{2n})$, since
$$
\int^\tau_0\left[\frac{1}{2}(J\dot{v}(t)+K(t)v(t),h(t))_{\mathbb{R}^{2n}}\right]dt= \int^\tau_0\left[\frac{1}{2}(J\dot{h}(t)+K(t)h(t), v(t))_{\mathbb{R}^{2n}}\right]dt,
$$
we deduce that
\begin{eqnarray}\label{e:Action4}
d\Psi_{H,K}(v)[h]&=&\int^\tau_0\left[\frac{1}{2}(J\dot{v}(t)+K(t)v(t),h(t))_{\mathbb{R}^{2n}}+  \frac{1}{2}(J\dot{h}(t)+K(t)h(t), v(t))_{\mathbb{R}^{2n}}\right]dt\nonumber\\
&+&\int^\tau_0\left[\Big(\nabla_z H^\ast\big(t;-J\dot{v}(t)-K(t)v(t)\big),  -J\dot{h}(t)-K(t)h(t)\Big)_{\mathbb{R}^{2n}}\right]dt\nonumber\\
&=&\int^\tau_0\left[(v(t), J\dot{h}(t)+K(t)h(t))_{\mathbb{R}^{2n}}\right]dt\nonumber\\
&+&\int^\tau_0\left[\Big(\nabla_z H^\ast\big(t;-J\dot{v}(t)-K(t)v(t)\big),  -J\dot{h}(t)-K(t)h(t)\Big)_{\mathbb{R}^{2n}}\right]dt\nonumber\\
&=&\int^\tau_0\left[\Big(v(t)-\nabla_z H^\ast\big(t;-J\dot{v}(t)-K(t)v(t)\big), \Lambda_{M,\tau,K}h \Big)_{\mathbb{R}^{2n}}\right]dt.
\end{eqnarray}
By (\ref{e:orthCom}),
 we can decompose  $h$ into $h_0+h_1$, where $h_0\in{\rm Ker}(\Lambda_{M,\tau,K})$
 and $h_1\in \tilde{W}^{1,2}_{M,K}([0,\tau];\R^{2n})$. Hence (\ref{e:Action4}) leads to
 \begin{eqnarray*}
 d\Psi_{H,K}(v)[h]&=&\int^\tau_0\left[\Big(v(t)-\nabla_z H^\ast\big(t;-J\dot{v}(t)-K(t)v(t)\big), \Lambda_{M,\tau,K}h_1 \Big)_{\mathbb{R}^{2n}}\right]dt\\
 &=&d\tilde{\Psi}_{H,K}(v)[h_1]=0.
 \end{eqnarray*}
That is,  $v$ is a critical point of $\Psi_{H,K}$.
\end{proof}

\begin{remark}\label{rm:MWTh.2.5}
{\rm As pointed out below (\ref{e:orthCom}),
$\Lambda_{M,\tau, K}$
restricts to a Banach space isomorphism $\tilde{\Lambda}_{M,\tau, K}$ from $\tilde{W}^{1,2}_{M, K}([0, \tau];\R^{2n})$ onto
$\tilde{L}^{2}_{M, K}([0, \tau];\R^{2n})$.
Thus under the assumptions of Theorem~\ref{th:MWTh.2.3}, $v$ is a critical point of $\tilde{\Psi}_{H,K}$ on  $\tilde{W}^{1,2}_{M, K}([0,\tau];\R^{2n})$
if and only if $u:=-\tilde{\Lambda}_{M,\tau,K}v$ is that of the  functional $\tilde{\psi}_{H,K}:\tilde{L}^{2}_{M, K}([0, \tau];\R^{2n})\to\mathbb{R}$ given by
$$
\tilde{\psi}_{H,K}(w):=\tilde\Psi_{H,K}\circ(-\tilde{\Lambda}_{M,\tau,K})^{-1}(w)
=\int^\tau_0\left[\frac{1}{2}(w(t), ((\tilde{\Lambda}_{M,\tau,K})^{-1}w)(t))_{\mathbb{R}^{2n}}+
 H^\ast(t; w(t))\right]dt.
 $$
In particular, if $\det(\Upsilon_K(\tau)-M)\ne 0$,
then $v$ is a critical point of $\Psi_{H,K}$ on  ${W}^{1,2}_{M}([0,\tau];\R^{2n})$
if and only if $w:=-\Lambda_{M,\tau,K}v$ is one of the $C^1$ functional
\begin{equation}\label{e:Action***}
\psi_{H,K}(u):=\int^\tau_0\left[\frac{1}{2}(u(t),
((\Lambda_{M,\tau,K})^{-1}u)(t))_{\mathbb{R}^{2n}}+ H^\ast(t; u(t))\right]dt
\end{equation}
on ${L}^{2}([0,\tau];\R^{2n})$. ({\bf Note}: the functional ${\Psi}_{H,K}$ and so $\tilde{\psi}_{H,K}$
is mistakenly considered to be of class $C^2$ in many references. In general, they cannot
be $C^2$, see \cite[Chap.5, Sec.5.1, Theorem~1]{Skr1}.)

For a critical point $w$ of $\tilde{\psi}_{H,K}$, $v:=(-\tilde{\Lambda}_{M,\tau,K})^{-1}w$
is a critical point of $\tilde{\Psi}_{H,K}$ (and so that of ${\Psi}_{H,K}$ by Theorem~\ref{th:MWTh.2.3}(iv)).
It follows from Theorem~\ref{th:MWTh.2.3}(i)  that there exists a unique $\xi\in\mathcal{E}_{M,\tau,K}$ such that  $u:=v-v_\xi$,
where $v_\xi(t)=\Upsilon_K(t)\xi$, is a  critical point  of $\Phi_{H,K}$ on $W^{1,2}_{M}([0,\tau];\R^{2n})$ and
satisfies  $\Phi_{H,K}({u})=-\Psi_{H,K}({v})$. Then for any $t\in [0, \tau]$ we have
$$
-\nabla_z H(t, u(t))=J\dot{u}(t)+K(t)u(t)=J\dot{v}(t)+K(t)v(t)=-w(t)
$$
and hence $\nabla^2_{z}H^\ast(t;w(t))\nabla^2_z H(t,u(t))=I_{2n}$.
A direct computation shows
\begin{eqnarray*}
(\tilde{\psi}_{H,K})''(w)[\xi,\eta]&=&\int^\tau_0\left[(((\tilde{\Lambda}_{M,\tau,K})^{-1}\xi)(t), \eta(t))_{\mathbb{R}^{2n}}+
(\nabla^2_{z}H^\ast(t;w(t))\xi(t),\eta(t))_{\mathbb{R}^{2n}})\right]dt\\
&=&\int^\tau_0\left[(((\tilde{\Lambda}_{M,\tau,K})^{-1}\xi)(t)+
[\nabla^2_z H(t,u(t))]^{-1}\xi(t),\eta(t))_{\mathbb{R}^{2n}})\right]dt
\end{eqnarray*}
for $\xi,\eta\in\tilde{L}^{2}_{M, K}([0, \tau];\R^{2n})$.
By the arguments below Theorem~\ref{th:twoIndex}, $(\tilde{\psi}_{H,K})''(w)$ is
a Legendre form on $\tilde{L}^{2}_{M, K}([0, \tau];\R^{2n})$, and hence has finite
 Morse index and the nullity, denoted by $m^-(\tilde{\psi}_{H,K}, w)$ and $m^0(\tilde{\psi}_{H,K}, w)$, respectively.
By (\ref{e:MorseIndex}) and (\ref{e:Nullity}) we have
  \begin{eqnarray}
   m^-(\tilde{\psi}_{H,K}, w)&=&i_{\tau,M}(\Upsilon_{K+\nabla^2_z H(\cdot, u(\cdot))})-i_{\tau,M}(\Upsilon_{K})-\nu_{\tau,M}(\Upsilon_{K}),\label{e:psi-MorseIndex}\\
  m^0(\tilde{\psi}_{H,K}, w)&=&\nu_{\tau,M}(\Upsilon_{K+\nabla^2_z H(\cdot, u(\cdot))})=\dim{\rm Ker}(\Upsilon_{K+\nabla^2_z H(\cdot, u(\cdot))}(\tau)-M),\label{e:psi-Nullity}
 \end{eqnarray}
 where $i_{\tau,M}$ is defined by  (\ref{e:dongIndex}) and  $\Upsilon_{K+\nabla^2_z H(\cdot, u(\cdot))}$ is the fundamental matrix solution of
 $$
\dot{Z}(t)=J(K(t)+ \nabla^2_z H(t, u(t)))Z(t)\;\forall t\in [0,\tau].
$$
A direct proof of (\ref{e:psi-Nullity}) can also be given by
following \hbox{\cite[Proposition~3.14]{Do06}}. When $\det(\Upsilon_K(\tau)-M)\ne 0$, $\tilde{\psi}_{H,K}$ in
(\ref{e:psi-MorseIndex}) and (\ref{e:psi-Nullity})  is replaced by ${\psi}_{H,K}$ in (\ref{e:Action***}),
and $u=v=(-\tilde{\Lambda}_{M,\tau,K})^{-1}w$.
}
\end{remark}

\begin{corollary}\label{cor:MWTh.2.6}
 For   $M\in{\rm Sp}(2n,\mathbb{R})$, let
 $H: [0,\tau]\times \mathbb{R}^{2n}\to\mathbb{R}, (t, u)\to H(t, u)$
  be measurable in $t$ for each $u\in \mathbb{R}^{2n}$ and twice continuously differentiable
in $u$ for almost every $t\in [0,\tau]$. Assume that there exists $\ell>0$, $\alpha>0$, $\delta>0$, and
$\beta,\gamma\in L^2(0,\tau;\mathbb{R}^+)$ such that
 for almost every $t\in [0,\tau]$,
\begin{eqnarray*}
&&|(\nabla^2_zH(t,\xi)\eta,\eta)_{\mathbb{R}^{2n}}|\le \ell|\eta|^2\quad\forall \xi, \eta\in\mathbb{R}^{2n},\\
&&-\delta |\xi|^2-\beta(t)\le H(t,\xi)\le\alpha |\xi|^2+ \gamma(t)\quad\forall \xi\in\mathbb{R}^{2n}.
\end{eqnarray*}
Then for any real $\kappa<\min\{-\ell, -2\delta\}$, and ${H}_{\kappa}(t,\xi):=H(t,\xi)- \frac{\kappa}{2}|\xi|^2$,  functionals
\begin{eqnarray*}
&&\Phi(v)=\int^\tau_0\left[\frac{1}{2}(J\dot{v}(t),v(t))_{\mathbb{R}^{2n}}+ H({v}(t), t)\right]dt=\Phi_{H_{\kappa},\kappa I_{2n}}(v),\\
&&\Psi_\kappa(v):=\Psi_{H_{\kappa},\kappa I_{2n}}(v)=\int^\tau_0\left[\frac{1}{2}(J\dot{v}(t)+\kappa v(t),v(t))_{\mathbb{R}^{2n}}+ ({H}_{\kappa})^\ast(t;-J\dot{v}(t)-\kappa v(t))\right]dt
\end{eqnarray*}
are $C^1$ and twice G\^ateaux  differentiable on $W^{1,2}_{M}([0,\tau],\R^{2n})$, and the following holds:
\begin{enumerate}
\item[\rm (i)] If $\bar{u}\in W^{1,2}_{M}([0,\tau],\R^{2n})$ is a critical point of $\Phi$,
then each $u\in \bar{u}+{\rm Ker}(\Lambda_{M,\tau, \kappa I_{2n}})$
 is also a critical point of $\Psi_{\kappa}$ on $W^{1,2}_{M}([0,\tau],\R^{2n})$
  and $\Psi_{\kappa}({u})=-\Phi(\bar{u})$.
Conversely, if $\bar{v}\in W^{1,2}_{M}([0,\tau],\R^{2n})$ is a critical point  of $\Psi_{\kappa}$,
  then there exists a unique $\xi\in\mathcal{E}_{M,\tau,\kappa I_{2n}}$ such that $\bar{u}:=\bar{v}-v_\xi$, where
  $v_\xi(t)=\Upsilon_{\kappa I_{2n}}(t)\xi$,
   is a  critical point  of $\Phi$ on $W^{1,2}_{M}([0,\tau],\R^{2n})$
 and satisfies  $\Phi(\bar{u})=-\Psi_{\kappa}(\bar{v})$; this unique $\xi$ is equal to
 $(\Upsilon_{\kappa I_{2n}}(t))^{-1}\big(\bar{v}(t)-\nabla_z (H_\kappa)^\ast(t;-J\dot{\bar{v}}(t)-\kappa \bar{v}(t))\big)$
 and therefore
 $\bar{u}(t)=\nabla_z (H_\kappa)^\ast(t;-J\dot{\bar{v}}(t)-\kappa\bar{v}(t))$.

\item[\rm (ii)] If $\det(\Upsilon_{\kappa I_{2n}}(\tau)-M)\ne 0$, $\Phi$ and $\Psi_{\kappa}$  have the same critical point sets
on  ${W}^{1,2}_{M}([0,\tau];\R^{2n})$ and  $\Phi(u)=-\Psi_{\kappa}(u)$ for each critical point $u$ of them.

\item[\rm (iii)] The restrictions of $\Phi$ and $\Psi_{\kappa}$ to $\tilde{W}^{1,2}_{M,\kappa I_{2n}}([0,\tau];\R^{2n})$,
denoted by $\tilde{\Phi}$ and $\tilde{\Psi}_{\kappa}$, have the same critical point sets and
 $\Phi(u)=-\Psi_{\kappa}(u)$ for each critical point $u$ of them.

 \item[\rm (iv)]
 Any critical point of  $\tilde{\Psi}_{\kappa}$  is also one of $\Psi_{\kappa}$ on ${W}^{1,2}_{M}([0,\tau];\R^{2n})$.
 \end{enumerate}
   Moreover, we can take a real $\kappa<\min\{-\ell, -2\delta\}$ such that
$\det(\Upsilon_{\kappa I_{2n}}(\tau)-M)\ne 0$, and therefore
 $\bar{v}$ is a critical point of $\Psi_{\kappa}$ on  ${W}^{1,2}_{M}([0,\tau];\R^{2n})$
if and only if $\bar{w}:=-\Lambda_{M,\tau,\kappa I_{2n}}\bar{v}$ is that of the $C^1$ and twice G\^ateaux  differentiable functional
\begin{equation}\label{e:Action****}
\psi_{\kappa}(w):=\int^\tau_0\left[\frac{1}{2}(w(t), ((\Lambda_{M,\tau,\kappa I_{2n}})^{-1}w)(t))_{\mathbb{R}^{2n}}+ (H_{\kappa})^\ast(t; w(t))\right]dt
\end{equation}
on ${L}^{2}([0,\tau];\R^{2n})$; in this case there holds
 \begin{eqnarray}
   m^-({\psi}_{\kappa}, \bar{w})&=&i_{\tau,M}(\Upsilon_{\nabla^2_zH(\cdot,\bar{v}(\cdot))})-i_{\tau,M}(\Upsilon_{\kappa I_{2n}})-\nu_{\tau,M}(\Upsilon_{\kappa I_{2n}}),\label{e:psi-MorseIndex+}\\
  m^0({\psi}_{\kappa}, \bar{w})&=&\nu_{\tau,M}(\Upsilon_{\nabla^2_zH(\cdot,\bar{v}(\cdot))})=\dim{\rm Ker}(\Upsilon_{\nabla^2_zH(\cdot,\bar{v}(\cdot))}(\tau)-M),\label{e:psi-Nullity+}
 \end{eqnarray}
 where $\Upsilon_{\kappa I_{2n}}(t)=\exp(t\kappa J)$, and
  $\Upsilon_{\nabla^2_zH(\cdot,\bar{v}(\cdot))}$ is the fundamental matrix solution of
 $$
 \dot{Z}(t)=J(H_t)''(\bar{v}(t))Z(t)=J\nabla^2_zH(t,\bar{v}(t))Z(t).
 $$
 \end{corollary}
\begin{proof}[\bf Proof]
For any $\kappa<\min\{-\ell, -2\delta\}$,  by the assumptions it is easily seen that $H_{\kappa}$ satisfies
(\ref{e:Hess}) and (\ref{e:H-condition}). Therefore applying Theorem~\ref{th:MWTh.2.3}
to $K(t)=\kappa I_{2n}$, $H=H_\kappa$ and therefore $\tilde{\psi}_{H,K}=\psi_{\kappa}$
we obtain the conclusions above ``Moreover''.

Since $\Upsilon_{\kappa I_{2n}}(t)=\exp(t\kappa J)$, by Proposition~\ref{prop:Index}
we can choose a real $\kappa<\min\{-\ell, -2\delta\}$ such that
$\det(\Upsilon_{\kappa I_{2n}}(\tau)-M)\ne 0$.
Other claims follow from Remark~\ref{rm:MWTh.2.5} and the fact that
$\kappa I_{2n}+\nabla^2_z H_\kappa(\cdot, \bar{v}(\cdot))=\nabla^2_z H(\cdot, \bar{v}(\cdot))$.
\end{proof}

\section{Proofs of Theorems~\ref{th:bif-ness},
~\ref{th:bif-suffict} and Corollaries~\ref{cor:necess-suffi},~\ref{cor:bif-deform}
}\label{sec:HamBif}\setcounter{equation}{0}

The following remark about parameter space $\Lambda$ is effective for this section and next sections.

\begin{remark}\label{rm:effective}
{\rm The parameter space $\Lambda$ may be, respectively, replaced by its subsets
\begin{description}
\item[$\bullet$] $\{\mu, \lambda_k\,|\, k\in\mathbb{N}\}$ in
Theorem~\ref{th:bif-ness}(I), Theorem~\ref{th:bif-ness-orbit}, Theorem~\ref{th:bif-nessbrake}(I),Theorem~\ref{th:bif-nessHam}(I),
\item[$\bullet$] $\{\mu, \lambda_k^+, \lambda_k^-\,|\, k\in\mathbb{N}\}$  in Theorem~\ref{th:bif-ness}(II), Theorem~\ref{th:bif-suffict1-orbit}, Theorem~\ref{th:bif-nessbrake}(II), Theorem~\ref{th:bif-nessHam}(II),
\item[$\bullet$] $\alpha([0,1])$ in Theorem~\ref{th:bif-ness}(III), Theorem~\ref{th:bif-existence-orbit}, Theorem~\ref{th:bif-nessbrake}(III), Theorem~\ref{th:bif-nessHam}(III),
\item[$\bullet$] $[\mu-\epsilon, \mu+\epsilon]$, $\epsilon>0$, in Theorem~\ref{th:bif-suffict}, Theorems~\ref{th:bif-per3}, Theorem~\ref{th:bif-suffict-orbit},
Theorems~\ref{th:bif-suffictbrake},~\ref{th:bif-per3brake}, Theorem~\ref{th:bif-suffHam}.
\end{description}
These four sets $\{\mu, \lambda_k\,|\, k\in\mathbb{N}\}$, $\{\mu, \lambda_k^+, \lambda_k^-\,|\, k\in\mathbb{N}\}$,
$\alpha([0,1])$ and $[\mu-\epsilon, \mu+\epsilon]$ are compact and
sequentially compact subsets of $\Lambda$.}
\end{remark}

\subsection{Proofs of Theorems~\ref{th:bif-ness},
~\ref{th:bif-suffict} and Corollary~\ref{cor:necess-suffi}
for an orthogonal symplectic matrix $M$}\label{sec:HamBif}

 The assumption that the symplectic matrix $M\in{\rm Sp}(2n,\mathbb{R})$
is orthogonal is essentially used in two places:
\begin{itemize}
\item[(1)] in modifying $H$ outside an open neighborhood of $0\in\mathbb{R}^{2n}$, as described above equation (\ref{e:modifyH});
\item[(2)] in Step~4 of the proof of Theorem~\ref{th:bif-ness}(II), specifically for verifying the conditions of Theorem~\ref{th:A.9}.
\end{itemize}

Our proofs will be completed with theorems  in \cite{Lu8, Lu10}
and Theorems~\ref{th:A.9},~\ref{th:A.9+},~\ref{th:A.10}.
To this goal let us define
$\bar{H}:\Lambda\times[0,\tau]\times{\R}^{2n}\to\R$ by
\begin{equation}\label{e:ModifiedH}
\bar{H}(\lambda,t,z)={H}(\lambda,t,z+u_\lambda(t))-(z, \nabla_z{H}(\lambda,t, u_\lambda(t)))_{\mathbb{R}^{2n}}.
\end{equation}
Since each $\Lambda\times [0,\tau]\ni(\lambda,t)\mapsto u_\lambda(t)\in\R^{2n}$ is  continuous,
it is clear that $\bar{H}$ satisfies Assumption~\ref{ass:BasiAss1}
and $\bar{u}\equiv 0\in\mathbb{R}^{2n}$ satisfies
\begin{equation}\label{e:Hboundary*}
\dot{u}(t)=J\nabla_z\bar{H}(\lambda,t, u(t))\;\forall t\in [0,\tau]\quad\hbox{and}\quad u(\tau)=Mu(0)
\end{equation}
for each $\lambda\in\Lambda$.
Note that a function $u:[0,\tau]\to\R^{2n}$ satisfies
(\ref{e:Hboundary}) with the parameter value $\lambda$ if and only if
the function $w(t):=u(t)-u_\lambda(t)$
is a solution of (\ref{e:Hboundary*}). Hence  we have

\begin{claim}\label{cl:Equiv}
For $X=W^{1,2}_{M}([0,\tau];\R^{2n})$ or $C^1_{M}([0,\tau];\R^{2n})$, the bifurcation problem of (\ref{e:Hboundary}) in $\Lambda\times X$
with respect to the branch $\{(\lambda,u_\lambda)\,|\,\lambda\in\Lambda\}$
is equivalent to that of (\ref{e:Hboundary*}) in $\Lambda\times X$
with respect to the trivial branch $\{(\lambda, 0)\,|\,\lambda\in\Lambda\}$.
\end{claim}

Since $\nabla^2_z\bar{H}(\lambda,t,0)=\nabla^2_zH(\lambda,t, u_\lambda(t))$
for all $(\lambda,t)\in \Lambda\times [0,\tau]$, $\gamma_\lambda:[0,\tau]\to {\rm Sp}(2n,\mathbb{R})$
is also the  fundamental matrix solution of
$$
\dot{Z}(t)=J\nabla^2_z\bar{H}(\lambda,t, 0)Z(t).
$$
\textsf{Therefore in what follows we only need to prove Theorems~\ref{th:bif-ness},~\ref{th:bif-suffict}
in the case where $u_\lambda\equiv 0$ for all} $\lambda\in\Lambda$. (This means $\nabla_z{H}(\lambda,t, 0)=0\;\forall (\lambda,t)$.)\\

\noindent{\bf Modify $H$ outside an open neighborhood of $0\in\mathbb{R}^{2n}$}.
By Remark~\ref{rm:effective} \textsf{we always assume that $\Lambda$ is compact and
 sequentially compact} in what follows.
By Assumption~\ref{ass:BasiAss1}
$$
\Lambda\times [0,\tau]\ni (\lambda,t)\mapsto \nabla^2_zH(\lambda,t,0)\in\mathscr{L}_s(\mathbb{R}^{2n})
$$
is continuous.  Therefore we can get a constant $C>0$ and
 a compact neighborhood $U$ of $0$ in $\mathbb{R}^{2n}$ such that
$$
-CI_{2n}\le \nabla^2_zH(\lambda,t,z)\le CI_{2n},\quad\forall (\lambda,t, z)\in \Lambda\times [0,\tau]\times U.
$$
Take positive numbers $0<\delta_1<\delta_2$ such that
the closed ball  $\bar{B}^{2n}(0,\delta_2^2)\subset U$, and
a smooth cut-off function $\rho:[0, \infty)\to [0, 1]$ such that
$\rho(t)=1$ for $t\le\delta_1^2$ and $\rho(t)=0$ for $t\ge\delta_2^2$.
Define a function $\chi:\mathbb{R}^{2n}\to [0, 1]$ by
$$
\chi(z)=\rho(|z|^2),\quad\forall z\in\mathbb{R}^{2n},
$$
and
$$
\tilde{H}:\Lambda\times [0, \tau]\times\mathbb{R}^{2n}\to \R,\;(\lambda,t,z)\mapsto\chi(z)H(\lambda,t,z).
$$
Since $M$ is an orthogonal symplectic matrix, we have $\chi(Mz)=\chi(z)$ and hence
 $$
 \tilde{H}(\lambda,\tau, Mz)=\chi(Mz)H(\lambda,\tau, Mz)=\chi(z)H(\lambda, 0, z)=
\tilde{H}(\lambda, 0, z).
$$
 Then it is easily computed that for any $\xi,\eta\in\mathbb{R}^{2n}$,
\begin{eqnarray*}
(\nabla^2_z\tilde{H}({\lambda,t},z)\xi,\eta)_{\mathbb{R}^{2n}}&=&(H_{\lambda,t})'(z)[\xi]\chi'(z)[\eta]+ \chi(z)(\nabla^2_zH(\lambda,t,z)\xi,\eta)_{\mathbb{R}^{2n}}\\
&&+(H_{\lambda,t})'(z)[\eta]\chi'(z)[\xi]+ H(\lambda,t, z)(\chi''(z)\xi,\eta)_{\mathbb{R}^{2n}}.
\end{eqnarray*}
Since $H$ and $\nabla_zH$ are continuous by Assumption~\ref{ass:BasiAss1},
it follows from the compactness of $\Lambda$ that there exists a constant $C'>0$ such that
$$
-C'I_{2n}\le\nabla^2_z\tilde{H}(\lambda,t,z)\le C'I_{2n},\quad\forall (\lambda,t, z)\in \Lambda\times [0,\tau]\times \mathbb{R}^{2n}.
$$
Note that for some small $\epsilon>0$, that  $\|u\|_{1,2}\le\epsilon$ implies $u([0,\tau])\subset U_0$, and
that we are only concerned with solutions of (\ref{e:Hboundary})
near $0\in W_M^{1,2}([0,\tau];\mathbb{R}^n)$.
Replacing $H$ by $\tilde{H}$ we can assume  that $H$ satisfies
\begin{equation}\label{e:modifyH}
-CI_{2n}\le \nabla^2_zH(\lambda,t,z)\le CI_{2n},
\quad\forall (\lambda,t, z)\in \Lambda\times [0,\tau]\times \mathbb{R}^{2n}.
\end{equation}

For a given $(\lambda,t, z)\in \Lambda\times [0,\tau]\times \mathbb{R}^{2n}$,
using Taylor expansion we have $\theta\in (0, 1)$ such that
\begin{eqnarray*}
H(\lambda,t, z)&=&H(\lambda,t, 0)+ (\nabla_z H(\lambda,t,0), z)_{\mathbb{R}^{2n}}+
 \frac{1}{2}\big(\nabla_z^2H(\lambda,t,\theta z)z, z\big)_{\mathbb{R}^{2n}}\\
&=&H(\lambda,t, 0)+ \frac{1}{2}\big(\nabla^2_zH(\lambda,t,\theta z)z, z\big)_{\mathbb{R}^{2n}}.
\end{eqnarray*}
Since  $(\lambda,t)\to H(\lambda,t, 0)$ is continuous,
and hence bounded,  from the above expression it follows
that there exist constants $c'_1>0, c'_2>0$ such that
 \begin{equation}\label{e:modifyH1}
-c'_1|z|^2-c'_2\le H(\lambda,t, z)\le c'_1|z|^2+ c'_2,\quad\forall (\lambda,t, z)\in \Lambda\times [0,\tau]\times \mathbb{R}^{2n}.
\end{equation}

\begin{proof}[\bf Proof of Theorem~\ref{th:bif-ness}(I)]
By the above arguments we can  assume that $H$ satisfies
 (\ref{e:modifyH}) and (\ref{e:modifyH1}). Then we may choose $\kappa<0$,
 $c_i>0$, $i=1,2,3$   such that
   \begin{enumerate}
   \item[(i)] $\det(e^{\tau \kappa J}M-I_{2n})\ne 0$;
   \item[(ii)] each $H_\kappa (\lambda, t, z):=H(\lambda, t, z)- \frac{\kappa}{2}|z|^2$ satisfies
 $H_\kappa (\lambda, \tau, Mz)=H_\kappa (\lambda, 0, z)$ and
   \begin{equation}\label{e:HPositive}
   c_1I_{2n}\le \nabla^2_zH_\kappa(\lambda,t,z)\le c_2I_{2n},\quad\forall (\lambda,t,z);
   \end{equation}
  \item[(iii)] $\frac{c_1}{2} |z|^2- c_3\le H_\kappa(\lambda, t, z)\le c_2|z|^2+ c_3$
  for all $(\lambda, t, z)$.
   \end{enumerate}
   Let $(H_\kappa)^\ast(\lambda,t;z)=(H_\kappa(\lambda,t,\cdot))^\ast(z)$. These and Assumption~\ref{ass:BasiAss1} imply:
  \begin{enumerate}
   \item[(iv)]  For $\xi\in \mathbb{R}^{2n}$ and $\xi^\ast=\nabla_z H_\kappa(\lambda,t, \xi)$  it holds that
 \begin{equation}\label{e:HPositive+}
 \xi=\nabla_{z}(H_\kappa)^\ast(\lambda,t;\xi^\ast)\quad\hbox{and}\quad I_{2n}=\nabla^2_z(H_\kappa)^\ast(\lambda,t;\xi^\ast) \nabla^2_zH_\kappa(\lambda,t,\xi).
  \end{equation}
  \item[(v)] Because of (\ref{e:HPositive}), it holds for all $(\lambda,t,z)$  that
 \begin{equation}\label{e:HPositive1}
  \frac{1}{c_2}I_{2n}\le \nabla_z^2(H_\kappa)^\ast(\lambda,t;z)\le \frac{1}{c_1}I_{2n}.
  \end{equation}
  \item[(vi)] $(H_\kappa)^\ast:\Lambda\times [0,\tau]\times{\R}^{2n}\to\R$
also satisfies Assumption~\ref{ass:BasiAss1}, that is, it
is a continuous function such that each
$(H_\kappa)^\ast(\lambda,t;\cdot):{\R}^{2n}\to\R$, $(\lambda,t)\in\Lambda\times [0,\tau]$, is $C^2$ and all possible partial derivatives of it depend continuously on
 $(\lambda, t, z)\in\Lambda\times [0,\tau]\times\mathbb{R}^{2n}$.
 (These  follow from (\ref{e:HPositive+}) and the implicit function theorem.)
  \item[(vii)] $\frac{1}{c_2} |z|^2- c_3\le (H_\kappa)^\ast(\lambda, t; z)\le \frac{2}{c_1}|z|^2+ c_3$ for all $(\lambda, t, z)$.
    \end{enumerate}
By these and Corollary~\ref{cor:MWTh.2.6}  the functionals  $\Phi(\lambda,\cdot):
W^{1,2}_{M}([0,\tau];\R^{2n})\to\R$ defined by
\begin{eqnarray}\label{e:HActionK}
\Phi(\lambda, v):=\int^{\tau}_0\left[\frac{1}{2}(J\dot{v}(t),v(t))_{\mathbb{R}^{2n}}+ H(\lambda,t, {v}(t))\right]dt
\end{eqnarray}
and $\Psi_\kappa(\lambda,\cdot):
W^{1,2}_{M}([0,\tau];\R^{2n})\to\R$ defined by
\begin{equation}\label{e:HAction*}
\Psi_\kappa(\lambda, v)=\int^{\tau}_0\left[\frac{1}{2}(J\dot{v}(t)+\kappa v(t),v(t))_{\mathbb{R}^{2n}}+
 (H_\kappa)^\ast(\lambda, t;-J\dot{v}(t)-\kappa v(t))\right]dt,
\end{equation}
are  $C^1$ and twice G\^ateaux-differentiable, and have same critical point sets,
 which exactly correspond to  solutions of (\ref{e:Hboundary}) with the parameter value $\lambda$.
Moreover,  $\Phi(\lambda, u)=-\Psi_\kappa(\lambda, u)$ for any critical point $u$ of them.
As noted in Remark~\ref{rm:MWTh.2.5},
(i) also implies that
\begin{equation}\label{e:Bisom}
\Lambda_{M,\tau,\kappa I_{2n}}:W^{1,2}_{M}([0,\tau];\R^{2n})\to L^{2}([0,\tau];\R^{2n})
\end{equation}
is a Banach space isomorphism, and thus the functional
$$
\psi_\kappa(\lambda, \cdot)\stackrel{\rm def}{=}\Psi_\kappa(\lambda, \cdot)\circ(-\Lambda_{M,\tau,\kappa I_{2n}})^{-1}:
L^2([0,\tau];\R^{2n})\to\R
$$
given by
\begin{equation}\label{e:HAction**}
\psi_\kappa(\lambda, u)=\int^{\tau}_0\left[\frac{1}{2}(u(t), ((\Lambda_{M,\tau,\kappa I_{2n}})^{-1}u)(t))_{\mathbb{R}^{2n}}+ (H_\kappa)^\ast(\lambda, t; u(t))\right]dt
\end{equation}
has the same analytical properties as those of $\Psi_\kappa(\lambda, \cdot)$. In particular, $\psi_\kappa(\lambda, \cdot)$
is $C^1$ and twice G\^ateaux-differentiable.

Let us prove that \cite[Theorem~3.1]{Lu8} (Theorem~\ref{th:A.10}) is applicable to
$$
\mathcal{F}_\lambda(\cdot)=\psi_\kappa(\lambda, \cdot),\quad
H=L^2([0,\tau];\R^{2n}),\quad X=L^2([0,\tau];\R^{2n})
$$
(thus any Banach space $X$ which is dense in $H=L^2([0,\tau];\R^{2n})$).
 To this end we define
\begin{equation}\label{e:Lambda-inverse}
A_{M,\tau,\kappa}:L^{2}([0,\tau];\R^{2n})\to L^{2}([0,\tau];\R^{2n}),\;u\mapsto  \iota\circ(\Lambda_{M,\tau,{\kappa I_{2n}}})^{-1}u,
\end{equation}
 where
$\iota:W^{1,2}_{M}([0,\tau];\R^{2n})\to L^2([0,\tau];\R^{2n})$ is the inclusion. By (\ref{e:inver})
\begin{equation}\label{e:inver*}
[({\Lambda}_{M,\tau, \kappa I_{2n}})^{-1}u](t)=\Upsilon_{\kappa I_{2n}}(t)\mathfrak{J}^{-1}\int^\tau_0\Upsilon_{\kappa I_{2n}}(s)^{-1}Ju(s)ds-
\Upsilon_{\kappa I_{2n}}(t)\int^t_0\Upsilon_{\kappa I_{2n}}(s)^{-1}Ju(s)ds.
\end{equation}
 Then $A_{M,\tau,\kappa}$ is a compact self-adjoint operator.
It is easily proved that $\psi_\kappa(\lambda,\cdot)$ has the gradient
\begin{equation}\label{e:p-gradient}
\nabla_v\psi_\kappa(\lambda, v)=A_{M,\tau,\kappa}v+ \nabla_z(H_\kappa)^\ast(\lambda, \cdot; v(\cdot))
\end{equation}
and that $L^{2}([0,\tau];\R^{2n})\ni v\mapsto \nabla_v\psi_\kappa(\lambda, v)\in L^{2}([0,\tau];\R^{2n})$
has a G\^ateaux derivative
\begin{equation}\label{e:p-self-adjoint}
B_\lambda(v):=D_v\nabla_v\psi_\kappa(\lambda, v)=A_{M,\tau,\kappa}+ \nabla^2_z(H_\kappa^\ast)(\lambda, \cdot; v(\cdot))
\in \mathscr{L}_s(L^{2}([0,\tau];\R^{2n})).
\end{equation}
Denote by
$$
P_\lambda(v)=\nabla^2_z(H_\kappa)^\ast(\lambda, \cdot; v(\cdot))\quad\hbox{and}\quad Q_\lambda(v)=A_{M,\tau,\kappa}.
$$
 Both are in $\mathscr{L}_s(L^{2}([0,\tau];\R^{2n}))$.
Clearly, $Q_\lambda$ satisfies  (iii)-(iv) of \cite[Theorem~3.1]{Lu8}.
By (\ref{e:HPositive1}),
$P_\lambda(v)\in \mathscr{L}_s(L^{2}([0,\tau];\R^{2n}))$
is uniformly positive definite with respect to $(\lambda,v)$, i.e., it satisfies
(ii) of \cite[Theorem~3.1]{Lu8}.

We also need to prove that $P_\lambda$ satisfies (i) of \cite[Theorem~3.1]{Lu8},
that is, for any $h\in L^{2}([0,\tau];\R^{2n})$ there holds
\begin{eqnarray*}
\|P_{\lambda_k}(v_k)h-P_{\mu}(0)h\|_2^2=\int^{\tau}_0|\nabla^2_z(H_\kappa)^\ast(\lambda_k,t;v_k(t))h(t)-
\nabla_z^2(H_\kappa)^\ast(\mu,t; 0)h(t)|^2dt\to 0
\end{eqnarray*}
provided that $(v_k)\subset L^{2}([0,\tau];\R^{2n})$
 and $(\lambda_k)\subset\Lambda$ converge to $0\in L^{2}([0,\tau];\R^{2n})$ and $\mu\in\Lambda$, respectively.
 Arguing by contradiction,  we can assume after passing to subsequences if necessary that
 \begin{equation}\label{e:contrad}
 \hbox{$v_k\to 0$ a.e., and
$\|P_{\lambda_k}(v_k)h-P_{\mu}(0)h\|_2\ge \varepsilon_0$ for some $\varepsilon_0>0$ and all $k$.}
\end{equation}
By the above (iv) the map
$(\lambda,t,z)\mapsto \nabla_z^2(H_\kappa^\ast)(\lambda,t; z)$
 is continuous, and thus
 $$
 \hbox{$\nabla_z^2(H_\kappa)^\ast(\lambda_k,t;v_k(t))h(t)\to \nabla_z^2(H_\kappa)^\ast(\mu,t; 0)h(t)$ almost everywhere.}
 $$
Observe that (\ref{e:HPositive1}) implies
\begin{eqnarray*}
&&|\nabla^2_z(H_\kappa)^\ast(\lambda_k,t;v_k(t))h(t)-\nabla^2_z(H_\kappa)^\ast(\mu,t; 0)h(t)|^2\\
&&\le 4(|\nabla_z^2(H_\kappa)^\ast(\lambda_k,t;v_k(t))h(t)|^2+
|\nabla^2_z(H_\kappa)^\ast(\mu,t; 0)h(t)|^2)\le \frac{8}{c_1^2}|h(t)|^2.
\end{eqnarray*}
Using Lebesgue dominated convergence theorem we deduce
\begin{eqnarray*}
\lim_{k\to\infty}\int^{\tau}_0|\nabla^2_z(H_\kappa)^\ast(\lambda_k,t;v_k(t))h(t)-
\nabla_z^2(H_\kappa)^\ast(\mu,t; 0)h(t)|^2dt=0.
\end{eqnarray*}
This contradicts (\ref{e:contrad}). The desired claim is proved.

Since $(\mu, 0)\in\Lambda\times W^{1,2}_{M}([0,\tau];\R^{2n})$ is
a bifurcation point along sequences of
the problem  (\ref{e:Hboundary}) if and only if it is that of solutions of
$\nabla_u\Phi(\lambda,u)=0$ (or equivalently $\nabla_u\Psi_\kappa(\lambda,u)=0$)
in $\Lambda\times W^{1,2}_{M}([0,\tau];\R^{2n})$, we deduce that $(\mu, 0)\in\Lambda\times W^{1,2}_{M}([0,\tau];\R^{2n})$ is a bifurcation point along sequences
of the problem  (\ref{e:Hboundary}) if and only if $(\mu, 0)\in\Lambda\times L^2([0,\tau];\R^{2n})$
 is a bifurcation point along sequences for solutions of $\nabla_w\psi_\kappa(\lambda, w)=0$
in $\Lambda\times L^2([0,\tau];\R^{2n})$.

It follows from the assumption of  Theorem~\ref{th:bif-ness}(I) and  \cite[Theorem~3.1]{Lu8} (Theorem~\ref{th:A.10}) that $0\in L^2([0,\tau];\R^{2n})$ is a degenerate critical point of
$\psi_\kappa(\mu,\cdot)$ and therefore
$$
\dim{\rm Ker}(\Upsilon_{\nabla_z^2H(\mu,\cdot,0)}(\tau)-M)=m^0(\psi_\kappa(\mu,\cdot), 0)>0
$$
by  (\ref{e:psi-Nullity+}).
Note that $\gamma_\mu=\Upsilon_{\nabla_z^2H(\mu,\cdot,0)}$ by the definition of $\gamma_\lambda$. Hence
\begin{eqnarray*}
\nu_{\tau,M}(\gamma_\mu)=\dim{\rm Ker}(\gamma_{\mu}(\tau)-M)=\dim{\rm Ker}(\Upsilon_{\nabla^2_zH(\mu,\cdot,0)}(\tau)-M)>0.
 \end{eqnarray*}
 {\it Note}: \cite[Theorem~3.2]{Lu8} (Theorem~\ref{th:A.10}) can be also used because its conditions are satisfied by the following proof of Theorem~\ref{th:bif-ness}(II).
 \end{proof}

\begin{proof}[\bf Proof of Theorem~\ref{th:bif-ness}(II)]
 The arguments before the final paragraph in the proof of Theorem~\ref{th:bif-ness}(I)
are still valid.
Clearly, the isomorphism $\Lambda_{M,\tau,\kappa I_{2n}}$ in (\ref{e:Bisom})
gives rise to a Banach space isomorphism from $C^1_{M}([0,\tau];\R^{2n})$ onto $C^0_M([0,\tau];\R^{2n})$,
denoted by $\Lambda_{M,\tau,\kappa I_{2n}}^c$ for the sake of clearness.
If $w\in C^0_{M}([0,\tau];\R^{2n})$ then $u:=A_{M,\tau,\kappa}w$ belongs to
 $u\in W^{1,2}_{M}([0,\tau];\R^{2n})$ and satisfies $J\dot{u}+ \kappa u=\Lambda_{M,\tau,\kappa I_{2n}}u=w$.
It follows that $J\dot{u}=w-u\in C^0_{M}([0,\tau];\R^{2n})$ and hence
$u\in C^1_{M}([0,\tau];\R^{2n})$.

\textsf{Let us prove in five steps that the family
$$
\{\mathcal{L}_\lambda(\cdot)=\psi_\kappa(\lambda, \cdot)\,|\,
 \lambda\in \Lambda\}
 $$
 satisfies the conditions of Theorem~\ref{th:A.9}
with $\lambda^\ast=\mu$, $H=L^2([0,\tau];\R^{2n})$ and $X=C^0_M([0,\tau];\R^{2n})$
except for the condition that ${\rm Ker}(B_{\lambda^\ast}(0))\ne\{0\}$.}

{\bf Step 1}  ({\it Prove $\psi_\kappa$ to be continuous}). By a contradiction we assume that
 $\psi_\kappa$ is not continuous at some point
 $(\bar{\lambda}, \bar{u})\in\Lambda\times L^2([0,\tau];\R^{2n})$.
 Since $\Lambda\times L^2([0,\tau];\R^{2n})$ is first countable,
  there exists $\varepsilon>0$ and a sequence
  $\{(\lambda_k,u_k)\}^\infty_{k=1}$ in $\Lambda\times L^2([0,\tau];\R^{2n})$ converging to
$(\bar{\lambda}, \bar{u})$ such that
\begin{equation}\label{e:Th1.3(II)1}
|\psi_\kappa({\lambda_k}, {u}_k)-\psi_\kappa(\bar{\lambda}, \bar{u})|\ge\varepsilon,\quad\forall k=1,2,\cdots.
\end{equation}
By (vii) in the proof of Theorem~\ref{th:bif-ness}(I) we have
 \begin{equation}\label{e:HPositive2}
 |H^\ast_\kappa(\lambda, t; z)|\le \frac{1}{c_1}|z|^2+ c_3\quad\forall (\lambda, t, z).
  \end{equation}
Since $H_\kappa^\ast:\Lambda\times [0,\tau]\times{\R}^{2n}\to\R$ is continuous,
it follows from Lebesgue dominated convergence theorem that
$$
\lim_{\lambda\to\bar{\lambda}}\int^\tau_0H_\kappa^\ast(\lambda, t; \bar{u}(t))dt=\int^\tau_0 H_\kappa^\ast(\bar{\lambda}, t; \bar{u}(t))dt.
$$
Moreover, (\ref{e:HPositive2})  and \cite[Proposition C.1]{Lu9} imply that maps
\begin{equation*}
 L^2([0,\tau];\R^{2n})\to L^1([0,\tau]),\; u\mapsto H_\kappa^\ast(\lambda, \cdot; u(\cdot))
\end{equation*}
are uniformly continuous at $\bar{u}$ with respect to $\lambda\in\Lambda^\ast:=\{\bar{\lambda},\lambda_k\,|\,k\in\mathbb{N}\}$. Then
we have a neighborhood $\Lambda_0$ of $\bar{\lambda}$ in $\Lambda$ and a natural number $N$  such that
\begin{eqnarray*}
&&\left|\int^\tau_0H_\kappa^\ast(\lambda, t; \bar{u}(t))dt-\int^\tau_0 H_\kappa^\ast(\bar{\lambda}, t; \bar{u}(t))dt\right|<\frac{\varepsilon}{4},
\quad\forall\lambda\in\Lambda_0,\\
&&\left|\int^\tau_0H_\kappa^\ast(\lambda_k, t; {u}_k(t))dt-\int^\tau_0 H_\kappa^\ast({\lambda}_k, t; \bar{u}(t))dt\right|<\frac{\varepsilon}{4}\quad\forall k>N, \\
&&\frac{1}{2}\left|\int^{\tau}_0(u_k(t), ((\Lambda_{M,\tau,\kappa I_{2n}})^{-1}u_k)(t))_{\mathbb{R}^{2n}}dt-
\int^{\tau}_0(\bar{u}(t), ((\Lambda_{M,\tau,\kappa I_{2n}})^{-1}\bar{u})(t))_{\mathbb{R}^{2n}}dt\right|<\frac{\varepsilon}{4}
\end{eqnarray*}
if $k>N$.
We can assume $\lambda_k\in\Lambda_0$ for all $k>N$. It follows from these and (\ref{e:HAction**}) that
$$
|\psi_\kappa({\lambda}_k, {u}_k)-\psi_\kappa(\bar{\lambda}, \bar{u})|<\varepsilon,\quad\forall k>N.
$$
This contradicts (\ref{e:Th1.3(II)1}).
Hence $\{\mathcal{L}_\lambda\,|\, \lambda\in \Lambda\}$ is a continuous family.

{\bf Step 2}. By (\ref{e:HPositive+})-(\ref{e:HPositive1}) and (\ref{e:p-gradient})
it is easily seen that $\nabla_v\psi_\kappa(\lambda, v)\in X$ for each $v\in X$.
Thus
$$
\mathscr{A}:\Lambda\times X\to X,\;(\lambda,v)\mapsto \nabla_v\psi_\kappa(\lambda, v)
=\iota^c\circ(\Lambda^c_{M,\tau,\kappa I_{2n}})^{-1}v+ \nabla_zH_\kappa^\ast(\lambda, \cdot; v(\cdot))
$$
is well-defined, where  $\iota^c:C^1_{M}([0,\tau];\R^{2n})\hookrightarrow C^0_M([0,\tau];\R^{2n})$
is the inclusion.
By a contradiction we assume that $\mathscr{A}$  is not continuous at some point $(\bar{\lambda}, \bar{u})\in \Lambda\times X$.
Then there exists $\varepsilon>0$ and a sequence $\{(\lambda_k,u_k)\}^\infty_{k=1}$ in $\Lambda\times X$ converging to
$(\bar{\lambda}, \bar{u})$ such that
\begin{equation}\label{e:Th1.3(II)2}
\|\mathscr{A}({\lambda_k}, {u}_k)-\mathscr{A}(\bar{\lambda}, \bar{u})\|_{C^0}\ge\varepsilon,\quad\forall k=1,2,\cdots.
\end{equation}
(Here we have also used the first countability of $\Lambda$ and $X$.)
Since $\iota^c\circ(\Lambda^c_{M,\tau,\kappa I_{2n}})^{-1}$ is continuous,
we deduce that $\|\iota^c\circ(\Lambda^c_{M,\tau,\kappa I_{2n}})^{-1}u_k-
\iota^c\circ(\Lambda^c_{M,\tau,\kappa I_{2n}})^{-1}\bar{u}\|_{C^0}\to 0$ as $k\to\infty$.
Therefore   there exists a natural number $N$ such that
$\|\iota^c\circ(\Lambda^c_{M,\tau,\kappa I_{2n}})^{-1}u_k-
\iota^c\circ(\Lambda^c_{M,\tau,\kappa I_{2n}})^{-1}\bar{u}\|_{C^0}<\varepsilon/2$ for all $k\ge N$.
It follows from this and (\ref{e:Th1.3(II)2}) that
\begin{equation}\label{e:Th1.3(II)3}
\|\nabla_zH_\kappa^\ast(\lambda_k, \cdot; {u}_k(\cdot))-\nabla_zH_\kappa^\ast(\bar{\lambda}, \cdot; \bar{u}(\cdot))\|_{C^0}\ge\frac{\varepsilon}{2},\quad\forall k\ge N.
\end{equation}
These mean that there exists a sequence $\{t_k\}_{k=N}^\infty$ in $[0,\tau]$ such that
\begin{equation}\label{e:Th1.3(II)4}
|\nabla_zH_\kappa^\ast(\lambda_k, t_k; {u}_k(t_k))-\nabla_zH_\kappa^\ast(\bar{\lambda}, t_k; \bar{u}(t_k))|_{\mathbb{R}^{2n}}\ge\frac{\varepsilon}{2},\quad\forall k\ge N.
\end{equation}
Passing to a subsequence we can assume $t_k\to\bar{t}\in [0,\tau]$.
 Since $\nabla_zH_\kappa^\ast(\lambda, t; z)$ is continuous and
\begin{eqnarray*}
|{u}_k(t_k)-\bar{u}(\bar{t})|_{\mathbb{R}^{2n}}&\le& |{u}_k(t_k)-\bar{u}(t_k)|_{\mathbb{R}^{2n}}+|\bar{u}({t}_k)-\bar{u}(\bar{t})|_{\mathbb{R}^{2n}}\\
&\le& \|{u}_k-\bar{u}\|_{C^0}+|\bar{u}({t}_k)-\bar{u}(\bar{t})|_{\mathbb{R}^{2n}}\to 0,
\end{eqnarray*}
it follows from (\ref{e:Th1.3(II)4}) that
$0=|\nabla_zH_\kappa^\ast(\bar{\lambda}, \bar{t}; \bar{u}(\bar{t}))-\nabla_zH_\kappa^\ast(\bar{\lambda}, \bar{t}; \bar{u}(\bar{t}))|_{\mathbb{R}^{2n}}\ge\frac{\varepsilon}{2}$.
This contradiction shows that $\mathscr{A}$ is  continuous.

{\bf Step 3}. We claim that $\mathscr{A}_\lambda$ is $C^1$.
In fact, as in the proof of Proposition~\ref{prop:Ek14} we obtain that
$\mathscr{A}_\lambda(\cdot):=\mathscr{A}(\lambda,\cdot)$ has the G\^ateaux derivative at $w\in X$,
$$
D\mathscr{A}_\lambda(w):X\to\mathscr{L}(X),\;\xi\mapsto \nabla_z^2(H_\kappa)^\ast(\lambda, \cdot; w(\cdot))\xi(\cdot)+ \iota^c\circ(\Lambda^c_{M,\tau,\kappa I_{2n}})^{-1}.
$$
For any given $w\in X$, since  $(t,z)\mapsto\nabla^2_z(H_\kappa)^\ast(\lambda, t; z)$ is uniformly continuous
on a compact neighborhood of $[0,\tau]\times w([0,\tau])$ in $[0,\tau]\times\R^{2n}$, using the inequality
\begin{eqnarray*}
\|D\mathscr{A}_\lambda(u)-D\mathscr{A}_\lambda(w)\|_{\mathscr{L}(X)}\le\max_{0\le t\le\tau}
\|\nabla_z^2(H_\kappa)^\ast(\lambda, t; u(t))-\nabla_z^2(H_\kappa)^\ast(\lambda, t; w(t))\|_{\mathbb{R}^{2n\times 2n}}
\end{eqnarray*}
for $u\in X$  we can deduce that $\|D\mathscr{A}_\lambda(u)-D\mathscr{A}_\lambda(w)\|_{\mathscr{L}(X)}\to 0$
as $\|u-w\|_{C^0}\to 0$ and therefore that $\mathscr{A}_\lambda$ is $C^1$.

{\bf Step 4}.   For $B_\lambda(v)$ given by (\ref{e:p-self-adjoint}) we claim
\begin{equation}\label{e:C-D-D1}
\{u\in H\,|\,{B}_\lambda(0)u=su,\, s\le 0\}\subset X\quad\hbox{and}\quad
\{u\in H\,|\,{B}_\lambda(0)u\in X\}\subset X.
\end{equation}
That is, both (D1) in \cite[Hypothesis~1.1]{Lu8} and (C) in
\cite[Hypothesis~1.3]{Lu8} hold with $B_\lambda(0)$.

In fact, suppose that  $u\in H$
and $\varrho\le 0$ satisfy $B_\lambda(0)u=\varrho u$.
Then
$$
 (A_{M,\tau,\kappa}u)(t)+ \nabla^2_z(H_\kappa)^\ast(\lambda, t; 0)u(t)=\varrho u(t).
$$
By (\ref{e:HPositive1}), $\nabla^2_z(H_\kappa)^\ast(\lambda, t; 0)-\varrho I_{2n}\ge \frac{1}{c_2}I_{2n}$
and so $\nabla^2_z(H_\kappa)^\ast(\lambda, t; 0)-\varrho I_{2n}$ is invertible.
Since
$$
A_{M,\tau,\kappa}u=\iota\circ(\Lambda_{M,\tau,\kappa I_{2n}})^{-1}u=(\Lambda_{M,\tau,\kappa I_{2n}})^{-1}u\in W^{1,2}_M([0,\tau];\R^{2n}),
$$
we deduce that $u(t)=-[\nabla^2_z(H_\kappa)^\ast(\lambda, t; 0)-\varrho I_{2n}]^{-1}(A_{M,\tau,\kappa I_{2n}}u)(t)$ is continuous. To prove that $u\in X$,
 it suffices to show that  $u(\tau)=Mu(0)$.
 Above (\ref{e:HPositive}) we have shown
  that $H_\kappa (\lambda, \tau, Mz)=H_\kappa (\lambda, 0, z)$ for all $z\in\mathbb{R}^{2n}$.
  This implies
$$
\nabla_{z}^{2}H(\lambda, \tau, Mz)=(M^{-1})^T\nabla_{z}^{2}H(\lambda, 0, z)M^{-1}.
$$
It follows from (\ref{e:HPositive+}) that
\begin{eqnarray*}
\nabla_{z}^{2}(H_{\kappa})^{*}(\lambda, \tau;0)
&=&(\nabla_{z}^{2}H_{\kappa}(\lambda, \tau;0))^{-1}\\
&=&((M^{-1})^T\nabla_{z}^{2}H(\lambda, 0, 0)M^{-1})^{-1}\\
&=&M(\nabla_{z}^{2}H(\lambda, 0, 0))^{-1}M^{T}\\
&=&M\nabla_{z}^{2}(H_{\kappa})^{*}(\lambda, 0;0)M^{-1}
\end{eqnarray*}
because $M$ is an orthogonal symplectic matrix. Hence
\begin{eqnarray*}
u(\tau)&=&-[\nabla^2_z(H_\kappa)^\ast(\lambda, \tau; 0)-\varrho I_{2n}]^{-1}(A_{M,\tau,\kappa I_{2n}}u)(\tau)\\
&=&-[M\nabla_{z}^{2}(H_{\kappa})^{*}(\lambda, 0;0)M^{-1}-\varrho I_{2n}]^{-1}M(A_{M,\tau,\kappa I_{2n}}u)(0)\\
&=&-M[\nabla_{z}^{2}(H_{\kappa})^{*}(\lambda, 0;0)-\varrho I_{2n}]^{-1}M^{-1}M(A_{M,\tau,\kappa I_{2n}}u)(0)\\
&=&Mu(0).
\end{eqnarray*}
The first claim is thus proved.

In order to prove the second inclusion in (\ref{e:C-D-D1})
let $u\in H$ be such that $v:=B_\lambda(0)u\in X=C^0_M([0,\tau];\R^{2n})$. Then
$$
 (A_{M,\tau,\kappa}u)(t)+ \nabla^2_z(H_\kappa)^\ast(\lambda, t; 0)u(t)=v(t).
$$
By (\ref{e:Bisom}), $A_{M,\tau,\kappa}u\in W^{1,2}_{M}([0,\tau];\R^{2n})\subset X$.
This implies  that $u$ is continuous since $t\mapsto \nabla^2_z(H_\kappa)^\ast(\lambda, t; 0)$
 is continuous and
 $\nabla^2_z(H_\kappa)^\ast(\lambda, t; 0)$ is invertible.
Let $w(t)=v(t)-(A_{M,\tau,\kappa}u)(t)$. Then $w\in X$ and
$u(t)=[\nabla^2_z(H_\kappa)^\ast(\lambda, t; 0)]^{-1}w(t)$. As above, we have
\begin{eqnarray*}
u(\tau)&=&[\nabla^2_z(H_\kappa)^\ast(\lambda, \tau; 0)]^{-1}w(\tau)\\
&=&[M\nabla_{z}^{2}(H_{\kappa})^{*}(\lambda, 0;0)M^{-1}]^{-1}Mw(0)\\
&=&M[\nabla_{z}^{2}(H_{\kappa})^{*}(\lambda, 0;0)]^{-1}M^{-1}Mw(0)\\
&=&Mu(0).
\end{eqnarray*}
Hence $u\in X$, which proves the second claim.

{\bf Step 5}.
 In the proof of Theorem~\ref{th:bif-ness}(I) we have proved that
$P_\lambda(v)=\nabla^2_z(H_\kappa)^\ast(\lambda, \cdot; v(\cdot))$ and $Q_\lambda(v)=A_{M,\tau,\kappa}$ satisfy
the conditions (i), (ii), (iii) and (iv) in
Theorem~\ref{th:A.9} with $\lambda^\ast=\mu$.
Combining these with the above  Steps 3, 4 we see that
Hypothesis~\ref{hyp:A.5} and so the condition (v) in Theorem~\ref{th:A.9} with $\lambda^\ast=\mu$ is satisfied.

It follows from (\ref{e:psi-MorseIndex+}) and (\ref{e:psi-Nullity+})  that the Morse index and nullity of
$\mathcal{L}_{{\lambda}}$ at $0\in H$ are
\begin{eqnarray}
m^-_\lambda&=& m^-({\psi}_{\kappa}(\lambda,\cdot), 0)=i_{\tau,M}(\gamma_\lambda)- i_{\tau,M}(\Upsilon_{\kappa I_{2n}})-\nu_{\tau,M}(\Upsilon_{\kappa I_{2n}}),\label{e:L-MorseIndex}\\
m^0_\lambda&=&m^0({\psi}_{\kappa}(\lambda, \cdot), 0)=\nu_{\tau,M}(\gamma_\lambda)=\dim{\rm Ker}(\gamma_\lambda(\tau)-M),\label{e:L-Nullity}
\end{eqnarray}
 respectively, where $\Upsilon_{\kappa I_{2n}}(t)=\exp({t\kappa J})$.
Let $\Xi= -i_{\tau,M}(\Upsilon_{\kappa I_{2n}})-\nu_{\tau,M}(\Upsilon_{\kappa I_{2n}})$.
Under the assumptions in Theorem~\ref{th:bif-ness}(II), for each $k\in\mathbb{N}$ we derive from (\ref{e:L-MorseIndex})-(\ref{e:L-Nullity})
\begin{eqnarray*}
&&[m^-_{\lambda_k^-}, m^-_{\lambda_k^-}+m^0_{\lambda_k^-}]\cap[m^-_{\lambda_k^+},
m^-_{\lambda_k^+}+m^0_{\lambda_k^+}]\\
&=&[i_{\tau,M}(\gamma_{\lambda_k^-})+\Xi, i_{\tau,M}(\gamma_{\lambda_k^-})+\nu_{\tau,M}(\gamma_{\lambda_k^-})+\Xi]\cap[i_{\tau,M}(\gamma_{\lambda_k^+})
+\Xi, i_{\tau,M}(\gamma_{\lambda_k^+})+\nu_{\tau,M}(\gamma_{\lambda_k^+})+\Xi]\\
&=&\Xi+ [i_{\tau,M}(\gamma_{\lambda_k^-}), i_{\tau,M}(\gamma_{\lambda_k^-})+\nu_{\tau,M}(\gamma_{\lambda_k^-})]\cap[i_{\tau,M}(\gamma_{\lambda_k^+}), i_{\tau,M}(\gamma_{\lambda_k^+})+\nu_{\tau,M}(\gamma_{\lambda_k^+})]=\emptyset
\end{eqnarray*}
 and either $m^0_{\lambda_k^+}=\nu_{\tau,M}(\gamma_{\lambda_k^+})=0$ or $m^0_{\lambda_k^-}=\nu_{\tau,M}(\gamma_{\lambda_k^-})=0$.
Hence from the conclusion (B) of Theorem~\ref{th:A.9} we immediately conclude that
there exists a sequence $\{(\lambda_k, w_k)\}_{k\ge 1}$ in
$\hat\Lambda\times L^2([0,\tau];\R^{2n})$ converging to
$(\mu, 0)$ such that each $w_k\ne 0$ and satisfies $\nabla_w\psi_\kappa(\lambda_k, w_k)=0$,
$k=1,2,\cdots$. Then
$$
u_k:=-(\Lambda_{M,\tau,\kappa I_{2n}})^{-1}w_k+u_{\lambda_k}\in W^{1,2}_M([0,\tau];\mathbb{R}^{2n}),\quad k=1,2,\cdots,
$$
satisfy (\ref{e:Hboundary}) with $\lambda=\lambda_k$
and $0<\|u_k-u_{\lambda_k}\|_{1,2}=\|(\Lambda_{M,\tau,\kappa I_{2n}})^{-1}w_k\|_{1,2}\to 0$
as $k\to\infty$.
We also need to prove $\|\dot{u}_k-\dot{u}_{\lambda_k}\|_{C^0}\to 0$.
It is easily seen  that
\begin{eqnarray*}
|\dot{{u}}_k(t)-\dot{u}_{\lambda_k}(t)|&=&|J\nabla_zH(\lambda_k,t, {u}_k(t))-
J\nabla_zH(\lambda_k,t, u_{\lambda_k}(t))|\\
&=&|\nabla_zH(\lambda_k,t, {u}_k(t))-\nabla_zH(\lambda_k,t, u_{\lambda_k}(t))|\\
&\le&\int^1_0\|\nabla^2_zH(\lambda_k,t, s{u}_k(t)+(1-s)u_{\lambda_k}(t))\|\cdot|{u}_k(t)-u_{\lambda_k}(t)|ds\\
&\le&\|{u}_k-u_{\lambda_k}\|_{C^0}\int^1_0\|\nabla^2_zH(\lambda_k,t, s{u}_k(t)+(1-s)u_{\lambda_k}(t))\|ds.
\end{eqnarray*}
Let $\Lambda_0=\{\mu,\lambda_k\,|\,k\in\mathbb{N}\}$. It is a compact set.
Since $\|u_k-u_{\lambda_k}\|_{C^0}\to 0$ and $\Lambda\times [0, \tau]\ni (\lambda,t)\mapsto u_\lambda(t)\in\mathbb{R}^{2n}$ is continuous,
there exists a $C>0$ such that $|u_{\lambda_k}(t)|+|u_{k}(t)|\le C$ for all $(t,k)\in[0,\tau]\times\mathbb{N}$.
Note that $\nabla^2_zH$ is continuous. It is bounded on the compact subset
$$
\Lambda_0\times [0,\tau]\times \{z\in\mathbb{R}^{2n}\,|\,|z|\le C\}.
$$
It follows that $|\dot{{u}}_k(t)-\dot{u}_{\lambda_k}(t)|$ uniformly converges to zero on $[0,\tau]$
as $k\to\infty$. Hence  $0<\|u_k-u_{\lambda_k}\|_{C^1}\to 0$.
The required conclusions are proved.
\end{proof}

\begin{proof}[\bf Proof of Theorem~\ref{th:bif-ness}(III)]
Recall that we can assume $\Lambda=\alpha([0,1])$.
From the proof of Theorem~\ref{th:bif-ness}(II)
it is easily seen that the conditions of Theorem~\ref{th:A.9+}
are satisfied. Then there exists a sequence
$\{(t_k, w_k)\}_{k\ge1}\subset [0, 1]\times L^2([0,\tau];\mathbb{R}^{2n})$ such that
 \begin{enumerate}
\item[$\bullet$]  $t_k\to\bar{t}$ and $0<\|w_k\|_{2}\to 0$,
 \item[$\bullet$] each $w_k$ satisfies $\nabla_w\psi_\kappa(\alpha(t_k), w_k)=0$, $k=1,2,\cdots$,
\item[$\bullet$]  $\alpha(\bar{t})$ is not equal to $\lambda^+$ (resp. $\lambda^-$) if $m^0_{\lambda^+}=0$ (resp. $m^0_{\lambda^-}=0$),
where $m^0_\lambda$ is given by (\ref{e:L-Nullity}).
\end{enumerate}
As above we define $u_k:=-(\Lambda_{M,\tau,\kappa I_{2n}})^{-1}w_k+u_{\alpha(t_k)}\in W^{1,2}_M([0,\tau];\mathbb{R}^{2n})$,
$k=1,2,\cdots$. Then $u_k$ satisfies (\ref{e:Hboundary}) with $\lambda=\alpha(t_k)$
and $0<\|u_k-u_{\alpha(t_k)}\|_{1,2}=\|(\Lambda_{M,\tau,\kappa I_{2n}})^{-1}w_k\|_{1,2}\to 0$
as $k\to\infty$.
As in the proof of Theorem~\ref{th:bif-ness}(II), taking $\lambda_k=\alpha(t_k)$ and $\mu=\alpha(\bar{t})$
we obtain $\|u_k-u_{\alpha(t_k)}\|_{C^1}\to 0$.
\end{proof}

\begin{remark}\label{rm:MW89}
{\rm Even if $(\Lambda, M, \tau)=(I, I_{2n}, 2\pi)$, where $I$ is
 an interval containing $\mu$ as an interior point,
we cannot still guarantee that $\psi_\kappa$ is $C^1$ on $\Lambda\times L^{2}([0,\tau];\mathbb{R}^{2n})$. Therefore the conclusion cannot be derived from
\cite[Theorem~4.6]{BaSzWi} (a result due to Mawhin and Willem \cite[Theorem~8.9]{MaWi}).
 }
 \end{remark}

\begin{proof}[\bf Proof of Theorem~\ref{th:bif-suffict}]
With notations in the proofs of Theorems~\ref{th:bif-ness}.
But now the parameter space $\Lambda$ is a real interval.
Comparing the conditions in Theorem~\ref{th:A.9}
 with those of \cite[Theorem~3.6]{Lu10}
we  have actually checked in  the proof of Theorem~\ref{th:bif-ness}(II)
 that $\mathcal{L}_\lambda(\cdot)=\psi_\kappa(\lambda, \cdot)$ with $\lambda^\ast=\mu$, $H=L^2([0,\tau];\R^{2n})$
and $X=C^0_M([0,\tau];\R^{2n})$ satisfy conditions in \cite[Theorem~3.6]{Lu10} (Theorem~\ref{th:A.11})
except for the condition (f).

We claim that the latter can be also satisfied.
In fact, by the assumptions of Theorem~\ref{th:bif-suffict} and (\ref{e:L-MorseIndex})-(\ref{e:L-Nullity}), $m^0({\psi}_{\kappa}(\mu,\cdot), 0)>0$
and $m^0({\psi}_{\kappa}(\lambda,\cdot), 0)=0$
if $\lambda\in\Lambda\setminus\{\mu\}$ is close to $\mu$. Moreover,
$m^-({\psi}_{\kappa}(\lambda, \cdot), 0)$ takes, respectively,
values $m^-({\psi}_{\kappa}(\mu,\cdot), 0)$ and
$m^-({\psi}_{\kappa}(\mu, \cdot), 0)+ m^0({\psi}_{\kappa}(\mu, \cdot), 0)$
 as $\lambda\in\Lambda$ varies in two deleted half neighborhoods  of $\mu$.

Then by Theorem~\ref{th:A.11} (\cite[Theorem~3.6]{Lu10}),  one of the following assertions holds:
 \begin{enumerate}
\item[(i)] $(\mu, 0)$ is not an isolated solution  in  $\{\mu\}\times L^2([0,\tau];\R^{2n})$
 of $\nabla_w\psi_\kappa(\lambda, w)=0$.

\item[(ii)]  For each  $\lambda\in\Lambda\setminus\{\mu\}$ near $\mu$,
$\nabla_w\psi_\kappa(\lambda, w)=0$ has a  solution $w_\lambda\in C^0_M([0,\tau];\R^{2n})$
different from $0$,
 which  converges to $0$ in $C^0_M([0,\tau];\R^{2n})$ as $\lambda\to \mu$.

\item[(iii)] For a given neighborhood $\mathcal{U}$ of $0$ in $C^0_M([0,\tau];\R^{2n})$
there is an one-sided  neighborhood $\Lambda^0$ of $\mu$ such that for any $\lambda\in\Lambda^0\setminus\{\mu\}$, $\nabla_w\psi_\kappa(\lambda, w)=0$
has at least two distinct nonzero solutions  $w_\lambda^1$ and $w_\lambda^2$ in $\mathcal{U}$,
which can also be required to satisfy
$\psi_\kappa(\lambda, w_\lambda^1)\ne\psi_\kappa(\lambda, w_\lambda^2)$
provided that $\nu_{\tau,M}(\gamma_\mu)>1$ and $\nabla_w\psi_\kappa(\lambda, w)=0$
has only finitely many  solutions in $\mathcal{U}$.
\end{enumerate}

Since the Banach space isomorphism  $-\Lambda_{M,\tau,\kappa I_{2n}}^c: C^1_{M}([0,\tau];\R^{2n})\to C^0_M([0,\tau];\R^{2n})$
maps the set of solutions of $\nabla_u\Psi_\kappa(\lambda, u)=0$
in $C^1_{M}([0,\tau];\R^{2n})$ onto that of $\nabla_w\psi_\kappa(\lambda, w)=0$ in $C^0_{M}([0,\tau];\R^{2n})$, and
for each critical point $w$ of $\psi_\kappa(\lambda, \cdot)$ it holds that
$$
\psi_\kappa(\lambda, w)=\Psi_\kappa\left(\lambda, (-\Lambda_{M,\tau,\kappa I_{2n}}^c)^{-1}w\right)=-\Phi\left(\lambda,
(-\Lambda_{M,\tau,\kappa I_{2n}}^c)^{-1}w\right),
$$
the conclusions in the first part of  Theorem~\ref{th:bif-suffict} follow from these and Proposition~\ref{prop:threeBifu}(i) as in the proof of Theorem~\ref{th:bif-ness}(III).

For the part of ``Moreover", note that the Banach space isomorphism
$\Lambda_{M,\tau,\kappa I_{2n}}$ is equivariant with respect to
 the natural actions of $\Z_2=\{id,-id\}$ on spaces
$$
W^{1,2}_{M}([0,\tau];\R^{2n}), \quad L^{2}([0,\tau];\R^{2n}),\quad C^1_{M}([0,\tau];\R^{2n})
\quad\hbox{and}\quad C^0_{M}([0,\tau];\R^{2n})
$$
 and that functionals
$\Psi_\kappa(\lambda, \cdot)$ and $\psi_\kappa(\lambda, \cdot)$ in (\ref{e:HAction*})
and (\ref{e:HAction**})
are invariant under the $\Z_2$-action, i.e., even.
Comparing the conditions in Theorem~\ref{th:A.11} (\cite[Theorem~3.6]{Lu10}
or \cite[Theorem~4.6]{Lu8}) with those of
Theorem~\ref{th:A.12} (\cite[Theorem~3.7]{Lu10} or \cite[Theorem~5.12]{Lu8})
we immediately use the latter and \cite[Remark~3.9]{Lu10} (or \cite[Remark~5.14]{Lu8})
to obtain the desired claims.
\end{proof}

\begin{proof}[\bf Proof of Corollary~\ref{cor:necess-suffi}]
If $(\mu, \bar{u})\in\R\times W^{1,2}_M([0,\tau];\R^{2n})$
is a bifurcation point  for (\ref{e:Hboundary}) with
$H(\lambda,t, z)=H_0(t,z)+\lambda\hat{H}(t,z)$ and $\Lambda=\R$,
then $\nu_{\tau,M}(\gamma_\mu)>0$ by Theorem~\ref{th:bif-ness}(I).

Suppose now that $\nu_{\tau,M}(\gamma_\mu)>0$.
Take positive numbers $0<R_1<R_2<R_3$ such that
 $\bar{u}([0,\tau])$ is contained in the closed ball  $\bar{B}^{2n}(0, R_1^2)$.
 Therefore we can get a constant $C'>0$  such that
$$
-C'I_{2n}\le \nabla_z^2H_0(t, z),\; \nabla_z^2\hat{H}(t, z)\le C'I_{2n},
\quad\forall (\lambda,t, z)\in \Lambda\times [0,\tau]\times \bar{B}^{2n}(0, R_3^2).
$$
Let us choose a smooth cut-off function $\rho:[0, \infty)\to [0, 1]$ such that
$\rho(t)=1$ for $t\le R_2^2$ and $\rho(t)=0$ for $t\ge R_3^2$.
Define a function $\chi:\mathbb{R}^{2n}\to [0, 1]$ by
$$
\chi(z)=\rho(|z|^2),\quad\forall z\in\mathbb{R}^{2n}.
$$
We modify the functions $H_0$ and $\hat{H}$ by multiplying them by the cutoff function $\chi$.
That is, we define
$$
H_0^\star(t,z):=\chi(z)H_0(t,z),\qquad \hat{H}^\star(t,z):=\chi(z)\hat{H}(t,z),
$$
for all $(t,z)\in [0, \tau]\times\mathbb{R}^{2n}$.
After this modification, we continue to denote these new functions by
$H_0$ and $\hat{H}$. They then satisfy the following uniform estimates for some constants
 $C>0, c'_1>0, c'_2>0$:
\begin{eqnarray*}
-CI_{2n}\le \nabla_z^2H_0(t, z),\; \nabla_z^2\hat{H}(t, z)\le CI_{2n},\quad\forall (t, z)\in [0,\tau]\times \mathbb{R}^{2n},\\
-c'_1|z|^2-c'_2\le H_0(t, z),\; \hat{H}(t, z)\le c'_1|z|^2+ c'_2,\quad\forall (t, z)\in [0,\tau]\times \mathbb{R}^{2n}
\end{eqnarray*}
 Therefore there exist constants $\kappa<0$ and
 $c_i>0$, $i=1,2,3$   such that (i)-(iii) in the proof of Theorem~\ref{th:bif-ness}(I)
are satisfied with
$$
H_\kappa(\lambda, t, z):=H_0(t, z)+\lambda\hat{H}(t,z)- \frac{\kappa}{2}|z|^2
$$
for all $(\lambda, t, z)\in [\mu-1, \mu+1]\times [0,\tau]\times\R^{2n}$. In particular, (\ref{e:HPositive}) implies
\begin{equation}\label{e:HPositive*}
   c_1I_{2n}\le \nabla_z^2H_0(t,z)+ \lambda\nabla_z^2\hat{H}(t,z)-\kappa I_{2n}
      \le c_2I_{2n}
   \end{equation}
for all $(\lambda, t, z)\in [\mu-1, \mu+1]\times [0,\tau]\times\R^{2n}$.
Let $\bar{w}=-\Lambda_{M,\tau,\kappa I_{2n}}\bar{u}$ and
$\psi_{\kappa,\lambda}(u):=\psi_\kappa(\lambda, u)$ be given by (\ref{e:HAction**}). Then
for all $\xi,\eta\in{L}^{2}([0, \tau];\R^{2n})$,
\begin{eqnarray*}
&&{\psi}''_{\kappa,\lambda}(\bar{w})[\xi,\eta]\\
&=&\int^\tau_0\Big[({A}_{M,\tau,\kappa}\xi)(t), \eta(t))_{\mathbb{R}^{2n}}
+([\nabla_z^2H_0(t, \bar{u}(t))+   \lambda\nabla_z^2\hat{H}(t, \bar{u}(t))-\kappa I_{2n}]^{-1}\xi(t),\eta(t))_{\mathbb{R}^{2n}})\Big]dt\\
&=&2q_{M, B_\lambda|A}(\xi,\eta),
\end{eqnarray*}
 where $B_\lambda(t)=\nabla^2_zH_0(t, \bar{u}(t))+   \lambda\nabla_z^2\hat{H}(t, \bar{u}(t))$,
 $A(t)=\kappa I_{2n}$, and $q_{M, B|A}$ is as in (\ref{e:quadratic}).
By (\ref{e:HPositive*}), for all $(\lambda, t)\in [\mu-1, \mu+1]\times [0,\tau]$ we have
\begin{equation}\label{e:HPositive**}
   c_1I_{2n}\le B_\lambda(t)-A(t)
      \le c_2I_{2n}.
   \end{equation}
Since $\gamma_\lambda=\Upsilon_{B_\lambda}$,
it follows from  (\ref{e:MorseIndex}) and (\ref{e:Nullity}) that
\begin{equation}\label{e:DongMorse}
\left.\begin{array}{ll}
m^-({\psi}_{\kappa,\lambda},\bar{w})=j_{\tau,M}(B_\lambda|A)\quad\hbox{and}\\
m^0({\psi}_{\kappa,\lambda},\bar{w})=\nu_{\tau,M}(B_\lambda|A)=
\nu_{\tau,M}(B_\lambda)=\nu_{\tau,M}(\gamma_\lambda).
\end{array}\right\}
\end{equation}

\underline{Firstly, let us assume $\nabla^2_z\hat{H}(t, \bar{u}(t))>0$ for all $t\in [0,\tau]$}.
 Then by (\ref{e:HPositive**}) we get
$$
B_{\lambda_2}>B_{\lambda_1}\ge A+ c_1I_{2n}\quad\hbox{for any}\quad \mu-1\le \lambda_1<\lambda_2\le\mu+1.
$$
Because of these we derive from (\ref{e:MorseRelation3})  that
\begin{eqnarray}\label{e:DongMorse1}
 j_{\tau,M}(B_{\lambda_2}|A)\ge j_{\tau,M}(B_{\lambda_1}|A)+\nu_{\tau,M}(B_{\lambda_1}|A)
\end{eqnarray}
for any $\mu-1\le \lambda_1<\lambda_2\le\mu+1$.
By (\ref{e:M-index}), both $j_{\tau,M}(B|A)$ and $\nu_{\tau,M}(B|A)$ are nonnegative integers. Hence
 (\ref{e:DongMorse1}) implies that there exist at most finitely many points
$\lambda\in [\mu-1, \mu+1]$ where $\nu_{\tau,M}(B_\lambda|A)=\nu_{\tau,M}(B_\lambda)=\nu_{\tau,M}(\Upsilon_{B_\lambda})
=\nu_{\tau,M}(\gamma_\lambda)\ne 0$.
It follows that $\{\lambda\in\R\,|\, \nu_{\tau,M}(\gamma_\lambda)>0\}$  is a discrete set in $\R$.
The first claim is proved in the present case.

In order to prove (\ref{e:case1}) let us
take $\rho\in (0, 1]$ so small that $\nu_{\tau,M}(B_\lambda|A)=\nu_{\tau,M}(B_\lambda)=0$ for $\lambda\in [\mu-\rho, \mu+\rho]\setminus\{\mu\}$.
Since  $\nabla_z^2\hat{H}(t, \bar{u}(t))>0\;\forall t\in [0,\tau]$, by (\ref{e:MorseRelation4}) we get
\begin{equation}\label{e:DongMorse2}
\left.\begin{array}{ll}
j_{\tau,M}(B_{\mu_1})\le j_{\tau,M}(B_{\mu_2})\le j_{\tau,M}(B_{\mu})&<j_{\tau,M}(B_{\mu})+\nu_{\tau,M}(B_\mu)\\
&\le j_{\tau,M}(B_{\lambda_1})\le j_{\tau,M}(B_{\lambda_2})
\end{array}\right\}
\end{equation}
for any $\mu-\rho\le\mu_1<\mu_2<\mu<\lambda_1<\lambda_2<\mu+\rho$.
Moreover, by (\ref{e:MorseIndex})  we can derive from
\cite[Proposition~2.3.3]{Ab} that
$$
B\mapsto j_{\tau,M}(B|A)\quad\hbox{and}\quad B\mapsto j_{\tau,M}(B|A)+\nu_{\tau,M}(B|A)
$$
are lower-semi continuous and
upper-semi continuous, respectively. These and (\ref{e:MorseIndex0}) imply that
$$
B\mapsto i_{\tau,M}(B)\quad\hbox{and}\quad B\mapsto i_{\tau,M}(B)+\nu_{\tau,M}(B)
$$
are lower-semi continuous and upper-semi continuous, respectively.
Using (\ref{e:DongMorse2}) again we deduce
\begin{eqnarray*}
&&j_{\tau,M}(B_{\mu_1})= j_{\tau,M}(B_{\mu})\;\;\forall\mu_1\in [\mu-\rho,\mu),\\
&&j_{\tau,M}(B_{\lambda_1})=j_{\tau,M}(B_{\lambda_1})+\nu_{\tau,M}(B_{\lambda_1})
=j_{\tau,M}(B_{\mu})+\nu_{\tau,M}(B_\mu)
\;\;\forall\lambda_1\in (\mu, \mu+\rho].
\end{eqnarray*}
By Theorem~\ref{th:twoIndex}, these lead to (\ref{e:case1}).

\underline{Similarly, if $\nabla_z^2\hat{H}(t, \bar{u}(t))<0\;\forall t\in [0,\tau]$},
we may obtain reversed inequalities
to those in (\ref{e:DongMorse1}) and (\ref{e:DongMorse2}),
 and therefore the discreteness of $\Sigma$.
Then as above we have
\begin{eqnarray*}
&&j_{\tau,M}(B_{\mu_1})= j_{\tau,M}(B_{\mu})\;\;\forall\mu_1\in (\mu, \mu+\rho],\\
&&j_{\tau,M}(B_{\lambda_1})=j_{\tau,M}(B_{\lambda_1})+\nu_{\tau,M}(B_{\lambda_1})
=j_{\tau,M}(B_{\mu})+\nu_{\tau,M}(B_\mu)
\;\;\forall\lambda_1\in [\mu-\rho,\mu)
\end{eqnarray*}
and so (\ref{e:case2}).

Finally, other  conclusions  follow from (\ref{e:case1})-(\ref{e:case2})
and Theorem~\ref{th:bif-suffict}.
\end{proof}

\subsection{Proofs of Theorems~\ref{th:bif-ness},
~\ref{th:bif-suffict} and Corollaries~\ref{cor:necess-suffi},~\ref{cor:bif-deform}
for a general symplectic matrix $M$}\label{sec:directM}

We first prove Theorems~\ref{th:bif-ness} and~\ref{th:bif-suffict}, and Corollary~\ref{cor:necess-suffi}, for an arbitrary symplectic matrix $M$. 
Via a simple transformation,  these results can be derived from their already-proved special cases with $M=I_{2n}$.
Next, we derive from Theorem~\ref{th:bif-suffict},  Corollary~\ref{cor:bif-deform}(I), which in turn leads to Corollary~\ref{cor:bif-deform}(II).

For a differentiable curve
$\gamma \in \mathcal{P}_\tau(2n) = \{\gamma \in C([0,\tau],\mathrm{Sp}(2n,\mathbb{R})) \mid \gamma(0) = I_{2n}\}$,
differentiating the identity $\gamma(t)^\top J_n \gamma(t) = J_n$ (which holds because $\gamma(t) \in \mathrm{Sp}(2n,\mathbb{R})$)
and using $J_n^\top = -J_n$, we derive that
the matrix $B^\gamma(t) := \gamma(t)^\top J_n \dot{\gamma}(t)$ is symmetric.
Moreover, since
$B^\gamma(0) = B^\gamma(\tau)$ if and only if $\gamma(0)^\top J_n \dot{\gamma}(0) = \gamma(\tau)^\top J_n \dot{\gamma}(\tau)$,
we see that  the condition $B^\gamma(0) = B^\gamma(\tau)$ is equivalent to $\dot{\gamma}(\tau) =  \gamma(\tau)\dot{\gamma}(0)$.

Since $M$ has a unique polar decomposition $M = \exp(N_1) \exp(N_2)$
with $N_1$ symmetric satisfying $N_1^T J + J N_1 = 0$
and $N_2$ skew-symmetric satisfying $N_2^T J + J N_2 = 0$,
\[
[0, \tau]\ni t\mapsto \gamma^M(t) := \exp(t N_1/\tau) \exp(t N_2/\tau)
\]
defines a smooth path in $\operatorname{Sp}(2n,\mathbb{R})$
joining $\gamma^M(0)=I_{2n}$ to $\gamma^M(\tau)=M$.

One can always choose a $C^1$ path $\bar{\gamma}^M: [0,\tau] \to \operatorname{Sp}(2n,\mathbb{R})$
that is homotopic to $\gamma^M$ with fixed endpoints in $\mathcal{P}_\tau(2n)$
and satisfies $\dot{\bar{\gamma}}^M(\tau) =\bar{\gamma}^M(\tau)\dot{\bar{\gamma}}^M(0) $
(see \cite[p.~973]{LiuTang15}). Therefore, for the indexes in (A.16) and (A.17), there hold
\begin{align}\label{e:2-P-index1C}
i_{\tau,M}(\bar\gamma^M)&=i_{\tau, M}(\gamma^M),\quad
\nu_{\tau,M}(\bar\gamma^M)=\nu_{\tau, M}(\gamma^M)=2n,\\
i_\tau^M(\bar\gamma)&=i_\tau^M(\gamma^M),\quad
\nu_\tau^M(\bar\gamma^M)=\nu_\tau^M(\gamma^M)=2n.
\label{e:2-P-index2C}
\end{align}

Let $\bar{B}_M(t):=B^{\bar\gamma^M}(t)= \bar{\gamma}^M(t)^\top J_n \dot{\bar{\gamma}}^M(t)$,
which is symmetric and satisfies $\bar{B}_M(0)=\bar{B}_M(\tau)$.
Let $H:\Lambda\times [0,\tau]\times\mathbb{R}^{2n}\to\mathbb{R}$ satisfy Assumption~\ref{ass:BasiAss1},
but with the requirement that $M$ is orthogonal omitted.
Define $\check{H}:\Lambda\times [0,\tau]\times\mathbb{R}^{2n}\to\mathbb{R}$ by
\begin{equation}\label{e:PerHboundary0C}
\check{H}(\lambda,t, z)=H(\lambda,t, \bar{\gamma}^M(t)z)+ \frac{1}{2}(\bar{B}_M(t)z,z)_{\mathbb{R}^{2n}}.
\end{equation}
It is easily seen that $\check{H}$ satisfies Assumption~\ref{ass:BasiAss1} with $M=I_{2n}$
and that $\check{H}(\lambda,t, \cdot)$ is even if and only if $H(\lambda,t, \cdot)$ is even.
A direct computation yields
\begin{align*}
\nabla_z\check{H}(\lambda,t, z)&=\bar{\gamma}^M(t)\nabla_zH(\lambda,t, \bar{\gamma}^M(t)z)+ \bar{B}_M(t)z,\\
\nabla^2_z\check{H}(\lambda,t, z)&=\bar{\gamma}^M(t)^\top\nabla_z^2H(\lambda,t, \bar{\gamma}^M(t)z)\bar{\gamma}^M(t)+ \bar{B}_M(t).
\end{align*}
From these we derive:
\begin{itemize}
\item[{\rm (i)}] A path $u:[0,\tau]\to\mathbb{R}^{2n}$ satisfies \eqref{e:Hboundary} if and only if
the transformed path $w(t):=\bar{\gamma}^M(t)^{-1}u(t)$ satisfies the Hamiltonian boundary value problem
\begin{equation}\label{e:PerHboundary1C}
\dot{w}(t)=J_n\nabla_z \check{H}(\lambda,t, w(t))\quad\text{and}\quad w(\tau)=w(0).
\end{equation}

\item[{\rm (ii)}] If $\gamma$ and $\check{\gamma}$ are the fundamental matrix solutions of
\begin{align*}
\dot{\eta}(t)&=J_n\nabla_z^2 {H}(\lambda,t, u(t))\eta,\\
\dot{\eta}(t)&=J_n\nabla_z^2 \check{H}(\lambda,t, w(t))\eta,
\end{align*}
respectively, then $\bar\gamma^M\check{\gamma}=\gamma$, and therefore by \cite[Lemmas~4.3 and 4.4]{Liu17+}
there hold
\begin{align}
\nu_\tau(\check{\gamma})&=\nu^M_\tau(\gamma),\label{e:PerHboundary2aC}\\
i_\tau(\check{\gamma})&=i^M_\tau(\gamma)-i^M_\tau(\bar\gamma^M)-n
=i^M_\tau(\gamma)-i^M_\tau(\gamma^M)-n.\label{e:PerHboundary2bC}
\end{align}
\end{itemize}

Let $\xi\in\mathcal{P}_\tau(2n)$ satisfy $\xi(\tau)=M^{-1}$.
By Proposition~\ref{prop:twoDef}
\begin{align*}
i_{\tau,M}(\gamma)=i^M_\tau(\gamma)+ \bigl[i_\tau(\xi)-\triangle(\xi)/\tau\bigr]\quad\text{and}\quad
i_{\tau,M}(\gamma^M)=i^M_\tau(\gamma^M)+ \bigl[i_\tau(\xi)-\triangle(\xi)/\tau\bigr].
\end{align*}
It follows from these and (\ref{e:PerHboundary2bC}) that
\begin{equation}\label{e:PerHboundary3C}
i_\tau(\check{\gamma})=i^M_\tau(\gamma)-i^M_\tau(\gamma^M)-n
=i_{\tau,M}(\gamma)-i_{\tau,M}(\gamma^M)-n.
\end{equation}

 For each $\lambda\in\Lambda$, define a path $w_\lambda:[0,\tau]\to\mathbb{R}^{2n}$
 by   $w_\lambda(t):=\bar{\gamma}^M(t)^{-1}u_\lambda(t)$.
 From Assumption~\ref{ass:BasiAss1} and the arguments below it,
we deduce that $\Lambda\times [0,\tau]\ni (\lambda,t)\mapsto w_\lambda(t)\in\mathbb{R}^{2n}$ is  continuous,
 and for any given $\mu\in\Lambda$ we have $\|w_\lambda-w_\mu\|_{C^1}\to 0$ as $\lambda\to\mu$.

By the above (i)-(ii), we have:
\begin{itemize}
\item[{\rm (iii)}]
 $w_\lambda$ satisfies (\ref{e:PerHboundary1C}) for any $\lambda\in\Lambda$,
and  $(\mu, u_\mu)$ is a bifurcation point along sequences of
 (\ref{e:Hboundary}) in $\Lambda\times C^0_{M}([0,\tau];\R^{2n})$
 with respect to the branch $\{(\lambda,u_\lambda)\,|\,\lambda\in\Lambda\}$ if and only if
 $(\mu, w_\mu)$ is a bifurcation point along sequences of
 (\ref{e:PerHboundary1C}) in $\Lambda\times C^0_{I_{2n}}([0,\tau];\R^{2n})$
 with respect to the branch $\{(\lambda, w_\lambda)\,|\,\lambda\in\Lambda\}$.
 \end{itemize}

Let  $\gamma_\lambda$ and $\check{\gamma}_\lambda$ be the fundamental matrix solutions of
\begin{align*}
\dot{\eta}(t)=J_n\nabla_z {H}^2(\lambda,t, u_\lambda(t))\eta\quad\text{and}\quad 
\dot{\eta}(t)=J_n\nabla_z \check{H}^2(\lambda,t, w_\lambda(t))\eta
\end{align*}
respectively. By (\ref{e:PerHboundary2aC}) and (\ref{e:PerHboundary3C}), we obtain
$\bar\gamma^M\check{\gamma}_\lambda=\gamma_\lambda$ and
\begin{align}\label{e:PerHboundary4C}
\nu_\tau(\check{\gamma}_\lambda)=\nu_{\tau, M}(\gamma_\lambda),\qquad
i_\tau(\check{\gamma}_\lambda)=i_{\tau,M}(\gamma_\lambda)-i_{\tau,M}(\gamma^M)-n.
\end{align}

\begin{proof}[\bf Proof of Theorem~\ref{th:bif-ness}(I) for a general symplectic matrix $M$]
By the above (iii),  $(\mu, w_\mu)$ is a bifurcation point along sequences of
 (\ref{e:PerHboundary1C}) in $\Lambda\times C^0_{I_{2n}}([0,\tau];\R^{2n})$
 with respect to the branch $\{(\lambda, w_\lambda)\,|\,\lambda\in\Lambda\}$.
 Then Theorem~1.4(I) with $M=I_{2n}$ implies $\nu_\tau(\check{\gamma}_\mu)$
  and hence $\nu^M_\tau(\gamma_\mu)\ne 0$ by (\ref{e:PerHboundary4C}).
 \end{proof}

\begin{proof}[\bf Proof of Theorem~\ref{th:bif-ness}(II) for a general symplectic matrix $M$]
Let $m=i_{\tau,M}(\gamma^M)+n$.
By (\ref{e:PerHboundary4C}), we have
\begin{align*}
&[i_{\tau}(\check{\gamma}_{\lambda_k^-}), i_{\tau}(\check{\gamma}_{\lambda_k^-})+\nu_{\tau}(\check{\gamma}_{\lambda_k^-})]\cap[i_{\tau}(\check{\gamma}_{\lambda_k^+}), i_{\tau}(\check{\gamma}_{\lambda_k^+})+\nu_{\tau}(\check{\gamma}_{\lambda_k^+})]\\
&=[i_{\tau,M}(\gamma_{\lambda_k^-})-m, i_{\tau,M}(\gamma_{\lambda_k^-})+\nu_{\tau,M}(\gamma_{\lambda_k^-})-m]\cap[i_{\tau,M}(\gamma_{\lambda_k^+})-m, i_{\tau,M}(\gamma_{\lambda_k^+})+\nu_{\tau,M}(\gamma_{\lambda_k^+})-m]\\
&=([i_{\tau,M}(\gamma_{\lambda_k^-}), i_{\tau,M}(\gamma_{\lambda_k^-})+\nu_{\tau,M}(\gamma_{\lambda_k^-})]-m)\cap([i_{\tau,M}(\gamma_{\lambda_k^+}), i_{\tau,M}(\gamma_{\lambda_k^+})+\nu_{\tau,M}(\gamma_{\lambda_k^+})]-m)=\emptyset
\end{align*}
   and either $\nu_{\tau}(\check{\gamma}_{\lambda_k^+})=0$ or $\nu_{\tau}(\check{\gamma}_{\lambda_k^-})=0$.
As above, the desired result directly follows from Theorem~1.4(II) with $M=I_{2n}$ and
  the above (iii).
 \end{proof}

\begin{proof}[\bf Proof of Theorem~\ref{th:bif-ness}(III) for a general symplectic matrix $M$]
As above, we have
\begin{align*}
&[i_{\tau}(\check{\gamma}_{\lambda^-}), i_{\tau,M}(\check{\gamma}_{\lambda^-})+\nu_{\tau}(\check{\gamma}_{\lambda^-})]\cap[i_{\tau}(\check{\gamma}_{\lambda^+}), i_{\tau}(\check{\gamma}_{\lambda^+})+\nu_{\tau}(\check{\gamma}_{\lambda^+})]\\
&=[i_{\tau,M}(\gamma_{\lambda^-})-m, i_{\tau,M}(\gamma_{\lambda^-})+\nu_{\tau,M}(\gamma_{\lambda^-})-m]\cap[i_{\tau,M}(\gamma_{\lambda^+})-m, i_{\tau,M}(\gamma_{\lambda^+})+\nu_{\tau,M}(\gamma_{\lambda^+})-m]\\
&=\emptyset
\end{align*}
 and hence the expected conclusion follows.
\end{proof}

\begin{proof}[\bf Proof of Theorem~\ref{th:bif-suffict} for a general symplectic matrix $M$]
By the first equality in (\ref{e:PerHboundary4C}),
 $\dim{\rm Ker}(\check{\gamma}_\mu(\tau)-I_{2n})=\dim{\rm Ker}(\gamma_\mu(\tau)-M)\ne 0$ and
  $\dim{\rm Ker}(\check{\gamma}_\lambda(\tau)-I_{2n})=\dim{\rm Ker}(\gamma_\lambda(\tau)-M)=0$
 for each $\lambda\in\Lambda\setminus\{\mu\}$ near $\mu$.
Let $m=i_{\tau,M}(\gamma^M)+n$, and let $\Lambda_1$ and $\Lambda_2$ be
two deleted half neighborhoods  of $\mu$ such that
 $i_{\tau,M}(\gamma_\lambda)=i_{\tau,M}(\gamma_\mu)$ for $\lambda\in\Lambda_1$,
 and $i_{\tau,M}(\gamma_\lambda)=i_{\tau,M}(\gamma_\mu)+ \nu_{\tau,M}(\gamma_\mu)$
for $\lambda\in\Lambda_2$.
Then  (\ref{e:PerHboundary4C}) implies that
\begin{align*}
i_{\tau}(\check{\gamma}_\lambda)&=i_{\tau,M}(\gamma_\lambda)-m=i_{\tau,M}(\gamma_\mu)-m=i_{\tau}(\check{\gamma}_\mu)
\;\text{for all $\lambda\in\Lambda_1$},\\
i_{\tau}(\check{\gamma}_\lambda)&=i_{\tau,M}(\gamma_\lambda)-m=i_{\tau,M}(\gamma_\mu)+\nu_{\tau,M}(\gamma_\mu)-m=
i_{\tau}(\check{\gamma}_\mu)+ \nu_{\tau}(\check{\gamma}_\mu)
\;\text{for all $\lambda\in\Lambda_2$}.
\end{align*}
Note also that $u_\lambda=0\;\Leftrightarrow\;w_\lambda=0$
and that $\check{H}(\lambda,t, \cdot)$ is even if and only if $H(\lambda,t, \cdot)$ is even.
As above, the desired result directly follows from Theorem~\ref{th:bif-suffict}, with $M=I_{2n}$ and
  the above (i).
\end{proof}

\begin{proof}[\bf Proof of Corollary~\ref{cor:necess-suffi} for a general symplectic matrix $M$]
Define
$$
H_0^\star(t, z)=H_0(t, \bar{\gamma}^M(t)z)+\frac{1}{2}(\bar{B}_M(t)z,z)_{\mathbb{R}^{2n}}\quad\hbox{and}\quad
\hat{H}^\star(t, z)=\hat{H}(t, \bar{\gamma}^M(t)z)
$$
for all $(t,z)\in [0, \tau]\times\mathbb{R}^{2n}$. By (\ref{e:PerHboundary0C}), we have
$\check{H}(\lambda,t, z)=H_0^\star(t, z)+ \lambda\hat{H}^\star(t, z)$.
 Define the transformed path $\bar{w}(t):=\bar{\gamma}^M(t)^{-1}\bar{u}(t)$. It satisfies
\begin{equation}\label{e:PerHboundary5C}
\dot{w}(t)=J_n\nabla_z {H}_0^\star(t, w(t))\quad\text{and}\quad w(\tau)=w(0).
\end{equation}
 Since $\nabla_z\hat{H}^\star(t, z)=\bar{\gamma}^M(t)^\top\nabla_z\hat{H}_0(t, \bar{\gamma}^M(t)z)$,
we have $\nabla_z\hat{H}^\star(t, \bar{w}(t))=\bar{\gamma}^M(t)^\top\nabla_z\hat{H}_0(t, \bar{u}(t))=0$ for all $t\in [0,\tau]$.
Moreover, from $\nabla^2_z\hat{H}^\star(t, z)=\bar{\gamma}^M(t)^\top\nabla_z^2\hat{H}(t, \bar{\gamma}^M(t)z)\bar{\gamma}^M(t)$,
it follows that for each $t\in [0, \tau]$,
\begin{align*}
\nabla^2_z\hat{H}^\star(t, \bar{w}(t))>0\;\Longleftrightarrow\;\nabla_z^2\hat{H}(t, \bar{u}(t))>0,\\[2pt]
\nabla^2_z\hat{H}^\star(t, \bar{w}(t))<0\;\Longleftrightarrow\;\nabla_z^2\hat{H}(t, \bar{u}(t))<0.
\end{align*}
Note that (\ref{e:PerHboundary4C}) implies $\Sigma=\{\lambda\in\mathbb{R}\mid \nu_{\tau,M}(\gamma_\lambda)>0\}=
\{\lambda\in\mathbb{R}\mid \nu_\tau(\check{\gamma}_\lambda)>0\}$ and that
 for each $\mu\in\Sigma$ and a small enough $\rho>0$ it holds that
\begin{equation*}
i_{\tau}(\check{\gamma}_\lambda)=
\begin{cases}
i_{\tau}(\check{\gamma}_\mu) & \forall\lambda\in[\mu-\rho,\mu),\\
i_{\tau}(\check{\gamma}_\mu)+\nu_{\tau}(\check{\gamma}_\mu) & \forall\lambda\in(\mu,\mu+\rho]
\end{cases}
\end{equation*}
if $\nabla^2_z\hat{H}^\star(t, \bar{w}(t))>0\;\forall t\in[0,\tau]$, and
\begin{equation*}
i_{\tau}(\check{\gamma}_\lambda)=
\begin{cases}
i_{\tau}(\check{\gamma}_\mu)+\nu_{\tau}(\check{\gamma}_\mu) & \forall\lambda\in[\mu-\rho,\mu),\\
i_{\tau}(\check{\gamma}_\mu) & \forall\lambda\in(\mu,\mu+\rho]
\end{cases}
\end{equation*}
if $\nabla^2_z\hat{H}^\star(t, \bar{w}(t))<0\;\forall t\in[0,\tau]$.
Since $\bar{u}=0\;\Leftrightarrow\;\bar{w}=0$
and ${H}_0^\star(t, \cdot)$ (resp. $\hat{H}^\star(t, \cdot)$) is even if and only if
${H}_0(t, \cdot)$ (resp. $\hat{H}(t, \cdot)$) is even.
As above, the desired result directly follows from Corollary~\ref{cor:necess-suffi}, with $M=I_{2n}$ and
  the above (i).
\end{proof}

\begin{proof}[\bf Proof of Corollary~\ref{cor:bif-deform}]
{\it Proof of} (I).
Define $F:(0, 1]\times{\R}^{2n}\to\R$ by
$$
F(\lambda, z)=\lambda H(\lambda,z).
$$
It satisfies Assumption~\ref{ass:BasiAss1} and $\nabla_z F(\lambda, \bar{u})=0$ for all $\lambda$.
Let
$$
B_\lambda=\nabla_z^2F(\lambda, \bar{u})=\lambda \nabla_z^2H(\bar{u})=\lambda B,
$$
and let  $\gamma_{\lambda}$ be the fundamental matrix solution of
$\dot{Z}=JB_\lambda(t)Z$ on $[0,\tau]$. It is easily checked that
$\gamma_{\lambda}(t)=\Upsilon_{B}(\lambda t)$
for all $\lambda\in (0, 1]$ and $t\in [0,\tau]$.
Since $B>0$, by (\ref{e:positive-negativeA}) we have
\begin{eqnarray*}
i_{\tau,M}(\Upsilon_{B})-i_{\tau,M}(\xi_{2n})-\dim{\rm Ker}(I_{2n}-M)
&=&\sum_{0< s<\tau}\nu_{s,M}(\Upsilon_{B}|_{[0,s]})\\
&=&\sum_{0<\lambda<1}\nu_{\lambda\tau,M}(\Upsilon_{B}|_{[0,\lambda\tau]})=
\sum_{0<\lambda<1}\nu_{\tau,M}(\gamma_{\lambda}).
\end{eqnarray*}
From this and the assumption $i_{\tau,M}(\Upsilon_{B})\ne i_{\tau,M}(\xi_{2n})+ \dim{\rm Ker}(I_{2n}-M)$
it follows that
$\{0<\lambda<1\,|\, \nu_{\tau,M}(\gamma_{\lambda})>0\}$ is nonempty and only
consists of  finitely many numbers $\lambda_1<\cdots <\lambda_l$.
Let $\tau_k=\lambda_k\tau$, $k=1,\cdots,l$. Then
\begin{eqnarray*}
\nu_{\tau_k,M}(\Upsilon_B|_{[0,\tau_k]}):=\dim{\rm Ker}(\Upsilon_{B}(\tau_k)-M)
=\dim{\rm Ker}(\gamma_{\lambda_k}(\tau)-M)=\nu_{\tau,M}(\gamma_{{\lambda_k}})\ne 0
\end{eqnarray*}
for $k=1,\cdots, l$.
Moreover, for each $\lambda\in (0, 1]$ we can derive from (\ref{e:positive-negativeA}) that
\begin{eqnarray*}
&&i_{\tau,M}(\gamma_\lambda)-i_{\tau,M}(\xi_{2n})-\dim{\rm Ker}(I_{2n}-M)
=j_{\tau,M}(B_\lambda|0)\\
&&=\sum_{0<t<\tau}\nu_{t,M}(\gamma_\lambda|_{[0,t]})=\sum_{0<t<\lambda\tau}
\dim{\rm Ker}(\Upsilon_{B}(t)-M).
\end{eqnarray*}
Hence for each $\lambda_k$, $k=1,\cdots,l$, we obtain
\begin{equation}\label{e:Deform-Equ.1}
\left.\begin{array}{ll}
&i_{\tau,M}(\gamma_\lambda)=i_{\tau,M}(\gamma_{\lambda_k})\quad\hbox{if $\lambda_{k-1}<\lambda\le \lambda_k$}, \\
&i_{\tau,M}(\gamma_\lambda)=i_{\tau,M}(\gamma_{\lambda_k})+ \nu_{\tau,M}(\gamma_{\lambda_k})\quad\hbox{if $\lambda_{k}<\lambda\le \lambda_{k+1}$}
\end{array}\right\}
\end{equation}
where $\lambda_0=0$ and $\lambda_{l+1}=1$. Note that
 $u:[0,\tau]\to\R^{2n}$ satisfies the  boundary value problem
\begin{equation}\label{e:Deform-Equ.2}
\dot{u}(t)=J\nabla_z F(\lambda, u(t))\;\forall t\in [0,\tau]\quad\hbox{and}\quad u(\tau)=Mu(0)
\end{equation}
if and only if $v:[0,\lambda\tau]\to \R^{2n},\;t\mapsto u(t/\lambda)$ satisfies
\begin{equation}\label{e:Deform-Equ.3}
\dot{v}(t)=J\nabla_z H(v(t))\;\forall t\in [0,\lambda\tau]\quad\hbox{and}\quad v(\lambda\tau)=Mv(0),
\end{equation}
and in this situation there holds
$$
\int^{\lambda\tau}_0\left[\frac{1}{2}(J\dot{v}(t),v(t))_{\mathbb{R}^{2n}}+ H({v}(t))\right]dt=\int^{\tau}_0\left[\frac{1}{2}(J\dot{u}(t),u(t))_{\mathbb{R}^{2n}}+ F(\lambda, {u}(t))\right]dt.
$$
Because of (\ref{e:Deform-Equ.1}), applying Theorem~\ref{th:bif-suffict} to $F$
leads to the desired results.

{\it Proof of} (II).
Define $\hat{H}:{\R}^{2n}\to\R$ by
$\hat{H}(z)=- H(z)$. It satisfies Assumption~\ref{ass:BasiAss1} and $\nabla \hat{H}(\bar{u})=0$ for all $(\lambda,t)$.
Note that $\hat{B}=\nabla^2\hat{H}(\bar{u})=- \nabla^2H(\bar{u})=-B$ is positive definite
for each $t$, and that $\Upsilon_{\hat{B}}(t)=\Upsilon_B(\tau-t)\Upsilon_B(\tau)^{-1}$
for all  $t\in [0,\tau]$. Applying the conclusions in (I) to $\hat{H}$ we get
 at least one and at most  finitely many numbers
in $(0, \tau)$, $\delta_1<\cdots<\delta_l$, such that
$$
\nu_{\delta_k,M}(\Upsilon_{\hat{B}}):=\dim{\rm Ker}(\Upsilon_{\hat{B}}(\delta_k)-M)\ne 0,\quad k=1,\cdots, l,
$$
and that (II) holds if $H$ and $\tau_k$ are replaced by $\hat{H}$ and $\delta_k$, $k=1,\cdots,l$.
Moreover, for $0<\rho<\tau$, $u:[0,\rho]\to\mathbb{R}^{2n}$ satisfies  the following boundary value problem
\begin{equation*}
\hbox{$\dot{u}(t)=J\nabla_z \hat{H}(u(t))\;\forall t\in [0,\rho]$ \quad and\quad $u(\rho)=Mu(0)$}
\end{equation*}
if and only if $w:[\tau-\rho^2/\tau, \tau]\to \mathbb{R}^{2n}$ given by $w(t)=u(\tau-\frac{\rho}{\tau} t)$
satisfies
\begin{equation*}
\hbox{$\dot{w}(t)=\frac{\tau}{\rho}J\nabla_z {H}(w(t))\;\forall t\in [\tau-\rho^2/\tau,\tau]$ \quad and\quad $Mw(\tau)=w(\tau-\rho^2/\tau)$};
\end{equation*}
and in this situation there holds
$$
\int^{\rho}_0\left[\frac{1}{2}(J\dot{u}(t),u(t))_{\mathbb{R}^{2n}}+ \hat{H}({u}(t))\right]dt=
\int^{\tau}_{\tau-\rho^2/\tau}\left[\frac{1}{2}(J\dot{w}(t), w(t))_{\mathbb{R}^{2n}}+
\frac{\tau}{\rho}H({w}(t))\right]dt.
$$
Using these and the conclusions in (I) it is easy for us to derive (II).
\end{proof}

\section{Proofs of Theorems~\ref{th:bif-per3},\ref{th:bif-ness-orbit},
\ref{th:bif-suffict1-orbit},\ref{th:bif-existence-orbit},\ref{th:bif-suffict-orbit} and Corollary~\ref{cor:bif-per5}}\label{sec:orbit}
\setcounter{equation}{0}

All proofs are completed in two subsections.
Before giving these we begin with  the following.\\

\noindent{\textsf{Variational spaces}}.
Consider the Hilbert spaces
\begin{eqnarray*}
\textsf{L}_{\tau,M}&=&\{v\in L^2_{\rm loc}(\R,\R^{2n})\,|\, v(t+\tau)=Mv(t)\;\hbox{for a.e.}\;t\in\R\},\\
\textsf{W}_{\tau,M}&=&\{v\in W^{1,2}_{\rm loc}(\R,\R^{2n})\,|\, v(t+\tau)=Mv(t)\;\hbox{for all}\;t\in\R\}
\end{eqnarray*}
equipped with inner products (\ref{e:innerP}) and (\ref{e:innerP2}) respectively.
There exists a natural (continuous) $\mathbb{R}$-action on $\textsf{L}_{\tau,M}$ and $\textsf{W}_{\tau,M}$:
\begin{equation}\label{e:R-action0}
(\theta, v)\mapsto \theta\cdot v
\end{equation}
given by $(\theta\cdot v)(t):=v(\theta+t)$ for any $t\in\mathbb{R}$.

Let $\mathcal{S}$ be one of the spaces $C^i_{M}([0,\tau];\R^{2n})$, ${W}^{1,2}_{M}([0,\tau];\R^{2n})$ and $L^{2}([0,\tau];\R^{2n})$.
For each $u\in\mathcal{S}$ we define $u^M:\R\to\R^{2n}$ by (\ref{e:extend}).
Clearly, if $u\in C^i_{M}([0,\tau];\R^{2n})$, then $u^M\in C^i(\R, \R^{2n})$ and satisfies $u^M(t+\tau)=Mu^M(t)\;\forall t\in\R$.
When $u\in L^{2}([0,\tau];\R^{2n})$ we have $u^M(t+\tau)=Mu^M(t)\;\forall t\in\R\setminus(\tau\Z)$, and so
$u^M(t+\tau)=Mu^M(t)$ for a.e. $t\in\R$. Thus there exists a natural (continuous) $\mathbb{R}$-action on $\mathcal{S}$,
\begin{equation}\label{e:R-action}
\mathbb{R}\times \mathcal{S}\to  \mathcal{S},\;(\theta, u)\mapsto (\theta\ast u)(\cdot):=(\theta\cdot u^M)|_{[0,\tau]}.
\end{equation}
Clearly, the actions in (\ref{e:R-action0}) act on $\mathcal{S}$
 by Banach isometries
if $M$ is also an orthogonal matrix, and
become $S^1$-actions, where $S^1=\mathbb{R}/(l\tau\mathbb{Z})$, if
  $M^l=I_{2n}$ for some integer $l\ge 1$. Moreover Hilbert space isomorphisms
$$
{W}^{1,2}_{M}([0,\tau];\R^{2n})\to \textsf{W}_{\tau,M}\quad\hbox{and}\quad L^{2}([0,\tau];\R^{2n})\to \textsf{L}_{\tau,M}
$$
given by $u\to u^M$
are equivariant with respect to the actions in (\ref{e:R-action0}) and (\ref{e:R-action}).
Because of these, \textsf{variational spaces will be chosen as ${W}^{1,2}_{M}([0,\tau];\R^{2n})$ and $L^{2}([0,\tau];\R^{2n})$
in the following proofs} though natural choices should be $\textsf{W}_{\tau,M}$ and $\textsf{L}_{\tau,M}$.

 Let ${\R}_u$ denote the isotropy group  at a point $u\in\mathcal{S}$
 of the action in (\ref{e:R-action}).
It is a subgroup of $(\mathbb{R}, +)$. If $\mathbb{R}_u$ contains a sequence $(\theta_k)\subset\mathbb{R}\setminus\{0\}$
converging to $0$,  then $\mathbb{R}_u$ must be dense in $\mathbb{R}$
and so is equal to $\mathbb{R}$ because the continuity of the map
$\theta\mapsto \theta\cdot u$ implies that $\mathbb{R}_u$ is closed. Therefore
 $\mathbb{R}_u$  must be equal to either $\R$, or $\{0\}$ or $\varrho\Z$ for some real $\varrho>0$.
It follows that $u$ is constant in the first case, injective in the second case
(if $\mathcal{S}$ is not $L^2$), and
has a period $\varrho=k\tau$ for some $k\in\N$ in the final case.
The second case has no effect on our arguments, since solutions are at least $C^1$, as demonstrated
by the following proposition.

\begin{proposition}\label{prop:criticalCase*}
Let $H:\mathbb{R}^{2n}\to\R$ be $C^{1,1}$ and  $M$-invariant.
Suppose that $u:[0,\tau]\to\R^{2n}$ satisfies $\dot{u}=J\nabla H(u)$ and $u(\tau)=Mu(0)$.
Then $u$ is $C^{1,1}$ and
\begin{enumerate}
\item[\rm (i)] if $\R_u=\R$, $u$ is equal to a constant vector $u_0$ in ${\rm Ker}(M-I_{2n})$;
 \item[\rm (ii)] if $\R_u=\{0\}$, $\theta\mapsto \theta\ast u$
  is an one-to-one $C^1$ immersion from $\mathbb{R}$ to $C^1_{M}([0,\tau];\R^{2n})$;
   \item[\rm (iii)] if $\R_u=\varrho\Z$ for some $\varrho>0$, i.e., $\varrho$ is a minimal period of $u$,
    the $\R$-orbit $\{\theta\ast u\,|\,\theta\in\R\}$
   is a $C^1$ embedded $S^1=\R/(\varrho\Z)$ in $C^1_{M}([0,\tau];\R^{2n})$.
\end{enumerate}
Moreover, if $M=I_{2n}$ then $\tau\mathbb{Z}\subset \mathbb{R}_u$ and therefore the case (ii) does not occur.
\end{proposition}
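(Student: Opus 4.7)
The plan is to work with the extension $u_M:\R\to\R^{2n}$ defined in (\ref{e:extend}) rather than with $u$ on $[0,\tau]$, since the $\R$-action (\ref{e:R-action}) is intertwined (via the isomorphism $u\mapsto u_M$ noted above the proposition) with the standard translation action on $\mathsf{W}_{\tau,M}$. First I would verify that $u_M$ is globally $C^1$ on $\R$ and solves $\dot u_M=J\nabla H(u_M)$ everywhere. On each open interval $(k\tau,(k+1)\tau)$ this follows from the ODE for $u$ together with the identity $M^kJ\nabla H(u(t-k\tau))=J\nabla H(M^ku(t-k\tau))$, which is obtained from the $M$-invariance of $H$ (giving $\nabla H\circ M=(M^T)^{-1}\nabla H$) and the identity $JM^{-T}=MJ$ valid for orthogonal symplectic $M$. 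At a junction $t=k\tau$ the left- and right-hand limits of $\dot u_M$ agree by the same identity combined with $u(\tau)=Mu(0)$. When $H$ is in addition $C^2$ (the paper's standing hypothesis), a bootstrap through $\dot u_M=J\nabla H(u_M)$ upgrades this to $u_M\in C^2$; under the weaker $C^{1,1}$ hypothesis one obtains $u_M\in C^{1,1}$, which is all that is needed below.

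Next, $\R_u$ is a closed subgroup of $\R$ (the preimage of $\{u\}$ under the continuous orbit map), hence equal to $\{0\}$, $\varrho\Z$ for some $\varrho>0$, or $\R$; this dichotomy gives the three stated cases. Case (i) is immediate: $\R_u=\R$ means $u_M(\theta+t)=u_M(t)$ for all $\theta,t$, so $u_M\equiv u_0$ is constant, and the boundary condition $u(\tau)=Mu(0)$ then forces $u_0\in\mathrm{Ker}(M-I_{2n})$.

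For case (ii) I would consider $\Phi:\R\to C^1_M([0,\tau];\R^{2n})$, $\theta\mapsto u_M(\theta+\cdot)|_{[0,\tau]}$. Standard uniform-continuity estimates for translations, together with $u_M\in C^2$, show $\Phi$ is $C^1$ with derivative $\Phi'(\theta)=\dot u_M(\theta+\cdot)|_{[0,\tau]}$ (this lies in $C^1_M$ because differentiating $u_M(t+\tau)=Mu_M(t)$ gives $\dot u_M(t+\tau)=M\dot u_M(t)$). Injectivity of $\Phi$ is exactly the hypothesis $\R_u=\{0\}$. For the immersion property one argues by contradiction: if $\Phi'(\theta_0)=0$, then $\dot u_M\equiv 0$ on $[\theta_0,\theta_0+\tau]$, so $\nabla H(u_M)\equiv 0$ and $u_M$ is constant on that interval; by uniqueness of solutions to $\dot w=J\nabla H(w)$ (which uses precisely the Lipschitz regularity of $\nabla H$, i.e.\ $H\in C^{1,1}$), $u_M$ must be constant on all of $\R$, contradicting $\R_u\neq\R$.

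For case (iii), $\Phi$ factors through $\bar\Phi:\R/(\varrho\Z)\to C^1_M([0,\tau];\R^{2n})$, which is $C^1$, injective (by definition of $\varrho\Z$), and an immersion (same argument as in (ii)); compactness of $\R/(\varrho\Z)=S^1$ makes $\bar\Phi$ a homeomorphism onto its image and hence a $C^1$-embedding. For the final assertion, if $M=I_{2n}$ then the extension formula gives $u_M(t+\tau)=u(t)$, so $(\tau\ast u)(t)=u_M(\tau+t)|_{[0,\tau]}=u(t)$, hence $\tau\in\R_u$ and case (ii) cannot occur. The main technical obstacle is the first step, namely checking $C^1$ matching of $u_M$ at the joining points $k\tau$, where both the $M$-invariance of $H$ and the orthogonal symplectic property of $M$ must be used simultaneously to cancel the $M^k$ factors; the rest of the proof is then a formal consequence of $u_M\in C^2$ together with the classification of closed subgroups of $\R$.
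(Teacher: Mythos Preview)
Your proof is correct and follows the same route the paper indicates in its two-sentence sketch: (ii) via uniqueness for $\dot w=J\nabla H(w)$ (forcing $\dot u_M\not\equiv 0$ unless $u_M$ is constant) and (iii) via the compact-to-Hausdorff embedding fact; you have simply supplied the details the paper omits, including the junction analysis for $u_M$ and the ``Moreover'' clause. One small caveat: your parenthetical that $u_M\in C^{1,1}$ ``is all that is needed below'' is slightly optimistic, since to have $\Phi$ map $C^1$-smoothly into $C^1_M([0,\tau];\R^{2n})$ (as opposed to $C^0_M$ or $L^2$) you really do need $\dot u_M\in C^1$, i.e.\ $u_M\in C^2$, which is why you correctly invoke $u_M\in C^2$ later in the argument.
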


(ii) comes from the existence and uniqueness theorem for solutions of
$\dot{u}=J\nabla H(u)$ with initial values. (iii) uses the elementary topological fact that
a continuous one-to-one map from a compact space to a Hausdorff space is a homeomorphism.

\subsection{Proof of Theorem~\ref{th:bif-per3} and Corollary~\ref{cor:bif-per5}}\label{sec:orbit.2}

\begin{proof}[\bf Proof of Theorem~\ref{th:bif-per3}]
Define
$\hat{H}:\Lambda\times{\R}^{2n}\to\R$ by
$$
\hat{H}(\lambda,z)={H}(\lambda,z+v_\lambda).
$$
Clearly, it satisfies Assumption~\ref{ass:BasiAss2} with $v_\lambda\equiv 0$.
Therefore in what follows we always write $\hat{H}$ as $H$ and assume  $v_\lambda=0$ for all $\lambda\in\Lambda$.

When $\bar{v}$ is viewed as a constant value map from $[0,\tau]$ to $\R^{2n}$
we write it as $\bar{u}$, i.e.,  $\bar{u}=\bar{v}|_{[0,\tau]}$.
Then $M\bar{u}=\bar{u}$ and $\bar{u}$ is a fixed point for the $\R$-action in (\ref{e:R-action}).

  As in the first paragraph of Section~\ref{sec:HamBif}
 we can modify $H$ as $\tilde{H}(\lambda, z)=\chi(z)H(\lambda, z)$ so that
it is also $M$-invariant. (Indeed, $\hat{U}_0:=\sum^l_{j=0}M^jU_0$ is $M$-invariant and $Cl(\hat{U}_0)$
may be contained in $U$ by shrinking $U_0$. We only need to replace $\chi$ by $\hat{\chi}$,
where $\hat{\chi}(z):=\chi(\sum^l_{j=0}M^jz)$.)

 Follow the notations in proofs of Theorems~\ref{th:bif-ness},~\ref{th:bif-suffict}.
As pointed out at the beginning of the proof of Theorem~\ref{th:bif-suffict}
we  have actually checked in the proof of Theorem~\ref{th:bif-ness}(II)
 that $\mathcal{L}_\lambda(\cdot)=\psi_\kappa(\lambda, \bar{w}+\cdot)$ with
 $$
\bar{w}=-\Lambda_{M,\tau,\kappa I_{2n}}\bar{u}=-J\dot{\bar{u}}-\kappa \bar{u}=-\kappa\bar{u}
$$
(and hence $M\bar{w}=\bar{w}$),  $H=L^2([0,\tau];\R^{2n})$ and $X=C^0_M([0,\tau];\R^{2n})$
satisfy conditions in Theorem~\ref{th:A.11} (\cite[Theorem~3.6]{Lu10} or \cite[Theorem~4.6]{Lu8})
except for the condition (f).
Note that $\mathcal{L}_\lambda(\cdot)$ are also invariant for the $S^1$-action with $S^1=\mathbb{R}/(l\tau\mathbb{Z})$ given by
(\ref{e:R-action}).\\

\noindent{\bf Step 1} ({\it Prove the alternative of (i) or (ii)}).
Let us prove  that Theorem~\ref{th:A.12} (\cite[Theorem~3.7]{Lu10}
or \cite[Theorem~5.12]{Lu8}) can be used.
Because of the assumption (b),
it suffices to prove that
\begin{equation}\label{e:FixedPS}
\hbox{the fixed point set of the induced $S^1$-action on
$H_\mu^0:={\rm Ker}(B_\mu(\bar{w}))$ is $\{0\}$, }
\end{equation}
where $B_\mu(\bar{w})$ is given by (\ref{e:p-self-adjoint}).
Let $\xi\in H^0_\mu$ be a fixed point for the above $S^1$-action.
Then it is constant and satisfies $M\xi=\xi$.  Moreover,
$B_\mu(\bar{w})\xi= A_{M,\tau,\kappa}\xi+ \nabla^2_z(H_\kappa)^\ast(\mu; \bar{w})\xi=0$, that is,
$$
\xi=-\Lambda_{M,\tau,\kappa I_{2n}}(\nabla_z^2(H_\kappa)^\ast(\mu; \bar{w})\xi)=-\kappa\nabla_z^2(H_\kappa)^\ast(\mu; \bar{w})\xi
=-\kappa[\nabla^2_zH(\mu, \bar{u})-\kappa I_{2n}]^{-1}\xi,
$$
where the second equality comes from  (\ref{e:LambdaKA}),  and
the third equality is because
$$
\nabla_z^2(H_\kappa)^\ast(\mu; \bar{w})\nabla^2_z H_\kappa(\mu, \bar{u})=I_{2n}
$$
by the equality $\bar{w}=-J\dot{\bar{u}}-\kappa \bar{u}=\nabla_z H(\mu, \bar{u})-\kappa \bar{u}=\nabla_z H_\kappa (\mu, \bar{u})$ and
\cite[page 92, Proposition 10]{Ek90}.
Then the fixed point set of the induced $S^1$-action on $H^0_\mu$ is equal to
$$
\{\xi\in H^0_\mu\,|\,
\nabla^2_zH(\mu, \bar{u})\xi=0\}=
\{\xi\in \mathbb{R}^{2n}\,|\,
\nabla^2_zH(\mu, \bar{u})\xi=0\}.
$$
By the assumption (a), i.e., ${\rm Ker}(M-I_{2n})\cap{\rm Ker}(\nabla^2_zH(\mu, \bar{u}))=\{0\}$,
we deduce $\xi=0$. (\ref{e:FixedPS}) is proved.

Now by Theorem~\ref{th:A.12} (\cite[Theorem~3.7]{Lu10} or \cite[Theorem~5.12]{Lu8})
 one of the following alternatives occurs:
\begin{enumerate}
\item[(I)] $(\mu, \bar{w})$ is not an isolated solution  in  $\{\mu\}\times L^2([0,\tau];\R^{2n})$
 of $\nabla_w\psi_\kappa(\mu, w)=0$.

 \item[(II)] There exist left and right  neighborhoods $\Lambda^-$ and $\Lambda^+$ of $\mu$ in $\Lambda$
and integers $n^+, n^-\ge 0$, such that $n^++n^-\ge\frac{1}{2}\dim H^0_{\mu}$,
and that for $\lambda\in\Lambda^-\setminus\{\mu\}$ (resp. $\lambda\in\Lambda^+\setminus\{\mu\}$)
the functional $\psi_\kappa(\lambda, \cdot)$ has at least $n^-$ (resp. $n^+$) distinct critical
$S^1$-orbits disjoint with $\bar{w}$, which converge to  $\bar{w}$ in $C^0_M([0,\tau];\R^{2n})$  as $\lambda\to \mu$.
\end{enumerate}

\underline{{In the case of (I)}}, we have a sequence
 $(w_j)\subset L^2([0,\tau];\R^{2n})\setminus\{\bar{w}\}$
such that $\|w_j-\bar{w}\|_{2}\to 0$ as $j\to\infty$ and
that $\nabla_w\psi_\kappa(\mu, w_j)=0$ for each $j\in\N$.
Because $\bar{w}$ is a fixed point for the $S^1$-action,
and different $S^1$-orbits
are not intersecting, by passing to a subsequence we can assume that any two of $w_j$, $j=0,1,\cdots$,
do not belong the same $S^1$-orbit. The same claim also holds for $\bar{u}$ and
 $u_j:=-(\Lambda_{M,\tau,\kappa I_{2n}})^{-1}w_j\ne \bar{u}$, $j=1,2,\cdots$.
 Moreover, since  $\Lambda_{M,\tau,\kappa I_{2n}}: W^{1,2}_{M}([0,\tau];\R^{2n})\to L^2([0,\tau];\R^{2n})$
is a Banach space isomorphism,
$u_j\to \bar{u}$ in $W^{1,2}_M([0,\tau];\R^{2n})$, and hence $\|u_j-\bar{u}\|_{C^1}\to 0$ by Proposition~\ref{prop:threeBifu}(i).
Since $H(\mu,\cdot)$ is $M$-invariant, each $u_j$ can be extended into a solution $\bar{v}_j:=(u_j)^M$ of (\ref{e:PPer1})
with $\lambda=\mu$ via (\ref{e:extend}), which is not equal to $\bar{v}$. Clearly, any two of $\bar{v}_j$, $j=0,1,\cdots$,
are $\R$-distinct, and $(\bar{v}_j)$ converges to $\bar{v}$ on any compact interval $I\subset\R$ in $C^1$-topology.

\underline{In the case of (II)},
note firstly that $\dim H_\mu^0$ is equal to the number of maximal linearly independent solutions of
\begin{eqnarray}\label{e:LBV}
\dot{u}(t)=J\nabla_z^2H(\mu, \bar{u})u(t)\;\forall t\in [0,\tau]\quad\hbox{and}\quad u(\tau)=Mu(0).
\end{eqnarray}
Since $M^T\nabla_z^2H(\mu, Mz)M=\nabla_z^2H(\mu,z)$,
the extension via (\ref{e:extend}) establishes an isomorphism
between the solution space of (\ref{e:LBV}) and that of
\begin{eqnarray*}
\dot{v}(t)=J\nabla_z^2H(\mu, \bar{v})v(t)\quad\hbox{and}\quad v(t+\tau)=Mv(t),\;\forall t\in \R.
\end{eqnarray*}
Hence $\dim H_\mu^0$ is equal to the number of maximal linearly independent solutions of
(\ref{e:linear3*}) with $\lambda=\mu$.
For $\lambda\in\Lambda^-\setminus\{\mu\}$ (resp. $\lambda\in\Lambda^+\setminus\{\mu\}$) let
$S^1\cdot w_\lambda^i$,\; $i=1,\cdots,n^-$ (resp. $n^+$)
be  distinct critical $S^1$-orbits of $\psi_\kappa(\lambda, \cdot)$
disjoint with $\bar{w}$,  which converge to  $\bar{w}$ in $C^0_M([0,\tau];\R^{2n})$  as $\lambda\to \mu$.
Since  $\Lambda^c_{M,\tau,\kappa I_{2n}}: C^1_{M}([0,\tau];\R^{2n})\to C^0_M([0,\tau];\R^{2n})$
is a Banach space isomorphism,
$$
\hbox{$u^i_\lambda:=- (\Lambda^c_{M,\tau,\kappa I_{2n}})^{-1}(w_\lambda^i)\notin S^1\cdot \bar{u}=\{\bar{u}\}$,\quad
 $i=1,\cdots,n^-$ (resp. $n^+$)},
 $$
and $S^1\cdot u^i_\lambda\to \bar{u}$ in $C^1_M([0,\tau];\R^{2n})$ as $\lambda\to \mu$.
It follows that  $v^i_\lambda:=(u^i_{\lambda})^M+ v_\lambda$,
 $i=1,\cdots,n^-$ (resp. $n^+$),  are $\R$-distinct solutions of (\ref{e:PPer1})
 under the initial assumptions on $H$
 in Assumption~\ref{ass:BasiAss2}, and
 also $\R$-distinct from $v_\lambda$ because each $v_\lambda$
 is a fixed point for the $S^1$-action given by
 (\ref{e:R-action0}).\\

\noindent{\bf Step 2} ({\it Prove the part after ``Moreover''}).
Since any orbit $\mathcal{O}$ of the induced $S^1$-action on $H_\mu^0$
is either a point or an embedded circle,
our  assumption  $\dim H^0_\mu=\nu_{\tau,M}(\gamma_\mu)\ge 3$ shows that
Theorem~\ref{th:A.13} (or \cite[Theorem~3.10]{Lu10}) is available to the functionals
$\mathcal{L}_\lambda(\cdot)=\psi_\kappa(\lambda, \bar{w}+\cdot)$.
 Then we obtain either (I) or one of the following alternatives occurs:
\begin{description}
\item[(III)]  For every $\lambda\in\Lambda\setminus\{\mu\}$ near $\mu\in\Lambda$ there is a
 $S^1$-orbit $S^1\ast \bar{w}_\lambda\ne S^1\ast \bar{w}=\{\bar{w}\}$ near $\bar{w}\in  C^0_M([0,\tau];\R^{2n})$
 such that $\nabla_w\psi_\kappa(\lambda, \bar{w}_\lambda)=0$ and that
 $S^1\ast \bar{w}_\lambda\to \bar{w}$ in $C^0_M([0,\tau];\R^{2n})$  as $\lambda\to \mu$.

\item[(IV)] For any small $S^1$-invariant neighborhood $\mathcal{N}$ of
$\bar{w}$ in $C^0_M([0,\tau];\R^{2n})$
there is an one-sided deleted neighborhood $\Lambda^0$ of $\mu\in\Lambda$ such that
for any $\lambda\in\Lambda^0$, $\nabla_w\psi_\kappa(\lambda, \cdot)=0$
 has either infinitely many  $S^1$-orbits  of solutions in $\mathcal{N}$,
 $S^1\ast\bar{w}^j_\lambda$, $j=1,2,\cdots$,   or  at least two $S^1$-orbits of
 solutions in $\mathcal{N}$ with different energy,
 $S^1\ast \hat{w}^1_\lambda\ne\{\bar{w}\}$ and $S^1\ast \hat{w}^2_\lambda\ne\{\bar{w}\}$.
Moreover, these orbits converge to $\bar{w}$ in $C^0_M([0,\tau];\R^{2n})$  as $\lambda\to \mu$.
\end{description}

\underline{In the case of (III)},
let $\bar{u}_\lambda:=- \Lambda_{M,\tau,\kappa I_{2n}}^{-1}(\bar{w}_\lambda)$,
which is not in $S^1\cdot \bar{u}=\{\bar{u}\}$.
Then $\bar{v}_\lambda:=(\bar{u}_{\lambda})^M+ v_\lambda$
   is a solution of (\ref{e:PPer1}) under the initial assumptions on $H$
 in Assumption~\ref{ass:BasiAss2}, and  also $\R$-distinct from $v_\lambda$.

\underline{In the case of (IV)}, let $\hat{u}^i_\lambda:=- \Lambda_{M,\tau,\kappa I_{2n}}^{-1}(\hat{w}_\lambda)$, $i=1,2$,
and $\bar{u}^j_\lambda:=- \Lambda_{M,\tau,\kappa I_{2n}}^{-1}(\bar{w}^j_\lambda)$, $j=1,2,\cdots$. Then
$\hat{v}^i_\lambda:=(\hat{u}^i_{\lambda})^M+ v_\lambda$, $i=1,2$, and
$\bar{v}^j_\lambda:=(\bar{u}^j_{\lambda})^M+ v_\lambda$, $j=1,2,\cdots$, are the desired solutions
if $\mathcal{N}$ is small enough. In fact,
since  $\Lambda_{M,\tau,\kappa I_{2n}}^c: C^1_{M}([0,\tau];\R^{2n})\to C^0_M([0,\tau];\R^{2n})$
is a Banach space isomorphism and $\bar{w}=0$, we deduce that for $i=1,2$ and all $j\in\mathbb{N}$,
\begin{eqnarray*}
&&\|\hat{v}^i_\lambda|_{[0,\tau]}- v_\lambda|_{[0,\tau]}\|_{C^1}=\|\hat{u}^i_{\lambda}\|_{C^1}\le
\|(\Lambda_{M,\tau,\kappa I_{2n}}^c)^{-1}\|\cdot\|\hat{w}^i_\lambda\|_{C^0}=
\|(\Lambda_{M,\tau,\kappa I_{2n}}^c)^{-1}\|\cdot\|\hat{w}^i_\lambda-\bar{w}\|_{C^0},\\
&&\|\bar{v}^j_\lambda|_{[0,\tau]}- v_\lambda|_{[0,\tau]}\|_{C^1}=\|\bar{u}^j_{\lambda}\|_{C^1}\le
\|(\Lambda_{M,\tau,\kappa I_{2n}}^c)^{-1}\|\cdot\|\bar{w}^j_\lambda\|_{C^0}=
\|(\Lambda_{M,\tau,\kappa I_{2n}}^c)^{-1}\|\cdot\|\bar{w}^j_\lambda-\bar{w}\|_{C^0}.
\end{eqnarray*}
  Hence we only need to take the above neighborhood $\mathcal{N}$ as
  the open ball of radius
  $$\left(\|(\Lambda_{M,\tau,\kappa I_{2n}}^c)^{-1}\|/\varepsilon\right)^{-1}
  $$
  and center at $\bar{w}=0$ in $C^0_M([0,\tau];\R^{2n})$.
\end{proof}

\begin{proof}[\bf Proof of Corollary~\ref{cor:bif-per5}]
By Corollary~\ref{cor:bif-per4} with $H_0=0$ and $\hat{H}=H$,
$\gamma_\lambda(t)=\exp(\lambda tJ{H}''(v_0))$ and therefore
$$
\Sigma_1=\{\lambda\in\mathbb{R}\,|\,\nu_{1,M}(\gamma_\lambda)>0\}=\Gamma(H,v_0,M)\cup\{0\}
$$
is a discrete set in $\mathbb{R}$, which gives rise to the claim (A). In addition,
for each $\mu\in\Gamma(H, \bar{v}, M)$
the problem (\ref{e:linear3*}) with $H(\mu, \cdot)=\mu\hat{H}(\cdot)$ and $v_\mu=\bar{v}$ has no nonzero constant solutions because ${\rm Ker}(\hat{H}''(\bar{v}))=\{0\}$.

Suppose ${H}''(\bar{v})>0$ and $\Gamma(H, \bar{v}, M)\cap(0, \infty)\ne\emptyset$
(resp. ${H}''(\bar{v})<0$ and $\Gamma(H, \bar{v}, M)\cap(-\infty, 0)\ne\emptyset$).
For $\mu\in \Gamma(H, \bar{v}, M)\cap(0, \infty)$ (resp. $\Gamma(H, \bar{v}, M)\cap(-\infty, 0)$),
it follows from (\ref{e:positive-negative3}) (resp. (\ref{e:positive-negative4})) that
$i_{1,M}(\gamma_\lambda)=i_{1,M}(\gamma_\mu)$ for $\lambda\le\mu$ close to $\mu$ and that
$i_{1,M}(\gamma_\lambda)=i_{1,M}(\gamma_\mu)+ \nu_{1,M}(\gamma_\mu)$ for $\lambda>\mu$ close to $\mu$.
Hence for each $\mu\in\Gamma(H, \bar{v}, M)$  Corollary~\ref{cor:bif-per4}
implies that one of the following alternatives occurs:
 \begin{enumerate}
\item[(1)] The problem
 \begin{equation}\label{e:PPer3.5.2}
\dot{v}(t)=\mu J\nabla H(v(t))\quad\hbox{and}\quad v(t+1)=Mv(t),\;\forall t\in \R
\end{equation}
 has a sequence of $\R$-distinct solutions, $w_k$, $k=1,2,\cdots$,
which are $\R$-distinct from $\bar{v}$ and  converge to $\bar{v}$ on any compact interval $I\subset\R$ in $C^1$-topology.
\item[(2)] There exist left and right  neighborhoods $\Lambda^-$ and $\Lambda^+$ of $\mu$ in $\mathbb{R}\setminus\{\mu\}$
and integers $n^+, n^-\ge 0$, such that $n^++n^-\ge \nu_{1,M}(\gamma_\mu)/2$,
and for $\lambda\in\Lambda^-\setminus\{\mu\}$ (resp. $\lambda\in\Lambda^+\setminus\{\mu\}$),
the problem
  \begin{equation}\label{e:PPer3.5.3}
\dot{v}(t)=\lambda J\nabla H(v(t))\quad\hbox{and}\quad v(t+1)=Mv(t),\;\forall t\in \R
\end{equation}
 has at least $n^-$ (resp. $n^+$) $\R$-distinct solutions
  $w_\lambda^i$, $i=1,\cdots,n^-$ (resp. $n^+$),
which are $\R$-distinct from  $\bar{v}$ and converge to  $\bar{v}$ on any compact interval $I\subset\R$ in $C^1$-topology as $\lambda\to\mu$.
\end{enumerate}
Moreover, if $\dim{\rm Ker}(\exp(\mu{H}''(\bar{v}))-M)\ge 3$,
by the conclusions after ``Moreover'' in Theorem~\ref{th:bif-per3}
 with $H(\lambda, x)=\lambda\hat{H}(x)$ and $v_\lambda\equiv \bar{v}\;\forall\lambda$, we obtain that
 either (1) holds or one of the following alternatives occurs:
\begin{enumerate}
\item[\rm (3)]  For every $\lambda\in\mathbb{R}\setminus\{\mu\}$ near $\mu$,
(\ref{e:PPer3.5.3}) has a  solution $w_\lambda$, which is $\R$-distinct from $\bar{v}$ and converges to $\bar{v}$
on any compact interval $I\subset\R$ in $C^1$-topology as $\lambda\to\mu$.

\item[\rm (4)]
For a given $\epsilon>0$  there is an one-sided  neighborhood $\Lambda^0$ of $\mu$ in $\R\setminus\{\mu\}$ such that
for any $\lambda\in\Lambda^0\setminus\{\mu\}$, (\ref{e:PPer3.5.3}) with parameter value $\lambda$
has either infinitely many $\mathbb{R}$-distinct solutions $\bar{w}_\lambda^k$
such that each of them is $\R$-distinct from $\bar{v}$ and $\|\bar{w}_\lambda^k|_{[0,1]}-\bar{v}\|_{C^1}<\epsilon$, $k=1,2,\cdots$, or
at least two $\mathbb{R}$-distinct solutions $\hat{w}_\lambda^1$ and $\hat{w}_\lambda^2$
such that:
\begin{enumerate}
\item[\rm a)] each of them is $\R$-distinct from $\bar{v}$,
\item[\rm b)] $\|\hat{w}_\lambda^i|_{[0,1]}-\bar{v}\|_{C^1}<\epsilon$, $i=1,2$,
\item[\rm c)]
{\small
\begin{eqnarray*}
\int^{1}_0\left[\frac{1}{2}(J\dot{\hat{w}}_\lambda^1(t),\hat{w}^1_\lambda(t))_{\mathbb{R}^{2n}}+ \lambda H(\hat{w}_\lambda^1(t))\right]dt
\ne \int^{1}_0\left[\frac{1}{2}(J\dot{\hat{w}}^2_\lambda(t), \hat{w}_\lambda^2(t))_{\mathbb{R}^{2n}}+ \lambda H(\hat{w}_\lambda^2(t))\right]dt.
\end{eqnarray*}}
\end{enumerate}
\end{enumerate}

Define $v_k:\R\to\R^{2n},\;t\mapsto w_k(t/\mu)$, $k=1,2,\cdots$,
and $v_\lambda^i:\R\to\R^{2n},\;t\mapsto w_\lambda^i(t/\lambda)$, $i=1,\cdots,n^-$ (resp. $n^+$).
By (1) and (2) they satisfy (B.i) and (B.ii), respectively.
Similarly, let us define
$$
v_\lambda(t):=w_\lambda(t/\lambda),\quad \bar{v}_\lambda^k(t):=\bar{w}_\lambda^k(t),\quad
\hat{v}_\lambda^i(t):=\hat{w}_\lambda^i(t/\lambda),\;i=1,2.
$$
They satisfy (\ref{e:PPer3.5.1})  with parameter value $\lambda$.
It is easily computed that for $i=1,2$,
\begin{eqnarray*}
\int^{1}_0\left[\frac{1}{2}(J\dot{\hat{w}}_\lambda^i(t),\hat{w}^i_\lambda(t))_{\mathbb{R}^{2n}}+ \lambda H(\hat{w}_\lambda^i(t))\right]dt
=\int^{\lambda}_0\left[\frac{1}{2}(J\dot{\hat{v}}_\lambda^i(t),\hat{v}^i_\lambda(t))_{\mathbb{R}^{2n}}+  H(\hat{v}_\lambda^i(t))\right]dt.
\end{eqnarray*}
Clearly, $\|{w}_\lambda\|_{C^1([0, 1])}=\|{v}_\lambda\|_{C^0([0, \lambda])}+\lambda
\|\dot{v}_\lambda\|_{C^0([0, \lambda])}$ for $\lambda>0$.
When $\lambda<0$, since $M\in{\rm Sp}(2n,\mathbb{R})$ is an orthogonal symplectic matrix,
and ${v}_\lambda(t)=M{v}_\lambda(t-\lambda)$ for all $t\in\mathbb{R}$,
we deduce
$$
\|{v}_\lambda\|_{C^0([\lambda,0])}+|\lambda|
\|\dot{v}_\lambda\|_{C^0([\lambda,0])}=\|{v}_\lambda\|_{C^0([0, -\lambda])}+|\lambda|
\|\dot{v}_\lambda\|_{C^0([0, -\lambda])}.
$$
Hence there always holds
$$
\|{w}_\lambda\|_{C^1([0, 1])}=\|{v}_\lambda\|_{C^0([0, |\lambda|])}+|\lambda|
\|\dot{v}_\lambda\|_{C^0([0, |\lambda|])}.
$$
The same holds for $\hat{w}^i_\lambda$ and $\hat{v}^i_\lambda$ ($i=1,2$),
 as well as for  $\bar{v}_\lambda^k$ and $\bar{w}_\lambda^k$ ($k\in\mathbb{N}$).
Then, (B.iii) and (B.iv) follow from the estimates above, together 
with alternatives (3) and (4),
respectively.
\end{proof}

\subsection{Proofs of
Theorems~\ref{th:bif-ness-orbit},\ref{th:bif-suffict1-orbit},\ref{th:bif-existence-orbit},
\ref{th:bif-suffict-orbit}}\label{sec:orbit.3}

Since $\mathbb{R}$ is a non-compact Lie group and the $\mathbb{R}$-action in (\ref{e:R-action}) is not $C^1$,
we cannot directly use the theorems in \cite[Sect.5.2]{Lu8} and \cite{Lu10}.
But the proof ideas of those theorems may be used to complete our proofs.

If $u\in C^k_{M}([0,\tau];\R^{2n})$ with $k\ge 1$ is nonconstant,
since $u^M=(u^M)^{(0)},\cdots, (u^M)^{(k)}$ are uniformly continuous over any compact interval of $\R$ it is easy to prove that the map
$f:\R\to L^{2}([0,\tau];\R^{2n})$ given by $f(\theta)=\theta\ast u$ is $C^k$,
and
\begin{equation}\label{e:otbitmap}
f^{(i)}(\theta)=(u^M)^{(i)}(\theta+\cdot)|_{[0,\tau]},\quad i=1,\cdots,k.
\end{equation}

In what follows we assume that $\bar{v}:\R\to\R^{2n}$ is a nonconstant solution of (\ref{e:PPer1})  for each $\lambda\in\Lambda$.
Then $\bar{v}$ is $C^3$. As in Section~\ref{sec:orbit.2} we write $\bar{u}:=\bar{v}|_{[0,\tau]}$.
 It is nonconstant. By Proposition~\ref{prop:criticalCase*} and the arguments above this proposition we see:\\
$\bullet$ either the map
$\theta\mapsto\theta\ast \bar{u}$ is an one-to-one $C^2$ immersion from $\R$ to
$W^{1,2}_M([0,\tau];\R^{2n})$, (thus the restriction of it to any compact submanifold of $\R$ is
a $C^2$-embedding by an elementary result in general topology,)
and $\{\theta\ast \bar{u}\,|\,\theta\in\R\}$
is a $C^2$-immersed  submanifold in ${W}^{1,2}_{M}([0,\tau];\R^{2n})$ without self-intersection points;\\
$\bullet$ or $\{\theta\ast \bar{u}\,|\,\theta\in\R\}$ is a $C^2$-embedded circle.

Let $\bar{w}:=-\Lambda_{M,\tau,\kappa I_{2n}}(\bar{u})=-J\dot{\bar{u}}-\kappa \bar{u}$.
It belongs to $C^2_M([0,\tau];\R^{2n})$.
Since $\Lambda_{M,\tau,\kappa I_{2n}}$ is a Banach space isomorphism from ${W}^{1,2}_{M}([0,\tau];\R^{2n})$ to $L^2([0,\tau];\R^{2n})$,
and equivariant with respect to the action in (\ref{e:R-action}), i.e.,
$\Lambda_{M,\tau,\kappa I_{2n}}(\theta\ast u)=\theta\ast (\Lambda_{M,\tau,\kappa I_{2n}}(u))$ for any $(\theta, u)\in\R\times{W}^{1,2}_{M}([0,\tau];\R^{2n})$,
we have:

\begin{claim}\label{cl:5orbit}
$\mathcal{O}:=\{\theta\ast \bar{w}\,|\,\theta\in\R\}$
is either a $C^2$-immersed  submanifold in $L^2([0,\tau];\R^{2n})$ without self-intersection points
or a $C^2$-embedded circle in $L^2([0,\tau];\R^{2n})$. In particular,
by (\ref{e:otbitmap})
$$
0\ne \frac{d}{d\theta}(\theta\ast \bar{w})=(\dot{\bar{w}})^M(\theta+\cdot)|_{[0,\tau]}\quad\forall\theta\in\mathbb{R}
\quad\hbox{and so}\quad \dot{\bar{w}}=(\dot{\bar{w}})^M|_{[0,\tau]}\ne 0.
$$
\end{claim}

Let $H^\bot$ denote the orthogonal complementary of $\dot{\bar{w}}$ in  $H:=L^2([0,\tau];\R^{2n})$ and
$$
B_{H}^\bot(\bar{w},\varepsilon):=\{\bar{w}+ u\,|\, u\in{H}^\bot,\;
\|u\|_{2}<\varepsilon\},\quad\forall \varepsilon>0.
$$
Clearly, $B_{H}^\bot(\bar{w},\varepsilon)$ is a $C^\infty$ submanifold of $L^2([0,\tau];\R^{2n})$ containing $\bar{w}$, and
$T_{\bar{w}}B_{H}^\bot(\bar{w},\varepsilon)=H^\bot$
and $L^2([0,\tau];\R^{2n})=T_{\bar{w}}B_{H}^\bot(\bar{w},\varepsilon)\oplus(\R\dot{\bar{w}})$.

Note that the isotropy groups ${\R}_{\bar{w}}$ and ${\R}_{\dot{\bar{w}}}$ of the
$\mathbb{R}$-action defined by (\ref{e:R-action}) satisfy ${\R}_{\bar{w}}\subset{\R}_{\dot{\bar{w}}}$.
For $\theta\in {\R}_{\bar{w}}$
and $u\in H^\bot$,
$\|\theta\ast u\|_2=\|u\|_2$ and $(\dot{\bar{w}},\theta\ast u)_2=(\theta\ast\dot{\bar{w}},\theta\ast u)_2=(\dot{\bar{w}},\ast u)_2=0$
imply that $B_{H}^\bot(\bar{w},\varepsilon)$ is ${\R}_{\bar{w}}$-invariant.
Since $\Lambda_{M,\tau,\kappa I_{2n}}(\theta\ast \bar{u})=\theta\ast \bar{w}$ for any $\theta\in\R$,
${\R}_{\bar{w}}$ is equal to the isotropy group  ${\R}_{\bar{u}}$  of the $\R$-action at $\bar{u}\in {W}^{1,2}_{M}([0,\tau];\R^{2n})$.

\begin{proposition}\label{prop:orbi-neigh}
There exist $\varepsilon>0$ such that for any $0<\epsilon\le\varepsilon$,
 $\R\ast B_{H}^\bot(\bar{w},\epsilon)$  is a neighborhood of the orbit $\mathcal{O}=\R\ast \bar{w}$.
\end{proposition}

 Because the $\R$-action is not $C^1$  we cannot directly deduce  that the orbit  $\R\ast w$
 has a transversal intersection with  $B_{H}^\bot(\bar{w},\epsilon)$
for  every $w\in L^2([0,\tau];\R^{2n})$ sufficiently close to $\bar{w}$.
But Proposition~\ref{prop:orbi-neigh} can be proved with the following:

\begin{proposition}[\hbox{\cite[Proposition~3.5]{BetPS1}}]\label{prop:BetPS2}
 Let $\mathcal{M}$ be a finite-dimensional manifold, $N$ a (possibly infinite dimensional)
Banach manifold, $Q\subset N$ a Banach submanifold, and $A$ a topological
space. Assume that $\chi:A\times \mathcal{M}\to N$ is a continuous function such that there exists
$a_0\in A$ and $m_0\in \mathcal{M}$ with:
\begin{enumerate}
\item[\rm (a)]  $\chi(a_0,m_0)\in Q$;
\item[\rm (b)] $\chi(a_0,\cdot): \mathcal{M}\to N$ is of class $C^1$;
\item[\rm (c)] $\partial_2\chi(a_0,m_0)(T_{m_0}\mathcal{M})+ T_{\chi(a_0,m_0)}Q=T_{\chi(a_0,m_0)}N$.
\end{enumerate}
Then, for $a\in A$ near $a_0$, $\chi(a,\mathcal{M})\cap Q\ne\emptyset$.
\end{proposition}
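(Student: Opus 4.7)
The plan is to reduce the problem to a finite-dimensional Brouwer degree argument by using a submanifold chart for $Q\subset N$ together with the transversality hypothesis (c). First, because $Q$ is a Banach submanifold of $N$, around $n_0:=\chi(a_0,m_0)$ I can pick a chart
$\phi:U\subset N\to V\subset E$ onto an open set in a Banach space $E=T_{n_0}N$
with $\phi(n_0)=0$, $d\phi_{n_0}=\mathrm{id}$, and $\phi(U\cap Q)=V\cap F$, where $F=T_{n_0}Q$ is a closed subspace of $E$ that splits, say $E=F\oplus G$ topologically. Let $\pi:E\to G$ denote the continuous projection along $F$. Since $q\in U$ belongs to $Q$ iff $\pi(\phi(q))=0$, the problem ``find $m$ with $\chi(a,m)\in Q$'' becomes, locally, ``find $m$ with $\widetilde{F}(a,m)=0$'', where
\[
\widetilde{F}(a,m):=\pi\circ\phi\circ\chi(a,m)\in G.
\]

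Next, condition (c) translates into the statement that $\pi\circ d\phi_{n_0}\circ\partial_2\chi(a_0,m_0):T_{m_0}M\to G$ is \emph{surjective}. Since $T_{m_0}M$ is finite-dimensional and $G$ is the image of a finite-dimensional space under a continuous linear map, $G$ is automatically finite-dimensional, with $\dim G\le\dim M$. Set $k:=\dim G$. By condition (b), $\widetilde{F}(a_0,\cdot)$ is $C^1$ on a neighborhood of $m_0$, and its differential at $m_0$ is surjective onto the $k$-dimensional space $G$. Therefore I can pick a $k$-dimensional $C^1$-submanifold (slice) $M'\subset M$ through $m_0$ such that $\widetilde{F}(a_0,\cdot)|_{M'}:M'\to G$ is a local $C^1$-diffeomorphism at $m_0$, by the inverse function theorem applied in finite dimensions. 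After shrinking and identifying $M'$ near $m_0$ with an open neighborhood $\Omega\subset\mathbb{R}^k$ and $G$ with $\mathbb{R}^k$, I may assume $\widetilde{F}(a_0,\cdot):\Omega\to\mathbb{R}^k$ is a continuous map sending $0\mapsto 0$ and is a local homeomorphism at $0$.

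Now I invoke Brouwer degree theory. Choose a small closed ball $\overline{B_\rho(0)}\subset\Omega$ so that $\widetilde{F}(a_0,\cdot)$ is a homeomorphism onto a neighborhood of $0$ and $\widetilde{F}(a_0,x)\neq 0$ for every $x\in\partial B_\rho(0)$. Then
\[
\deg\bigl(\widetilde{F}(a_0,\cdot),B_\rho(0),0\bigr)=\pm1.
\]
Because $\chi$ is jointly continuous on $A\times M$ and $\phi,\pi$ are continuous, the map $(a,x)\mapsto \widetilde{F}(a,x)$ is continuous on $A\times\overline{B_\rho(0)}$. By uniform continuity on the compact set $\{a_0\}\times\partial B_\rho(0)$ and a standard compactness argument, there exists a neighborhood $\mathcal{U}\subset A$ of $a_0$ such that $\widetilde{F}(a,x)\neq 0$ for all $(a,x)\in\mathcal{U}\times\partial B_\rho(0)$. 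Homotopy invariance of the Brouwer degree then gives
\[
\deg\bigl(\widetilde{F}(a,\cdot),B_\rho(0),0\bigr)=\deg\bigl(\widetilde{F}(a_0,\cdot),B_\rho(0),0\bigr)=\pm1\neq 0
\]
for every $a\in\mathcal{U}$, whence there exists $x_a\in B_\rho(0)\subset\Omega\subset M$ with $\widetilde{F}(a,x_a)=0$, equivalently $\chi(a,x_a)\in Q$. This proves the conclusion.

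\textbf{Main obstacle.} The key subtlety is the asymmetric regularity: $\chi$ is jointly continuous but only $C^1$ in the second argument at $a=a_0$. This rules out a direct application of the implicit function theorem to $\widetilde{F}:A\times M\to G$. The trick is to use the $C^1$ regularity of $\widetilde{F}(a_0,\cdot)$ only to reduce to a finite-dimensional situation via a slice $M'$ of dimension $k=\dim G$, and then to upgrade this to $a\neq a_0$ by the purely topological tool of Brouwer degree, which is stable under merely continuous perturbations. The finite-dimensionality of $M$ (and hence of $G$) is essential, so the argument does not extend to the infinite-dimensional setting without additional compactness.
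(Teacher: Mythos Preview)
The paper does not give its own proof of this proposition: it is quoted verbatim from \cite[Proposition~3.5]{BetPS2} and used as a black box to prove Proposition~\ref{prop:orbi-neigh}. Your argument is correct and is essentially the standard one (and presumably close to what \cite{BetPS2} does): a submanifold chart turns membership in $Q$ into a finite-dimensional equation because transversality with the finite-dimensional $T_{m_0}M$ forces the complement $G$ of $T_{n_0}Q$ to be finite-dimensional; then the inverse function theorem in the slice $M'$ gives nonzero Brouwer degree at $a_0$, and joint continuity of $\chi$ together with compactness of $\overline{B_\rho(0)}$ propagates this to nearby $a$. One cosmetic remark: your appeal to ``homotopy invariance'' is really the statement that the Brouwer degree is locally constant under sup-norm perturbations (equivalently, via the \emph{linear} homotopy $t\widetilde{F}(a,\cdot)+(1-t)\widetilde{F}(a_0,\cdot)$ in $\mathbb{R}^k$), so no path in $A$ is needed --- which matters since $A$ is only a topological space.
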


\begin{proof}[\bf Proof of Proposition~\ref{prop:orbi-neigh}]
Applying Proposition~\ref{prop:BetPS2} to $A=N=L^2([0,\tau];\R^{2n})$, $\mathcal{M}=\R$, $Q=B_{H}^\bot(\bar{w},\epsilon)$,
$a_0=\bar{w}$, $m_0=0$ and
$\chi:A\times \mathcal{M}\to N:(w,\theta)\mapsto\theta\ast w$,
we get that
$(\R\ast w)\cap B_{H}^\bot(\bar{w},\epsilon)\ne\emptyset$ if $w\in L^2([0,\tau];\R^{2n})$ is close to $\bar{w}$.
\end{proof}

As in the first paragraph of Section~\ref{sec:HamBif}  we can require that \textsf{all modified $H(\lambda,\cdot)$
are also $M$-invariant}. (See the proof of Theorem~\ref{th:bif-per3} in Section~\ref{sec:orbit.2}.)
Following the notations in Section~\ref{sec:HamBif},
 we have a family of $C^1$ and twice G\^ateaux-differentiable functionals
on  $H=L^2([0,\tau];\R^{2n})$,
\begin{equation}\label{e:HAction***}
\psi_\kappa(\lambda, u)=\int^{\tau}_0\left[\frac{1}{2}(u(t), (A_{M,\tau,\kappa}u)(t))_{\mathbb{R}^{2n}}+ H_\kappa^\ast(\lambda; u(t))\right]dt,
\quad\lambda\in\Lambda.
\end{equation}
Moreover, each $(\psi_\kappa)_{\lambda}(\cdot):=\psi_\kappa(\lambda, \cdot)$ is also invariant for the $\R$-action  given by (\ref{e:R-action}), and
\begin{eqnarray}\label{e:p-gradient**}
&\nabla(\psi_\kappa)_{\lambda}(v)=A_{M,\tau,\kappa}v+ \nabla_zH_\kappa^\ast(\lambda; v(\cdot)),\\
&B_\lambda(v):=D_v\nabla(\psi_\kappa)_{\lambda}(v)=A_{M,\tau,\kappa}+ \nabla^2_zH_\kappa^\ast(\lambda; v(\cdot))
\in \mathscr{L}_s(L^{2}([0,\tau];\R^{2n})),\label{e:p-self-adjoint**}\\
&B_\lambda(\bar{w})\dot{\bar{w}}=0. \label{e:p-self-adjoint***}
\end{eqnarray}

Fortunately, we can directly  prove the following result, which naturally holds if the $\R$-action in (\ref{e:R-action}) is $C^1$. Define
$$
\mathcal{L}_\lambda(u)=\psi_{\kappa}(\lambda, \bar{w}+u),\quad \forall u\in B_{H}(0,\varepsilon).
$$
Suppose that  $u\in B_{H}^\bot(0,\varepsilon)$ is a critical point of  $\mathcal{L}_\lambda$.
It is clear that $u$ is also a critical point of the restriction of $\mathcal{L}_\lambda$ to $B_{H}^\bot(0,\varepsilon)$.
Conversely, we have:

\begin{proposition}\label{prop:solution}
There exists $\varepsilon>0$ such that for any $\lambda\in\Lambda$, if $u\in B_{H}^\bot(0,\varepsilon)$
is a critical point of the restriction of $\mathcal{L}_\lambda$ to $B_{H}^\bot(0,\varepsilon)$
then it is a critical point of $\mathcal{L}_\lambda$.
Moreover, $u$ is $C^1$ and $\|u\|_{C^1}\to 0$ as $\|u\|_2\to 0$.
\end{proposition}

\begin{proof}[\bf Proof]
Let $u\in B_{H}^\bot(0,\varepsilon)$ be a critical point of
the restriction of $\mathcal{L}_\lambda$ to $B_{H}^\bot(0,\varepsilon)$. Then
$\bar{w}+u$ is a critical point of the restriction of $(\psi_{\kappa})_{\lambda}$ to $B_{H}^\bot(\bar{w},\varepsilon)$,  i.e.,
 \begin{equation}\label{e:orbit-critical}
 d(\psi_{\kappa})_{\lambda}(\bar{w}+u)[\xi]=0\quad\forall \xi\in T_{\bar{w}+u}B_{H}^\bot(\bar{w},\varepsilon)=H^\bot.
 \end{equation}

{\bf Step 1} ({\it Prove that  $u$ is $C^1$}).
Since
$$
d(\psi_{\kappa})_{\lambda}(\bar{w}+u)[\xi]=(A_{M,\tau,\kappa}(\bar{w}+u)+ \nabla_zH_\kappa^\ast(\lambda; \bar{w}+u), \xi)_2=0
$$
for all $\xi\in H^\bot$,
 we have  $\mu(\lambda, u)\in\R$ such that
\begin{equation}\label{e:gradient}
 \nabla(\psi_{\kappa})_{\lambda}(\bar{w}+u)=A_{M,\tau,\kappa}(\bar{w}+u)+ \nabla_zH_\kappa^\ast(\lambda; \bar{w}+u)=\mu(\lambda, u)\dot{\bar{w}}.
\end{equation}
  Note that $\dot{\bar{w}}$ is $C^1$ and that  $A_{M,\tau,\kappa}(\bar{w}+u)\in W^{1,2}_{M}([0,\tau];\R^{2n})$ by (\ref{e:Lambda-inverse}).
  The map
 $$
\Gamma:[0,\tau]\times \mathbb{R}^{2n}\to \R^{2n},\;(t,  z)\mapsto \nabla_z H_\kappa^\ast(\lambda; z)-\mu(\lambda, u)\dot{\bar{w}}(t)+
(A_{M,\tau,\kappa}(\bar{w}+u))(t)
 $$
is continuous, and $C^1$ in $z$. But $D_z\Gamma(t,z)=\nabla^2_zH_\kappa^\ast(\lambda; z)$ is invertible.
 Therefore applying the implicit function theorem to $\Gamma$
 we derive from (\ref{e:gradient}) that $\bar{w}+u$ and therefore $u$ is $C^0$.
 By (\ref{e:inver}),  $A_{M,\tau,\kappa}(\bar{w}+u)\in C^1_{M}([0,\tau];\R^{2n})$
 and hence
 $$
 t\mapsto\mu(\lambda, u)\dot{\bar{w}}(t)-(A_{M,\tau,\kappa}(\bar{w}+u))(t)
 $$
 is $C^1$. Then $\Gamma$ is $C^1$. Using the implicit function theorem to $\Gamma$ again
 we obtain that $u$ is $C^1$.

{\bf Step 2} ({\it For any $\epsilon>0$, prove that there exists $\delta>0$
such that $\|u\|_{2}<\delta$ implies $|\mu(\lambda, u)|<\epsilon$
for all $\lambda\in\Lambda$}).
 Since $\nabla(\psi_{\kappa})_{\lambda}(\bar{w})=0\;\forall\lambda\in\Lambda$,
  by (\ref{e:gradient}) we have
\begin{eqnarray*}
\mu(\lambda, u)(\dot{\bar{w}}, v)_2&=&(\nabla(\psi_{\kappa})_{\lambda}(\bar{w}+u), v)_2-(\nabla(\psi_{\kappa})_{\lambda}(\bar{w}), v)_2\nonumber\\
 &=&(A_{M,\tau,\kappa}(u), v)_2+ (\nabla_zH_\kappa^\ast(\lambda; \bar{w}+u), v)_2-(\nabla_z H_\kappa^\ast(\lambda;\bar{w}), v)_2\nonumber\\
 &=&\int^\tau_0\int^1_0(\nabla^2_zH_\kappa^\ast(\lambda,t;\bar{w}(t)+su(t))u(t), v(t))_{\mathbb{R}^{2n}}dtds\nonumber\\
 &&+ (A_{M,\tau,\kappa}(u), v)_2,\quad\forall v\in H.
\end{eqnarray*}
It follows from this and (\ref{e:HPositive1}) that
$$
|\mu(\lambda, u)|\cdot\|\dot{\bar{w}}\|_2\le \|A_{M,\tau,\kappa}(u)\|_2+ \frac{1}{c_1}\|u\|_2.
$$
 Then the claim follows because  $\|\dot{\bar{w}}\|_2>0$ and
 $A_{M,\tau,\kappa}\in\mathscr{L}_s(L^{2}([0,\tau];\R^{2n}))$ by (\ref{e:Lambda-inverse}).

{\bf Step 3} ({\it For any $\epsilon>0$, prove there exists $\varepsilon>0$
such that $\|u\|_{2}\le\varepsilon$
implies $\|u\|_{C^1}<\epsilon$}).
Since $\nabla(\psi_{\kappa})_{\lambda}(\bar{w})=0\;\forall\lambda\in\Lambda$,
by (\ref{e:gradient}) we have
\begin{equation}\label{e:gradient+}
A_{M,\tau,\kappa}(u)+ \nabla_zH_\kappa^\ast(\lambda; \bar{w}+u)-\nabla_z H_\kappa^\ast({\lambda};\bar{w})=\mu(\lambda, u)\dot{\bar{w}}.
\end{equation}
Note that this also holds in the sense of point-wise
because $\bar{w}$ is $C^2$, $u$ is $C^1$ and
\begin{equation}\label{e:gradient++}
A_{M,\tau,\kappa}(u)(t)=e^{\kappa tJ}\mathfrak{J}^{-1}\int^\tau_0e^{-\kappa tJ}Ju(t)dt-
e^{\kappa tJ}\int^t_0e^{-\kappa sJ}Ju(s)ds
\end{equation}
is $C^2$ by (\ref{e:Lambda-inverse})--(\ref{e:inver*}).
It follows that there exists a constant $C_\kappa>0$ such that
\begin{equation}\label{e:gradient+++}
|A_{M,\tau,\kappa}(u)(t)|_{\mathbb{R}^{2n}}\le C_\kappa\|u\|_2,\quad\forall t\in [0,\tau].
\end{equation}
Clearly,  (\ref{e:HPositive1}), the mean value theorem of integrals and (\ref{e:gradient+})  lead to
\begin{eqnarray*}
\frac{1}{c_2}|u(t)|^2_{\mathbb{R}^{2n}}&\le&\int^1_0(\nabla^2_zH_\kappa^\ast(\lambda;\bar{w}(t)+su(t))u(t), u(t))_{\mathbb{R}^{2n}}ds\\
&=&\big(\nabla_zH_\kappa^\ast({\lambda};\bar{w}(t)+u(t))-\nabla_zH_\kappa^\ast(\lambda; \bar{w}(t)), u(t)\big)_{\mathbb{R}^{2n}}\\
&=&\big(\mu(\lambda, u)\dot{\bar{w}}(t)-A_{M,\tau,\kappa}(u)(t), u(t)\big)_{\mathbb{R}^{2n}}\\
&\le&|\mu(\lambda, u)|_{\mathbb{R}^{2n}}\cdot|\dot{\bar{w}}(t)|_{\mathbb{R}^{2n}}\cdot|u(t)|_{\mathbb{R}^{2n}}+
|A_{M,\tau,\kappa}(u)(t)|_{\mathbb{R}^{2n}}\cdot |u(t)|_{\mathbb{R}^{2n}}
\end{eqnarray*}
and so
\begin{eqnarray*}
\frac{1}{c_2}|u(t)|_{\mathbb{R}^{2n}}
\le|\mu(\lambda, u)|_{\mathbb{R}^{2n}}\cdot|\dot{\bar{w}}(t)|_{\mathbb{R}^{2n}}+C_\kappa\|u\|_2
\quad\forall t\in [0,\tau]
\end{eqnarray*}
by (\ref{e:gradient+++}).
Therefore, since $\dot{\bar{w}}$ is $C^1$, for any $\epsilon'>0$,
from this and the result in Step 2  we derive that
there exists $\varepsilon>0$ such that
\begin{equation}\label{e:gradient4}
\|u\|_{2}\le\varepsilon\quad \Longrightarrow\quad
\|u\|_{C^0}<\epsilon'.
\end{equation}

Taking the derivatives on two sides of equation in (\ref{e:gradient+}) with respect to $t$ we obtain
\begin{eqnarray*}
&&\frac{d}{dt}A_{M,\tau,\kappa}(u)(t)+ \nabla^2_zH_\kappa^\ast(\lambda;\bar{w}(t)+u(t))\dot{u}(t)\\
&&+ \nabla^2_zH_\kappa^\ast(\lambda;\bar{w}(t)+u(t))\dot{\bar{w}}(t)
-\nabla^2_zH_\kappa^\ast(\lambda;\bar{w}(t))\dot{\bar{w}}(t)=\mu(\lambda, u)\ddot{\bar{w}}(t).
\end{eqnarray*}
It follows from  (\ref{e:HPositive1}) and this that
\begin{eqnarray}\label{e:gradient5}
\frac{1}{c_2}|\dot{u}(t)|_{\mathbb{R}^{2n}}&\le& |\nabla^2_zH_\kappa^\ast(\lambda;\bar{w}(t)+u(t))\dot{u}(t)|_{\mathbb{R}^{2n}}\nonumber\\
&\le&\left\|\nabla^2_zH_\kappa^\ast(\lambda;\bar{w}(t)+u(t))-
\nabla^2_zH_\kappa^\ast(\lambda;\bar{w}(t))\right\|_{\mathbb{R}^{2n\times 2n}}|\dot{\bar{w}}(t)|
\nonumber\\
&&+ \left|\frac{d}{dt}A_{M,\tau,\kappa}(u)(t)\right|_{\mathbb{R}^{2n}}+ |\mu(\lambda, u)|\cdot|\ddot{\bar{w}}(t)|
\end{eqnarray}
When $\|u\|_{C^0}\to 0$, (\ref{e:gradient++}) and Step 2 lead to
$$
\max_t\left\|\frac{d}{dt}A_{M,\tau,\kappa}(u)(t)\right\|_{\mathbb{R}^{2n}}\to 0\quad\hbox{and}\quad
 |\mu(\lambda, u)|\cdot\|\ddot{\bar{w}}\|_{C^0}\to 0,
 $$
respectively. Moreover, for a given compact neighborhood $\mathscr{U}$ of $\bar{w}([0,\tau])$ in $\mathbb{R}^{2n}$,
since $\nabla^2_zH_\kappa^\ast(\lambda;z)$ is uniformly continuous in $\Lambda\times \mathscr{U}$,
 we deduce that
$$
\max_t\left\|\nabla^2_zH_\kappa^\ast(\lambda;\bar{w}(t)+u(t))-
\nabla^2_zH_\kappa^\ast(\lambda;\bar{w}(t))\right\|_{\mathbb{R}^{2n\times 2n}}\to 0
$$
uniformly with respect to $\lambda\in\Lambda$ as $\|u\|_{C^0}\to 0$.
From these and (\ref{e:gradient5}) we deduce  that $\|\dot{u}\|_{C^0}\to 0$ as $\|u\|_{C^0}\to 0$.
The desired claim follows from this and (\ref{e:gradient4}).

{\bf Step 4} ({\it Prove $d(\psi_{\kappa})_\lambda(\bar{w}+u)=0$}).
Note that $\|\dot{\bar{w}}\|_{C^0}>0$.
By Step 3 we get $\varepsilon>0$ such that $\|u\|_{2}\le\varepsilon$ implies $\|u\|_{C^1}<\|\dot{\bar{w}}\|_{C^0}$ and
$\|u\|_{C^1}<\|\dot{\bar{w}}\|_{2}/\sqrt{\tau}$. Then $w:=\bar{w}+u$ satisfy
\begin{eqnarray*}
\|\dot{w}\|_{C^0}&\ge& \|\dot{\bar{w}}\|_{C^0}-\|\dot{u}\|_{C^0}>0\quad\hbox{and}\\
(\dot{\bar{w}}, \dot{w})_{2}&=&(\dot{\bar{w}}, \dot{\bar{w}})_{2}+ (\dot{\bar{w}}, \dot{u})_{2}\\
&\ge& \|\dot{\bar{w}}\|^2_{2}-\|\dot{\bar{w}}\|_{2}\|\dot{u}\|_{2}\\
&\ge&\|\dot{\bar{w}}\|^2_{2}-\sqrt{\tau}\|\dot{\bar{w}}\|_{2}\|{u}\|_{C^1}>0.
\end{eqnarray*}
The former shows that $w$ is nonconstant, and therefore that
the map $\theta\mapsto\theta\ast w$ is a $C^1$ immersion from $\R$ to
$H=L^2([0,\tau];\R^{2n})$. Then for $\nu>0$ small enough,   $\Delta:=[-\nu, \nu]\ast w$ is
a $C^1$ embedded submanifold and $T_w\Delta=\mathbb{R}\dot{w}$. Because $(\dot{\bar{w}}, \dot{w})_{2}>0$,
the orthogonal decomposition ${H}=(\R\dot{\bar{w}})\oplus T_{\bar{w}}B_{H}^\bot(\bar{w},\varepsilon)$ implies
  a direct sum decomposition of Banach spaces
  \begin{equation}\label{e:directsum}
  {H}=T_{w}\Delta\dot{+} T_{w}B_{{H}^\bot}(\bar{w},\varepsilon).
  \end{equation}
Since  $(\psi_{\kappa})_\lambda$ is  invariant for the $\R$-action  given by (\ref{e:R-action}),
  $d(\psi_{\kappa})_\lambda(w)[\xi]=0\;\forall \xi\in T_{w}\Delta$.
From this, (\ref{e:orbit-critical}) and (\ref{e:directsum}) it follows that $d(\psi_{\kappa})_\lambda(w)=0$.
\end{proof}

Consider the following  Banach space and functional
\begin{eqnarray}\label{e:orth-space}
&&X^\bot:=\{u\in C^0_M([0,\tau];\R^{2n})\,|\,(\dot{\bar{w}}, u)_2=0\},\\
&&\mathcal{L}^\bot_\lambda:H^\bot\to\R,\;u\mapsto\mathcal{L}_\lambda(u)=\psi_\kappa(\lambda, \bar{w}+ u).\label{e:orth-funct}
\end{eqnarray}
They are invariant under the isotropy group $\mathbb{R}_{\bar{w}}$, and $d\mathcal{L}^\bot_\lambda(0)=0\;\forall\lambda$.
Moreover, by shrinking $\varepsilon>0$ if necessary, Proposition~\ref{prop:solution} shows that for $u\in B_{H^\bot}(0,\varepsilon)$,
\begin{equation}\label{e:Bi.3.11.2}
d\mathcal{L}^\bot_\lambda(u)=0\;
\quad\Longleftrightarrow\quad d\mathcal{L}_\lambda(u)=
d(\psi_{\kappa})_{\lambda}(\bar{w}+u)=0.
\end{equation}
Denote by $\Pi:H\to H^\bot$ the orthogonal projection. Then $\Pi(u)=u-\frac{(u,\dot{\bar{w}})_2}{\|\dot{\bar{w}}\|_2^2}\dot{\bar{w}}$ for $u\in H$, and
\begin{equation}\label{e:gradient2}
\nabla\mathcal{L}^\bot_\lambda(u)=\nabla(\psi_{\kappa})_{\lambda}(\bar{w}+u)-
\frac{(\nabla(\psi_{\kappa})_{\lambda}(\bar{w}+u),\dot{\bar{w}})_2}{\|\dot{\bar{w}}\|_2^2}\dot{\bar{w}}
\quad \forall u\in H^\bot,
\end{equation}
where $\nabla(\psi_{\kappa})_{\lambda}(\bar{w}+u)=A_{M,\tau,\kappa}(\bar{w}+u)+ \nabla_zH_\kappa^\ast(\lambda, \bar{w}(\cdot)+ v(\cdot))$ by (\ref{e:p-gradient}).
Since $\dot{\bar{w}}$ is $C^1$, it easily follows from (\ref{e:gradient2})
 that $\nabla\mathcal{L}^\bot_\lambda(u)\in X^\bot$ for any $u\in X^\bot$, and
\begin{equation}\label{e:gradient3}
\mathfrak{A}_\lambda:X^\bot\to X^\bot,\;u\mapsto\nabla\mathcal{L}^\bot_\lambda(u)
\end{equation}
is $C^1$. Actually, we have more: for any $(\lambda, x), (\lambda_0, x_0)\in\Lambda\times X^\bot$,
noting that both $\|\cdot\|_X$ and $\|\cdot\|_{X^\bot}$
are $\|\cdot\|_{C^0}$, and
\begin{eqnarray*}
\mathfrak{A}_\lambda(x)-\mathfrak{A}_{\lambda_0}(x_0)&=&
\nabla(\psi_{\kappa})_{\lambda}(\bar{w}+x)-\nabla(\psi_{\kappa})_{\lambda_0}(\bar{w}+x_0)\\
&+&\frac{(\nabla(\psi_{\kappa})_{\lambda}(\bar{w}+x)-
\nabla(\psi_{\kappa})_{\lambda_0}(\bar{w}+x_0),\dot{\bar{w}})_2}{(\|\dot{\bar{w}}\|_2)^2}\dot{\bar{w}}
\end{eqnarray*}
we deduce
\begin{eqnarray*}\label{e:gradient3+}
\|\mathfrak{A}_\lambda(x)-\mathfrak{A}_{\lambda_0}(x_0)\|_{X^\bot}&\le&
\|\nabla(\psi_{\kappa})_{\lambda}(\bar{w}+x)-\nabla(\psi_{\kappa})_{\lambda_0}(\bar{w}+x_0)\|_X\\
&+&\sqrt{\tau}\|\nabla(\psi_{\kappa})_{\lambda}(\bar{w}+x)-
\nabla(\psi_{\kappa})_{\lambda_0}(\bar{w}+x_0)\|_X\|\dot{\bar{w}}\|_{C^0}.
\end{eqnarray*}
By Step 2 of the proof of Theorem~\ref{th:bif-ness}(II),
$\Lambda\times X\ni (\lambda,v)\mapsto \nabla(\psi_{\kappa})_{\lambda}(v)=\nabla_v\psi_\kappa(\lambda, v)\in X$ is continuous.
This and (\ref{e:gradient3}) lead to

\begin{claim}\label{cl:contBot}
$\Lambda\times X^\bot\ni (\lambda, x)\mapsto \mathfrak{A}_\lambda(x)\in X^\bot$ is continuous.
\end{claim}

From (\ref{e:p-gradient}) and (\ref{e:p-self-adjoint}) we may deduce that
$\nabla\mathcal{L}^\bot_\lambda$ has a G\^ateaux derivative at $u\in H^\bot$,
$\mathfrak{B}_\lambda(u)\in\mathscr{L}_s(H^\bot)$, given by
\begin{equation}\label{e:gradient44}
\mathfrak{B}_\lambda(u)v=B_\lambda(u+\bar{w})v-
\frac{(B_\lambda(u+\bar{w})v,\dot{\bar{w}})_2}{(\|\dot{\bar{w}}\|_2)^2}\dot{\bar{w}}
\quad \forall v\in H^\bot,
\end{equation}
where $B_\lambda(u+\bar{w})=A_{M,\tau,\kappa}+ \nabla^2_zH_\kappa^\ast({\lambda};u(\cdot)+ \bar{w}(\cdot))$
by (\ref{e:p-self-adjoint}). Since $P_\lambda(v)=\nabla^2_zH_\kappa^\ast({\lambda};v(\cdot))$
(resp. $Q_\lambda(v)=A_{M,\tau,\kappa}$) is a positive definite (resp. compact self-adjoint) operator on $H$,
\begin{equation}\label{e:gradient55}
\mathfrak{P}_\lambda(u):=\Pi\circ P_\lambda(u+\bar{w})|_{H^\bot}\quad\hbox{(resp.\quad
$\mathfrak{Q}_\lambda(u):=\Pi\circ Q_\lambda(u+\bar{w})|_{H^\bot}$)}
\end{equation}
is a positive definite (resp. compact self-adjoint) operator on $H^\bot$. Moreover, it is clear that
$\mathfrak{B}_\lambda(u)=\mathfrak{P}_\lambda(u)+\mathfrak{Q}_\lambda(u)$.

\begin{proposition}\label{prop:hypA.5-bot}
$(H^\bot, X^\bot, \mathcal{L}^\bot_\lambda, \mathfrak{A}_\lambda, \mathfrak{B}_\lambda)$
satisfies \cite[Hypothesis~1.1]{Lu8} (\cite[Hypothesis~3.1]{Lu10}) and \cite[Hypothesis~1.3]{Lu8} (\cite[Hypothesis~3.2]{Lu10}).
\end{proposition}
\begin{proof}[\bf Proof]
It remains  to prove that
$(H^\bot, X^\bot, \mathcal{L}^\bot_\lambda, \mathfrak{A}_\lambda, \mathfrak{B}_\lambda)$
satisfies (D1) in \cite[Hypothesis~1.1]{Lu8} (\cite[Hypothesis~3.1]{Lu10}) and (C) in
\cite[Hypothesis~1.3]{Lu8} (\cite[Hypothesis~3.2]{Lu10}), that is,
\begin{equation}\label{e:D-D1}
\{u\in H^\bot\,|\,\mathfrak{B}_\lambda(0)u=su,\, s\le 0\}\subset X^\bot\quad\hbox{and}\quad
\{u\in H^\bot\,|\,\mathfrak{B}_\lambda(0)u\in X^\bot\}\subset X^\bot.
\end{equation}
 In fact, if  $u\in H^\bot$ and $\varrho\le 0$ satisfy $\mathfrak{B}_\lambda(0)u=\varrho u$,
 then $B_\lambda(\bar{w})u=\varrho u$ by (\ref{e:gradient44}) and (\ref{e:p-self-adjoint***}).
 The first inclusion in (\ref{e:C-D-D1}) leads to $u\in X$ and so $u\in X^\bot$.
 Similarly, if $u\in H^\bot$ is such that $v:=\mathfrak{B}_\lambda(0)u\in X^\bot$,
 (\ref{e:gradient44}) and (\ref{e:p-self-adjoint***}) lead to $B_\lambda(\bar{w})u=v$, and so the second claim
by the second inclusion in (\ref{e:C-D-D1}).
\end{proof}

Note that (\ref{e:p-self-adjoint***}) implies $\R\dot{\bar{w}}\subset{\rm Ker}(B_\lambda(\bar{w}))$. By (\ref{e:L-MorseIndex}) and (\ref{e:L-Nullity}) we get
 \begin{eqnarray}
 m^-(\mathcal{L}^\bot_\lambda, 0)&=&  m^-(({\psi}_{\kappa})_{\lambda}, \bar{w})=i_{\tau,M}(\gamma_\lambda)- i_{\tau,M}(\Upsilon_{\kappa I_{2n}})-\nu_{\tau,M}(\Upsilon_{\kappa I_{2n}}),\label{e:MorseIndex++}\\
  m^0(\mathcal{L}^\bot_\lambda, 0)&=& m^0(({\psi}_{\kappa})_{\lambda}, \bar{w})-1=\dim{\rm Ker}(\gamma_\lambda(\tau)-M)-1.\label{e:Nullity++}
\end{eqnarray}

\begin{proof}[\bf Proof of Theorem~\ref{th:bif-ness-orbit}]
In the proof of Theorem~\ref{th:bif-ness} we proved that
$$
\{\mathcal{F}_\lambda(\cdot):=(\psi_\kappa)_{\lambda}(\bar{w}+\cdot)\,|\,\lambda\in\Lambda\}
$$
satisfies the conditions of
Theorem~\ref{th:A.10} (\cite[Theorem~3.1]{Lu8})  with $H=X=L^2([0,\tau];\R^{2n})$.
 Thus if $\lambda_k\to\mu$ and $(x_k)\subset H^\bot$ converges to $0$
 it follows from (\ref{e:gradient55}) that
 \begin{eqnarray*}
\|\mathfrak{P}_{\lambda_k}(x_k)h-\mathfrak{P}_{\mu}(0)h\|&=&\|\Pi\circ P_{\lambda_k}(x_k+\bar{w})h-
\Pi\circ P_{\mu}(\bar{w})h\|\\
&\le &\|P_{\lambda_k}(x_k+\bar{w})h- P_{\mu}(\bar{w})h\|\to 0,\quad\forall h\in H^\bot.
 \end{eqnarray*}
 Similarly, we have $\| \mathfrak{Q}_{\lambda_k}(0)-\mathfrak{Q}_{\mu}(0)\|_{\mathscr{L}(H^\bot)}\le
\| Q_{\lambda_k}(\bar{w})-Q_{\mu}(\bar{w})\|_{\mathscr{L}(H^\bot)}\to 0$.
Moreover, for $x\in H^\bot$ near $0$ it is  easily seen that
 \begin{eqnarray*}
 &&(\mathfrak{P}_{\lambda}(x)h, h)_H=(P_{\lambda}(x+\bar{w})h, h)_H,\quad\forall h\in H^\bot,\\
 &&\| \mathfrak{Q}_{\lambda}(x)-\mathfrak{Q}_{\lambda}(0)\|_{\mathscr{L}(H^\bot)}
 \le\|Q_{\lambda}(x+\bar{w})- Q_{\lambda}(\bar{w})\|_{\mathscr{L}(H)}.
 \end{eqnarray*}
These imply that the conditions (i)-(iv) in \cite[Theorem~3.1]{Lu8} (cf. Theorem~\ref{th:A.10})
can be satisfied with $\mathfrak{P}_{\lambda}$ and  $\mathfrak{Q}_{\lambda}$ near $0\in H^\bot$.
Hence Theorem~\ref{th:A.10} (\cite[Theorem~3.1]{Lu8}) is applicable to
$\mathcal{F}_\lambda:=\mathcal{L}^\bot_\lambda$ near $0\in H^\bot$.

Let $(\lambda_k, v_k)$ be as in Theorem~\ref{th:bif-ness-orbit}. Then $\lambda_k\to\mu$, $\|v_k|_{[0,\tau]}- \bar{v}|_{[0,\tau]}\|_{C^0}\to 0$
 and each $w_k:=-\Lambda_{M,\tau,\kappa I_{2n}}(v_k|_{[0,\tau]})$ is a critical point of $(\psi_{\kappa})_{\lambda_k}$ on $L^2([0,\tau];\R^{2n})$.
By Proposition~\ref{prop:threeBifu}, $\|v_k|_{[0,\tau]}- \bar{v}|_{[0,\tau]}\|_{C^1}\to 0$
 and hence $\|w_k-\bar{w}\|_{C^0}\to 0$.
Take $0<\epsilon_m<\varepsilon$ such that $\epsilon_m\to 0$.
Since each $\R\ast B_{{H}^\bot}(\bar{w},\epsilon_m)$  is a neighborhood of the orbit $\mathcal{O}=\R\ast \bar{w}$ by Proposition~\ref{prop:orbi-neigh},
we have a subsequence $(w_{k_m})$ of $(w_k)$ such that $w_{k_m}\in\mathbb{R}\ast  B_{{H}^\bot}(\bar{w},\epsilon_m)$ for all $m\in\mathbb{N}$,
 that is, there exists a $\theta_m\in\R$ such that $u_m:=\theta_m\ast w_{k_m}\in B_{{H}^\bot}(\bar{w},\epsilon_m)\subset B_{{H}^\bot}(\bar{w},\varepsilon)$ for each $m\in\mathbb{N}$.
Note that $\|u_m-\bar{w}\|_2<\epsilon_m\to 0$ and that each $u_m$ is a critical point of $\psi_{\kappa}(\lambda_{k_m},\cdot)$ on
$L^2([0,\tau];\R^{2n})$.
The latter implies that each $u_m-\bar{w}$ is also a critical point of  $\mathcal{L}_{\lambda_{k_m}}^\bot$.
Since all $v_k$ are $\R$-distinct, any two of all $w_k$ belong to different $\R$-orbits. It follows that all $u_m-\bar{w}$ are distinct each other.
These show that $(\mu, 0)$ is a bifurcation point of $\nabla\mathcal{L}^\bot_\lambda(u)=0$ in $\Lambda\times H^\bot$.
By Theorem~\ref{th:A.10} (\cite[Theorem~3.1]{Lu8}) we get $m^0(\mathcal{L}^\bot_{\mu}, 0)>0$.
From (\ref{e:Nullity++}) it follows that
 $\dim{\rm Ker}(\gamma_\mu(\tau)-M)>1$.
\end{proof}

\begin{proof}[\bf Proof of Theorem~\ref{th:bif-suffict1-orbit}]
Follow the notations above. In the proof of Theorem~\ref{th:bif-ness}(II) we  proved that
$\{\mathcal{L}_\lambda(\cdot)=\psi_\kappa(\lambda, \bar{w}+\cdot)\,|\,
\lambda\in \Lambda\}$  satisfies the conditions of Theorem~\ref{th:A.9}
on  $H=L^2([0,\tau];\R^{2n})$ and $X=C^0_M([0,\tau];\R^{2n})$.
By Claim~\ref{cl:contBot}, Proposition~\ref{prop:hypA.5-bot} and
the proof of Theorem~\ref{th:bif-ness-orbit},
the conditions of Theorem~\ref{th:A.9} are also satisfied for
 $\{(H^\bot, X^\bot, \mathcal{L}^\bot_\lambda, \mathfrak{A}_\lambda, \mathfrak{B}_\lambda)\,|\,\lambda\in\Lambda\}$.

Note that using (\ref{e:MorseIndex++})-(\ref{e:Nullity++}) we can translate the assumptions (a)-(c) of Theorem~\ref{th:bif-suffict1-orbit}
as follows:
\begin{description}
\item[(${\rm a}^\prime$)]  The orbit ${\cal O}:=\R\ast \bar{w}$ is an embedded circle
(i.e., $\mathbb{R}_{\bar{w}}$ is an infinite cyclic subgroup of $\R$ with generator $p>0$).
\item[(${\rm b}^\prime$)]
$m^0(\mathcal{L}^\bot_{\mu}, 0)\ge 1$,  for each $k\in\mathbb{N}$, $\lambda_k^-\ne\lambda_k^+$,
$$
[m^-(\mathcal{L}^\bot_{\lambda_k^-}, 0), m^-(\mathcal{L}^\bot_{\lambda_k^-}, 0)+m^0(\mathcal{L}^\bot_{\lambda_k^-}, 0)]\cap[m^-(\mathcal{L}^\bot_{\lambda_k^+}, 0), m^-(\mathcal{L}^\bot_{\lambda_k^+}, 0)+
m^0(\mathcal{L}^\bot_{\lambda_k^+}, 0)]=\emptyset,
$$
 and either $m^0(\mathcal{L}^\bot_{\lambda_k^+}, 0)=0$ or $m^0(\mathcal{L}^\bot_{\lambda_k^-}, 0)=0$.
 \item[(${\rm c}^\prime$)] For any critical point $w$ of $\psi_\kappa(\mu, \cdot)$ which sits in $X$,
 if there exists a sequence $(s_k)$ of reals such that  $s_k\cdot w$
  converges to  $\bar{w}$ in $X$, then $w$ is periodic, and so $\mathbb{R}\ast w$ is closed.
  (Clearly, this holds if $M^l=id_{\mathbb{R}^{2n}}$ for some $l\in\mathbb{N}$.)
 \end{description}

By (${\rm b}^\prime$) we may use Theorem~\ref{th:A.9} to get $m^0(\mathcal{L}^\bot_{\mu}, 0)>0$
and a sequence $\{(\lambda_k, u_k)\}_{k\ge 1}$  in $\hat\Lambda\times H^\bot$
   converging to $(\mu, 0)$, where $\hat{\Lambda}:=\{\mu,\lambda^+_k, \lambda^-_k\,|\,k\in\mathbb{N}\}$,
   such that each $u_k$  is a nonzero solution of $\nabla\mathcal{L}^\bot_{\lambda_k}(u)=0$, $k=1,2,\cdots$.
 Clearly, we can assume $u_k\in B_{H^\bot}(0,\varepsilon)\;\forall k\in\mathbb{N}$.
 By Proposition~\ref{prop:solution}
 \begin{equation}\label{e:Pro4.5}
 \hbox{$w_k:=\bar{w}+u_k\in B_{H}^\bot(\bar{w},\varepsilon)$ is $C^1$, satisfies $d(\psi_{\kappa})_{\lambda_k}(w_k)=0$ and $\|w_k-\bar{w}\|_{C^1}\to 0$.}
 \end{equation}

 From now on we \textsf{use (${\rm a}^\prime$), i.e.,  the orbit $\mathcal{O}=\R\ast \bar{w}$  is a $C^1$ embedded circle}.
 Since it  transversely intersects with  $B_{H}^\bot(\bar{w},\varepsilon)$  at $\bar{w}$,
 by shrinking $\varepsilon>0$ we can assume
 $$
 \mathcal{O}\cap B_{H}^\bot(\bar{w},\varepsilon)=\{\bar{w}\}.
 $$
 It follows from this and (\ref{e:Pro4.5}) that there exists  $k_0>0$ such that  for each $k>k_0$,
 $\mathbb{R}\ast w_k$ is an immersed $C^1$ submanifold which
 transversely intersects  $B_{H}^\bot(\bar{w},\varepsilon)$ at $w_k$. Hence $\mathbb{R}\ast w_k\ne\mathcal{O}$ for any $k>k_0$. (Otherwise, $\mathcal{O}$ and $B_{H}^\bot(\bar{w},\varepsilon)$
 have at least two distinct intersection points $w_k$ and
 $\bar{w}$ in $B_{H}^\bot(\bar{w},\varepsilon)$.)

Finally, we conclude that $\{\mathbb{R}\ast w_k\,|\, k\in\mathbb{N}\}$ is an infinite set.
 (Thus $(w_k)$ has a subsequence which only consists of $\R$-distinct elements,
  also denoted by $(w_{k})$ for brevity.
 For each $k=1,2,\cdots$,  let $v_k$ be defined by $-(\Lambda_{M,\tau,\kappa I_{2n}})^{-1}w_{k}$ via
         (\ref{e:extend}). Then $\{(\lambda_k, v_k)\}_{k\ge 1}$ is the required one. The proof is completed.)
 Otherwise,  passing to a subsequence we may assume that all $w_k$ belong to an $\R$-orbit, i.e.,
  $w_k=s_k\ast \hat{w}$ for some $s_k\in\mathbb{R}$,
 where $\hat{w}\in X$ satisfies  $d(\psi_{\kappa})_{\lambda_k}(\hat{w})=d(\psi_{\kappa})_{\lambda_k}({w}_k)=0$ for all $k\in\mathbb{N}$.
Since $\Lambda\times X\ni (\lambda,v)\mapsto \nabla_v\psi_\kappa(\lambda, v)\in X$ is continuous, and $\lambda_k\to \mu$,
we obtain $d(\psi_{\kappa})_\mu(\hat{w})=0$.
 Clearly, $\bar{w}$ sits in the intersection of $\mathcal{O}$
 and the closure of $\mathbb{R}\ast \hat{w}$. By the assumption
 (${\rm c}^\prime$), $\mathbb{R}\ast \hat{w}$ is closed and
 so equal to $\mathcal{O}$, which contradicts the claim that
 $\mathcal{O}\ne\mathbb{R}\ast w_k=\mathbb{R}\ast \hat{w}$ for any $k>k_0$
  in the last paragraph.
\end{proof}

\begin{proof}[\bf Proof of Theorem~\ref{th:bif-existence-orbit}]
The first two paragraphs in the proof of Theorem~\ref{th:bif-suffict1-orbit} are still valid
provided we replace $\Lambda$ and $\hat\Lambda$ by $\alpha([0,1])$, and
(${\rm b}^\prime$) by
\begin{enumerate}
\item[(${\rm d}^\prime$)]
$[m^-(\mathcal{L}^\bot_{\lambda^-}, 0), m^-(\mathcal{L}^\bot_{\lambda^-}, 0)
+m^0(\mathcal{L}^\bot_{\lambda^-}, 0)]\cap[m^-(\mathcal{L}^\bot_{\lambda^+}, 0), m^-(\mathcal{L}^\bot_{\lambda^+}, 0)+m^0(\mathcal{L}^\bot_{\lambda^+}, 0)]=\emptyset$,
and either $m^0(\mathcal{L}^\bot_{\lambda^+}, 0)=0$ or $m^0(\mathcal{L}^\bot_{\lambda^-}, 0)=0$.
  \end{enumerate}
By Theorem~\ref{th:A.9+} there exists a sequence $\{(t_k, u_k)\}_{k\ge 1}$  in $[0,1]\times H^\bot$
   converging to $(\bar{t}, 0)$, such that each $u_k$
  is a nonzero solution of $\nabla\mathcal{L}^\bot_{\alpha(t_k)}(u)=0$, $k=1,2,\cdots$.
Moreover, $\alpha(\bar{t})\ne \lambda^+$ (resp. $\mu\ne \lambda^-$)
 if $m^0(\mathcal{L}^\bot_{\lambda^+}, 0)=0$ (resp. $m^0(\mathcal{L}^\bot_{\lambda^-}, 0)=0$).
 As in the proof of Theorem~\ref{th:bif-suffict1-orbit}
we may assume $u_k\in B_{H^\bot}(0,\varepsilon)\;\forall k\in\mathbb{N}$ and have
 (\ref{e:Pro4.5}). Letting $w_k:=\bar{w}+u_k$ for each $k\in\mathbb{N}$
  and repeating other arguments in the proof of Theorem~\ref{th:bif-suffict1-orbit}
   the required results can be obtained.
 \end{proof}

\begin{proof}[\bf Proof of Theorem~\ref{th:bif-suffict-orbit}]
Part (I) follows from Theorem~\ref{th:bif-suffict1-orbit} directly.
In order to prove parts (II) and (III),
following the first paragraph of the proof of Theorem~\ref{th:bif-suffict1-orbit} and
  comparing conditions in Theorem~\ref{th:A.9}
 it is easily seen  that $(\mathcal{L}^\bot_\lambda, H^\bot, X^\bot)$
 with $\lambda\in \Lambda$  satisfy
the conditions in Theorem~\ref{th:A.9}
with $\lambda^\ast=\mu$.
As in the proof of Theorem~\ref{th:bif-suffict} we may use
 (\ref{e:MorseIndex++})-(\ref{e:Nullity++}) and the assumptions about Morse indexes in Theorem~\ref{th:bif-suffict-orbit}  to check that
 $m^0(\mathcal{L}^\bot_\mu, 0)=\nu_{\tau}(\gamma_\mu)-1>0$, and
 $m^-(\mathcal{L}^\bot_\lambda, 0)$ takes values $m^-(\mathcal{L}^\bot_\mu, 0)=i_{\tau}(\gamma_\mu)$ and $
 m^-(\mathcal{L}^\bot_\mu, 0)+ m^0(\mathcal{L}^\bot_\mu, 0)=i_{\tau}(\gamma_\mu)+ \nu_{\tau}(\gamma_\mu)-1$
  as $\lambda\in\mathbb{R}$ varies in  two deleted half neighborhoods  of $\mu$.

Since $M=I_{2n}$, the $\mathbb{R}$-action on $H$ is actually a $S^1=\mathbb{R}/(\tau\mathbb{Z})$ action,
$$
[\theta]\diamond x=(\theta+k\tau)\ast x,\quad\forall x\in H,\;[\theta]=\theta+\tau\mathbb{Z}\in S^1,\;k\in\mathbb{Z}.
$$
This is a continuous action on $H$ (resp. $X$) via Hilbert (resp. Banach) isometry isomorphisms.

\begin{claim}\label{cl:isotryGroup}
The isotropy group of the above $S^1$-action at $\bar{w}$ is
$$
S^1_{\bar{w}}=\left\{\left[\frac{l\tau}{p}\right]\,\Bigg|\, l=0,\cdots,p-1\right\}\cong \mathbb{Z}_p:=\mathbb{Z}/p\mathbb{Z},
$$
and $[\theta]\diamond\dot{\bar{w}}=\dot{\bar{w}}$ for any $[\theta]\in S^1_{\bar{w}}$.
\end{claim}
\begin{proof}[\bf Proof]
Recall that $\bar{u}=\bar{v}|_{[0,\tau]}$ and $\bar{w}=-\Lambda_{I_{2n},\tau,\kappa I_{2n}}\bar{u}$.
Suppose $[\theta]\diamond \bar{w}=\bar{w}$, i.e.,  $(\theta+k\tau)\ast \bar{w}=\bar{w}$.
Note that $\Lambda_{I_{2n},\tau,\kappa I_{2n}}$ is the equivariant isomorphism.
Hence  $(\theta+k\tau)\ast \bar{u}=\bar{u}$. That is,
$$
(\bar{u})^{I_{2n}}(\theta+k\tau+t)=\bar{u}(t),\;\forall 0\le t\le\tau.
$$
Since $(\bar{u})^{I_{2n}}=\bar{v}$ and $\bar{v}$ has the minimal period $\tau/p$, this means that
$\bar{v}(\theta+ t)=\bar{v}(t)\;\forall 0\le t\le\tau$ and so $\bar{v}(\theta+ t)=\bar{v}(t)\;\forall t\in\mathbb{R}$. It follows that
$$
\theta\in \bigcup^{p-1}_{l=0}\left(\mathbb{Z}\tau+ \frac{l\tau}{p}\mathbb{Z}\right).
$$
Moreover, it is easily checked that  such a $\theta$ satisfies $[\theta]\diamond \bar{w}=\bar{w}$.
The first claim follows.

Since $\bar{u}$ is $C^2$, and $\bar{w}=-\Lambda_{M,\tau,\kappa I_{2n}}(\bar{u})=-J\dot{\bar{u}}-\kappa \bar{u}$,
we deduce $\dot{\bar{w}}=-\Lambda_{M,\tau,\kappa I_{2n}}(\dot{\bar{u}})$.
Note that $\tau/p$ is also a period of $\dot{\bar{v}}$ and $\dot{\bar{u}}=\dot{\bar{v}}|_{[0,\tau]}$.
It must hold that $[\theta]\diamond \dot{\bar{w}}=\dot{\bar{w}}$ for any $[\theta]\in S^1_{\bar{w}}$.
\end{proof}

Recall that $H^\bot$ is the orthogonal complementary of $\dot{\bar{w}}$ in  $H=L^2([0,\tau];\R^{2n})$ and
$X^\bot=X\cap H^\bot$. By the second conclusion in Claim~\ref{cl:isotryGroup}
 spaces $H^\bot, X^\bot$ and each functional $\mathcal{L}^\bot_{\lambda}$ are $S^1_{\bar{w}}$-invariant.

Since $\{[\theta]\diamond \bar{u}\,|\,[\theta]\in S^1\}=\{\theta\ast \bar{u}\,|\,\theta\in\R\}$ is a $C^2$-embedded circle,
by Claim~\ref{cl:5orbit} the orbit $\mathcal{O}:=\{[\theta]\diamond \bar{w}\,|\,[\theta]\in S^1\}$
is  a $C^2$-embedded circle in $L^2([0,\tau];\R^{2n})$ and
$T_{\bar{w}}\mathcal{O}=\mathbb{R}\dot{\bar{w}}$.
Let  $\pi:N{\cal O}\to{\cal O}$ be the normal bundle
of ${\cal O}$ in $H$. It is a $C^{1}$
Hilbert vector bundle over ${\cal O}$ and its fibre $N{\cal O}_{\bar{w}}$ at $\bar{w}$ is equal to $H^\bot$.
Let $N{\cal O}(\varepsilon):=\{(w,x)\in N{\cal O}\,|\,\|x\|_{2}<\varepsilon\}$ and define
\begin{equation*}
{\rm EXP}:N{\cal O}(\varepsilon)\to H,\;(w,x)\mapsto w+x.
\end{equation*}
Clearly, ${\rm EXP}$ is equivariant, i.e.,
$[\theta]\diamond({\rm EXP}(w,x))={\rm EXP}([\theta]\diamond w, [\theta]\diamond x)$ for
all $[\theta]\in S^1$. By shrinking   $\varepsilon>0$,  ${\rm EXP}$ gives rise to a $C^{1}$
diffeomorphism  $\digamma$ from $N{\cal O}(\varepsilon)$ to
an open neighborhood ${\cal N}({\cal O}, \varepsilon)$ of ${\cal O}$ in $H$.
Let $x,y\in N{\cal O}(\varepsilon)_{\bar{w}}=B_{H}^\bot(\bar{w},\varepsilon)$ such that
$$
[\theta]\diamond(\bar{w}+x)=\bar{w}+y\quad\hbox{for some}\; [\theta]\in S^1=\mathbb{R}/(\tau\mathbb{Z}).
$$
Then $[\theta]\diamond \bar{w}+ [\theta]\diamond x=\bar{w}+y$, i.e.,
${\rm EXP}([\theta]\diamond \bar{w}, [\theta]\diamond x)={\rm EXP}(\bar{w}, y)$.
It follows that $[\theta]\diamond \bar{w}=\bar{w}$ and $[\theta]\diamond x=y$.
The former shows  $[\theta]\in S^1_{\bar{w}}$, and the latter implies that $x$ and $y$
sit in the same $S^1_{\bar{w}}$-orbit. Therefore we have proved:

\begin{claim}\label{cl:distinctOrbit}
If $x, y\in N{\cal O}(\varepsilon)_{\bar{w}}=B_{H}^\bot(\bar{w},\varepsilon)$ belong to
distinct $S^1_{\bar{w}}$-orbits, then $S^1\diamond(\bar{w}+x)\ne S^1\diamond(\bar{w}+y)$.
\end{claim}

\begin{claim}\label{cl:fixedSet}
The fixed point set of $S^1_{\bar{w}}$-action on
${\rm Ker}(\mathfrak{B}_\lambda(0))$,
$$
\{x\in {\rm Ker}(\mathfrak{B}_\lambda(0))\,|\, [\theta]\diamond x=x\;\forall [\theta]\in S^1_{\bar{w}}\},
$$
is a linear subspace of dimension $\nu_{\tau/p}(\gamma_\mu)-1$.
\end{claim}
\begin{proof}[\bf Proof]
 Since $B_\lambda(\bar{w})\dot{\bar{w}}=0$, by (\ref{e:gradient44})
we have
$$
{\rm Ker}(\mathfrak{B}_\lambda(0))=\{x\in H^\bot\,|\, B_\lambda(\bar{w})x=0\}.
$$
Note that $[\tau/p]$ is a generator of $S^1_{\bar{w}}$. $x\in {\rm Ker}(\mathfrak{B}_\lambda(0))$
is a fixed point of $S^1_{\bar{w}}$-action if and only if $[\tau/p]\diamond x=x$, i.e., $(\tau/p)\ast x=x$.
By (\ref{e:p-gradient**}) and (\ref{e:p-self-adjoint**}),
\begin{eqnarray*}
&&(\Lambda_{I_{2n},\tau,{\kappa I_{2n}}})^{-1}\bar{w}+ \nabla(H_\kappa^\ast)_\lambda(\bar{w}(\cdot))=A_{I_{2n},\tau,\kappa}\bar{w}+ \nabla(H_\kappa^\ast)_\lambda(\bar{w}(\cdot))=0,\\
&&(\Lambda_{I_{2n},\tau,{\kappa I_{2n}}})^{-1}x+(H_\kappa^\ast)''_{\lambda}(\bar{w}(\cdot))x
=A_{I_{2n},\tau,\kappa}x+ (H_\kappa^\ast)''_{\lambda}(\bar{w}(\cdot))x=0.
\end{eqnarray*}
Let $\bar{u}=-(\Lambda_{I_{2n},\tau,{\kappa I_{2n}}})^{-1}\bar{w}$. Then $\bar{u}=\nabla(H_\kappa^\ast)_\lambda(\bar{w}(\cdot))$
and so $\bar{w}=\nabla(H_\kappa)_\lambda(\bar{u}(\cdot))$ and
$$
I_{2n}=(H_\kappa^\ast)''_{\lambda}(\bar{w}(\cdot))(H_\kappa)''_{\lambda}(\bar{u}(\cdot)).
$$
It follows from these that $y:=(\Lambda_{I_{2n},\tau,{\kappa I_{2n}}})^{-1}x$ satisfies
$$
\dot{y}(t)=JH_\lambda''(\bar{u}(t))y(t),\quad 0\le t\le\tau.
$$
Moreover, $(\tau/p)\ast y=y$ implies $y^{I_{2n}}$ has a period $\tau/p$.
Hence the desired claim follows.
\end{proof}

\noindent{\bf Proof of (II)}. Since $m^0(\mathcal{L}^\bot_\mu, 0)=\nu_{\tau}(\gamma_\mu)-1>1$,
Theorem~\ref{th:A.11} (\cite[Theorem~3.6]{Lu10}) implies that one of the following alternatives occurs:
\begin{description}
\item[(${\rm i}'$)] $(\mu,0)$ is not an isolated solution  in  $\{\mu\}\times H^\bot$ of the equation
$\mathfrak{A}_\lambda(x)=0$.

\item[(${\rm ii}'$)]  For every $\lambda\in\Lambda$ near $\mu$ there is a nontrivial solution $x_\lambda$ of
$\mathfrak{A}_\lambda(x)=0$ in $X^\bot$,
which  converges to $0$ in $X^\bot$ as $\lambda\to \mu$.

\item[(${\rm iii}'$)] For any  neighborhood $\mathcal{N}$ of $0$ in $X^\bot$, which may be assumed to be $S^1_{\bar{w}}$-invariant,
there is a one-sided  neighborhood $\Lambda^0$ of $\mu$ in $\Lambda$ such that
for any $\lambda\in\Lambda^0\setminus\{\mu\}$, $\mathfrak{A}_\lambda(x)=0$
has at least two nontrivial solutions $x_\lambda^1$ and $x_\lambda^2$ in $\mathcal{N}$ with different energy
if $\mathfrak{A}_\lambda(x)=0$ has only finitely many nontrivial solutions in $\mathcal{N}$.
\end{description}

By (${\rm i}'$) we have a sequence $(x_k)\subset H^\bot\setminus\{0\}$
converging to $0$ in $H^\bot$ such that $\nabla\mathcal{L}^\bot_\mu(x_k)=0$ for each $k$.
Since $S^1_{\bar{w}}$ is a finite group, by passing to a subsequence (if necessary)
 we can assume that all $S^1_{\bar{w}}\diamond x_k$ are pairwise distinct.
By Proposition~\ref{prop:solution} and Claim~\ref{cl:distinctOrbit}, all $S^1\diamond(\bar{w}+x_k)=\mathbb{R}\ast(\bar{w}+x_k)$
are distinct critical orbits of $\psi_\kappa(\mu, \cdot)$ and $\bar{w}+x_k\to \bar{w}$ in $H$.
For each $k=1,2,\cdots$, set
$$
u_k:=-(\Lambda_{I_{2n},\tau,\kappa I_{2n}})^{-1}(\bar{w}+x_k)\quad\hbox{and}\quad v_k:=(u_k)^{I_{2n}}.
$$
Then $\|u_k-\bar{v}|_{[0,\tau]}\|_{1,2}\to 0$, and therefore $\|u_k-\bar{v}|_{[0,\tau]}\|_{C^1}\to 0$ by Proposition~\ref{prop:threeBifu}.
Hence $(v_k)$ satisfy (II.1).

For every $\lambda\in\Lambda\setminus\{0\}$, let $x_\lambda$ be as in (${\rm ii}'$).
Set $u_\lambda:=-(\Lambda_{I_{2n},\tau,\kappa I_{2n}})^{-1}(\bar{w}+x_\lambda)$ and $v_\lambda:=(u_\lambda)^{I_{2n}}$.
As above these $v_\lambda$ satisfy (II.2).

By (${\rm iii}'$), for any $\lambda\in\Lambda^0\setminus\{\mu\}$, $\mathfrak{A}_\lambda(x)=0$ has \\
$\bullet$ either  infinitely many distinct nontrivial  solutions in
$\mathcal{N}$, and therefore infinitely many distinct nontrivial $S^1_{\bar{w}}$-orbit of solutions in
$\mathcal{N}$ (because of the finiteness of  $S^1_{\bar{w}}$), $S^1_{\bar{w}}\diamond \bar{x}_\lambda^k$, $k=1,2,\cdots$, \\
$\bullet$ or at least two nontrivial $S^1_{\bar{w}}$-orbit of solutions in $\mathcal{N}$,
 $S^1_{\bar{w}}\diamond x^1_\lambda$ and $S^1_{\bar{w}}\diamond x^2_\lambda$, such that $\psi_{\kappa}(\lambda,x^1_\lambda)\ne \psi_{\kappa}(\lambda, x^2_\lambda)$.

As above we define  $\bar{u}^k_\lambda:=-(\Lambda_{I_{2n},\tau,\kappa I_{2n}})^{-1}(\bar{w}+\bar{x}^k_\lambda)$
 and $\bar{v}^k_\lambda:=(\bar{u}^k_\lambda)^{I_{2n}}$, $k=1,2,\cdots$, and
 $u^i_\lambda:=-(\Lambda_{I_{2n},\tau,\kappa I_{2n}})^{-1}(\bar{w}+x^i_\lambda)$ and $v^i_\lambda:=(u^i_\lambda)^{I_{2n}}$, $i=1,2$.
Then $\bar{v}^k_\lambda$ and $v^i_\lambda$ satisfy (II.3).\\

\noindent{\bf Proof of (III)}.
Since $\nu_{\tau/p}(\gamma_\mu)=1$, by Claim~\ref{cl:fixedSet}
the fixed point set of $S^1_{\bar{w}}$-action on ${\rm Ker}(\mathfrak{B}_\lambda(0)$
is equal to $\{0\}$. Using Theorem~\ref{th:A.12} (\cite[Theorem~3.7]{Lu10}) and \cite[Remark~3.9]{Lu10} we obtain:

$\bullet$ If $p=2$,  one of (${\rm i}^\prime$) and the following (${\rm iv}^\prime$) occurs:
  \begin{enumerate}
\item[(${\rm iv}^\prime$)] There exist left and right  neighborhoods $\Lambda^-$ and $\Lambda^+$ of $\mu$ in $\Lambda$
and integers $n^+, n^-\ge 0$, such that $n^++n^-\ge \nu_{\tau}(\gamma_\mu)$,
and for $\lambda\in\Lambda^-\setminus\{\mu\}$ (resp. $\lambda\in\Lambda^+\setminus\{\mu\}$),
$\mathfrak{A}_\lambda(x)=0$   has at least $n^-$ (resp. $n^+$)
nontrivial $S^1_{\bar{w}}$-orbit of solutions of in $X^\bot$,
 $S^1_{\bar{w}}\diamond x_\lambda^i$, $i=1,\cdots,n^-$ (resp. $n^+$),
which  converge to  $0$ in $X^\bot$ as $\lambda\to \mu$.
\end{enumerate}

$\bullet$ If $p>2$ is a prime,  one of (${\rm i}^\prime$) and the following (${\rm v}^\prime$)  occurs:
 \begin{enumerate}
\item[(${\rm v}^\prime$)] The conclusions obtained by
replacing $\nu_{\tau}(\gamma_\mu)$ with $\nu_{\tau}(\gamma_\mu)/2$ in
 (${\rm iv}^\prime$).
\end{enumerate}
 As above, the desired conclusions follow from these and Claim~\ref{cl:distinctOrbit}.
 \end{proof}

\section{Proofs of Theorems~\ref{th:bif-nessbrake},\ref{th:bif-suffictbrake},\ref{th:bif-per3brake} and
Corollaries~\ref{cor:bif-per2brake},\ref{cor:bif-per4brake},\ref{cor:bif-per5brake}
}\label{sec:brake}\setcounter{equation}{0}

Let $\textsf{L}_\tau$ be the Hilbert subspace of $L^{2}(S_\tau; \R^{2n})$ as in (\ref{e:Lbrake}).
Below (\ref{e:Lbrake}) we have showed that the operator $\Lambda_{I_{2n},\tau, 0}$ on $L^2(S_\tau; \mathbb{R}^{2n})$
defined by (\ref{e:LambdaKA})  with domain $W^{1,2}(S_\tau; \mathbb{R}^{2n})$
restricts to a closed linear and self-adjoint operator
$\tilde{\Lambda}_{I_{2n},\tau, 0}$
on $\textsf{L}_\tau$ with domain $\textsf{W}_\tau$, and
$$
\sigma(\tilde{\Lambda}_{I_{2n},\tau, 0})=\frac{2\pi}{\tau}\mathbb{Z}
$$
consists of only eigenvalues, and each eigenvalues has multiplicity $n$. Then
for each real $\kappa\notin \frac{2\pi}{\tau}\mathbb{Z}$,
\begin{equation}\label{e:isomorphism}
\tilde{\Lambda}_{I_{2n},\tau, \kappa I_{2n}}: \textsf{W}_\tau\to \textsf{L}_\tau,\;v\mapsto\tilde{\Lambda}_{I_{2n},\tau, 0}v+ \kappa v=
J\frac{d}{dt}v+\kappa v
\end{equation}
is a Banach space isomorphism.

Solutions of (\ref{e:Pbrake}) are critical points of the functional
$$
\Phi_\lambda(v)=\int^\tau_0\left[\frac{1}{2}(J\dot{v}(t),v(t))_{\mathbb{R}^{2n}}+
 H(\lambda, t,{v}(t))\right]dt
$$
on the Hilbert space $\textsf{W}_\tau$.

Note that the function $\bar{H}:\Lambda\times\R\times{\R}^{2n}\to\R$ by
$$
\bar{H}(\lambda,t,z)={H}(\lambda,t,z+u_\lambda(t))-(z, \nabla_z{H}(\lambda,t, u_\lambda(t)))_{\mathbb{R}^{2n}}
$$
 satisfies Assumption~\ref{ass:brake},
 and for each $\lambda\in\Lambda$, $\bar{v}\equiv 0\in\mathbb{R}^{2n}$ satisfies
 \begin{equation}\label{e:Pbrake.1}
\dot{v}(t)=J\nabla_z \bar{H}(\lambda,t, v(t)),\quad v(t+\tau)=v(t)\quad\hbox{and}\quad v(-t)=Nv(t)\;\forall t\in\R.
\end{equation}

As done above the proof of Theorem~\ref{th:bif-ness}(I) in Section~\ref{sec:HamBif},
replacing $\bar{H}$ by $\tilde{H}$
\textsf{in what follows we can assume that $H$
satisfies inequalities in (\ref{e:modifyH}) and (\ref{e:modifyH1})
for all $(\lambda,t, z)\in \Lambda\times \mathbb{R}\times \mathbb{R}^{2n}$, and
  only need to prove Theorems~\ref{th:bif-nessbrake}--\ref{th:bif-suffictbrake}
in the case where $v_\lambda\equiv 0$ for all} $\lambda\in\Lambda$.

Then we may choose $\kappa\notin \frac{2\pi}{\tau}\mathbb{Z}$,
 $c_i>0$, $i=1,2,3$   such that each
 $$
 H_\kappa(\lambda, t, z):=H(\lambda, t, z)- \frac{\kappa}{2}|z|^2
 $$
 satisfies  (ii)-(iii) in the proof of Theorem~\ref{th:bif-ness}(I).
Consider the dual action  $\Psi_\kappa(\lambda,\cdot):
\textsf{W}_\tau\to\R$ defined by (\ref{e:HAction*}).
 We obtain a family of $C^1$ and twice G\^ateaux-differentiable  functionals
$$
\psi_\kappa(\lambda, \cdot)=
\Psi_\kappa(\lambda, \cdot)\circ(-\tilde{\Lambda}_{I_{2n},\tau,\kappa I_{2n}})^{-1}:
\textsf{L}_\tau\to\R
$$
given by
\begin{equation}\label{e:HActionbrake}
\psi_\kappa(\lambda, v)=\int^{\tau}_0\left[\frac{1}{2}\left(v(t), ((\tilde{\Lambda}_{I_{2n},\tau, \kappa I_{2n}})^{-1}v)(t)\right)_{\mathbb{R}^{2n}}
+ H_\kappa^\ast(\lambda, t, v(t))\right]dt,
\end{equation}
such that $v\in\textsf{W}_\tau$ is a critical point of $\Psi_\kappa(\lambda, \cdot)$ if and only if
$w:=-\tilde{\Lambda}_{I_{2n},\tau, \kappa I_{2n}}v$ is a critical point of $\psi_\kappa(\lambda, \cdot)$.
In particular, $(\mu, 0)\in\Lambda\times \textsf{W}_\tau$ is a bifurcation point of
the problem  (\ref{e:Pbrake}) if and only if  $(\mu, 0)\in\Lambda\times \textsf{L}_\tau$
 is a bifurcation point of $\nabla_w\psi_\kappa(\lambda, w)=0$.

Let $\psi_{\kappa,\lambda}(v)=\psi_\kappa(\lambda, v)$.
As in Remark~\ref{rm:MWTh.2.5} we can compute
\begin{eqnarray*}
({\psi}_{\kappa,\lambda})''(0)[\xi,\eta]&=&\int^\tau_0\left[\left((\tilde{\Lambda}_{I_{2n},\tau,\kappa I_{2n}})^{-1}\xi)(t), \eta(t)\right)_{\mathbb{R}^{2n}}+
\left(\nabla^2_zH_{\kappa}(\lambda,t;0)\xi(t),\eta(t)\right)_{\mathbb{R}^{2n}}\right]dt\\
&=&\int^\tau_0\left[\left((\tilde{\Lambda}_{I_{2n},\tau,\kappa I_{2n}})^{-1}\xi)(t)+
[\nabla^2_zH_{\kappa}(\lambda,t, 0)]^{-1}\xi(t),\eta(t)\right)_{\mathbb{R}^{2n}}\right]dt\\
&=&\textsf{Q}_{B_\lambda,\kappa I_{2n}}(\xi,\eta),\quad\forall \xi,\eta\in\textsf{{L}}_\tau,
\end{eqnarray*}
where $\textsf{Q}_{\textsf{B}_\lambda,\kappa I_{2n}}$ is given by (\ref{e:Bquadratic}) with $\textsf{B}_\lambda(t)=\nabla^2_zH(\lambda,t, 0)$.
By Theorem~\ref{th:brakeIndex} we obtain
 \begin{eqnarray}
   m^-({\psi}_{\kappa,\lambda}, 0)&=&\mu_{1,\tau}(\gamma_\lambda)-n[\kappa\tau/(2\pi)],\label{e:psi-MorseIndexbrake}\\
  m^0({\psi}_{\kappa,\lambda}, 0)&=&\nu_{1,\tau}(\gamma_\lambda).\label{e:psi-Nullitybrake}
 \end{eqnarray}

Let $\iota:\textsf{W}_\tau\to \textsf{L}_\tau$ be the inclusion. Then
\begin{equation}\label{e:Lambda-inverseBrake}
\tilde{A}_{I_{2n},\tau,\kappa}:\textsf{L}_\tau\to \textsf{L}_\tau,\;v\mapsto  \iota\circ(\tilde{\Lambda}_{I_{2n},\tau,\kappa I_{2n}})^{-1}v,
\end{equation}
 is a compact self-adjoint operator, and
 $\psi_\kappa(\lambda,\cdot)$ has the gradient
\begin{equation}\label{e:p-gradientBrake}
\nabla_v\psi_\kappa(\lambda, v)=\tilde{A}_{I_{2n},\tau,\kappa}v+ \nabla_zH_\kappa^\ast(\lambda, \cdot; v(\cdot)).
\end{equation}
Moreover,  $\textsf{L}_\tau\ni v\mapsto \nabla_v\psi_\kappa(\lambda, v)\in \textsf{L}_\tau$
has a G\^ateaux derivative
\begin{equation}\label{e:p-self-adjointBrake}
\textsf{B}_\lambda(v):=D_v\nabla_v\psi_\kappa(\lambda, v)=\tilde{A}_{I_{2n},\tau,\kappa}+ \nabla^2_zH_\kappa^\ast(\lambda, \cdot; v(\cdot))
\in \mathscr{L}_s(\textsf{L}_\tau).
\end{equation}
Let $P_\lambda(v)=\nabla^2_zH_\kappa^\ast(\lambda, \cdot; v(\cdot))$
and $Q_\lambda(v)=\tilde{A}_{I_{2n},\tau,\kappa}$.

\begin{proof}[\bf Proof of Theorem~\ref{th:bif-nessbrake}(I)]
As in the proof of Theorem~\ref{th:bif-ness}
we can show that Theorem~\ref{th:A.10} (\cite[Theorem~3.1]{Lu8})  is applicable to
$H=X=\textsf{L}_\tau$ and $\mathcal{F}_\lambda(\cdot)=\psi_\kappa(\lambda, \cdot)$.
\end{proof}

Consider Banach subspaces of $C^{k}(S_\tau; \R^{2n})$, $\textsf{C}^k_\tau=C^{k}(S_\tau; \R^{2n})\cap\textsf{L}_\tau$,
$k=0,1,\cdots$.

\begin{proof}[\bf Proof of Theorem~\ref{th:bif-nessbrake}(II)]
Note that $\tilde{\Lambda}_{I_{2n},\tau, \kappa I_{2n}}$ gives rise to a Banach space isomorphism from
 $\textsf{C}^1_\tau$ to $\textsf{C}^0_\tau$, denoted by $\tilde{\Lambda}^c_{I_{2n},\tau, \kappa I_{2n}}$.
 As in the proof of Theorem~\ref{th:bif-ness}(II), using (\ref{e:psi-MorseIndexbrake}) and (\ref{e:psi-Nullitybrake})
we can  prove that $\mathcal{L}_\lambda(\cdot)=\psi_\kappa(\lambda, \cdot)$
with $\lambda\in \Lambda$  satisfies the conditions of Theorem~\ref{th:A.9}
with $\lambda^\ast=\mu$ and $H=\textsf{L}_\tau$ and $X=\textsf{C}^0_\tau$.
\end{proof}

\begin{proof}[\bf Proof of Theorem~\ref{th:bif-nessbrake}(III)]
Following the notations above,  using (\ref{e:psi-MorseIndexbrake}) and (\ref{e:psi-Nullitybrake})
we can  prove as in the proof of Theorem~\ref{th:bif-ness}(III) that $\mathcal{L}_\lambda(\cdot)=\psi_\kappa(\lambda, \cdot)$
with $\lambda\in \Lambda$  satisfies the conditions of Theorem~\ref{th:A.9+}
with $\lambda^\ast=\mu$ and $H=\textsf{L}_\tau$ and $X=\textsf{C}^0_\tau$.
\end{proof}

\begin{proof}[\bf Proof of Theorem~\ref{th:bif-suffictbrake}]
As in the proof of Theorem~\ref{th:bif-suffict} the first part follows from
Theorem~\ref{th:A.11} (\cite[Theorem~3.6]{Lu10} or \cite[Theorem~4.6]{Lu8}),
and others are obtained by
Theorem~\ref{th:A.12} (\cite[Theorem~3.7]{Lu10}) and \cite[Remark~3.9]{Lu10}.
\end{proof}

\begin{proof}[\bf Proof of Corollary~\ref{cor:bif-per2brake}]
As in the proof of Corollary~\ref{cor:necess-suffi} the desired conclusions  can be completed
using Theorems~\ref{th:brakeIndex},~\ref{th:brakeIndexMono} and the above arguments.
\end{proof}

\begin{proof}[\bf Proof of Theorem~\ref{th:bif-per3brake}]
{\it Step 1}.  The Lie group $G=\mathbb{Z}_2=\{[0],[1]\}$
 orthogonally  acts on $\textsf{L}_\tau$ and $\textsf{W}_\tau$  by
$[0]\cdot u=u$ and $[1]\cdot u=N\cdot u$, where $(N\cdot u)(t)=N(u(t))$.
Clearly, this action induces an isometric action on each space $\textsf{C}_\tau^k$.
The set of fixed points of this action, ${\rm Fix}_G$, is equal to
$\{u\in \textsf{L}_\tau\,|\, u(-t)=u(t), {\rm a.e.}\, t\in\mathbb{R}\}$.
Clearly, each functional $\psi_\kappa(\lambda, \cdot)$ defined by
(\ref{e:HActionbrake}) is invariant under this $G$-action.
As in the proof of Theorem~\ref{th:bif-per3} we can obtain that
the functional $\mathcal{L}_\lambda(\cdot)=\psi_\kappa(\lambda, \bar{w}+\cdot)$ with
$\bar{w}:=-\tilde{\Lambda}_{I_{2n},\tau, \kappa I_{2n}}\bar{v}$
satisfies conditions in Theorem~\ref{th:A.11} (\cite[Theorem~3.6]{Lu10} or \cite[Theorem~4.6]{Lu8})
with $H=\textsf{L}_\tau$ and $X=\textsf{C}_\tau^0$.

Since $\bar{v}$ is even, $\bar{v}(t)=\bar{v}(-t)=N\bar{v}(t)$ for all $t$, that is, $\bar{v}\in {\rm Fix}_G$.
Then $\bar{w}$ belongs to ${\rm Fix}_G$ and so
$\mathcal{L}_\lambda$ is $G$-invariant.
The assumption (b) implies that the fixed point set of the induced $G$-action on $H_\mu^0$ is $\{0\}$.
Combining the assumption (a) we can use Theorem~\ref{th:A.12} (\cite[Theorem~3.7]{Lu10} or \cite[Theorem~5.12]{Lu8}) to derive
that one of the claims (i) and (ii) in Theorem~\ref{th:bif-per3brake} holds.

{\it Step 2}. The Lie group
$G=\mathbb{Z}_2\times\mathbb{Z}_2=\{([0],[0]), ([0], [1]), ([1], [0]), ([1],[1])\}$
 orthogonally  acts on $\textsf{L}_\tau$ and $\textsf{W}_\tau$  by
$$
([0], [0])\cdot u=u, \quad ([0], [1])\cdot u=N\cdot u,\quad([1], [0])\cdot u=-u,\quad([1], [1])\cdot u=-(N\cdot u),
$$
and the fixed point set of the induced $G$-action on $H_\mu^0$ is $\{0\}$.
Clearly, each functional $\mathcal{L}_\lambda(\cdot)$ is invariant for this action, using
Theorem~\ref{th:A.12} (\cite[Theorem~3.7]{Lu10} or \cite[Theorem~5.12]{Lu8})
and \cite[Remark~3.9]{Lu10} (\cite[Remark~5.14]{Lu8})
 we may directly obtain either (iii) or (iv).
\end{proof}

\begin{proof}[\bf Proof of Corollary~\ref{cor:bif-per4brake}]
Suppose $\nu_{1,\tau}(\gamma_\mu)>0$.
As in the proof of Corollary~\ref{cor:necess-suffi},
by modifying values of $H_0$ and $\hat{H}$ outside a neighborhood $U$ of $0\in\mathbb{R}^{2n}$
we can choose $\kappa\in\mathbb{R}\setminus\frac{2\pi}{\tau}\mathbb{Z}$ and
 $c_i>0$, $i=1,2,3$   such that
$H_\kappa(\lambda, z):=H_0(z)+\lambda\hat{H}(z)- \frac{\kappa}{2}|z|^2$
satisfies
\begin{eqnarray*}
  && c_1I_{2n}\le (H_0)''(z)+ \lambda\hat{H}''(z)-\kappa I_{2n}\le c_2I_{2n}\quad\hbox{and}\\
  &&c_1|z|^2-c_3\le H_\kappa(\lambda, z)\le c_2|z|^2+c_3
   \end{eqnarray*}
for all $(\lambda, t, z)\in [\mu-1, \mu+1]\times [0,\tau]\times\R^{2n}$.
Let $\psi_{\kappa,\lambda}(u):=\psi_\kappa(\lambda, u)$ be given by (\ref{e:HActionbrake}). Then
$$
{\psi}''_{\kappa,\lambda}(0)[\xi,\eta]=\textsf{Q}_{B_\lambda,\kappa I_{2n}}(\xi,\eta)\quad\hbox{for all $\xi,\eta\in\textsf{{L}}_\tau$,}
$$
where $\textsf{Q}_{B_\lambda,\kappa I_{2n}}$ is given by (\ref{e:Bquadratic}) with
 $B_\lambda$ constant equal to $(H_0)''(0)+ \lambda\hat{H}''(0)$. Therefore
$$
m^-({\psi}_{\kappa,\lambda},0)=m^-(\textsf{Q}_{B_\lambda,\kappa I_{2n}})\quad\hbox{and}\quad
 m^0({\psi}_{\kappa,\lambda},0)=m^0(\textsf{Q}_{B_\lambda,\kappa I_{2n}}).
 $$

If $\hat{H}''(0)>0$ (resp. $\hat{H}''(0)<0$), by Theorem~\ref{th:brakeIndexMono} we deduce that
\begin{eqnarray*}\label{e:DongMorse1brake}
&&m^-(\textsf{Q}_{B_{\lambda_2},\kappa I_{2n}})\ge m^-(\textsf{Q}_{B_{\lambda_1},\kappa I_{2n}})+
 m^0(\textsf{Q}_{B_{\lambda_1},\kappa I_{2n}})
\quad\\
&&\hbox{(resp. $m^-(\textsf{Q}_{B_{\lambda_1},\kappa I_{2n}})\ge
m^-(\textsf{Q}_{B_{\lambda_2},\kappa I_{2n}})+  m^0(\textsf{Q}_{B_{\lambda_2},\kappa I_{2n}})$)}
\end{eqnarray*}
for any $\mu-1\le \lambda_1<\lambda_2\le\mu+1$. These imply that
$\{\lambda\in [\mu-1,\mu+1]\,|\,m^0(\textsf{Q}_{B_{\lambda},\kappa I_{2n}})>0\}$ is a finite set.
It follows from this and Theorem~\ref{th:brakeIndex}
that $\{\lambda\in\R\,|\, \nu_{1,\tau}(\gamma_\lambda)>0\}$  is a discrete set in $\R$.
The first claim is proved.

As in the proof of Corollary~\ref{cor:necess-suffi} we can use
Theorem~\ref{th:brakeIndexMono} to derive that
$$
m^-(\textsf{Q}_{B_\lambda,\kappa I_{2n}})=
\begin{cases}
 m^-(\textsf{Q}_{B_\mu,\kappa I_{2n}})\;&\forall\lambda\in [\mu-\rho,\mu),\\
m^-(\textsf{Q}_{B_\mu,\kappa I_{2n}})+
m^0(\textsf{Q}_{B_\mu,\kappa I_{2n}})\;&\forall\lambda\in (\mu, \mu+\rho]
\end{cases}
$$
if $\hat{H}''(0)>0$ and $\rho>0$ is small enough, and
$$
m^-(\textsf{Q}_{B_\lambda,\kappa I_{2n}})=
\begin{cases}
 m^-(\textsf{Q}_{B_\mu,\kappa I_{2n}})+
m^0(\textsf{Q}_{B_\mu,\kappa I_{2n}})\;&\forall\lambda\in [\mu-\rho,\mu),\\
m^-(\textsf{Q}_{B_\mu,\kappa I_{2n}})\;&\forall\lambda\in (\mu, \mu+\rho]
\end{cases}
$$
if $\hat{H}''(0)<0$ and $\rho>0$ is small enough.
These and Theorem~\ref{th:brakeIndex} lead to (\ref{e:case1brake}) and (\ref{e:case2brake}).
Other conclusions follow from Theorem~\ref{th:bif-per3brake}.
\end{proof}

\begin{proof}[\bf Proof of Corollary~\ref{cor:bif-per5brake}]
The first claim follows from Corollary~\ref{cor:bif-per4brake}.
Suppose  ${H}''(0)>0$ and $\mu\in\Delta(H)\cap(0, \infty)$.
Let $\gamma_\lambda(t)=\exp(\lambda tJ{H}''(0))$.
It follows from (\ref{e:case1brake}) that
$\mu_{1,1}(\gamma_\lambda)=\mu_{1,1}(\gamma_\mu)$ for $\lambda\le\mu$ close to $\mu$ and that
$\mu_{1,1}(\gamma_\lambda)=\mu_{1,1}(\gamma_\mu)+ \nu_{1,1}(\gamma_\mu)$ for $\lambda>\mu$ close to $\mu$.
By the third conclusion of Corollary~\ref{cor:bif-per4brake} with $H_0=0$ and $\hat{H}=H$ we obtain the desired claims.
The final result may follow from the fourth conclusion of
Corollary~\ref{cor:bif-per4brake} with $H_0=0$ and $\hat{H}=H$.
\end{proof}

\section{Proofs of Theorems~\ref{th:bif-nessHam}, \ref{th:bif-suffHam} and \ref{th:two-point}}\label{sec:twoLagr}\setcounter{equation}{0}

\subsection{Proofs of Theorems~\ref{th:bif-nessHam}, \ref{th:bif-suffHam}}\label{sec:twoLagr-1}

As in (\ref{e:ModifiedH}) we define
$\bar{H}(\lambda,t, z)={H}(\lambda,t, z+u_\lambda(t))-(z, \nabla_z{H}(\lambda,t, u_\lambda(t)))_{\mathbb{R}^{2n}}$
for $(\lambda,t, z)\in \Lambda\times [0, \tau]\times{\R}^{2n}$.
Then $\bar{H}$ still satisfies Assumption~\ref{ass:TwoPoint}, and it holds that
 \begin{equation}\label{e:TwoPoint3}
 \left.\begin{array}{ll}
\nabla_z\bar{H}(\lambda,t, z)=\nabla_z H(\lambda,t, z+ u_\lambda(t))-\nabla_z{H}(\lambda,t, u_\lambda(t)),\\
 \nabla^2_z\bar{H}(\lambda,t, z)=\nabla^2_zH(\lambda,t, z+ u_\lambda(t)).
 \end{array}\right\}
\end{equation}
It is easy to see that $w:[0,\tau]\to\mathbb{R}^{2n}$ satisfies (\ref{e:TwoPoint1}) if and only if $v:=w-u_\lambda$
satisfies  the Hamiltonian boundary value problem
  \begin{equation}\label{e:TwoPoint4}
\left.\begin{array}{ll}
\dot{u}(t)=J\nabla_z \bar{H}(\lambda,t, u(t))\;\forall t\in [0, \tau],\\ 
u(0)\in L,\quad u(\tau)\in L'.
\end{array}\right\}
\end{equation}
In particular,  $v\equiv 0\in\mathbb{R}^{2n}$ satisfies (\ref{e:TwoPoint4}) for each $\lambda\in\Lambda$.
For the vertical Lagrangian subspace $L_0:=\{0\}\times\mathbb{R}^n\subset\mathbb{R}^{2n}$,
we may choose an orthogonal symplectic matrix $O\in{\rm Sp}(2n)$ such that $OL_0=L$.
Define $L''=O^{-1}L'$ and
\begin{equation*}
K:\Lambda\times [0,\tau]\times{\R}^{2n}\to\R,\;(\lambda,t, z)\mapsto \bar{H}(\lambda,t, Oz).
\end{equation*}
Then $K$ also satisfies Assumption~\ref{ass:TwoPoint}, and
\begin{equation}\label{e:TwoPoint5+}
\nabla_z K(\lambda,t, z)=O^{-1}\nabla_z \bar{H}(\lambda,t, Oz),\quad \nabla^2_zK(\lambda,t, z)=O^{-1}\nabla^2_z\bar{H}(\lambda,t,Oz)O.
\end{equation}
It follows that $v:[0,\tau]\to\mathbb{R}^{2n}$ satisfies (\ref{e:TwoPoint4}) if and only if $w(t):=O^{-1}v(t)$
 satisfies  the Hamiltonian boundary value problem
  \begin{equation}\label{e:TwoPoint6}
\left.\begin{array}{ll}
\dot{u}(t)=J\nabla_z K(\lambda,t, u(t))\;\forall t\in [0, \tau],\\
u(0)\in L_0,\quad u(\tau)\in L''.
\end{array}\right\}
\end{equation}
In particular,  $w\equiv 0\in\mathbb{R}^{2n}$ satisfies (\ref{e:TwoPoint6}) for each $\lambda\in\Lambda$.

 Let $\alpha_\lambda:[0, \tau]\to{\rm Sp}(2n)$ be the fundamental matrix solution of
$\dot{u}(t)=J\nabla^2_zK(\lambda,t, 0)u(t)$. Since
  (\ref{e:TwoPoint3}) and (\ref{e:TwoPoint5+}) imply $\nabla^2_zK(\lambda,t, 0)=O^{-1}\nabla^2_z\bar{H}(\lambda,t, 0)O=
O^{-1}\nabla^2_zH(\lambda,t, u_\lambda(t))O$, we have $\alpha_\lambda(t)=O^{-1}\gamma_\lambda(t)O$ and hence
 \begin{equation}\label{e:TwoPoint8}
 i^{L'}_L(\gamma_\lambda)=i^{L''}_{L_0}(\alpha_\lambda)
 \quad\hbox{and}\quad \nu^{L'}_L(\gamma_\lambda)=\nu^{L''}_{L_0}(\alpha_\lambda)
 =\dim(\gamma_\lambda(\tau)L\cap L')
\end{equation}
by \cite[Definition~2.4]{LiuWL11}.
Note that $v:[0, \tau]\to\mathbb{R}^{2n}$ satisfies (\ref{e:TwoPoint6}) if and only if
$u(t):=Ov(t)+ u_\lambda(t)$ satisfies (\ref{e:TwoPoint1}).
Therefore {\bf from now on we can assume}:
\begin{equation}\label{e:TwoPoint9}
\hbox{\textsf{in Assumption~\ref{ass:TwoPoint}, $L=L_0$, $u_\lambda=0$, $w_\lambda=0=w_\lambda'$ for all $\lambda\in\Lambda$}.}
\end{equation}

Under assumptions in (\ref{e:TwoPoint9}), we only consider solutions of
(\ref{e:TwoPoint1}) near $0\in \mathbb{R}^{2n}$. Because of Remark~\ref{rm:effective},
 $\Lambda$ may be assumed to be compact and sequentially compact.
Therefore by modifying $H$ outside a large ball in what follows we may assume that
\begin{equation}\label{e:TwoPoint12}
 \|\nabla^2_zH(\lambda,t, x)\|_{\mathbb{R}^{2n\times 2n}}\le C(H)\quad\forall (\lambda,t,x)
\end{equation}
and so
\begin{equation}\label{e:TwoPoint12+}
\|\nabla_zH(\lambda,t, x)\|_{\mathbb{R}^{2n}}\le C(H)\|x\|_{\mathbb{R}^{2n}}+C(H)'\quad\forall (\lambda,t,x),
\end{equation}
where $C(H)$ and $C(H)'$ are positive constants.
Because of these, using \cite[Proposition C.1]{Lu9} we derive that the maps
\begin{equation}\label{e:TwoPoint12++}
 L^2([0,\tau];\R^{2n})\to  L^2([0,\tau];\R^{2n}),\; u\mapsto \nabla_zH(\lambda,\cdot; u(\cdot))
\end{equation}
have an uniform bound on any bounded subset and are uniform continuous at any $\bar{u}\in L^2([0,\tau];\R^{2n})$
with respect to $\lambda\in\Lambda$.
Let $u_{\lambda_i}\in L^2([0,\tau],\mathbb{R}^{2n})$, $i=1,2$,  satisfy  $\|u_{\lambda_2}-u_{\lambda_1}\|_2\to 0$ as $\lambda_2\to\lambda_1$.
Then
\begin{eqnarray*}
&&\left(\int^\tau_0|\nabla_zH(\lambda_2, t, u_{\lambda_2}(t))-\nabla_zH(\lambda_1, t, u_{\lambda_1}(t))|^2dt\right)^{1/2}\\
&\le&\left(\int^\tau_0|\nabla_zH(\lambda_2, t, u_{\lambda_2}(t))-\nabla_zH(\lambda_2, t, u_{\lambda_1}(t))|^2dt\right)^{1/2}\\
&&+\left(\int^\tau_0|\nabla_zH(\lambda_2, t, u_{\lambda_1}(t))-\nabla_zH(\lambda_1, t, u_{\lambda_1}(t))|^2dt\right)^{1/2}.
\end{eqnarray*}
Clearly, as $\lambda_2\to\lambda_1$ the first term of the right side converges to zero because of
 the uniform continuity of the maps in (\ref{e:TwoPoint12++}) at any $u_{\lambda_1}\in L^2([0,\tau];\R^{2n})$
with respect to $\lambda\in\Lambda$. From (\ref{e:TwoPoint12+}) and Lebesgue dominated convergence theorem
it follows that the second term of the right side converges to zero as $\lambda_2\to\lambda_1$.
Hence we obtain the first claim of the following.

\begin{lemma}\label{lem:solutionRegu}
The map
\begin{equation}\label{e:TwoPoint12+++}
\Lambda\times L^2([0,\tau];\R^{2n})\to  L^2([0,\tau];\R^{2n}),\;(\lambda, u)\mapsto
 \nabla_zH(\lambda,\cdot, u(\cdot))
\end{equation}
is continuous. Therefore if $u_{\lambda_i}\in C^1([0,\tau],\mathbb{R}^{2n})$, $i=1,2$, satisfy $\dot{u}_{\lambda_i}=J\nabla_zH(\lambda_i,t, u_{\lambda_i}(t))$,
and $\|u_{\lambda_2}-u_{\lambda_1}\|_2\to 0$ as $\lambda_2\to\lambda_1$, then
$u_{\lambda_2}\to u_{\lambda_1}$ in $C^1([0,\tau],\mathbb{R}^{2n})$ as $\lambda_2\to\lambda_1$.
\end{lemma}
\begin{proof}[\bf Proof]
By the first claim, $\|\dot{u}_{\lambda_2}-\dot{u}_{\lambda_1}\|_2\to 0$ as $\lambda_2\to\lambda_1$.
Then $\|u_{\lambda_2}-u_{\lambda_1}\|_{1,2}\to 0$ and so $\|u_{\lambda_2}-u_{\lambda_1}\|_{C^0}\to 0$.
By Assumption~\ref{ass:TwoPoint}, (\ref{e:TwoPoint3}) and (\ref{e:TwoPoint5+}) imply that
$$
\Lambda\times [0,\tau]\times \mathbb{R}^{2n}\ni(\lambda,t, z)\to \nabla_zH(\lambda, t, z)\in \mathbb{R}^{2n}
$$
is continuous. It follows from $\dot{u}_{\lambda_i}=J\nabla_zH(\lambda_i,t, u_{\lambda_i}(t))$ that
 $\|\dot{u}_{\lambda_2}-\dot{u}_{\lambda_1}\|_{C^0}\to 0$ as $\lambda_2\to\lambda_1$.
\end{proof}

Following \cite[\S4]{LiuWL11} the problem is reduced to one in finitely dimensional spaces.
 For each $s\geq 0$ consider a Hilbert space
  \begin{eqnarray*}
H^s_{L_0}=\left\{x\in L^2([0,\tau],\mathbb{R}^{2n})\,\Bigg| \begin{array}{ll}
x\stackrel{L^2}{=}\sum_{m\in\mathbb{Z}}-J\exp\left(\frac{m\pi tJ}{\tau}\right)a_m,\;
a_m\in \mathbb{R}^n\oplus\{0\}, \\
\sum_{m\in\mathbb{Z}}(1+|m|^{2s})|a_m|^2<\infty
\end{array}
\right\}
\end{eqnarray*}
with the following inner product
\begin{equation}\label{innerproduct}
\langle x,y\rangle_{H^s_{L_0}} ={\tau}\langle x_0,y_0\rangle_{\mathbb{R}^{2n}}+\sum_{k\neq 0} |k|^{2s}(x_k, y_k)_{\mathbb{R}^{2n}},\quad x, y\in H^s_{L_0}.
\end{equation}
Denote the associated norm by $\|\cdot\|_{H^s_{L_0}}$.
Then $H^0_{L_0}$ and $H^1_{L_0}$ are Hilbert subspaces of $L^2([0,\tau],\mathbb{R}^{2n})$ and $W^{1,2}([0,\tau],\mathbb{R}^{2n})$, respectively.
Hence we also write $\langle x,y\rangle_{H^s_{L_0}}$ as
$\langle x,y\rangle_{2}$ (resp. $\langle x,y\rangle_{1,2}$) for $s=0$ (resp. $s=1$), and
$\|\cdot\|_{H^s_{L_0}}$ as
$\|\cdot\|_2$ (resp. $\|\cdot\|_{1,2}$) for $s=0$ (resp. $s=1$) below.

Take an orthogonal symplectic matrix $P\in{\rm Sp}(2n)$ such that $PL_0=L'$.
We have a matrix $M$ such that $P=e^M$.
Consider Hilbert subspaces of $L^2([0,\tau],\mathbb{R}^{2n})$ and $W^{1,2}([0,\tau],\mathbb{R}^{2n})$
\begin{eqnarray*}
&&\mathcal{W}_0=\left\{x\in L^2([0,\tau],\mathbb{R}^{2n})\,\Big|\, [0,\tau]\ni t\mapsto \exp\left(-\frac{t}{\tau}M\right)x(t)\;\hbox{belongs to}\;H^0_{L_0}\right\},\\
&&\mathcal{W}_1=\left\{x\in L^2([0,\tau],\mathbb{R}^{2n})\,\Big|\, [0,\tau]\ni t\mapsto \exp\left(-\frac{t}{\tau}M\right)x(t)\;\hbox{belongs to}\;H^1_{L_0}\right\}
\end{eqnarray*}
and  a unbounded self-adjoint operator $\mathbb{A}$ in $\mathcal{W}_0$ with domain $D(\mathbb{A})=\mathcal{W}_1$, which is given by
$$
\langle \mathbb{A}x,y\rangle_{2}=\int^\tau_0(-J\dot{x}(t)+JMx(t),y(t))_{\mathbb{R}^{2n}}dt.
$$
The range of $\mathbb{A}$ is closed, the resolution of $\mathbb{A}$ is compact and the spectrum of
 $\mathbb{A}$ only consists of eigenvalues, more precisely $\sigma(\mathbb{A})=\frac{\pi}{\tau}\mathbb{Z}$.
Note that ${\rm Ker}(\mathbb{A})=\{[0,\tau]\ni t\mapsto e^{\frac{t}{\tau}M}a\,|\, a\in\mathbb{R}^n\oplus\{0\}\}$.

For each $\lambda\in\Lambda$, define functionals $\mathcal{H}_\lambda:\mathcal{W}_0\to\mathbb{R}$ and $f_\lambda: (\mathcal{W}_1, \|\cdot\|_{2})\to \mathbb{R}$  by
\begin{eqnarray}\label{e:SR1}
&&\mathcal{H}_\lambda(x)=\int^\tau_0H(\lambda, t,x(t))dt+ \frac{1}{2}\int^\tau_0(JMx(t),x(t))_{\mathbb{R}^{2n}}dt,\\
&&f_\lambda(x)=\frac{1}{2}\bigl(\mathbb{A}x, x\bigr)_{2}-\mathcal{H}_\lambda(x).\label{e:SR2}
\end{eqnarray}
(\ref{e:TwoPoint12}) and (\ref{e:TwoPoint12+}) imply that
both functionals are  of class $C^1$, and that  the $L^2$-gradient $\nabla^{L^2}\mathcal{H}_\lambda$ of $\mathcal{H}_\lambda$ defined by
\begin{eqnarray*}
(\nabla^{L^2}\mathcal{H}_\lambda(x), y)_2=\int^\tau_0(\nabla_zH(\lambda, t,x(t)), y(t))_{\mathbb{R}^{2n}}dt+ \int^\tau_0(JMx(t),y(t))_{\mathbb{R}^{2n}}dt\quad\forall x,y\in\mathcal{W}_0
\end{eqnarray*}
  is G\^ateaux differentiable. It is easily checked that the G\^ateaux derivative $D\nabla^{L^2}\mathcal{H}_\lambda(x)\in\mathcal{L}_s(\mathcal{W}_0)$ at $x\in\mathcal{W}_0$ is given by
\begin{eqnarray}\label{e:SR5}
(D\nabla^{L^2}\mathcal{H}_\lambda(x)y, z)_2=\int^\tau_0(\nabla^2_zH(\lambda, t,x(t))y(t), z(t))_{\mathbb{R}^{2n}}dt+ \int^\tau_0(JMy(t), z(t))_{\mathbb{R}^{2n}}dt
\end{eqnarray}
for all $y,z\in\mathcal{W}_0$. From (\ref{e:TwoPoint12}) and (\ref{e:SR5}) we derive that $\|D\nabla^{L^2}\mathcal{H}_\lambda(x)\|_{\mathcal{L}_s(\mathcal{W}_0)}\le C(H)+\|M\|_{\mathbb{R}^{2n\times 2n}}$.
Moreover  the $L^2$-gradient of $f_\lambda$ is given by
\begin{eqnarray}\label{e:SR4}
\nabla^{L^2}f_\lambda(x)=\mathbb{A}x-\nabla_zH(\lambda, \cdot,x(\cdot))-JMx,\quad\forall x\in\mathcal{W}_1,
\end{eqnarray}
and  $x\in (\mathcal{W}_1, \|\cdot\|_2)$ is a critical point of $f_\lambda$ if and only if
 it satisfies (\ref{e:TwoPoint1}) with the assumptions in (\ref{e:TwoPoint9}).

 Let $\Pi_0:\mathcal{W}_0\to{\rm Ker}(\mathbb{A})$
be the orthogonal projection. Define $\underline{\mathbb{A}}=\mathbb{A}+\Pi_0$, and denote by $E_s$ the spectral resolution of
$\underline{\mathbb{A}}$. Choose $\beta\in\mathbb{R}\setminus\sigma(\underline{\mathbb{A}})$ such that $\beta>2(C(H)+\|M\|_{\mathbb{R}^{2n\times 2n}}+1)$, and define
\begin{eqnarray*}
&&P=\int_{-\beta}^{\beta}dE_{s},\qquad P^{+}=\int_{\beta}^{+\infty}dE_{s},\qquad P^{-}=\int_{-\infty}^{-\beta}dE_{s},\\
&& \mathcal{W}_0^+:=P^+(\mathcal{W}_0),\quad \mathcal{W}_0^-:=P^-(\mathcal{W}_0),\quad Z:=P(\mathcal{W}_0).
\end{eqnarray*}
Then $\mathcal{W}_0=\mathcal{W}_0^+\oplus \mathcal{W}_0^-\oplus Z$, and $\dim Z<\infty$. Bounded self-adjoint linear operators on $\mathcal{W}_0$,
 \begin{eqnarray*}
S^+=\int_\beta^{\infty}s^{-1/2}dE_s,\quad
S^-=\int^{-\beta}_{-\infty}(-s)^{-1/2}dE_s,\quad
R=\int_{-\beta}^\beta|s|^{-1/2}dE_s,
\end{eqnarray*}
 have ranges $\mathcal{W}_0^+$, $\mathcal{W}_0^-$ and $Z$, respectively,  are pairwise commuting, and $S^+|_{\mathcal{W}_0^+}$,
 $S^-|_{\mathcal{W}_0^-}$ and $R|_Z$ are injective.

According to  Amann \cite{Am},  Amann-Zehnder \cite{AmZe}, Chang \cite{Ch}, Long \cite{Long97}
and Liu-Wang-Lin \cite[Theorems~4.1,~4.2]{LiuWL11} we can use Lemma~\ref{lem:solutionRegu} to obtain:

\begin{theorem}\label{th:AmZe}
There exist $(x(\cdot,\cdot), y(\cdot,\cdot))\in C(\Lambda\times Z, \mathcal{W}_0^+\times \mathcal{W}_0^-)$ satisfying the following:
\begin{description}
\item[(i)] The continuous map $\mathfrak{u}(\cdot,\cdot):\Lambda\times Z\to \mathcal{W}_0$ defined by
$$
\mathfrak{u}(\lambda,z)=S^+x(\lambda, R^{-1}z)+S^-y(\lambda, R^{-1}z)+  z
$$
takes values in $\mathcal{W}_1$, each $\mathfrak{u}(\lambda,\cdot):Z\to \mathcal{W}_0$ is a $C^1$ embedding, and
$d_z\mathfrak{u}(\lambda, z)$ is continuous in $(\lambda,z)$.

\item[(ii)] For each $\lambda\in\Lambda$, the functional $a_\lambda: Z\to\mathbb{R}$ defined by
\begin{eqnarray}\label{e:reducedAction}
a_\lambda(z)=f_\lambda(\mathfrak{u}(\lambda,z))&=&\frac{1}{2}\bigl(\underline{\mathbb{A}}\mathfrak{u}(\lambda,z),\mathfrak{u}(\lambda,z)\bigr)_{2}-
\mathcal{H}_\lambda(\mathfrak{u}(\lambda,z))
\end{eqnarray}
is $C^2$, and
\begin{eqnarray}\label{e:one-diff}
&&a'_\lambda(z)=\underline{\mathbb{A}}z-P\mathcal{H}'_\lambda(\mathfrak{u}(\lambda,z))=
\underline{\mathbb{A}}\mathfrak{u}(\lambda,z)-
\mathcal{H}'_\lambda(\mathfrak{u}(\lambda,z)),\\
&&a''_\lambda(z)=\underline{\mathbb{A}}|_{{Z}}-Pd\mathcal{H}'_\lambda(\mathfrak{u}(\lambda, z))d_z\mathfrak{u}(\lambda,z)=[\underline{\mathbb{A}}-
d\mathcal{H}'_\lambda(\mathfrak{u}(\lambda,z))]d_z\mathfrak{u}(\lambda,z).\label{e:two-diff}
\end{eqnarray}
(It follows that
\begin{eqnarray}\label{e:SR9}
\Lambda\times{Z}\ni (\lambda,z)\mapsto a'_\lambda(z)\in{Z}\quad\hbox{and}\quad
\Lambda\times{Z}\ni (\lambda,z)\mapsto a''_\lambda(z)\in\mathscr{L}({Z})
\end{eqnarray}
are continuous. Note that (i) implies $\Lambda\times{Z}\ni (\lambda,z)\mapsto a_\lambda(z)\in\mathbb{R}$ to be continuous.)
\item[(iii)] For each $\lambda\in\Lambda$, the map $Z\ni z\mapsto \mathfrak{u}(\lambda, z)\in \mathcal{W}_0$ gives a one-to-one correspondence
between the  critical points of $a_\lambda$ and those of $f_\lambda$.
(The condition (\ref{e:TwoPoint9}) implies that $\mathfrak{u}(\lambda, 0)=0$ and $0\in Z$ is
a critical point of $a_\lambda$ for any $\lambda\in\Lambda$.)
\item[(iv)] If $z\in Z$ is a critical point of $a_\lambda$ and
 $\beta_{\lambda,z}:[0, \tau]\to{\rm Sp}(2n)$ is the fundamental matrix solution of
 $\dot{v}(t)=J\nabla^2_zH(\lambda, t, \mathfrak{u}(\lambda, z)(t))v(t)$, then
 \begin{equation*}
 m^0(a_\lambda,z)=\nu_{L_0}^{L'}(\beta_{\lambda,z})\quad\hbox{and}\quad m^-(a_\lambda,z)=i_{L_0}^{L'}(\beta_{\lambda,z}).
 \end{equation*}
 In particular, $m^0(a_\lambda, 0)=\nu_{L_0}^{L'}(\gamma_\lambda)=\dim(\gamma_\lambda(\tau)L_0\cap L')$ and $m^-(a_\lambda, 0)=i_{L_0}^{L'}(\gamma_\lambda)$.
\end{description}
\end{theorem}
 \begin{proof}[\bf Proof of Theorem~\ref{th:bif-nessHam}]
 \noindent{\bf (I)}.
Under the assumptions in (\ref{e:TwoPoint9}) let $(\mu, 0)$  is a bifurcation point along sequences of the problem (\ref{e:TwoPoint1})
with respect to the {trivial branch} $\{(\lambda, 0)\,|\,\lambda\in\Lambda\}$.
That is,  there exists a sequence $(\lambda_k)\subset\Lambda$ converging to $\mu$ and
solutions $v_k\ne 0$ of (\ref{e:TwoPoint1}) with $\lambda=\lambda_k$
such that $v_k\to 0$ in $C^1([0, \tau], \mathbb{R}^{2n})$ by Proposition~\ref{prop:threeBifu}.
Then $v_k$  is a critical point of $f_{\lambda_k}$ in
$(\mathcal{W}_1, \|\cdot\|_2)$ and $\|v_k\|_2\to 0$ as $k\to\infty$.
Therefore for each $k\in\mathbb{N}$ by Theorem~\ref{th:AmZe}(iii) we have an unique $z_k\in Z$ such that
$$
v_k=\mathfrak{u}(\lambda_k,z_k)=S^+x(\lambda_k, R^{-1}z_k)+S^-y(\lambda_k, R^{-1}z_k)+  z_k.
$$
This implies that $\|z_k\|_2\le\|v_k\|_2\to 0$. Clearly, $z_k\ne 0$ by (i) and (iii) in  Theorem~\ref{th:AmZe}.
These show that $(\mu,0)\in\Lambda\times Z$  is a bifurcation point along sequences of $a'_\lambda(z)=0$
with respect to the {trivial branch} $\{(\lambda, 0)\,|\,\lambda\in\Lambda\}\subset\Lambda\times Z$.
Suppose  $\nu^{L'}_{L_0}(\gamma_\mu)=\dim(\gamma_\mu(\tau)L_0\cap L')=0$.
Theorem~\ref{th:AmZe}(iv) gives rise to $m^0(a_\mu, 0)=0$, that is, $a_\mu''(0):Z\to Z$ is invertible.
By Theorem~\ref{th:AmZe}(iii) we may use the implicit function theorem for
$\Lambda\times{Z}\ni (\lambda,z)\mapsto a'_\lambda(z)\in{Z}$ near $(\mu,0)$
to yield $z_k=0$ for $k$ large enough,
in contradiction with $z_k\ne 0$.

\noindent{\bf (II)}. As above we can derive from the assumptions and
Theorem~\ref{th:AmZe}(iv) that for each $k\in\mathbb{N}$,
 \begin{eqnarray*}
&&[m^-(a_{\lambda_k^-}, 0), m^-(a_{\lambda_k^-}, 0)+m^0(a_{\lambda_k^-}, 0)]\cap [m^-(a_{\lambda_k^+}, 0), m^-(a_{\lambda_k^+}, 0)+m^0(a_{\lambda_k^+}, 0)]\\
&=&[i^{L'}_L(\gamma_{\lambda_k^-}), i^{L'}_L(\gamma_{\lambda_k^-})+\nu^{L'}_L(\gamma_{\lambda_k^-})]\cap[i^{L'}_L(\gamma_{\lambda_k^+}), i^{L'}_L(\gamma_{\lambda_k^+})+\nu^{L'}_L(\gamma_{\lambda_k^+})]=\emptyset
\end{eqnarray*}
 and either $m^0(a_{\lambda_k^+}, 0)=\nu^{L'}_L(\gamma_{\lambda_k^+})=0$ or $m^0(a_{\lambda_k^-}, 0)=\nu^{L'}_L(\gamma_{\lambda_k^-})=0$.
 By Theorem~\ref{th:A.9} there exists a sequence $\{(\lambda_k, z_k)\}_{k\ge 1}$
 in $\hat\Lambda\times Z\setminus\{(\mu,0)\}$ converging to $(\mu,0)$ such that $z_k\ne 0$ and $a'_{\lambda_k}(z_k)=0$ for each $k\in\mathbb{N}$.
 Let $v_k=\mathfrak{u}(\lambda_k,z_k)=S^+x(\lambda_k, R^{-1}z_k)+S^-y(\lambda_k, R^{-1}z_k)+  z_k$ for each $k\in\mathbb{N}$.
Then by Theorem~\ref{th:AmZe} we obtain that $v_k\ne 0$, $v_k\to 0$ in $(\mathcal{W}_1, \|\cdot\|_2)$ and $f'_{\lambda_k}(v_k)=0$ for each $k\in\mathbb{N}$.
  The expected result is proved.

 \noindent{\bf (III)}. As above we may obtain
$$
[m^-(a_{\lambda^-}, 0), m^-(a_{\lambda^-}, 0)+m^0(a_{\lambda^-}, 0)]\cap [m^-(a_{\lambda^+}, 0), m^-(a_{\lambda^+}, 0)+m^0(a_{\lambda^+}, 0)]=\emptyset
$$
 and either $m^0(a_{\lambda^+}, 0)=\nu^{L'}_L(\gamma_{\lambda^+})=0$ or $m^0(a_{\lambda^-}, 0)=\nu^{L'}_L(\gamma_{\lambda^-})=0$.
 Then by Theorem~\ref{th:A.9+} we obtain a $\mu\in \Lambda$ such that
  $(\mu, 0)$ is a bifurcation point along sequences of $a_\lambda(z)=0$
   in $\Lambda\times Z$. Moreover, if $m^0(a_{\lambda^+}, 0)=0$ (resp. $m^0(a_{\lambda^-}, 0)=0$),
   then $\mu\ne \lambda^+$ (resp. $\mu\ne \lambda^-$).
 Finally,  the desired result follows from these as above.
\end{proof}

\begin{proof}[\bf Proof of Theorem~\ref{th:bif-suffHam}]
Under the assumptions of Theorem~\ref{th:bif-suffHam} with additional conditions in (\ref{e:TwoPoint9}),
following the notations in the proof of Theorem~\ref{th:bif-nessHam} we obtain that
  $m^0(a_{\mu}, 0)\ne 0$
   and  $m^0(a_{\lambda}, 0)=0$  for each $\lambda\in\Lambda\setminus\{\mu\}$ near $\mu$, and that
  $m^-(a_{\lambda}, 0)$ take, respectively, values $m^-(a_{\mu},0)$ and
  $m^-(a_{\mu},0)+ m^0(a_{\mu},0)$
 as $\lambda\in\Lambda$ varies in two deleted half neighborhoods  of $\mu$.
 Therefore \cite[Theorem~3.6]{Lu10} (or \cite[Theorem~4.10]{Lu8}) is applicable to $\{a_\lambda\,|\,\lambda\in\Lambda\}$.
 The corresponding conclusions are easily translated into those of
 Theorem~\ref{th:bif-suffHam} with additional conditions in (\ref{e:TwoPoint9}).
\end{proof}

\subsection{Proof of Theorem~\ref{th:two-point}}\label{sec:twoLagr-2}

Define $G:(0,\tau]\times [0, 1]\times\mathbb{R}^{2n}\to\mathbb{R}$ by
$$
G(\lambda,t, z)=\lambda H\big(z+ (1-t)u(0)+tu(\lambda)\big)+\big(z+ (1-t)u(0)+tu(\lambda),J(u(\lambda)-u(0))\big)_{\mathbb{R}^{2n}}.
$$
Then $\nabla_zG(\lambda,t, z)=\lambda\nabla H\bigr(z+ (1-t)u(0)+tu(\lambda)\bigl)+ J(u(\lambda)-u(0))$ and
$\nabla_z^2G(\lambda,t, z)=\lambda\nabla^2 H\bigr(z+ (1-t)u(0)+tu(\lambda)\bigl)$.
For $\lambda\in (0,\tau]$ it is easily proved that $z:[0,\lambda]\to\mathbb{R}^{2n}$ satisfies
\begin{equation}\label{e:TwoPoint52}
\left.\begin{array}{ll}
\dot{z}(t)=J\nabla_z H(z(t))\;\forall t\in [0, \lambda],\\
z(0)\in u(0)+L,\quad z(\lambda)\in u(\lambda)+ L
\end{array}\right\}
\end{equation}
if and only if $w:[0,1]\to\mathbb{R}^{2n}$ defined by $w(t):=z(\lambda t)-(1-t)u(0)-tu(\lambda)$ satisfies
\begin{equation}\label{e:TwoPoint53}
\left.\begin{array}{ll}
\dot{w}(t)=J\nabla_z G(\lambda,t, w(t))\;\forall t\in [0, 1],\\
w(0)\in L,\quad w(1)\in L.
\end{array}\right\}
\end{equation}
In particular, since $u|_{[0,\lambda]}$ satisfies (\ref{e:TwoPoint52}), the path
\begin{equation}\label{e:TwoPoint530}
w_\lambda:[0,1]\to \mathbb{R}^{2n},\; t\mapsto u(\lambda t)-(1-t)u(0)-tu(\lambda)
\end{equation}
 satisfies (\ref{e:TwoPoint53}). Let $B(t)=\nabla^2 H(u(t))$ for $t\in [0, \tau]$, and let
  $\gamma:[0, \tau]\to{\rm Sp}(2n)$  be the fundamental matrix solution of
$\dot{v}(t)=JB(t)v(t)$ on $[0,\tau]$. Then it is easily checked that
 \begin{equation}\label{e:TwoPoint53+}
B_\lambda(t):=\nabla^2_z G(\lambda,t, w_\lambda(t))=\lambda\nabla^2 H(u(\lambda t))=\lambda B(\lambda t)
\quad\forall t\in [0, 1],
\end{equation}
and $[0, 1]\ni t\mapsto \gamma_\lambda(t):=\gamma(\lambda t)$
 is the fundamental matrix solution of $\dot{v}(t)=JB_\lambda(t)v(t)$ on $[0, 1]$.

 We shall obtain conclusions of Theorem~\ref{th:two-point} by applying  Theorem~\ref{th:bif-nessHam}(I) and  Theorem~\ref{th:bif-suffHam} to (\ref{e:TwoPoint53}).
 Note that $(i^{L}_L, \nu^{L}_L)=(i_L, \nu_L)$.

 {\it Proof of (A).}
  Suppose that $\mu\in (0, \tau]$ is a bifurcation instant for $(H, u, L)$.
By the above definition  there exists a sequence $(\tau_k)\subset (0,\tau]$ converging to $\mu$
such that for each $k\in\mathbb{N}$ the boundary value problem
(\ref{e:TwoPointMu}) has a solution $v_k\ne u|_{[0,\tau_k]}$ such that $\|v_k-u\|_{C^1([0,\tau_k],\mathbb{R}^{2n})}\to 0$ as $k\to\infty$. Then
 $$
 w^k:[0,1]\to\mathbb{R}^{2n},\;t\mapsto v_k(\tau_k t)-(1-t)u(0)-tu(\tau_k)
 $$
 is not equal to $w_{\tau_k}$, and satisfies (\ref{e:TwoPoint53}) and
 $$
 \|w^k-w_{\tau_k}\|_{C^1([0,1],\mathbb{R}^{2n})}\le (1+\tau)\|v_k-u\|_{C^1([0,\tau_k],\mathbb{R}^{2n})} \to 0
 $$
  as $k\to\infty$. Since both $u$ and $\dot{u}$ are uniformly continuous on $[0,\tau]$, it is not hard to prove that
  $\|w_{\tau_k}-w_\mu\|_{C^1([0,1],\mathbb{R}^{2n})}\to 0$ as $k\to\infty$. Hence $\|w^k-w_{\mu}\|_{C^1([0,1],\mathbb{R}^{2n})}\to 0$.
  These show that  $(\mu, w_\mu)$  is a bifurcation point of (\ref{e:TwoPoint53}).
 By Theorem~\ref{th:bif-nessHam}(I) we deduce
 $$
 \nu_L(\gamma|_{[0,\mu]})=\dim (\gamma(\mu)L)\cap L=\nu_L(\gamma_\mu)>0
 $$
 and therefore Theorem~\ref{th:two-point}(A).

  {\it Proof of (B).}
 If all $B_{22}(t)$ (resp. all $B_{11}(t)$) are positive definite, it was claimed in \cite[Lemma~5.1]{Liu07} that
 \begin{eqnarray}\label{e:TwoPoint53+1}
 &&i_{L_0}(\gamma)=\sum_{0<\lambda<\tau}\nu_{L_0}(\gamma_\lambda)=\sum_{0<\lambda<\tau}\dim ((\gamma(\lambda)L_0)\cap L_0)\\
 &&\hbox{(resp.}\quad i_{L_1}(\gamma)=\sum_{0<\lambda<\tau}\nu_{L_1}(\gamma_\lambda)=\sum_{0<\lambda<\tau}
 \dim ((\gamma(\lambda)L_1)\cap L_1)\quad\hbox{)}.\label{e:TwoPoint53+2}
 \end{eqnarray}
 Clearly, the positive definiteness  of $B(t)=\left(\begin{array}{cc}
             B_{11}(t) & B_{12}(t) \\
            B_{21}(t) & B_{22}(t) \\
           \end{array}\right)$
 implies that both $B_{22}(t)$ and $B_{11}(t)$ are positive definite.
 Choose an orthogonal symplectic matrix  $O\in {\rm Sp}(2n,\mathbb{R})$ such that $OL_0=L$.
 Then $\Upsilon(t):=O^{-1}\gamma(t)O$ is the fundamental matrix solution of
$\dot{v}(t)=JO^TB(t)Ov(t)$ on $[0,\tau]$. Note that all $O^TB(t)O$ are also positive definite. By (\ref{e:L-index}) and
(\ref{e:TwoPoint53+1}) we obtain
 \begin{equation}\label{e:TwoPoint53+3}
i_L(\gamma)=i_{L_0}(\Upsilon)=\sum_{0<\lambda<\tau}\dim ((\Upsilon(\lambda)L_0)\cap L_0)=\sum_{0<\lambda<\tau}\dim ((\gamma(\lambda)L)\cap L)
\end{equation}
 because $\dim ((\Upsilon(\lambda)L_0)\cap L_0)=\dim ((O^{-1}\gamma(\lambda)OL_0)\cap L_0)=\dim (\gamma(\lambda)L)\cap L)$.
 This implies the first claim in  Theorem~\ref{th:two-point}(B).

Let $\mu\in (0, \tau)$ be such that $\dim ((\gamma(\mu)L)\cap L)>0$.
Then there exists a small $\varepsilon>0$ such that
$\dim ((\gamma(\lambda)L)\cap L)=0$ for all $\lambda\in [\mu-\varepsilon, \mu+\varepsilon]\setminus\{\mu\}$.
Note that (\ref{e:TwoPoint53+3}) implies
 $$
 i_L(\gamma_\lambda)=\sum_{0<s<1}\dim ((\gamma_\lambda(s)L)\cap L)=
 \sum_{0<\theta<\lambda}\dim ((\gamma(\theta)L)\cap L)
 $$
and so
\begin{eqnarray*}
i_L(\gamma_\lambda)=\left\{
\begin{array}{ll}
 i_L(\gamma_{\mu})\;&\forall\lambda\in (\mu-\varepsilon,\mu),\\
 i_L(\gamma_{\mu})+ \nu_L(\gamma_{\mu}) \;&\forall\lambda\in (\mu, \mu+\varepsilon).
\end{array}\right.
\end{eqnarray*}
We can apply  Theorem~\ref{th:bif-suffHam} to (\ref{e:TwoPoint53}).
Then the corresponding conclusions (i), (ii) and (iii) of Theorem~\ref{th:bif-suffHam} about
(\ref{e:TwoPoint53}) are equivalent  to (B.1), (B.ii) and (B.iii), respectively.

With (\ref{e:TwoPoint53+1}) and (\ref{e:TwoPoint53+2}),
 the final conclusions can be derived  as above. \hfill$\Box$\vspace{2mm}

%
%
%
%
%
%
%
%
%

\vspace{5mm}

\noindent{\bf Acknowledgements}\quad I would like to thank Professors Yujun Dong and Chungen Liu for the  helpful discussion, and Professor Yiming Long for helping me to correct an error in the first version.
I am very grateful to the anonymous referee for his/her many valuable comments and suggestions regarding the mathematical content and the writing, which have greatly improved the revised version.\vspace{2mm}

\noindent{\bf Research ethics}\quad Not applicable.\\
\noindent{\bf Informed consent}\quad 	Not applicable.\\
\noindent{\bf Author contributions}\quad
The author has accepted responsibility for the entire content of this manuscript and approved its submission.\\
\noindent{\bf Use of LLM, AI and MLT}\quad 	None declared.\\
\noindent{\bf Conflict of interest}\quad 
The author states no conflict of interest.\\
\noindent{\bf Research funding}\quad 
This work was supported by National Natural Science Foundation of China (Grant No. 11271044 and 12371108).\\
\noindent{\bf Data Availability}\quad We do not analyse or generate any datasets, because our work proceeds within a theoretical
and mathematical approach.\vspace{2mm}

%
%
%

%


\appendix

\section{Maslov-type index and Morse index}\label{app:Index}

Let ${\rm Sp}(2n,\mathbb{R})^0 = \{ M \in {\rm Sp}( 2 n )\,|\, D(M)= 0 \}$, where
$D(M)=( - 1 ) ^ { n - 1 } \operatorname { d e t } \left( M - I _ { 2 n }\right)$.
The complementary set ${\rm Sp}( 2 n,\mathbb{R}) ^ { * } := {\rm Sp}( 2 n,\mathbb{R} ) \backslash{\rm Sp}( 2 n,\mathbb{R}) ^ { 0 }$
 has exactly two path-connected components ${\rm Sp}( 2 n,\mathbb{R}) ^ {\pm} = \{ M \in {\rm Sp}( 2 n,\mathbb{R})\,|\, \pm D(M) < 0 \}$,
which contain, respectively,
$$
M _ { n } ^ { + } = {\rm blockdiag}(2I_n, \frac{1}{2}I_n)\quad\hbox{and}\quad
M _ { n } ^ { - } = {\rm blockdiag}(-2, 2I_{n-1}, -\frac{1}{2}, \frac{1}{2}I_{n-1}).
$$
Every loop in these two components is contractible in ${\rm Sp}(2n)$
(\cite[Lemma~1.7]{CoZe} and \cite[Lemma~3.2]{SaZe2}).

Let $\mathcal{P}_\tau(2n) =\{\gamma\in C([0,\tau],{\rm Sp}(2n,\R))\,|\, \gamma(0)=I_{2n}\}$
and let ${\cal P}^\ast_\tau(2n)$ consist of $\gamma\in\mathcal{P}_\tau(2n)$ such that
$\gamma(\tau)\in{\rm Sp}(2n,\mathbb{R})^\ast$, $\tau>0$.
If $\gamma_1,\gamma_2\in\mathcal{P}_\tau(2n)$ satisfy $\gamma_1(\tau)=\gamma_2(0)$, define
$\gamma_2\ast\gamma_1:[0,\tau]\to{\rm Sp}(2n,\mathbb{R})$ by
$\gamma_2\ast\gamma_1(t) = \gamma_1(2t)$ for $t\in [0,\tau/2]$, and $\gamma_2\ast\gamma_1(t) =\gamma_2(2t-\tau)$ for
$t\in[\tau/2,\tau]$.

Recall that each $S\in{\rm Sp}(2n,\mathbb{R})$ has a unique decomposition $PU$, where $P=\sqrt{SS^T}$ and
$U=\left(\begin{array}{cc}
             U_1 & -U_2 \\
             U_2 & U_1 \\
           \end{array}
         \right)$
such that $\mathfrak{u}(S):=U_1+\sqrt{-1}U_2$ is a unitary matrix, i.e., $\mathfrak{u}(S)\in U(n,\C)$.
Therefore each path $\gamma\in C([a, b], {\rm Sp}(2n,\mathbb{R}))$ corresponds to a unique continuous path
$\mathfrak{u}_\gamma:[a,b]\to U(n,\C)$ given by $\mathfrak{u}_\gamma(t):=\mathfrak{u}(\gamma(t))$.
By the lifting criterion in the cover space theory
there exists a continuous real function $\triangle_\gamma$ on $[a, b]$
such that $\det \mathfrak{u}_\gamma(t)=\exp(\sqrt{-1}\triangle_\gamma(t))$ for $t\in[a,b]$, and
$\triangle(\gamma):=\triangle_\gamma(b)-\triangle_\gamma(a)$ is uniquely determined by $\gamma$.

 Conley and Zehnder \cite{CoZe} (for $n\ge 2$)
 and Long and  Zehnder \cite{LongZeh}  (for $n=1$) assign an integer
 $i_{\rm CZ}(\gamma)=\triangle({\beta\ast\gamma})/\pi$
 to each $\gamma\in\mathcal{P}^\ast_\tau(2n)$,
 the so-called \textsf{Conley-Zehnder index}  of $\gamma$,
  where $\beta$ is a path in ${\rm Sp}(2n)^\ast$ connecting $\gamma(\tau)$ to $M^+_n$ or $M^-_n$.
   An alternative exposition was presented in  \cite{SaZe2}.

 There exist two ways to extend the Conley-Zehnder index to paths in $\mathcal{P}_\tau(2n)\setminus\mathcal{P}^\ast_\tau(2n)$.
  Long \cite{Long97, Long02}  defined the \textsf{Maslov-type index}
  of $\gamma\in\mathcal{P}_\tau(2n)$ to be a pair of integers $(i_\tau(\gamma),\nu_\tau(\gamma))$,
\begin{eqnarray*}
 {i}_\tau(\gamma)=\inf\{i_{\rm CZ}(\beta)\,|\, \beta\in{\cal
P}^\ast_\tau(2n)\;\hbox{is sufficiently $C^0$ close to $\gamma$
in}\;{\cal P}_\tau(2n)\}
\end{eqnarray*}
and $\nu_\tau(\gamma):=\dim{\rm Ker}(\gamma(\tau)-I_{2n})$.
It was proved in \cite{Long97} that ${i}_\tau$ has the following \textsf{homotopy invariance}: Two paths $\gamma_0$ and $\gamma_1$ in $\mathcal{P}_\tau(2n)$
have the same Maslov-type indexes $i_\tau(\gamma_0)=i_\tau(\gamma_1)$
if there is a map $\delta\in C([0,1]\times[0,\tau], {\rm Sp}(2n,\R))$ such that $\delta(0,\cdot)=\gamma_0$, $\delta(1,\cdot)=\gamma_1$,
$\delta(s,0)=I_{2n}$ and $\nu_\tau(\delta(s,\cdot))$ is constant for $0\le s\le 1$.

For $a<b$ and any path $\gamma\in C([a,b],{\rm Sp}(2n,\R))$, choose
$\beta\in{\cal P}_1(2n)$ with $\beta(1)=\gamma(a)$, and define
$\phi\in{\cal P}_1(2n)$ by $\phi(t)=\beta(2t)$ for $0\le t\le 1/2$,
and $\phi(t)=\gamma(a+(2t-1)(b-a))$ for $1/2\le t\le 1$.
 Long  showed in \cite{Long97} that the difference $i_1(\phi)-i_1(\beta)$ only
depends on $\gamma$, and called
\begin{equation}\label{e:Long.1}
i(\gamma, [a,b]):=i_1(\phi)-i_1(\beta)
\end{equation}
the \textsf{ Maslov-type index of $\gamma$}. Clearly, $i(\gamma, [0,1])=i_1(\gamma)$ for any $\gamma\in{\cal P}_1(2n)$.

Let us introduce another extension  of $i_{\rm CZ}$ due to Robbin-Salamon \cite{RoSa93}.
Let $(F,\Omega)$ be the symplectic space
$(\mathbb{R}^{2n}\oplus\mathbb{R}^{2n}, (-\omega_0)\oplus\omega_0)$
and let $\mathscr{L}(F,\Omega)$ denote the manifold of Lagrangian subspaces of $(F,\Omega)$.
For $M\in{\rm Sp}(2n,\R)$, both
\begin{center}
$W:=\{(x^T,x^T)^T\in\R^{4n}\,|\,x\in\R^{2n}\}$\quad  and\quad
${\rm Gr}(M):=\{(x^T,(Mx)^T)^T\in\R^{4n}\,|\,x\in\R^{2n}\}$
\end{center}
belong to $\mathscr{L}(F,\Omega)$.
The Robbin-Salamon index $\mu^{\rm RS}$ defined in \cite{RoSa93} assigns a half integer
$\mu^{\rm RS}(\Lambda,\Lambda')$ to every
pair of Lagrangian paths $\Lambda,\Lambda':[a,b]\to\mathscr{L}(F,\Omega)$.
 The \textsf{Conley-Zehnder index}  of $\gamma\in C([a,b],{\rm Sp}(2n,\R))$ was defined in \cite[Remark~5.35]{RoSa95}  by
\begin{eqnarray}\label{e:RobbinSaCZ}
\mu_{\rm CZ}(\gamma)=\mu^{\rm RS}({\rm Gr}\left(\gamma), W\right).
\end{eqnarray}
By \cite[Remark~5.4]{RoSa93} it holds that
\begin{eqnarray}\label{e:RobbinSaCZ+}
\mu_{\rm CZ}(\gamma)=i_{\rm CZ}(\gamma),\quad\forall \gamma\in\mathcal{P}^\ast_\tau(2n).
\end{eqnarray}

There exists a precise relation between $i(\gamma, [a,b])$ and $\mu_{\rm CZ}(\gamma)$
for each $\gamma\in C([a,b],{\rm Sp}(2n,\R))$.
 Recall that the Cappell-Lee-Miller index $\mu^{\rm CLM}_F$ characterized by properties
I-VI of \cite[pp. 127-128]{CLM}  assigns an integer $\mu^{\rm CLM}_F(\Lambda,\Lambda')$ to every
pair of Lagrangian paths $\Lambda,\Lambda':[a,b]\to\mathscr{L}(F,\Omega)$.
 It was proved in \cite[Corollary 2.1]{LongZhu00} that
\begin{eqnarray}\label{e:Long.2}
i_\tau(\gamma)=\mu^{\rm CLM}_F(W, {\rm Gr}(\gamma), [0,\tau])-n,\quad\forall \gamma\in\mathcal{P}_\tau(2n).
\end{eqnarray}
Then  for any $\gamma\in C([a,b],{\rm Sp}(2n,\R))$,
 (\ref{e:Long.1}), (\ref{e:Long.2}) and the path additivity of $\mu^{\rm CLM}_F$  lead to
\begin{eqnarray}\label{e:Long.3}
i(\gamma, [a,b])=\mu^{\rm CLM}_F(W, {\rm Gr}\left(\gamma),
[a,b]\right)
\end{eqnarray}
and therefore
\begin{eqnarray}\label{e:RobbinSa2}
i(\gamma, [a,b])
=\mu_{\rm CZ}(\gamma)-\frac{1}{2}(\dim
{\rm Ker}(I_{2n}-\gamma(b))-\dim
{\rm Ker}(I_{2n}-\gamma(a)))
\end{eqnarray}
by (\ref{e:RobbinSaCZ}) and \cite[Theorem~3.1]{LongZhu00}. In particular,
\begin{eqnarray}\label{e:RobbinSa3}
i_\tau(\gamma)=\mu_{\rm CZ}(\gamma)-\frac{1}{2}\dim
{\rm Ker}(I_{2n}-\gamma(\tau)),\quad\forall\gamma\in\mathcal{P}_\tau(2n).
\end{eqnarray}
For $\phi$ and $\beta$ in (\ref{e:Long.1}), since $\mu_{\rm CZ}$
 is additive with respect to the catenation of paths
we have also
\begin{eqnarray}\label{e:RobbinSa1}
\mu_{\rm CZ}(\gamma)=\mu_{\rm CZ}(\phi)-\mu_{\rm CZ}(\beta).
\end{eqnarray}
Both indexes $i$ and $\mu_{\rm CZ}$  also satisfy
vanishing, product and the following properties:
\begin{description}
\item[(Homotopy)] If a continuous path $\ell:[0,1]\to C([a,b],{\rm Sp}(2n,\R))$ is such that
\begin{eqnarray}\label{e:homotopy1}
s\mapsto\dim{\rm Ker}(I_{2n}-\ell(s)(a))\quad\hbox{and}\quad
s\mapsto\dim{\rm Ker}(I_{2n}-\ell(s)(b))
\end{eqnarray}
are constant functions on $[0,1]$, then
\begin{eqnarray}\label{e:RobbinSa4}
&&i(\ell(s), [a,b])=i(\ell(0), [a,b])\quad\forall s\in [0,1],\\
&&\mu_{\rm CZ}(\ell(s))=\mu_{\rm CZ}(\ell(0))\quad\forall s\in [0,1].\label{e:RobbinSa5}
\end{eqnarray}
\item[(Naturality)] For any $\phi, \gamma\in C([a,b],{\rm Sp}(2n,\R))$ it holds that
\begin{eqnarray}\label{e:RobbinSa6}
i(\phi\gamma\phi^{-1}, [a,b])=i(\gamma, [a,b])\quad\hbox{and}\quad
\mu_{\rm CZ}(\phi\gamma\phi^{-1})=\mu_{\rm CZ}(\gamma).
\end{eqnarray}
\end{description}

In fact, we have continuous maps $[0,1]\times[0,1]\to{\rm Sp}(2n,\R), (s,t)\mapsto\beta_s(t)$ given by
$$
\beta_s(t)=\beta_0\left(\frac{t}{1-s}\right)\;\hbox{for}\;0\le t\le 1-s,\qquad
\beta_s(t)=\ell\left(t-1+s\right)(a)\;\hbox{for}\;1-s\le t\le 1,
$$
and $[0,1]\times[0,1]\to{\rm Sp}(2n,\R), (s,t)\mapsto\phi_s(t)$ defined by
$$
\phi_s(t)=\beta_s\left(2t\right)\;\hbox{for}\;0\le t\le 1/2,\qquad
\phi_s(t)=\ell(s)\left(a+(2t-1)(b-a)\right)\;\hbox{for}\;1/2\le t\le 1.
$$
Then $\beta_s(0)=\phi_s(0)=I_{2n}$ for all $s\in [0,1]$, and $\nu_1(\beta_s)=\dim{\rm Ker}(I_{2n}-\ell(s)(a))$ and
$\nu_1(\phi_s)=\dim{\rm Ker}(I_{2n}-\ell(s)(b))$ are constant functions on $[0,1]$.
By (\ref{e:Long.1}), $i(\beta_s, [0,1])=i_1(\beta_s)$ for all $s\in [0,1]$.
The  homotopy invariance of ${i}_1$ leads to (\ref{e:RobbinSa4}), and therefore (\ref{e:RobbinSa5}) by (\ref{e:RobbinSa2}) and (\ref{e:homotopy1}).

In order to prove (\ref{e:RobbinSa6}), we consider a continuous path $\ell:[0,1]\to C([a,b],{\rm Sp}(2n,\R))$ given by $\ell(s)(t)=\phi(st)\gamma(t)(\phi(st))^{-1}$.
Then (\ref{e:RobbinSa5}) implies $\mu_{\rm CZ}(\phi\gamma\phi^{-1})=\mu_{\rm CZ}(\phi(0)\gamma(\phi(0))^{-1})$. Note that
\begin{eqnarray*}
{\rm Gr}(\phi(0)\gamma(\phi(0))^{-1})(t)=\{(x^T,(\phi(0)\gamma(t)(\phi(0))^{-1}x)^T)^T\in\R^{4n}\,|\,x\in\R^{2n}\}
=\Xi{\rm Gr}(\gamma)(t),
\end{eqnarray*}
where $\Xi:(F,\Omega)\to (F,\Omega)$ is the symplectic isomorphism given by
$\Xi(x\oplus y)=(\phi(0)x)\oplus(\phi(0)y)$. Since $\Xi W=W$,
by (\ref{e:RobbinSaCZ}) and the naturality property of $\mu^{\rm RS}$ (\cite[\S5]{RoSa95}) we obtain
\begin{eqnarray*}
\mu_{\rm CZ}(\phi(0)\gamma(\phi(0))^{-1})=\mu^{\rm RS}(\Xi{\rm Gr}(\gamma), W)=\mu^{\rm RS}({\rm Gr}(\gamma), W)
=\mu_{\rm CZ}(\gamma).
\end{eqnarray*}
The first equality in (\ref{e:RobbinSa6}) may follow from this and (\ref{e:RobbinSa2}).

Let $U_1=\{0\}\times\R^n$ and $U_2=\R^n\times\{0\}$.
For $\gamma\in{\cal P}_\tau(2n)$  with
$\gamma(\tau/2)={\scriptscriptstyle \left(\begin{array}{cc}
A& B\\
C& D\end{array}\right)}$, where $A, B, C, D\in\R^{n\times n}$,
Long, Zhang and Zhu \cite{LoZZ} defined
\begin{eqnarray}\label{e:2.3M}
&&\mu_{k, \tau}(\gamma)=\mu^{\rm CLM}_{\R^{2n}}\left(U_k,\gamma U_k,
[0, \tau/2]\right),\quad k=1,2,\\
&&\nu_{1,\tau}(\gamma)=\dim{\rm Ker}(B)\quad{\rm and}\quad \nu_{2,\tau}(\gamma)=\dim{\rm Ker}(C).\label{e:2.4M}
\end{eqnarray}

Dong \cite{Do06} and Liu \cite{Liu06} extended
the Maslov-type index $(i_\tau(\gamma),\nu_\tau(\gamma))$ of $\gamma\in\mathcal{P}_\tau(2n)$
 to the case relative to a given symplectic matrix $M\in{\rm Sp}(2n,\mathbb{R}))$
via two different methods; they denoted these two indexes by
$$
(i_{\tau, M}(\gamma),\nu_{\tau,M}(\gamma))\quad\hbox{and}\quad (i_{\tau}^M(\gamma),\nu_{\tau}^M(\gamma))
$$
respectively. (The former was written as  $(i_M(\gamma),\nu_M(\gamma))$ in \cite{Do06}).
 They are more suitable and convenient for dual variational methods and saddle point reduction ones, respectively.
 If $M=I_{2n}$, by \cite{Do06} and \cite[Definition~2.5 and Definition~2.6]{Liu06} there holds
\begin{equation}\label{e:dongLiuIndex+}
(i_{\tau,M}(\gamma),\nu_{\tau,M}(\gamma))=(i_{\tau}^M(\gamma),\nu_{\tau}^M(\gamma))
=(i_\tau(\gamma),\nu_\tau(\gamma)).
\end{equation}
Let  $[a]$  denote the largest integer no more than $a\in\R$, and let
 $\xi$ be any element in $\mathcal{P}_\tau(2n)$ satisfying $\xi(\tau)=M^{-1}$,
 Dong  defined
\begin{equation}\label{e:dongIndex}
\begin{array}{ll}
&\nu_{\tau,M}(\gamma)=\dim{\rm Ker}(\gamma(\tau)-M)\quad\hbox{and}\\
&i_{\tau,M}(\gamma)=[i_\tau((\gamma M^{-1})\ast\xi)-\triangle({\xi})/\pi]
\end{array}
\end{equation}
(\cite[Definitions~2.1, 2.2]{Do06}), and
 Liu (\cite[Definition~2.7 and Remark~2.8]{LiuTang15}) defined
\begin{equation}\label{e:LiuTang-index}
\begin{array}{ll}
&\nu_{\tau}^M(\gamma)=\dim{\rm Ker}(\gamma(\tau)-M)\quad\hbox{and}\\
&i^{M}_\tau(\gamma)=i_\tau((M^{-1}\gamma)\ast\xi)-i_\tau({\xi})
\end{array}
 \end{equation}
 if $M\ne I_{2n}$,  and $(i_{\tau}^M(\gamma),\nu_{\tau}^M(\gamma))=(i_\tau(\gamma),\nu_\tau(\gamma))$
  if $M= I_{2n}$.
 (Note that $i_\tau((M^{-1}\gamma)\ast\xi)$ may be replaced by $i_\tau((\gamma M^{-1})\ast\xi))$,
 see the proof of (\ref{e:LiuTang-index+}) below.
It was shown in \cite[Remark~2.8]{LiuTang15} that when $M=I_{2n}$
the right side of the second equality in (\ref{e:LiuTang-index}) is $i_\tau(\gamma)+n$.)
A direct observation leads to a precise relation between $i_{M,\tau}$ and $i^{M}_\tau$ as follows.

\begin{proposition}\label{prop:twoDef}
 For any $(M,\gamma)\in ({\rm Sp}(2n,\R)\setminus\{I_{2n}\})\times\mathcal{P}_\tau(2n)$
 and  any $\xi\in\mathcal{P}_\tau(2n)$ satisfying $\xi(\tau)=M^{-1}$
 it holds that
  \begin{equation}\label{e:indexrelation}
 i_{\tau,M}(\gamma)=[i^{M}_\tau(\gamma)+i_\tau(\xi)-\triangle({\xi})/\pi]=i^{M}_\tau(\gamma)+ [i_\tau(\xi)-\triangle({\xi})/\pi].
\end{equation}
 {\rm (}$[i_\tau(\xi)-\triangle({\xi})/\pi]$
only depends on $M$.{\rm )}
   \end{proposition}
   \begin{proof}[\bf Proof]
Clearly,  $M^{-1}\xi M\in\mathcal{P}_\tau(2n)$ and $(M^{-1}\xi M)(\tau)=M^{-1}$
 for any $\xi\in\mathcal{P}_\tau(2n)$ satisfying $\xi(\tau)=M^{-1}$.
By (\ref{e:LiuTang-index}) and  \cite[Corollary~6.5]{Long97}
\begin{eqnarray}\label{e:LiuTang-index+}
i^{M}_\tau(\gamma)&=&i_\tau\left((M^{-1}\gamma)\ast(M^{-1}\xi M)\right)-i_\tau(M^{-1}{\xi}M)\nonumber\\
&=&i_\tau\left(M^{-1}((\gamma M^{-1})\ast\xi))M\right)-i_\tau({\xi})\nonumber\\
&=&i_\tau\left((\gamma M^{-1})\ast\xi)\right)-i_\tau({\xi}).
 \end{eqnarray}
Substituting  this in the second equality in (\ref{e:dongIndex}) we obtain (\ref{e:indexrelation}).
\end{proof}

We notice that $i_{\tau,M}$ and $i^M_\tau$ may be different. For the symplectic path
 \begin{equation}\label{e:StanSymPath}
\xi_{2n,\tau}: [0, \tau]\to{\rm Sp}(2n,\mathbb{R}),\;t\mapsto I_{2n}.
\end{equation}
A direct computation yields
$i_{\tau}^{-J_n}(\xi_{2n,\tau})=0$ and $i_{\tau, -J_n}(\xi_{2n,\tau})=[n/2]$.

For any two $m\times m$ real symmetric matrices $A_1$ and $A_2$, we write $A_1\le A_2$
(resp. $A_1<A_2$)  if $A_2-A_1$ is positive semi-definite (resp. positive definite). For any
 $B_1,B_2\in L^\infty([0,\tau];\mathscr{L}_s(\mathbb{R}^{2n}))$
 we define $B_1\le B_2$ (resp. $B_1<B_2$) if $B_1(t)\le B_2(t)$ for a.e. $t\in [0,\tau]$
 (resp. if $B_1\le B_2$ and $B_1(t)<B_2(t)$ on a subset of $[0, \tau]$ with positive measure).

For $B, B_1,B_2\in L^\infty([0,\tau];\mathscr{L}_s(\mathbb{R}^{2n}))$ with $B_1<B_2$,
$\nu_{\tau,M}(B)$ and $I_{\tau,M}(B_1,B_2)$
were defined in \cite[Definitions~3.1, 3.2]{Do06} by
\begin{equation}\label{e:relativeMorse}
\left.\begin{array}{ll}
\nu_{\tau,M}(B):=\nu_M(\Upsilon_B)\quad\hbox{and}\\
I_{\tau,M}(B_1,B_2):=\sum_{s\in [0,1)}\nu_{\tau,M}((1-s)B_1+ sB_2).
\end{array}\right\}
\end{equation}
where $\Upsilon_B:[0,\tau]\to {\rm Sp}(2n,\mathbb{R})$ is
the fundamental matrix solution of the linear system
$\dot{Z}(t)=JB(t)Z(t)$ with $\Upsilon_B(0)=I_{2n}$.

For any $B_1,B_2\in L^\infty([0,\tau];\mathscr{L}_s(\mathbb{R}^{2n}))$,
and $s\in\R$ such that $sI_{2n}>B_1$ and $sI_{2n}>B_2$,
\begin{equation}\label{e:relativeMorse1}
I_{\tau,M}(B_1,B_2):=I_{\tau,M}(B_1, sI_{2n})- I_{\tau,M}(B_2, sI_{2n})
\end{equation}
is was called the \textsf{relative Morse index} between $B_1$ and $B_2$ (\cite[Definition~3.5]{Do06}).
The \textsf{$M$-index} of $B\in L^\infty([0,\tau];\mathscr{L}_s(\mathbb{R}^{2n}))$
was defined in \cite[Definition~3.8]{Do06} by
\begin{equation}\label{e:relativeMorse2}
j_{\tau,M}(B):=i_{\tau,M}(\xi_{2n})+ I_{\tau,M}(0, B),
\end{equation}
where $\xi_{2n}$ is as in (\ref{e:StanSymPath}).
 Then $j_{\tau,M}(0)=i_{\tau,M}(\xi_{2n})$ (\cite[Remark~3.9]{Do06}).

\begin{theorem}[\hbox{\cite[Theorem~4.1]{Do06}}]\label{th:twoIndex}
 $j_{\tau,M}(B)=i_{\tau,M}(\Upsilon_B)$ for any $B\in L^\infty([0,\tau];\mathscr{L}_s(\mathbb{R}^{2n}))$.
\end{theorem}

For $B\in L^\infty([0,\tau];\mathscr{L}_s(\mathbb{R}^{2n}))$, let  $A\in L^\infty([0,\tau];\mathscr{L}_s(\mathbb{R}^{2n}))$
satisfy $B-A\ge\varepsilon I_{2n}$ for $\varepsilon>0$ small enough, and let $C(t)=(B(t)-A(t))^{-1}$.
 Consider the quadratic form
\begin{equation}\label{e:quadratic}
q_{M, B|A}(u,v)=\frac{1}{2}\int^\tau_0\Bigl(((\hat{\Lambda}_{M,\tau, A})^{-1}u)(t), v(t))_{\mathbb{R}^{2n}}
+ (C(t)u(t), v(t))_{\mathbb{R}^{2n}}\Bigr) \, dt
\end{equation}
on the Hilbert space $\tilde{L}^{2}_{M, A}([0, \tau];\R^{2n})=R(\Lambda_{M,\tau, A})$ (cf. \S~2.1).
By \cite[Th.7.1]{He51}, $\tilde{L}^{2}_{M, A}([0, \tau];\R^{2n})$
 is expressible in a unique manner as the direct sum of three linear subspaces
 $$E_M^-(B|A),\quad E_M^0(B|A),\quad E_M^+(B|A)$$
  having the following properties:
  \begin{enumerate}
\item[\rm (a)] $E_M^-(B|A)$, $E_M^0(B|A)$, $E_M^+(B|A)$ are mutually $L^2$-orthogonal and
$q_{M, B|A}$-orthogonal;
\item[\rm (b)] $q_{M, B|A}$ is negative on $E_M^-(B|A)$, zero on $E_M^0(B|A)$, and positive on $E_M^+(B|A)$.
\end{enumerate}
Moreover, since $2q_{M, B|A}(u,v)=((\hat{\Lambda}_{M,\tau, A})^{-1}u, v)_{L^2}+ (\bar{C}u,v)_{L^2}$
for a compact self-adjoint operator $\bar{C}:\tilde{L}^{2}_{M, A}([0, \tau];\R^{2n})\to \tilde{L}^{2}_{M, A}([0, \tau];\R^{2n})$ defined by
$$
(\bar{C}u,v)_{L^2}=\int^\tau_0(C(t)u(t), v(t))_{\mathbb{R}^{2n}}\, dt,
$$
 namely
$q_{M, B|A}$ is a Legendre form by \cite[Theorem~11.6]{He51},
the subspaces $E_M^-(B|A)$ and $E_M^0(B|A)$ have finite dimensions (cf.
\cite[Theorem~11.3]{He51} or Corollary on page 311 of \cite{He61}).
Dong \cite{Do06} also proved these  directly   and called
\begin{equation}\label{e:M-index}
j_{\tau,M}(B|A):=\dim E_M^-(B|A)\quad\hbox{and}\quad \nu_{\tau,M}(B|A):=\dim E_M^0(B|A)
\end{equation}
\textsf{$M$-index} and \textsf{$M$-nullity of $B$ with respect to $A$}, respectively.
Let $m^-(q_{M, B|A})$ and $m^0(q_{M, B|A})$ denote the Morse index and the nullity
of $q_{M, B|A}$ on $\tilde{L}^{2}_{M, A}([0, \tau];\R^{2n})$, respectively.
Then
\begin{equation}\label{e:Nullity}
m^0(q_{M, B|A})=\nu_{\tau,M}(B|A)=\nu_{\tau,M}(B)=\nu_{\tau,M}(\Upsilon_B),
\end{equation}
where the second equality comes from \cite[Proposition~3.14]{Do06}.
By \cite[Remark~3.24]{Do06} and Theorem~\ref{th:twoIndex} we have also
\begin{equation}\label{e:MorseIndex0}
\left.\begin{array}{ll}
j_{\tau,M}(B|A)&=j_{\tau,M}(B)- j_{\tau,M}(A)-\nu_{\tau,M}(A)\\
&=i_{\tau,M}(\Upsilon_B)- i_{\tau,M}(\Upsilon_A)- \nu_{\tau,M}(\Upsilon_A).
\end{array}\right\}
\end{equation}
It follows from these, and  \cite[Theorem~9.1]{He51} or \cite[Lemma~3.16]{Do06} that
\begin{equation}\label{e:MorseIndex}
 m^-(q_{M, B|A})=j_{\tau,M}(B|A)=i_{\tau,M}(\Upsilon_B)- i_{\tau,M}(\Upsilon_A)- \nu_{\tau,M}(\Upsilon_A).
\end{equation}
For $B_1,B_2\in L^\infty([0,\tau];\mathscr{L}_s(\mathbb{R}^{2n}))$ with $B_2>B_1>A+\varepsilon I_{2n}$
for a small enough $\varepsilon>0$, it was proved in
\cite[Lemma~3.18, Proposition~3.25]{Do06} that
\begin{eqnarray}\label{e:MorseRelation3}
 j_{\tau,M}(B_2|A)&\ge& j_{\tau,M}(B_1|A)+\nu_{\tau,M}(B_1|A),\\
 j_{\tau,M}(B_2)&\ge& j_{\tau,M}(B_1)+\nu_{\tau,M}(B_1).\label{e:MorseRelation4}
\end{eqnarray}
The former is also a special case of \cite[Theorem~9.4]{He51}.

\begin{remark}\label{rm:Liu}
{\rm  When $M=I_{2n}$ and $A=\ell I_{2n}$ with $\ell\in\mathbb{R}\setminus 2\pi\mathbb{Z}$,
based on  the arguments in \cite{GirMa} and the saddle point reduction (cf. \cite[\S6.1, Theorem~1]{Long02})
Liu  \cite[Theorem~2.1]{Liu05}  proved the following precise versions of (\ref{e:MorseIndex0}) and (\ref{e:Nullity}) (in terms of the above notations)
 \begin{eqnarray}\label{e:LiuC1}
&&j_{1, I_{2n}}(B|\ell I_{2n})=i_1(\Upsilon_B)+ n+ 2n\left[-\frac{\ell}{2\pi}\right],\\
&&\nu_{1, I_{2n}}(B|\ell I_{2n})=\nu_1(\Upsilon_B).\label{e:LiuC2}
\end{eqnarray}}
\end{remark}

For $B\in L^\infty([0,\tau];\mathscr{L}_s(\mathbb{R}^{2n}))$ and $0\le t\le\tau$,
let
$$
\nu_{t,M}(\Upsilon_B|_{[0,t]})=\dim{\rm Ker}(\Upsilon_B(t)-M)
$$
and so  $\nu_{0,M}(\Upsilon_B|_{[0,0]})=\dim{\rm Ker}(I_{2n}-M)$.
As a generalization of  \cite[Chapter I, \S4, Theorem~6]{Ek90}
and \cite[Proposition~15.3]{Long02}, 
it was proved in \cite[Lemma~4.2]{Do06} that
\begin{equation}\label{e:positive-negativeA}
i_{\tau,M}(\Upsilon_B)=i_{\tau,M}(\xi_{2n})+\sum_{0\le t<\tau}\nu_{t,M}(\Upsilon_B|_{[0,t]})
\end{equation}
if $B$ is positive definite, and
\begin{equation}\label{e:positive-negativeB}
i_{\tau,M}(\Upsilon_B)=i_{\tau,M}(\xi_{2n})
-\sum_{0<t\le\tau}\nu_{t,M}(\Upsilon_B|_{[0,t]})
\end{equation}
if $B$ is negative definite. Here $\xi_{2n}$ is as in (\ref{e:StanSymPath}).

\begin{remark}\label{rm:DongLiu}
{\rm
 The sum $\sum_{0<t\le\tau}\nu_{t,M}(\Upsilon_B|_{[0,t]})$ in (\ref{e:positive-negativeB})
 was  written as $\sum_{0<t<\tau}\nu_{t,M}(\Upsilon_B|_{[0,t]})$ in \cite[(4.2)]{Do06}.
Actually, it was only asserted in \cite{Do06} that \cite[(4.2)]{Do06} can be obtained
in a similar way to the proof of (\ref{e:positive-negativeA}) \cite[(4.1)]{Do06}.
Since Proposition~\ref{prop:twoDef} implies that
$i_{\tau,M}(\gamma)-i_{\tau,M}(\xi_{2n,\tau})=i^{M}_\tau(\gamma)-i^{M}_\tau(\xi_{2n,\tau})$, and
$$
i_{\tau}^M(\xi_{2n,\tau})-i_{\tau}^M(\Upsilon_B)=
\sum_{0<t\le\tau}\nu_{t,M}(\Upsilon_B|_{[0,t]})
$$
by \cite[(4.8)]{Liu17+}, (\ref{e:positive-negativeB})  follows.
}
\end{remark}

From (\ref{e:positive-negativeA}) and  (\ref{e:positive-negativeB}) we derive:

\begin{proposition}\label{prop:Index}
Let $A\in\mathscr{L}_s(\mathbb{R}^{2n})$ be either positive definite or negative definite,
and $\Upsilon_{\lambda A}(t)=\exp(\lambda tJA)$ for $\lambda\in\mathbb{R}\setminus\{0\}$.
 {\rm (}Clearly, $\Upsilon_{\lambda A}(t)=\Upsilon_A(\lambda t)$.{\rm )} Then
\begin{eqnarray*}
\Gamma(A):=\{t\in\mathbb{R}\setminus\{0\}\,|\, \dim{\rm Ker}(\exp(tJA)-M)>0\}
\end{eqnarray*}
is a discrete set in $\mathbb{R}$, and the following holds.
{\rm (}$\lambda_0=0$ and so $\Upsilon_{\lambda_0 A}(t)=I_{2n}$ for all $t\in \mathbb{R}$.{\rm )}
  \begin{enumerate}
\item[\rm (i)] Let $A>0$. If $\Gamma(A)\cap (0, \infty)=\{\lambda_1<\cdots<\lambda_k<\cdots<\lambda_\kappa\}$ with $\kappa\in\mathbb{N}\cup\{\infty\}$,
then
\begin{equation}\label{e:positive-negative3}
i_{1,M}(\Upsilon_{\lambda A})
=\left\{\begin{array}{ll}
i_{1,M}(\Upsilon_{\lambda_k A})\quad&\hbox{$\lambda_{k-1}<\lambda\le\lambda_{k}$ with $k\ge 1$},\\
i_{1,M}(\Upsilon_{\lambda_k A})+\nu_{1,M}(\Upsilon_{\lambda_k A})
\quad&\hbox{if $\lambda_k<\lambda\le\lambda_{k+1}$ with $k\ge 0$},\\
i_{1,M}(\Upsilon_{\lambda_{\kappa}A})+\nu_{1,M}(\Upsilon_{\lambda_{\kappa}A})
\quad&\hbox{if $\kappa\in\mathbb{N}$ and $\lambda>\lambda_{\kappa}$};
\end{array}\right.
\end{equation}
and if $\Gamma(A)\cap (-\infty, 0)=\{\lambda_{-\kappa}\cdots<\lambda_{-k}<\cdots<\lambda_{-1}\}$ with
$\kappa\in\mathbb{N}\cup\{\infty\}$,
then
$$
i_{1,M}(\Upsilon_{\lambda A})
=\left\{\begin{array}{ll}
i_{1,M}(\Upsilon_{\lambda_{-k}A})+ \nu_{1,M}(\Upsilon_{\lambda_{-k}A})
\quad&\hbox{if $\lambda_{-k}<\lambda<\lambda_{-k+1}$ with $k\ge 1$},\\
i_{1,M}(\Upsilon_{\lambda_{-k}A})
\quad&\hbox{if $\lambda_{-k-1}<\lambda\le\lambda_{-k}$ with $k\ge 1$},\\
i_{1,M}(\Upsilon_{\lambda_{-\kappa}A})
\quad&\hbox{if $\kappa\in\mathbb{N}$ and $\lambda\le\lambda_{-\kappa}$}.
\end{array}\right.
$$
\item[\rm (ii)] Let $A<0$. If $\Gamma(A)\cap (0, \infty)=
\{\lambda_1<\cdots<\lambda_k<\cdots<\lambda_\kappa\}$
 with $\kappa\in\mathbb{N}\cup\{\infty\}$, then
$$
i_{1,M}(\Upsilon_{\lambda A})
=\left\{\begin{array}{ll}
i_{1,M}(\Upsilon_{\lambda_{k}A})+\nu_{1,M}(\Upsilon_{\lambda_{k}A})
\quad&\hbox{if $\lambda_{k-1}<\lambda<\lambda_{k}$ with $k\ge 1$},\\
i_{1,M}(\Upsilon_{\lambda_{k}A})\quad&\hbox{if $\lambda_k\le\lambda<\lambda_{k+1}$ with $k\ge 1$},\\
i_{1,M}(\Upsilon_{\lambda_{\kappa}A})
\quad&\hbox{if $\kappa\in\mathbb{N}$ and $\lambda\ge\lambda_{\kappa}$};
\end{array}\right.
$$
and if $\Gamma(A)\cap (-\infty, 0)=\{\lambda_{-\kappa}\cdots<\lambda_{-k}<\cdots<\lambda_{-1}\}$ with
 $\kappa\in\mathbb{N}\cup\{\infty\}$,
then
\begin{equation}\label{e:positive-negative4}
i_{1,M}(\Upsilon_{\lambda A})
=\left\{\begin{array}{ll}
i_{1,M}(\Upsilon_{\lambda_{-k}A})\quad&\hbox{if $\lambda_{-k}\le\lambda<\lambda_{-k+1}$ with $k\ge 1$},\\
i_{1,M}(\Upsilon_{\lambda_{-k}A})+\nu_{1,M}(\Upsilon_{\lambda_{-k}A})
\quad&\hbox{if $\lambda_{-k-1}\le\lambda<\lambda_{-k}$ and $k\ge 1$},\\
i_{1,M}(\Upsilon_{\lambda_{-\kappa}A})+\nu_{1,M}(\Upsilon_{\lambda_{-\kappa}A})
\quad&\hbox{if $\kappa\in\mathbb{N}$ and $\lambda<\lambda_{-\kappa}$}.
\end{array}\right.
\end{equation}
 \end{enumerate}
\end{proposition}
\begin{proof}[\bf Proof]
By (\ref{e:positive-negativeA}) and  (\ref{e:positive-negativeB}) we derive
\begin{eqnarray*}
&&i_{1,M}(\Upsilon_{\lambda A})-i_{1,M}(\Upsilon_{\lambda_0 A})
=\left\{\begin{array}{ll}
&\sum_{0\le t<\lambda}\dim{\rm Ker}(\exp(tJA)-M)\quad\hbox{if $A>0$},\vspace{1mm}\\
&-\sum_{0<t\le\lambda}\dim{\rm Ker}(\exp(tJA)-M)\quad\hbox{if $A<0$}
\end{array}\right.
\end{eqnarray*}
for $\lambda>0$, and
\begin{eqnarray*}
i_{1,M}(\Upsilon_{\lambda A})-i_{1,M}(\Upsilon_{\lambda_0 A})
=\left\{\begin{array}{ll}
&\sum_{\lambda<t\le 0}\dim{\rm Ker}(\exp(tJA)-M)\quad\hbox{if $A<0$},\vspace{1mm}\\
&-\sum_{\lambda\le t<0}\dim{\rm Ker}(\exp(tJA)-M)\quad\hbox{if $A>0$}
\end{array}\right.
\end{eqnarray*}
for $\lambda<0$. These imply that $\sum_{-N<t<N}\dim{\rm Ker}(\exp(tJA)-M)<\infty$ for any $N>0$.
The first claim is proved.

{\bf (i)}[$A>0$].\quad Let $\Gamma(A)\cap (0, \infty)=\{\lambda_1<\cdots<\lambda_k<\cdots<\lambda_\kappa\}$
with $\kappa\in\mathbb{N}\cup\{\infty\}$.
Recall that $\nu_{t,M}(\Upsilon_{\lambda A}|_{[0,t]})=\dim{\rm Ker}(\exp(t\lambda A)-M)$.
 By (\ref{e:positive-negativeA}) we have
 $$
i_{1,M}(\Upsilon_{\lambda A})=i_{1,M}(\Upsilon_{\lambda_0 A})
+\sum_{0\le t<\lambda}\dim{\rm Ker}(\exp(tJA)-M)=i_{1,M}(\Upsilon_{\lambda_k A})
$$
for $\lambda_{k-1}<\lambda\le\lambda_{k}$ with $k\ge 1$, and
\begin{eqnarray*}
i_{1,M}(\Upsilon_{\lambda A})&=&i_{1,M}(\Upsilon_{\lambda_0 A})
+\sum_{0\le t<\lambda}\dim{\rm Ker}(\exp(tJA)-M)\\
&=&i_{1,M}(\Upsilon_{\lambda_0 A})+\sum_{0\le t<\lambda_k}\dim{\rm Ker}(\exp(tJA)-M)
+\dim{\rm Ker}(\exp(\lambda_kJA)-M)\\
&=&i_{1,M}(\Upsilon_{\lambda_kA})+ \nu_{1,M}(\Upsilon_{\lambda_kA})
\end{eqnarray*}
for $\lambda_k<\lambda\le\lambda_{k+1}$ with $k\ge 0$.
If $\kappa\in\mathbb{N}$, then the last two equalities hold true for $k=\kappa$ and
any $\lambda>\lambda_{\kappa}$.

Next, let $\Gamma(A)\cap (-\infty, 0)=\{\lambda_{-\kappa}\cdots<\lambda_{-k}<\cdots<\lambda_{-1}\}$ with $\kappa\in\mathbb{N}\cup\{\infty\}$.
By (\ref{e:positive-negativeB}),
\begin{eqnarray*}
i_{1,M}(\Upsilon_{\lambda A})
&=&i_{1,M}(\Upsilon_{\lambda_0 A})-\sum_{\lambda\le t<0}\dim{\rm Ker}(\exp(t JA)-M)\\
&=&i_{1,M}(\Upsilon_{\lambda_0 A})-\sum_{\lambda_{-k}\le t<0}\dim{\rm Ker}(\exp(t JA)-M)+\nu_{1,M}(\Upsilon_{\lambda_{-k}A})\\
&=&i_{1,M}(\Upsilon_{\lambda_{-k}A})+ \nu_{1,M}(\Upsilon_{\lambda_{-k}A})
\end{eqnarray*}
 for $\lambda_{-k}<\lambda<\lambda_{-k+1}$ with $k\ge 1$, and
\begin{eqnarray*}
i_{1,M}(\Upsilon_{\lambda A})
&=&i_{1,M}(\Upsilon_{\lambda_0 A})-\sum_{\lambda\le t<0}\dim{\rm Ker}(\exp(t JA)-M)\\
&=&i_{1,M}(\Upsilon_{\lambda_0 A})-\sum_{\lambda_{-k}\le t<0}\dim{\rm Ker}(\exp(t JA)-M)=i_{1,M}(\Upsilon_{\lambda_{-k}A})
\end{eqnarray*}
for $\lambda_{-k-1}<\lambda\le\lambda_{-k}$ with $k\ge 1$.
Clearly, when $\kappa\in\mathbb{N}$, the last two equalities also hold true for $k=\kappa$ and
any $\lambda\le\lambda_{-\kappa}$.

{\bf (ii)}[$A<0$].\quad
Assume that $\Gamma(A)\cap (0, \infty)=\{\lambda_1<\cdots<\lambda_k<\cdots<\lambda_\kappa\}$ with $\kappa\in\mathbb{N}\cup\{\infty\}$.
By (\ref{e:positive-negativeB}) we have
\begin{eqnarray*}
i_{1,M}(\Upsilon_{\lambda A})&=&i_{1,M}(\Upsilon_{\lambda_0 A})-\sum_{0<t\le\lambda}\dim{\rm Ker}(\exp(tJA)-M)
\\
&=&i_{1,M}(\Upsilon_{\lambda_0 A})-\sum_{0< t\le\lambda_k}\dim{\rm Ker}(\exp(tJA)-M)+\nu_{1,M}(\Upsilon_{\lambda_kA})
\\
&=&i_{1,M}(\Upsilon_{\lambda_kA})+\nu_{1,M}(\Upsilon_{\lambda_kA})
\end{eqnarray*}
for $\lambda_{k-1}<\lambda<\lambda_{k}$ with $k\ge 1$, and
\begin{eqnarray*}
i_{1,M}(\Upsilon_{\lambda A})&=&i_{1,M}(\Upsilon_{\lambda_0 A})
-\sum_{0< t\le\lambda}\dim{\rm Ker}(\exp(tJA)-M)\\
&=&i_{1,M}(\Upsilon_{\lambda_0 A})-\sum_{0< t\le\lambda_k}\dim{\rm Ker}(\exp(tJA)-M)
=i_{1,M}(\Upsilon_{\lambda_kA})
\end{eqnarray*}
for $\lambda_k\le\lambda<\lambda_{k+1}$ with $k\ge 1$.
If $\kappa\in\mathbb{N}$,  the last two lines also hold true for $k=\kappa$ and
any $\lambda\ge\lambda_{\kappa}$.

When $\Gamma(A)\cap (-\infty, 0)=\{\lambda_{-\kappa}\cdots<\lambda_{-k}<\cdots<\lambda_{-1}\}$ with $\kappa\in\mathbb{N}\cup\{\infty\}$,
by (\ref{e:positive-negativeA}) we obtain
\begin{eqnarray*}
i_{1,M}(\Upsilon_{\lambda A})
=i_{1,M}(\Upsilon_{\lambda_0 A})+\sum_{\lambda< t\le 0}\dim{\rm Ker}(\exp(tJA)-M)
=i_{1,M}(\Upsilon_{\lambda_{-k} A})
\end{eqnarray*}
for $\lambda_{-k}\le\lambda<\lambda_{-k+1}$ with $k\ge 1$, and
\begin{eqnarray*}
i_{1,M}(\Upsilon_{\lambda A})
&=&i_{1,M}(\Upsilon_{\lambda_0 A})+\sum_{\lambda< t\le 0}\dim{\rm Ker}(\exp(tJA)-M)\\
&=&i_{1,M}(\Upsilon_{\lambda_{-k} A})+ \nu_{1,M}(\Upsilon_{\lambda_{-k}A})
\end{eqnarray*}
for $\lambda_{-k-1}\le\lambda<\lambda_{-k}$ with $k\ge 1$.
Clearly, when $\kappa\in\mathbb{N}$, the last two lines also hold true for $k=\kappa$ and
any $\lambda<\lambda_{-\kappa}$.
\end{proof}

Let $N$ be as in Assumption~\ref{ass:brake},
and let $\mu_{k, \tau}$ and $\nu_{k,\tau}$, $k=1,2$, be as in (\ref{e:2.3M}) and (\ref{e:2.4M}).

\begin{assumption}\label{ass:brakeB}
{\rm \begin{enumerate}
\item[(B1)] Let $B\in C(\R, \R^{2n\times 2n})$
be a path of symmetric matrix which is $\tau$-periodic in time $t$,
i.e., $B(t+\tau)=B(t)$ for any $t\in\R$.
\item[(B2)] $N^TB(-t)N=B(t)$, that is, if  $B(t)=\left(\begin{array}{cc}B_{11}(t) & B_{12}(t)\\
B_{21}(t)& B_{22}(t)\end{array}\right)$, then  $B_{11},
B_{22}:\;\R\to\R^{n\times n}$ are \verb"even" at $t=0$ and
$\tau/2$, and $B_{12}, B_{21}:\;\R\to\R^{n\times n}$ are
\verb"odd" at $t=0$ and $\tau/2$.
\end{enumerate} }
\end{assumption}

Note that the operator $\Lambda_{I_{2n},\tau, 0}$ defined by (\ref{e:LambdaKA}) has the spectrum $\frac{2\pi}{\tau}\mathbb{Z}$.
Under the assumption (B1) we can choose $\kappa\notin \frac{2\pi}{\tau}\mathbb{Z}$ such that
 $B(t)-\kappa I_{2n}\ge\varepsilon I_{2n}$ for all $t$ and  small enough $\varepsilon>0$.

Consider the Hilbert subspaces of $L^{2}(S_\tau; \R^{2n})$ and $W^{1,2}(S_\tau;\R^{2n})$,
\begin{eqnarray}\label{e:Lbrake}
&&\textsf{L}_\tau=\left\{\,z\in L^2(S_\tau;
\R^{2n})\,|\,z(-t)=Nz(t)\,{\rm a.e.}t\in\R\right\},\\
&&\textsf{W}_\tau=\left\{\,z\in W^{1,2}(S_\tau;\R^{2n})\,|\,z(-t)=Nz(t)\,{\rm a.e.}\,
t\in\R\right\}.\label{e:Wbrake}
\end{eqnarray}
Then $\textsf{W}_\tau=W^{1,2}(S_\tau; \R^{2n})\cap \textsf{L}_\tau$.
Clearly, the operator $\Lambda_{I_{2n},\tau, 0}$ maps $\textsf{W}_\tau$ into $\textsf{L}_\tau$.
Thus it restricts to a closed linear and self-adjoint operator on $\textsf{L}_\tau$ with domain $\textsf{W}_\tau$,
denoted by $\tilde{\Lambda}_{I_{2n},\tau, 0}$ for clearness. Note that the spectrum of $\tilde{\Lambda}_{I_{2n},\tau, 0}$
 is also $\frac{2\pi}{\tau}\mathbb{Z}$.
Choose $\kappa\notin \frac{2\pi}{\tau}\mathbb{Z}$ as above. Then the operator
$$
\tilde{\Lambda}_{I_{2n},\tau, \kappa I_{2n}}: \textsf{W}_\tau\subset \textsf{L}_\tau\to \textsf{L}_\tau,\;u\mapsto \tilde{\Lambda}_{I_{2n},\tau, 0}-\kappa u
$$
 is invertible and the inverse $(\tilde{\Lambda}_{I_{2n},\tau, \kappa I_{2n}})^{-1}:
\textsf{L}_\tau\to \textsf{L}_\tau$ is compact and self-adjoint.
It follows under the assumptions (B1) and (B2) that the quadratic form on $\textsf{L}_\tau$,
\begin{equation}\label{e:Bquadratic}
\textsf{Q}_{B,\kappa I_{2n}}(u,v)=\frac{1}{2}\int^\tau_0\Bigl(((\tilde{\Lambda}_{I_{2n},\tau, \kappa I_{2n}})^{-1}u)(t), v(t))_{\mathbb{R}^{2n}}
+ (C(t)u(t), v(t))_{\mathbb{R}^{2n}}\Bigr) \, dt
\end{equation}
with $C(t)=(B(t)-\kappa I_{2n})^{-1}$,
  is a Legendre form, and hence has finite  Morse index $m^-(\textsf{Q}_{B,\kappa I_{2n}})$ and nullity $m^0(\textsf{Q}_{B,\kappa I_{2n}})$.
Using \cite[Theorem~5.1]{LoZZ} and suitably modifying the proof of  \cite[Theorem~2.1]{Liu05}
we can get the corresponding results with  (\ref{e:LiuC1})-(\ref{e:LiuC2}).

\begin{theorem}\label{th:brakeIndex}
\begin{eqnarray}\label{e:brakeIndex1}
&&m^-(\textsf{Q}_{B,\kappa I_{2n}})=\mu_{1,\tau}(\Upsilon_B)-n\left[\frac{\kappa\tau}{2\pi}\right],\\
&&m^0(\textsf{Q}_{B,\kappa I_{2n}})=\nu_{1,\tau}(\Upsilon_B),\label{e:brakeIndex2}
\end{eqnarray}
\end{theorem}

If $u\in\textsf{L}_\tau$ belongs to the zero space of $\textsf{Q}_{B,\kappa I_{2n}}$, i.e.,
$\textsf{Q}_{B,\kappa I_{2n}}(u,v)=0\;\forall v\in\textsf{L}_\tau$, it easily follows that
$w:=(\tilde{\Lambda}_{I_{2n},\tau, \kappa I_{2n}})^{-1}u\in \textsf{W}_\tau$
satisfies $J\dot{w}(t)+B(t)w(t)=0$
for a.e. $t\in\mathbb{R}$. Thus $u\ne 0$ implies that $w(t)\ne 0$ for a.e. $t\in\mathbb{R}$.
Repeating the proof of \cite[Lemma~3.18]{Do06} we may obtain:

\begin{theorem}\label{th:brakeIndexMono}
 Let $B_1, B_2\in C(\R, \R^{2n\times 2n})$ satisfy Assumption~\ref{ass:brakeB}. Assume $B_2>B_1$. Then
for each $\kappa\notin \frac{2\pi}{\tau}\mathbb{Z}$ such that
 $B_i(t)-\kappa I_{2n}\ge\varepsilon I_{2n}$ for all $t$ and  small enough $\varepsilon>0$, $i=1,2$, it holds that
\begin{eqnarray*}
m^-(\textsf{Q}_{B_2,\kappa I_{2n}})\ge m^-(\textsf{Q}_{B_1,\kappa I_{2n}})+  m^0(\textsf{Q}_{B_1,\kappa I_{2n}})
\end{eqnarray*}
or equivalently $\mu_{1,\tau}(\Upsilon_{B_2})\ge \mu_{1,\tau}(\Upsilon_{B_1})+  \nu_{1,\tau}(\Upsilon_{B_1})$.
\end{theorem}

Using the last two theorems, it is straightforward to give corresponding results with
\cite[\S4]{Liu05} and \cite[\S5]{Do06} for the system (\ref{e:Pbrake}) under suitable
conditions on $H$.
They will be provided elsewhere.

\section{Proof of Proposition~\ref{prop:threeBifu}}\label{app:threeBifu}

Since the operator $\Lambda_{M,\tau}\equiv \Lambda_{M,\tau, 0}$ defined by (\ref{e:LambdaKA})
is a self-adjoint unbounded linear operator in $L^2([0,\tau],\mathbb{R}^{2n})$  with domain
${\rm dom}(\Lambda_{M,\tau})=W^{1,2}_{M}([0,\tau]; \mathbb{R}^{2n})$,
by \cite[Remarks~2.2,2.3]{JinLu1916}, all eigenvalues of $\Lambda_{M,\tau}$ have form
\begin{equation*}
\cdots\leq \lambda_{-k}\le\cdots\leq \lambda_{-1}<0<\lambda_1\leq\cdots\leq \lambda_k\leq \cdots
\end{equation*}
 and there exists a unit orthogonal  basis of $L^2([0,\tau],\mathbb{R}^{2n})$,
 $\{e_j\,|\,\pm j\in\mathbb{N}\}\cup\{e_0^i\}_{i=1}^q$,
such that  $|e_j(t)|=|e_0^i(t)|\equiv 1\;\forall t$,
${\rm Ker}(\Lambda_{M,\tau})={\rm Span}(\{e_0^i\}_{i=1}^q)$ and that each $e_j$
is an eigenvector corresponding to $\lambda_j$, $j=\pm 1,\pm 2,\cdots$.

For $s\geq 0$ we say that $x\in L^2([0,\tau],\mathbb{R}^{2n})$ belongs to $E^s_M$ if and only if
$x=\sum_{k\in\mathbb{Z}}x_ke_k$ satisfies
\begin{equation*}
\sum_{k\in\mathbb{Z}}|\lambda_k|^{2s}x_k^2<\infty.
\end{equation*}
It is easy to prove that $E^s_M$ is a Hilbert space with respect to the inner product
\begin{equation*}
\langle x,y\rangle_{E^s_M} =\langle x_0,y_0\rangle_{\mathbb{R}^{2n}}+
\sum_{k\neq 0} |\lambda_k|^{2s}x_ky_k,\quad x, y\in E^s_M.
\end{equation*}
 Denote the associated norm by $\|\cdot\|_{E^s_M}$.
Note that $E^0_M=L^2([0,\tau],\mathbb{R}^{2n})$ and $\|\cdot\|_{E^0_M}=\|\cdot\|_{L^2}$.
For $t>s\ge 0$, the inclusion $E^t_M\hookrightarrow E^s_M$ is compact (\cite[Proposition~2.5]{JinLu1916}).

\begin{proof}[\bf Proof of Proposition~\ref{prop:threeBifu}]
{\it Proof of} (i).
 Let $(\lambda_k, \bar{u}_k)$ be a sequence of solutions of the problem
(\ref{e:Hboundary}) converging to $(\mu, u_\mu)$ in $\Lambda\times C^{0}([0,\tau];\R^{2n})$.
It suffices to prove that
$$
|\dot{\bar{u}}_k(t)-\dot{u}_\mu(t)|=|J\nabla_zH(\lambda_k,t, \bar{u}_k(t))-J\nabla_zH(\mu,t, u_\mu(t))|=
|\nabla_zH(\lambda_k,t, \bar{u}_k(t))-\nabla_zH(\mu,t, u_\mu(t))|
$$
uniformly converges to zero on $[0,\tau]$. Otherwise, there exists $\varepsilon>0$ and subsequences
$(k_i)$ and $(t_i)\subset [0, \tau]$ such that
\begin{equation}\label{e:threeBifu0}
|\nabla_zH(\lambda_{k_i}, t_i, \bar{u}_{k_i}(t_i))-\nabla_zH(\mu,t_i, u_\mu(t_i))|\ge\varepsilon,\quad\forall i=1,2,\cdots.
\end{equation}
Passing to a subsequence (if necessary) we can assume $t_i\to t_0\in [0, \tau]$. Note that
\begin{eqnarray*}
|\bar{u}_{k_i}(t_i)-u_\mu(t_0)|&\le&|\bar{u}_{k_i}(t_i)-u_\mu(t_i)|+|u_\mu(t_i)-u_\mu(t_0)|\\
&\le&\|\bar{u}_{k_i}-u_\mu\|_{C^0}+|u_\mu(t_i)-u_\mu(t_0)|\to 0.
\end{eqnarray*}
Since $\nabla_zH(\lambda,t,z)$ is continuous in  $(\lambda, t, z)\in\Lambda\times [0,\tau]\times\mathbb{R}^{2n}$ by Assumption~\ref{ass:BasiAss1},
letting $i\to\infty$ in (\ref{e:threeBifu0}) we obtain
$0=|\nabla_zH(\mu, t_0, u_\mu(t_0))-\nabla_zH(\mu,t_0, u_\mu(t_0))|\ge \varepsilon$, which is a contradiction.

{\it Proof of} (ii).
 Let $(\lambda_k, \bar{u}_k)$ be a sequence of solutions of the problem
(\ref{e:Hboundary}) converging to $(\mu, u_\mu)$ in $\Lambda\times \mathbb{E}$.
By \cite[Proposition~2.10]{JinLu1916}, all $\bar{u}_k$ and $u_\mu$ are $C^1$.

Firstly, we assume $M\ne I_{2n}$. By \cite[Proposition~2.5]{JinLu1916},
we have a compact inclusion $\mathbb{E}\to L^{2}([0,\tau];\R^{2n})$ and therefore,
up to pass to a subsequence,
\begin{equation}\label{e:threeBifu}
\|{\bar{u}}_k-{u}_\mu\|_{L^r}\to 0
\end{equation}
because $1<r\le 2$. Note that
\begin{eqnarray}\label{e:threeBifu1}
\|\dot{\bar{u}}_k-\dot{u}_\mu\|_{L^1}&=&
\int^\tau_0|\nabla_zH(\lambda_k,t, \bar{u}_k(t))-\nabla_zH(\mu,t, u_\mu(t))|dt\nonumber\\
&\le&\int^\tau_0|\nabla_zH(\lambda_k,t, {u}_\mu(t))-\nabla_zH(\mu,t, u_\mu(t))|dt\nonumber\\
&&+\int^\tau_0|\nabla_zH(\lambda_k,t, \bar{u}_k(t))-\nabla_zH(\lambda_k,t, u_\mu(t))|dt.
\end{eqnarray}
Since $\nabla_zH$  is continuous, by (\ref{e:growth})
it follows from Lebesgue dominated convergence theorem that
\begin{equation}\label{e:threeBifu2}
\lim_{k\to\infty}\int^\tau_0|\nabla_zH(\lambda_k,t, {u}_\mu(t))-\nabla_zH(\mu,t, u_\mu(t))|dt=0.
\end{equation}
 Note that \cite[Proposition C.1]{Lu9} is still true for $p\ge 1$ and that $\Lambda_0:=\{\mu, \lambda_k\,|\, k\in\mathbb{N}\}$
is sequentially compact. From (\ref{e:growth}) we deduce that  maps
\begin{equation*}
 L^r([0,\tau];\R^{2n})\to L^1([0,\tau];\R^{2n}),\; u\mapsto \nabla_zH(\lambda, \cdot, u(\cdot))
\end{equation*}
are uniform continuous at ${u}_\mu$ with respect to $\lambda\in\Lambda_0$. This result and (\ref{e:threeBifu}) imply
$$
\lim_{k\to\infty}\int^\tau_0|\nabla_zH(\lambda_k,t, \bar{u}_k(t))-\nabla_zH(\lambda_k,t, u_\mu(t))|dt=0.
$$
It follows from this, (\ref{e:threeBifu2}) and (\ref{e:threeBifu1}) that
$\|\dot{\bar{u}}_k-\dot{u}_\mu\|_{L^1}\to 0$ and so $\|{\bar{u}}_k-{u}_\mu\|_{1,1}\to 0$
by (\ref{e:threeBifu}). Then
\begin{equation}\label{e:threeBifu3}
\|{\bar{u}}_k-{u}_\mu\|_{C^0}\le \max\{\tau^{-1}, 1\}\|{\bar{u}}_k-{u}_\mu\|_{1,1}\to 0.
\end{equation}
By the proof of (i), $\bar{u}_k\to u_\mu$ in $C^1([0,\tau];\R^{2n})$.

Next, we consider the case $M=I_{2n}$. Then there exists a stronger result than \cite[Proposition~2.5]{JinLu1916}:
 $\mathbb{E}=H^{1/2}(S_\tau;\R^{2n})$ is compactly embedded in $L^{s}(S_\tau;\R^{2n})$ for each $s\in [1, \infty)$
(cf. \cite[Proposition~6.6]{Rab86} or Theorem~3 in \cite[Appendix~A.3]{HoZe94}). Therefore
(\ref{e:threeBifu}) is still true for the present $r$. Repeating the above arguments we may obtain the desired conclusions.
\end{proof}

\begin{question}
{\rm  When $M\ne I_{2n}$, can $\mathbb{E}$ be embedded in $L^{s}(S_\tau;\R^{2n})$ for each $s\in (2, \infty)$?
If this is affirmative, then the condition ``$1<r\le 2$'' may be replaced by ``$1<r<\infty$''.
}
\end{question}

\section{Generalizations and corrections for related results in \cite{Lu3, Lu8, Lu10} }\label{app:Th3.5}

We begin with the following remark about \cite[Theorems~A.1,A.2]{Lu3}.

\begin{remark}\label{rm:splitting}
{\rm In \cite[Theorems~A.1,A.2]{Lu3} we assumed that the topological space $\Lambda$ is compact.
Actually, ``compact'' can be replaced by ``first countable and sequentially compact''.
Here we say the topological space $\Lambda$ to be {\it sequentially compact} if every
sequence $(\lambda_n)$ of points in it has a subsequence converging to some $\lambda\in\Lambda$.
Recall that compact topological spaces satisfying the first axiom of countability  must be sequentially compact.
In what follows let us point out how these different conditions of compactness  are used in the proofs of \cite[Theorems~A.1,A.2]{Lu3}.
\begin{enumerate}
\item[\rm (i)] For the proof of ``$\phi$ is continuous'' in  \cite[line 20, page 2980]{Lu3} using nets we can complete it without any requirements.
If $\Lambda$ is first countable we can easily prove this claim with sequences. (See (v) below.)

\item[\rm (ii)] For the proof of ``Step 1'' in  \cite[page 2980]{Lu3}, from the proof of Claim~A.3 in the proof of \cite[Theorem~A.2]{Lu3}
we see that the sequential compactness of $\Lambda$ was actually used. When $\Lambda$ is only compact we may work with nets instead of sequences
to complete the desired proof.

\item[\rm (iii)] Our proof of ``Step 2'' in  \cite[pages 2980-2981]{Lu3} actually used the assumption that
$\Lambda$ is first countable and sequentially compact. If $\Lambda$ is only compact the expected proof can be completed with nets.

\item[\rm (iv)] The proof of ``(A.2)'' in  \cite[page 2981]{Lu3} actually used the sequential compactness of
$\Lambda$. For compact $\Lambda$ a complete proof can be given with nets.

\item[\rm (v)] As in (i), for the proof of ``$\phi$ is continuous'' in Step 7 in  \cite[page 2983]{Lu3}
we need to use nets. It suffices to use sequences for first countable $\Lambda$.

These five remarks are effective for the proof of \cite[Theorem~A.2]{Lu3}.
\end{enumerate}
In summary, the topological space $\Lambda$ in \cite[Theorems~A.1,A.2]{Lu3} should be either {\it compact} or {\it first countable and sequentially compact}.
Because of these we have also:
\begin{enumerate}
\item[\rm (vi)]  The topological space $\Lambda$ in \cite[Theorem~A.3]{Lu8} and \cite[Theorem~3.3]{Lu10}
should be assumed to be {\it first countable and sequentially compact} because first
countable compact spaces are sequentially compact and our proof for (A.11) in the proof of \cite[Theorem~A.3]{Lu8} actually used the fact that $\Lambda$ has a countable neighborhood basis at $\lambda^\ast$.
\end{enumerate}
 }
\end{remark}

Let $\Lambda$ be a topological space, and let $X,Y$ be Banach spaces.
For continuous maps $F:\Lambda\times X\to Y$ and $\Lambda\ni\lambda\mapsto x_\lambda\in X$
satisfying $F(\lambda,x_\lambda)=0$ for all $\lambda\in\Lambda$, recall that
$(\lambda^\ast, x_{\lambda^\ast})\in\Lambda\times X$ is called a \textsf{bifurcation point} of
$F(\lambda,x)=0$ in $\Lambda\times X$ with respect to the trivial branch $\{(\lambda,x_\lambda)\,|\,\lambda\in\Lambda\}$
if every neighborhood $U$ of $(\lambda^\ast, x_{\lambda^\ast})$ contains a point $(\lambda, y_\lambda)\ne (\lambda,x_\lambda)$
satisfying $F(\lambda,y_\lambda)=0$. If $\Lambda$  is first countable,
this definition is equivalent to the following:

\begin{definition}\label{def:seqbifur}
{\rm Under the assumptions above, $(\lambda^\ast, x_{\lambda^\ast})\in\Lambda\times X$ is said to be
a \textsf{bifurcation point along sequences} of
$F(\lambda,x)=0$ in $\Lambda\times X$ with respect to the trivial branch $\{(\lambda,x_\lambda)\,|\,\lambda\in\Lambda\}$
if there exists a sequence $\{(\lambda_n, y_n)\}_{n\ge 1}$ of solutions of $F(\lambda,x)=0$ in $\Lambda\times X$ converging to $(\lambda^\ast, x_{\lambda^\ast})$
in $\Lambda\times X$ such that $y_n\ne x_{\lambda_n}$ for all $n\in\mathbb{N}$.}
\end{definition}

Clearly, a bifurcation point along sequences must be a bifurcation point.

\begin{hypothesis}[{\cite[Hypothesis~1.3]{Lu8}}]\label{hyp:A.5}
{\rm Let $H$ be a Hilbert space with inner product $(\cdot,\cdot)_H$
and the induced norm $\|\cdot\|$, and let $X$ be a Banach space with
norm $\|\cdot\|_X$, such that $X\subset H$ is dense in $H$ and $\|x\|\le\|x\|_X\;\forall x\in X$.
For an open neighborhood $U$ of $0$ in $H$, $U\cap X$
is also an open neighborhood of $0$ in $X$, denoted by $U^X$.
 Let $\mathcal{L}:U\to\mathbb{R}$ be a continuous functional  satisfying  the following
conditions:
\begin{description}
\item[(F1)] $\mathcal{L}$ is continuously directionally
differentiable and $D\mathcal{L}(0)=0$.
\item[(F2)] There exists a continuous and continuously directionally differentiable
 map $A: U^X\to X$, which is also  strictly Fr\'{e}chet differentiable
 at $0$,   such that
$D\mathcal{ L}(x)[u]=(A(x), u)_H$ for all $x\in U\cap X$ and $u\in X$.
\item[(F3)] There exists a map $B: U\cap X\to \mathscr{L}_s(H)$ such that
$(DA(x)[u], v)_H=(B(x)u, v)_H$ for all $x\in U\cap X$  and
$u, v\in X$. (So $B(x)$ induces an element in $\mathscr{L}(X)$, denoted by $B(x)|_X$,
and $B(x)|_X=DA(x)\in\mathscr{L}(X),\;\forall x\in U\cap X$.)
\item[(C)]   $\{u\in H\,|\, B(0)(u)\in X\}\subset X$,
in particular  ${\rm Ker}(B(0))\subset X$.
\item[(D)] $B$ satisfies the same conditions as in \cite[Hypothesis~1.1]{Lu8},
that is,
\begin{description}
\item[(D1)]  $\{u\in H\,|\, B(0)u=\mu u,\;\mu\le 0\}\subset X$,
\end{description}
and $B$ has a decomposition $B=P+Q$, where for each $x\in U\cap X$,  $P(x)\in\mathscr{L}_s(H)$ is
 positive definite and
$Q(x)\in\mathscr{L}_s(H)$ is compact, such that the maps $P$ and $Q$  also  satisfy the following
properties:
\begin{description}
\item[(D2)] For any sequence $(x_k)\subset
U\cap X$ with $\|x_k\|\to 0$, it holds that $\|P(x_k)u-P(0)u\|\to 0$ for any $u\in H$.
\item[(D3)] The  map $Q: U\cap X\to \mathscr{L}_s(H)$ is continuous at $0$ with respect to the topology
on $H$.
\item[(D4)] For any sequence $(x_k)\subset U\cap X$ with $\|x_k\|\to 0$, there exist
 constants $C_0>0$ and $k_0\in\N$ such that
$(P(x_k)u, u)_H\ge C_0\|u\|^2$ for all $u\in H$ and for all $k\ge k_0$.
\end{description}
\end{description}}
\end{hypothesis}

We say an isolated critical point $p$ of a $C^1$-functional $f$ on a Banach manifold $\mathcal{M}$
to be \textsf{homologically visible} if there exists a nonzero critical
 group $C_m(f,p;\mathbf{K})$ for some
 Abelian group $\mathbf{K}$.

In view of Remark~\ref{rm:splitting} we may revise and refine \cite[Theorem~3.5]{Lu10} as follows.

\begin{theorem}\label{th:A.9}
Let $H$, $X$ and $U$ be as in Hypothesis~\ref{hyp:A.5},
 and let $\Lambda$ be a first countable and sequentially compact
  topological space.
Let $\mathcal{L}_\lambda\in C^1(U, \mathbb{R})$, $\lambda\in\Lambda$, be a continuous family of functionals
    satisfying $\nabla\mathcal{L}_\lambda(0)=0$ for all $\lambda\in\Lambda$.
 For each $\lambda\in\Lambda$, assume that
there exist maps $A_\lambda\in C^1(U^X, X)$ and $B_\lambda:U\cap X\to\mathscr{L}_s(H)$
such that
\begin{enumerate}
\item[\rm a)] $\Lambda\times U^X\ni (\lambda, x)\to A_\lambda(x)\in X$ is continuous;
\item[\rm b)]  for all $x\in U\cap X$  and $u, v\in X$,
 \begin{equation}\label{e:LAB}
 D\mathcal{L}_\lambda(x)[u]=(A_\lambda(x), u)_H\quad\hbox{and}\quad
(DA_\lambda(x)[u], v)_H=(B_\lambda(x)u, v)_H;
\end{equation}
\item[\rm c)]  $B_\lambda$ has a decomposition
$B_\lambda=P_\lambda+Q_\lambda$, where for each $x\in U\cap X$,
 $P_\lambda(x)\in\mathscr{L}_s(H)$ is  positive definitive and
$Q_\lambda(x)\in\mathscr{L}_s(H)$ is compact.
\end{enumerate}
   Suppose also that for some $\lambda^\ast\in\Lambda$, $P_\lambda$ and $Q_\lambda$ satisfy the following conditions:
   \begin{enumerate}
\item[\rm (i)]  For each $h\in H$, it holds that $\|P_{\lambda}(x)h-P_{\lambda^\ast}(0)h\|\to 0$
as $x\in U\cap X$ approaches to $0$ in $H$ and $\lambda\in\Lambda$ converges to $\lambda^\ast$.

 \item[\rm (ii)]  For some small $\delta>0$, there exists  $c_0>0$ such that
$$
(P_\lambda(x)u, u)\ge c_0\|u\|^2\quad\forall u\in H,\;\forall x\in
\bar{B}_H(0,\delta)\cap X,\quad\forall\lambda\in \Lambda.
$$
 \item[\rm (iii)]  $Q_\lambda: U\cap X\to \mathscr{L}_s(H)$ is uniformly
  continuous at $0$  with respect to $\lambda\in \Lambda$.
  \item[\rm (iv)]  If $\lambda\in \Lambda$ converges to $\lambda^\ast$ then
  $\|Q_{\lambda}(0)-Q_{\lambda^\ast}(0)\|\to 0$.
   \item[\rm (v)] Each tuple $(\mathcal{L}_{\lambda}, H, X, U, A_{\lambda}, B_{\lambda}=P_{\lambda}+ Q_{\lambda})$,
   $\lambda\in\Lambda$, satisfies Hypothesis~\ref{hyp:A.5}.
   \end{enumerate}
  Then there holds:
    \begin{description}
\item[(A)] If $(\lambda^\ast, 0)$ is not a bifurcation point  of $\nabla\mathcal{L}_\lambda(x)=0$
   in $\Lambda\times U$, (which implies that $0$ is an isolated critical point
   of $\mathcal{L}_{\lambda}$ for each $\lambda$ near $\lambda^\ast$),
    $\Lambda$ is  sequentially compact,
and ${\rm Ker}(B_{\lambda^\ast}(0))\ne\{0\}$, then
   $C_\ast(\mathcal{L}_{\lambda}, 0;{\bf K})\cong
   C_\ast(\mathcal{L}_{\lambda^\ast}, 0;{\bf K})$ for any
    $\lambda$  in a small neighborhood of $\lambda^\ast\in\Lambda$ and for any Abelian group ${\bf K}$.
      ({\rm Note}: From the proof it is easily seen that
   (v) need only to be satisfied for $\lambda^\ast$.)
    In addition, the condition ``$C_\ast(\mathcal{L}_{\lambda}, 0;{\bf K})\cong C_\ast(\mathcal{L}_{\lambda^\ast}, 0;{\bf K})$''
  may be changed into ``$C_\ast(\mathcal{L}_{\lambda}|_{U^X}, 0;{\bf K})\cong C_\ast(\mathcal{L}_{\lambda^\ast}|_{U^X}, 0;{\bf K})$''
    provided that for each $\lambda$ near
    $\lambda^\ast$, $\mathcal{L}_\lambda|_{U^X}\in C^2(U^X, \mathbb{R})$
  and $B_\lambda$ is continuous as a map from $U^X$ to $\mathscr{L}_s(H)$.

  \item[(B)] Suppose  that there exist two sequences in  $\Lambda$ converging to $\lambda^\ast$, $(\lambda_k^-)$ and
$(\lambda_k^+)$,  such that for each $k\in\mathbb{N}$, $[\mu_{\lambda^+_k}, \mu_{\lambda^+_k}+\nu_{\lambda^+_k}]\cap[\mu_{\lambda^-_k}, \mu_{\lambda^-_k}+\nu_{\lambda^-_k}]=\emptyset$ and at least one of the following conditions is satisfied:
 \begin{enumerate}
\item[\rm (B.1)] There exists $\lambda\in\{\lambda^+_k, \lambda^-_k\}$ such that $0$  is  either
a nonisolated or a homologically visible critical point of
$\mathcal{L}_{\lambda}$.
\item[\rm (B.2)] $\mathcal{L}_\lambda|_{U^X}\in C^2(U^X, \mathbb{R})$ and $B_\lambda$ is continuous
 as a map from $U^X$ to $\mathscr{L}_s(H)$ for each $\lambda\in\{\lambda^+_k, \lambda^-_k\}$, and there exists $\lambda\in\{\lambda^+_k, \lambda^-_k\}$ such that $0$  is either a nonisolated or a homological visible critical point of $\mathcal{L}_{\lambda}|_{U^X}$.
\item[\rm (B.3)] For each $k$, either $\nu_{\lambda^+_k}=0$ or $\nu_{\lambda^-_k}=0$.
 \end{enumerate}
  (Here $\mu_{\lambda}=\dim H^-_\lambda$ and $\nu_{\lambda}=\dim H^0_\lambda$ are the dimensions of
the negative spectral subspace $H^-_\lambda$ and the kernel $H^0_\lambda$ of $B_\lambda(0)$, respectively.)
  Then $\nu_{\lambda^\ast}>0$ and there exists a sequence $\{(\lambda_k, x_k)\}_{k\ge 1}$  in $\hat\Lambda\times U$
   converging to $(\lambda^\ast, 0)$, where $\hat{\Lambda}:=\{\lambda^\ast,\lambda^+_k, \lambda^-_k\,|\,k\in\mathbb{N}\}$, such that each $x_k$
  is a nonzero solution of $\nabla\mathcal{L}_{\lambda_k}(x)=0$, $k=1,2,\cdots$.
       In particular,
    $(\lambda^\ast, 0)$ is a bifurcation point  of $\nabla\mathcal{L}_\lambda(x)=0$
   in $\hat\Lambda\times U$ (and so in $\Lambda\times U$).
 \end{description}
\end{theorem}
That $(\lambda^\ast, 0)$ is not a bifurcation point  of $\nabla\mathcal{L}_\lambda(x)=0$
   in $\Lambda\times U$ also implies that $(\lambda^\ast, 0)$ is not a bifurcation point  of $D(\mathcal{L}_{\lambda}|_{U^X})(x)=0$
   in $\Lambda\times U^X$. The latter claim shows that
 $0$ is also an isolated critical point of $\mathcal{L}_{\lambda}|_{U^X}$
 for each $\lambda$ near $\lambda^\ast$
and thus $C_\ast(\mathcal{L}_{\lambda}|_{U^X}, 0;{\bf K})$ is well-defined.
The condition ``$C_\ast(\mathcal{L}_{\lambda}|_{U^X}, 0;{\bf K})\cong C_\ast(\mathcal{L}_{\lambda^\ast}|_{U^X}, 0;{\bf K})$''
is easily checked in applications.

\begin{proof}[\bf Proof]
{\it Proof of} (A).
Since ${\rm Ker}(B_{\lambda^\ast}(0))\ne\{0\}$ and $\Lambda$ is first countable and sequentially compact,
by Remark~\ref{rm:splitting}(vi) we may use \cite[Theorem~3.3]{Lu10} or \cite[Theorem~A.3]{Lu8} to find a neighborhood
 $\Lambda_0$ of $\lambda^\ast$ in $\Lambda$, $\epsilon>0$, a (unique) $C^0$ map
$\psi:\Lambda_0\times B_{H^0_{\lambda^\ast}}(0,\epsilon)\to X^\pm_{\lambda^\ast}$
which is $C^1$ in the second variable and
satisfies $\psi(\lambda, 0)=0\;\forall\lambda\in \Lambda_0$ and
\begin{equation}\label{e:Spli.2.1.2}
 P^\pm_{\lambda^\ast}A_\lambda(z+ \psi(\lambda,z))=0\quad\forall (\lambda,z)\in \Lambda_0
 \times B_{H^0_{\lambda^\ast}}(0,\epsilon),
 \end{equation}
and a homeomorphism $\Phi_\lambda$ from $B_{H^0_{\lambda^\ast}}(0,\epsilon)\oplus
B_{H^+_{\lambda^\ast}}(0, \epsilon)\oplus B_{H^-_{\lambda^\ast}}(0, \epsilon)$ onto
an open neighborhood of $0$ in $H$ satisfying $\Phi_\lambda(0)=0$ for each $\lambda\in\Lambda_0$,
 such that for each $\lambda\in \Lambda_0$,
\begin{eqnarray}\label{e:Spli.2.2}
&&\mathcal{L}_{\lambda}\circ\Phi_{\lambda}(z, u^++ u^-)=\|u^+\|^2-\|u^-\|^2+ \mathcal{
L}_{{\lambda}}(z+ \psi({\lambda}, z))\\
&& \quad\quad \forall (z, u^+ + u^-)\in  B_{H^0_{\lambda^\ast}}(0,\epsilon)\times
\left(B_{H^+_{\lambda^\ast}}(0, \epsilon) + B_{H^-_{\lambda^\ast}}(0, \epsilon)\right).\nonumber
\end{eqnarray}
 Moreover, the functional
\begin{equation}\label{e:Spli.2.3}
\mathcal{L}_{\lambda}^\circ: B_{H^0_{\lambda^\ast}}(0,\epsilon)\to \mathbb{R},\;
z\mapsto\mathcal{L}_{\lambda}(z+ \psi({\lambda}, z))
\end{equation}
 is of class $C^{2}$, and
 \begin{eqnarray}\label{e:Spli.2.4}
&& d\mathcal{L}^\circ_\lambda(z)[\zeta]=\bigl(A_\lambda(z+ \psi(\lambda, z)), \zeta\bigr)_H,\quad\forall
(z,\zeta)\in B_{H^0_{\lambda^\ast}}(0,\epsilon)\times H^0_{\lambda^\ast}.
   \end{eqnarray}
 By  (\ref{e:Spli.2.1.2}) and (\ref{e:Spli.2.4}),   for each $\lambda\in\Lambda_0$,
the map $z\mapsto z+ \psi({\lambda}, z))$ induces an one-to-one correspondence
 between the critical points of  $\mathcal{L}_{\lambda}^\circ$ near $0\in H^0_{\lambda^\ast}$
and zeros of $A_{\lambda}$ near $0\in X$.

Since $(\lambda^\ast, 0)\in\Lambda\times U$
 is not a bifurcation point of $\nabla\mathcal{L}_\lambda(x)=0$ in $\Lambda\times U$,
 by shrinking $\Lambda_0$ towards $\lambda^\ast$ (in $\Lambda$) we can find $\delta>0$ such that
$\nabla\mathcal{L}_\lambda(x)=0$  has only trivial solutions in $\Lambda_0\times B_H(0,\delta)\subset\Lambda_0\times U$
 (and therefore $A_\lambda(x)=0$ has only trivial solutions in $\Lambda_0\times B_X(0,\delta)\subset\Lambda_0\times B_H(0,\delta)$).
 It follows from this and the sentence below (\ref{e:Spli.2.4}) that after shrinking $\epsilon>0$ (if necessary),
 \begin{eqnarray}\label{e:Spli.2.4.0}
\hbox{for each $\lambda\in \Lambda_0$ the functional $\mathcal{L}^\circ_\lambda$ has a unique
critical point $0$ in ${B}_{H^0_{\lambda^\ast}}(0, \epsilon)$.}
\end{eqnarray}
Since $\psi$ is continuous, by the assumption c), (\ref{e:Spli.2.3}) and (\ref{e:Spli.2.4}), the maps
\begin{eqnarray*}
&&\Lambda_0\times {B}_{H^0_{\lambda^\ast}}(0, \epsilon)\ni (\lambda, z)\mapsto\mathcal{L}_{\lambda}^\circ(z)\in\R\quad\hbox{and}\\
&&\Lambda_0\times {B}_{H^0_{\lambda^\ast}}(0, \epsilon)\ni (\lambda, z)\mapsto
d\mathcal{L}^\circ_\lambda(z)\in H_{\lambda^\ast}^0=X_{\lambda^\ast}^0
\end{eqnarray*}
 are  continuous. Let us show that
 \begin{eqnarray}\label{e:Spli.2.4.1}
&&\lim_{\lambda\to\lambda^\ast}\sup\{|\mathcal{L}_{\lambda}^\circ(z)-
\mathcal{L}_{\lambda^\ast}^\circ(z)|:\,z\in\bar{B}_{H^0_{\lambda^\ast}}(0, \epsilon/2)\}=0,\\
&&\lim_{\lambda\to\lambda^\ast}\sup\{\|d\mathcal{L}_{\lambda}^\circ(z)-
d\mathcal{L}_{\lambda^\ast}^\circ(z)\|:\,z\in\bar{B}_{H^0_{\lambda^\ast}}(0, \epsilon/2)\}=0.
\label{e:Spli.2.4.2}
\end{eqnarray}
Clearly, (\ref{e:Spli.2.4.2}) implies (\ref{e:Spli.2.4.1}) because $\mathcal{L}_{\lambda}^\circ(0)-
\mathcal{L}_{\lambda^\ast}^\circ(0)=\mathcal{L}_{\lambda}(0)-\mathcal{L}_{\lambda^\ast}(0)\to 0$ as $\lambda\to\lambda^\ast$.
(Recall that $\Lambda$ has a countable neighborhood basis at $\lambda^\ast$). By contradiction (\ref{e:Spli.2.4.2}) may follow
from the sequential compactness of  $\bar{B}_{H^0_{\lambda^\ast}}(0, \epsilon/2)$.
For any two different points in $\bar{B}_{H^0_{\lambda^\ast}}(0, \epsilon/2)$, $z_1,z_2$, it follows from the mean value theorem that
 \begin{eqnarray*}
\frac{|(\mathcal{L}_{\lambda}^\circ-\mathcal{L}_{\lambda^\ast}^\circ)(z_1)-
(\mathcal{L}_{\lambda}^\circ-\mathcal{L}_{\lambda^\ast}^\circ)(z_2)|}{|z_1-z_2|}
\le\int^1_0\|d\mathcal{L}_{\lambda}^\circ(tz_1+(1-t)z_2)-
d\mathcal{L}_{\lambda^\ast}^\circ(tz_1+(1-t)z_2)\|dt.
\end{eqnarray*}
This and (\ref{e:Spli.2.4.2}) lead to
\begin{eqnarray*}
\|\mathcal{L}_{\lambda}^\circ-\mathcal{L}_{\lambda^\ast}^\circ\|_{\rm Lip}:=
\sup\left\{\frac{|(\mathcal{L}_{\lambda}^\circ-\mathcal{L}_{\lambda^\ast}^\circ)(z_1)-
(\mathcal{L}_{\lambda}^\circ-\mathcal{L}_{\lambda^\ast}^\circ)(z_2)|}{|z_1-z_2|}\,\Bigg|\,
z_1,z_2\in\bar{B}_{H^0_{\lambda^\ast}}(0, \epsilon/2),\;z_1\ne z_2\right\}\to 0
\end{eqnarray*}
as $\lambda\to\lambda^\ast$. By this and (\ref{e:Spli.2.4.0})-(\ref{e:Spli.2.4.1})  we derive from  \cite[Theorem 5.1]{CorH} that
 critical groups
 \begin{eqnarray}\label{e:criticalgroup}
 C_\ast(\mathcal{L}^\circ_\lambda, 0;{\bf K})=C_\ast(\mathcal{L}^\circ_{\lambda^\ast}, 0;{\bf K})\quad\forall \lambda\in \Lambda_0
\end{eqnarray}
 for any Abelian group ${\bf K}$ by shrinking $\Lambda_0$ towards $\lambda^\ast$ (if necessary).
Moreover $0\in U$ is a unique critical point of $\mathcal{L}_\lambda$ in $B_H(0,\delta)\subset U$.
Using  (\ref{e:Spli.2.4.0}) and \cite[Corollary~A.5]{Lu8} we obtain
\begin{eqnarray*}
C_q(\mathcal{L}_{{\lambda}}, 0;{\bf K})=C_{q-\mu_{\lambda^\ast}}(\mathcal{L}^\circ_{{\lambda}}, 0;{\bf K}),\quad\forall
(\lambda,q)\in\Lambda_0\times(\mathbb{N}\cup\{0\}).
\end{eqnarray*}
This and (\ref{e:criticalgroup}) lead to
$ C_\ast(\mathcal{L}_\lambda, 0;{\bf K})=C_\ast(\mathcal{L}_{\lambda^\ast}, 0;{\bf K})$
for all $\lambda\in \Lambda_0$.

Under the assumptions in part ``In addition'' of (A), if ${\rm Ker}(B_\lambda(0))\ne\{0\}$ (resp. ${\rm Ker}(B_\lambda(0))=\{0\}$)
by Jiang \cite{JM} there exists a splitting lemma (resp. Morse lemma) for $\mathcal{L}_{\lambda}|_{U^X}$
near $0\in U^X$. (The case of ${\rm Ker}(B_\lambda(0))=\{0\}$ can be easily obtained from the proof therein.)
By \cite[Corollary~2.8]{JM} we get $C_\ast(\mathcal{L}_{\lambda}|_{U^X}, 0;{\bf K})\cong
   C_\ast(\mathcal{L}_{\lambda}, 0;{\bf K})$ for each $\lambda$ near $\lambda^\ast$.
   The required conclusion follows from the last one.

{\it Proof of} (B). Firstly, we prove $\nu_{\lambda^\ast}>0$.
 Otherwise,  ${\rm Ker}(B_{\lambda^\ast}(0))=\{0\}$.
 By Theorem~\ref{th:A.15}, there exists a sequentially compact neighborhood
$\tilde{\Lambda}$ of $\lambda^\ast$ in $\Lambda$ such that,
 for each $\lambda\in \tilde{\Lambda}$,  $0$ is an isolated critical point of $\mathcal{L}_\lambda$.
Moreover,  for any Abelian group ${\bf K}$, the critical groups are given by
\begin{eqnarray}\label{e:criticalgroup*}
 C_q(\mathcal{L}_\lambda, 0;{\bf K})=\delta^q_{\mu_{\lambda^\ast}}{\bf K}
 \quad\forall (\lambda, q)\in \tilde\Lambda\times\mathbb{Z},
\end{eqnarray}
 where $\delta^q_p=1$ if $p=q$, and $\delta^q_p=0$ if $p\ne q$.

 Choose a large $k$ so that $\lambda_k^+$ and $\lambda_k^-$ belong to $\tilde\Lambda$.
    Because of the assumption (v) in Theorem~\ref{th:A.9} it follows from \cite[(2.7)]{Lu3}
    and the shifting theorem \cite[Corollary~2.6]{Lu3} that
    for any Abelian group ${\bf K}$ and $\star=+,-$,
\begin{eqnarray}\label{e:criticalgroup**}
&&C_q(\mathcal{L}_{{\lambda_k^\star}}, 0;{\bf K})=\delta^q_{\mu_{\lambda^\star_k}}{\bf K}\;\;\forall q\in\mathbb{N}\cup\{0\}\;\hbox{if}\;\nu_{\lambda_k^\star}=0,\\
&&C_q(\mathcal{L}_{{\lambda_k^\star}}, 0;{\bf K})=0\;\;\forall q\notin[\mu_{\lambda_k^\star}, \mu_{\lambda_k^\star}+\nu_{\lambda_k^\star}]
\;\hbox{if}\;\nu_{\lambda_k^\star}>0,\label{e:criticalgroup***}
\end{eqnarray}
respectively. (See \cite[Corollary~5.1]{Ch} for the latter). From (\ref{e:criticalgroup*}), (\ref{e:criticalgroup**}) and
(\ref{e:criticalgroup***}) we deduce
$$
\mu_{\lambda^\ast}\in
[\mu_{\lambda_k^-}, \mu_{\lambda_k^-}+\nu_{\lambda_k^-}]\cap[\mu_{\lambda_k^+}, \mu_{\lambda_k^+}+\nu_{\lambda_k^+}],
$$
which contradicts the first condition in (B). Hence $\nu_{\lambda^\ast}>0$.

\textsf{By contradiction, suppose that  $(\lambda^\ast, 0)\in\hat\Lambda\times U$
 is not a bifurcation point of $\nabla\mathcal{L}_\lambda(x)=0$ in $\hat\Lambda\times U$.}
Here $\hat{\Lambda}=\{\lambda^\ast,\lambda^+_k, \lambda^-_k\,|\,k\in\mathbb{N}\}$.
Define $\hat\Lambda_0=\hat{\Lambda}\cap\Lambda_0$, where $\Lambda_0$ is as in the proof
(A). Then for each $\lambda\in \hat\Lambda_0$,
\begin{description}
\item[($\spadesuit$)] the functional $\mathcal{L}_\lambda$ has an isolated critical point $0$ in $U$,
\item[($\clubsuit$)] the functional $\mathcal{L}^\circ_\lambda$ has a unique
critical point $0$ in ${B}_{H^0_{\lambda^\ast}}(0, \epsilon)$.
\end{description}
Clearly,  the first paragraph in the proof (A) is still valid
after $\Lambda$ and $\Lambda_0$ are replaced by $\hat\Lambda$ and $\hat\Lambda_0$, respectively.
Repeating the arguments in the second paragraph in Step 1, for any Abelian group ${\bf K}$ we get
 $$
 C_q(\mathcal{L}_\lambda, 0;{\bf K})=C_q(\mathcal{L}_{\lambda^\ast}, 0;{\bf K})
 $$
  for all $(\lambda, q)\in \hat\Lambda_0\times(\mathbb{N}\cup\{0\})$,
 and hence
\begin{eqnarray}\label{e:criticalgroup****}
C_q(\mathcal{L}_{{\lambda_k^+}}, 0;{\bf K})=C_q(\mathcal{L}_{{\lambda_k^-}}, 0;{\bf K}),\;\;\forall q\in\mathbb{N}\cup\{0\}
\end{eqnarray}
if $k$ is so large that $\lambda_k^+,\lambda_k^-\in\hat\Lambda_0$.
 For such a large $k$, we can obtain a contradiction under any of the assumptions (B.1), (B.2) and (B.3) as follows.

 {\bf Case (B.1)}: Because of ($\spadesuit$), $0$  is a homologically visible critical point of
$\mathcal{L}_{{\lambda_k^+}}$ and $\mathcal{L}_{{\lambda_k^-}}$.
Combining with (\ref{e:criticalgroup****}) we derive
$C_m(\mathcal{L}_{{\lambda_k^+}}, 0;{\bf K})=C_m(\mathcal{L}_{{\lambda_k^-}}, 0;{\bf K})\ne 0$
for some Abelian group ${\bf K}$ and some $m\in\mathbb{N}\cup\{0\}$.
As above this and (\ref{e:criticalgroup**})-(\ref{e:criticalgroup***}) yield
a contradiction to the first condition in (B).

{\bf Case (B.2)}:
Since ($\spadesuit$) implies that $0\in U^X$ is an isolated critical
 point $\mathcal{L}_{\lambda}|_{U^X}$ for any $\lambda\in\{\lambda^+_k, \lambda^-_k\}$,
$0$  is a homological visible critical point of either
$\mathcal{L}_{{\lambda_k^+}}|_{U^X}$ or $\mathcal{L}_{{\lambda_k^-}}|_{U^X}$.
 It follows from our assumptions that
  either $C_m(\mathcal{L}_{{\lambda_k^+}}|_{U^X}, 0;{\bf K})\ne 0$
 for some Abelian group ${\bf K}$ and some $m\in\mathbb{N}\cup\{0\}$,
 or $C_n(\mathcal{L}_{{\lambda_k^-}}|_{U^X}, 0;{\bf K}')\ne 0$
 for some Abelian group ${\bf K}'$ and some $n\in\mathbb{N}\cup\{0\}$.
As in the final paragraph in the proof (A), for any Abelian group ${\bf G}$  we have also $C_\ast(\mathcal{L}_{\lambda}|_{U^X}, 0;{\bf G})\cong
   C_\ast(\mathcal{L}_{\lambda}, 0;{\bf G})$ for $\lambda=\lambda^+_k, \lambda^-_k$.
These and (\ref{e:criticalgroup****}) lead to either
\begin{eqnarray*}
C_m(\mathcal{L}_{{\lambda_k^+}}, 0;{\bf K})=C_m(\mathcal{L}_{{\lambda_k^-}}, 0;{\bf K})\ne 0\quad\hbox{or}\quad
C_n(\mathcal{L}_{{\lambda_k^+}}, 0;{\bf K}')=C_n(\mathcal{L}_{{\lambda_k^-}}, 0;{\bf K}')\ne 0.
\end{eqnarray*}
As above, from these and (\ref{e:criticalgroup**})-(\ref{e:criticalgroup***}) we derive
a contradiction to the first condition in (B).

{\bf Case (B.3)}: This case can also be divided into three cases.

\underline{{\it Case 1}: $\nu_{\lambda_k^+}=0$ and $\nu_{\lambda_k^-}=0$}.
 Because of ($\spadesuit$), we may use (\ref{e:criticalgroup**}) and
(\ref{e:criticalgroup****}) to derive
$\mu_{\lambda^+_k}=\mu_{\lambda^\ast}=\mu_{\lambda^-_k}$, which contradicts
the first condition in (B).

 \underline{{\it Case 2}: $\nu_{\lambda_k^+}=0$ and $\nu_{\lambda_k^-}>0$}.  Because ($\spadesuit$),
 (\ref{e:criticalgroup**}) with $\star=+$, (\ref{e:criticalgroup****})  and
 (\ref{e:criticalgroup***}) with $\star=-$  show that $\mu_{\lambda^+_k}\in[\mu_{\lambda_k^-}, \mu_{\lambda_k^-}+\nu_{\lambda_k^-}]$.
 This  contradicts the first condition in (B).

  \underline{{\it Case 3}: $\nu_{\lambda_k^+}>0$ and $\nu_{\lambda_k^-}=0$}.
  This case may be proved as in Case 2.
\end{proof}

 The following bifurcation theorem may be viewed as a global version of Theorem~\ref{th:A.9}.

\begin{theorem}\label{th:A.9+}
Under the assumptions a)-c) and (v) of Theorem~\ref{th:A.9} \textsf{without requirement of the first countability for} $\Lambda$, suppose that the  topological
space $\Lambda$ is path connected  and the following conditions are satisfied.
\begin{enumerate}
\item[\rm d)]  $P_\lambda$ and $Q_\lambda$ satisfy the conditions
(i)-(iv) of Theorem~\ref{th:A.9} for any $\lambda^\ast\in \Lambda$;
\item[\rm e)] There exist two  points $\lambda^+, \lambda^-\in\Lambda$ such that
one of the following conditions is satisfied:
 \begin{enumerate}
 \item[\rm (e.1)] Either $0$  is not an isolated critical point of $\mathcal{L}_{\lambda^+}$,
 or $0$ is not an isolated critical point of $\mathcal{L}_{\lambda^-}$,
 or $0$  is an isolated critical point of $\mathcal{L}_{\lambda^+}$ and $\mathcal{L}_{\lambda^-}$ and
  $C_m(\mathcal{L}_{\lambda^+}, 0;{\bf K})$ and $C_m(\mathcal{L}_{\lambda^-}, 0;{\bf K})$ are not isomorphic for some Abelian group ${\bf K}$
and some $m\in\mathbb{Z}$.  Moreover, ``$C_m(\mathcal{L}_{\lambda^+}, 0;{\bf K})$ and $C_m(\mathcal{L}_{\lambda^-}, 0;{\bf K})$''
in the third case  may be changed into ``$C_\ast(\mathcal{L}_{\lambda^+}|_{U^X}, 0;{\bf K})$ and $C_\ast(\mathcal{L}_{\lambda^-}|_{U^X}, 0;{\bf K})$'' provided  also  that $\mathcal{L}_\lambda|_{U^X}\in C^2(U^X, \mathbb{R})$
  and $B_\lambda$ is continuous as a map from $U^X$ to $\mathscr{L}_s(H)$
  for each $\lambda\in\{\lambda^+, \lambda^-\}$.

\item[\rm (e.2)] $[\mu_{\lambda^+}, \mu_{\lambda^+}+\nu_{\lambda^+}]\cap[\mu_{\lambda^-}, \mu_{\lambda^-}+\nu_{\lambda^-}]=\emptyset$,
and  there exists $\lambda\in\{\lambda^+, \lambda^-\}$ such that $0$  is  either a nonisolated or a homologically visible critical point of
$\mathcal{L}_{\lambda}$.

\item[\rm (e.3)] $[\mu_{\lambda^+}, \mu_{\lambda^+}+\nu_{\lambda^+}]\cap[\mu_{\lambda^-}, \mu_{\lambda^-}+\nu_{\lambda^-}]=\emptyset$, $\mathcal{L}_\lambda|_{U^X}\in C^2(U^X, \mathbb{R})$ and $B_\lambda$ is continuous
 as a map from $U^X$ to $\mathscr{L}_s(H)$ for each $\lambda\in\{\lambda^+, \lambda^-\}$, and
 there exists $\lambda\in\{\lambda^+, \lambda^-\}$ such that
  $0$  is either a nonisolated or a homologically visible critical point of   $\mathcal{L}_{\lambda}|_{U^X}$.

  \item[\rm (e.4)] $[\mu_{\lambda^+}, \mu_{\lambda^+}+\nu_{\lambda^+}]\cap[\mu_{\lambda^-}, \mu_{\lambda^-}+\nu_{\lambda^-}]=\emptyset$,
and either $\nu_{\lambda^+}=0$ or $\nu_{\lambda^-}=0$.
 \end{enumerate}
 \end{enumerate}
Then for any path $\alpha:[0,1]\to\Lambda$ connecting $\lambda^+$ to $\lambda^-$
there exists $\bar{t}\in [0, 1]$
such that $(\bar{t}, 0)\in [0, 1]\times U$  is a bifurcation point
of $\nabla\mathcal{L}_{\alpha(t)}(x)=0$ in $[0, 1]\times U$,
and therefore that $(\alpha(\bar{t}), 0)\in \Lambda\times U$  is
 a bifurcation point along sequences of $\nabla\mathcal{L}_{\lambda}(x)=0$ in $\Lambda\times U$.

  Moreover,
  if as a map from $U\cap X$ to $H$, $A_{\lambda^+}$ (resp. $A_{\lambda^-}$)
   has a G\^ateaux derivative $B_{\lambda^+}(x)$ (resp. $B_{\lambda^-}(x)$) at every point $x\in U\cap X$
   and  $\nu_{\lambda^+}=0$ (resp. $\nu_{\lambda^-}=0$), then $\alpha(\bar{t})\ne \lambda^+$ (resp. $\alpha(\bar{t})\ne \lambda^-$).
\end{theorem}

In e.1), even if $0$ is an isolated critical point of $\mathcal{L}_{\lambda^+}$ and $\mathcal{L}_{\lambda^-}$
it is possible that either $(\lambda^+,0)$ or $(\lambda^-,0)$ is a bifurcation point.
The second part in e.1) and e.4) are, sometimes, more conveniently used in applications, see \cite{Lu12}.

\begin{proof}[\bf Proof]
Clearly,  it suffices to prove the case that $\Lambda=[0,1]$ and $\lambda^+=0, \lambda^-=1$.
\textsf{By contradiction, suppose that $[0,1]\times U$
contains no bifurcation point of $\nabla\mathcal{L}_{\lambda}(x)=0$ in $[0,1]\times U$.}
Then there exists a small $\varepsilon>0$  such that
 \begin{eqnarray}\label{e:Spli.2.4.4}
\hbox{for each $\lambda\in [0, 1]$, $\nabla\mathcal{L}_{\lambda}(x)=0$ has a unique
solution $0$ in ${B}_{H}(0, \varepsilon)\subset U$.}
\end{eqnarray}
(In particular, this implies that $0$ is an isolated critical point of $\mathcal{L}_{\lambda}$ for each $\lambda\in [0,1]$.)
We conclude that for any Abelian group ${\bf K}$ and any $\lambda, \lambda'\in [0,1]$,
\begin{eqnarray}\label{e:Spli.2.4.5}
C_q(\mathcal{L}_{{\lambda}}, 0;{\bf K})=C_q(\mathcal{L}_{{\lambda'}}, 0;{\bf K}),\;\;\forall q\in\mathbb{N}\cup\{0\}.
\end{eqnarray}
In fact, for any given point $\lambda_0\in [0, 1]$, if ${\rm Ker}(B_{\lambda_0}(0))\ne\{0\}$,
since $[0, 1]$ is first countable and sequentially compact,
by the proof of (A) in the proof of Theorem~\ref{th:A.9} there exists a connected open neighborhood
$\mathcal{N}(\lambda_0)$ of $\lambda_0$ in $[0, 1]$ such that
$ C_\ast(\mathcal{L}_\lambda, 0;{\bf K})=C_\ast(\mathcal{L}_{\lambda_0}, 0;{\bf K})$
for any Abelian group ${\bf K}$
and for all $\lambda\in \mathcal{N}(\lambda_0)$.

If ${\rm Ker}(B_{\lambda_0}(0))=\{0\}$, replacing $\hat{\Lambda}$ by $[0, 1]$ in the proof of
(\ref{e:criticalgroup*}) we obtain  a connected open neighborhood
$\mathcal{N}(\lambda_0)$ of $\lambda_0$ in $[0, 1]$ such that
$C_\ast(\mathcal{L}_\lambda, 0;{\bf K})=\delta^q_{\mu_{\lambda_0}}{\bf K}$
for any Abelian group ${\bf K}$ and all $(q,\lambda)\in \mathbb{Z}\times\mathcal{N}(\lambda_0)$.

In summary, for any Abelian group
${\bf K}$, $[0,1]\ni\lambda\mapsto C_\ast(\mathcal{L}_\lambda, 0;{\bf K})$ is locally constant, and
therefore (\ref{e:Spli.2.4.5}) holds.
 In particular, this yields for any Abelian group ${\bf K}$
\begin{eqnarray}\label{e:Spli.2.4.6}
C_q(\mathcal{L}_{{\lambda^+}}, 0;{\bf K})=C_q(\mathcal{L}_{{\lambda^-}}, 0;{\bf K}),\;\;\forall q\in\mathbb{N}\cup\{0\},
\end{eqnarray}
Let us derive contradictions in each case.

{\it Step 1}[\textsf{Case (e.1)}].
By (\ref{e:Spli.2.4.4})  the third case in e.1) must occur. It contradicts (\ref{e:Spli.2.4.6}).
For the part of ``Moreover'', since $\mathcal{L}_\lambda|_{U^X}\in C^2(U^X, \mathbb{R})$
  and $B_\lambda$ is continuous as a map from $U^X$ to $\mathscr{L}_s(H)$
  for each $\lambda\in\{\lambda^+, \lambda^-\}$,
  as in the final paragraph in the proof of (B)
  in the proof of Theorem~\ref{th:A.9} we may derive from
  \cite[Corollary~2.8]{JM} that
  $C_\ast(\mathcal{L}_{\lambda}|_{U^X}, 0;{\bf K})\cong
   C_\ast(\mathcal{L}_{\lambda}, 0;{\bf K})$ for $\lambda=\lambda^+, \lambda^-$.
  This and (\ref{e:Spli.2.4.6}) lead to
  $C_\ast(\mathcal{L}_{{\lambda^+}}|_{U^X}, 0;{\bf K})=C_\ast(\mathcal{L}_{{\lambda^-}}|_{U^X}, 0;{\bf K})$,
which contradicts the new assumption.

{\it Step 2}[\textsf{Case (e.2)}].
By (\ref{e:Spli.2.4.4}), $0$  is an isolated critical point of $\mathcal{L}_{\lambda^+}$ and $\mathcal{L}_{\lambda^-}$,
and $0$  is a homologically visible critical point of either $\mathcal{L}_{\lambda^+}$ or $\mathcal{L}_{\lambda^-}$.
 As in arguments below (\ref{e:criticalgroup*}), for any Abelian group ${\bf K}$ we have also for $\star=+,-$,
\begin{eqnarray}\label{e:Spli.2.4.7}
&&C_q(\mathcal{L}_{{\lambda^\star}}, 0;{\bf K})=\delta^q_{\mu_{\lambda^\star}}{\bf K}\;\;\forall q\in\mathbb{N}\cup\{0\}\;\hbox{if}\;\nu_{\lambda^\star}=0,\\
&&C_q(\mathcal{L}_{{\lambda^\star}}, 0;{\bf K})=0\;\;\forall q\notin[\mu_{\lambda^\star}, \mu_{\lambda^\star}+\nu_{\lambda^\star}]
\;\hbox{if}\;\nu_{\lambda^\star}>0,\label{e:Spli.2.4.8}
\end{eqnarray}
Suppose that $0$  is a homologically visible critical point of $\mathcal{L}_{\lambda^+}$.
(The proof of another case is similar.) Then
 $C_m(\mathcal{L}_{\lambda^+}, 0;{\bf K})\ne 0$  for some Abelian group ${\bf K}$
and some $m\in\mathbb{Z}$, and therefore
 $m\in[\mu_{\lambda^+}, \mu_{\lambda^+}+\nu_{\lambda^+}]\cap[\mu_{\lambda^-}, \mu_{\lambda^-}+\nu_{\lambda^-}]$
 by (\ref{e:Spli.2.4.6}) and (\ref{e:Spli.2.4.8}). A contradiction is yielded.

{\it Step 3}[\textsf{Case (e.3)}].
Since ${B}_{H}(0, \varepsilon)^X$ is a neighborhood of $0$ in $U^X$,
by (\ref{e:Spli.2.4.4}), $0$  is also an isolated critical point of   $\mathcal{L}_{\lambda}|_{U^X}$ for each $\lambda\in\{\lambda^+, \lambda^-\}$. Then the third case in e.3) must occur, and so
either $C_m(\mathcal{L}_{{\lambda^+}}|_{U^X}, 0;{\bf K})\ne 0$
for some Abelian group ${\bf K}$ and some $m\in\mathbb{N}\cup\{0\}$ or
$C_n(\mathcal{L}_{{\lambda^-}}|_{U^X}, 0;{\bf K}')\ne 0$ for some Abelian group ${\bf K}'$ and some $n\in\mathbb{N}\cup\{0\}$.
Since for any Abelian group ${\bf G}$ we may use
  \cite[Corollary~2.8]{JM} to obtain   $C_\ast(\mathcal{L}_{\lambda}|_{U^X}, 0;{\bf G})\cong
   C_\ast(\mathcal{L}_{\lambda}, 0;{\bf G})$ for $\lambda=\lambda^+, \lambda^-$,
   as in the proof of the case (B.2) in Step 2 of the proof of Theorem~\ref{th:A.9} using these and (\ref{e:Spli.2.4.6}) we obtain
   either $C_m(\mathcal{L}_{{\lambda^+}}, 0;{\bf K})=C_m(\mathcal{L}_{{\lambda^-}}, 0;{\bf K})\ne 0$ or $
C_n(\mathcal{L}_{{\lambda^+}}, 0;{\bf K}')=C_n(\mathcal{L}_{{\lambda^-}}, 0;{\bf K}')\ne 0$.
These and  (\ref{e:Spli.2.4.7})-(\ref{e:Spli.2.4.8}) lead to a contradiction to the first condition in (e.3).

{\it Step 4}[\textsf{Case (e.4)}].
Because of (\ref{e:Spli.2.4.4}), (\ref{e:Spli.2.4.6}) and
 (\ref{e:Spli.2.4.7})-(\ref{e:Spli.2.4.8}) can be used. \\
$\bullet$ If $\nu_{\lambda^+}=0$ and $\nu_{\lambda^-}=0$, then (\ref{e:Spli.2.4.6}) and (\ref{e:Spli.2.4.7}) lead to
$\mu_{\lambda^+}=\mu_{\lambda^-}$, and hence a contradiction. \\
$\bullet$ If $\nu_{\lambda^+}=0$ and $\nu_{\lambda^-}>0$, then (\ref{e:Spli.2.4.6}) and
 (\ref{e:Spli.2.4.7}) with $\star=+$ imply $C_{\mu_{\lambda^+}}(\mathcal{L}_{{\lambda^-}}, 0;{\bf K})={\bf K}$
 and therefore $\mu_{\lambda^+}\in[\mu_{\lambda^-}, \mu_{\lambda^-}+\nu_{\lambda^-}]$ by
 (\ref{e:Spli.2.4.8}) with $\star=-$, which is a contradiction. \\
$\bullet$  If $\nu_{\lambda^+}>0$ and $\nu_{\lambda^-}=0$, as in the second case we obtain
$\mu_{\lambda^-}\in[\mu_{\lambda^+}, \mu_{\lambda^+}+\nu_{\lambda^+}]$ and so a contradiction as above.

{\it Step 5}[\textsf{Proof of the final claim}].
 Let $(\lambda^\ast, 0)\in[0,1]\times U$ be a bifurcation point of $\nabla\mathcal{L}_{\lambda}(x)=0$ in $[0,1]\times U$,
which is also a bifurcation point along sequences because $[0,1]$ is first countable.
By contradiction suppose that $\lambda^\ast=\lambda^+$ (resp. $\lambda^\ast=\lambda^-$)
in the case of $\nu_{\lambda^+}=0$ (resp. $\nu_{\lambda^-}=0$).
The additional assumption on $A_{\lambda^+}$ (resp. $A_{\lambda^-}$) implies
that $(U,X, \mathcal{L}_{\lambda^+}, A_{\lambda^+}, B_{\lambda^+})$
[resp. $(U,X, \mathcal{L}_{\lambda^-}, A_{\lambda^-}, B_{\lambda^-})$]
also satisfies \cite[Hypothesis~1.1]{Lu10}. Then by \cite[Theorem~3.1]{Lu10} (see also Theorem~\ref{th:A.10})
we obtain $\nu_{\lambda^+}=\nu_{\lambda^\ast}>0$ (resp. $\nu_{\lambda^-}=\nu_{\lambda^\ast}>0$), a contradiction.
\end{proof}

Because of remarks above,  in most of results in \cite{Lu8,Lu9,Lu10}
we either require the topological space $\Lambda$ to be first countable or
change `` bifurcation point'' into ``bifurcation point along sequences''.
For example, \cite[Theorems~3.1,3.2]{Lu8} should be revised as follows.

\begin{theorem}\label{th:A.10}
In \cite[Theorems~3.1,3.2]{Lu8}, `` bifurcation point'' should be changed into ``bifurcation point along sequences''.
{\rm (Note that $0$ is not required to be an isolated critical point of $\mathcal{F}_{\lambda^\ast}$
from the proofs therein.)}
\end{theorem}

\begin{theorem}\label{th:A.11}
The conclusions in \cite[Theorems~3.6]{Lu10}  should be revised as follows:\\
``Then  $(\lambda^\ast,0)\in\Lambda\times U$ is a bifurcation point  for the equation
\begin{equation*}
D\mathcal{L}_\lambda(u)=0,
\quad (\lambda,u)\in \Lambda\times U.
\end{equation*}
More precisely, one of the following alternatives occurs:
\begin{description}
\item[(i)] $(\lambda^\ast,0)$ is not an isolated solution  of the equation
$D\mathcal{L}_{\lambda^\ast}(u)=0$ in $\{\lambda^\ast\}\times U$.

\item[(ii)]  For every $\lambda\in\Lambda$ near $\lambda^\ast$ there is a nontrivial solution $u_\lambda$ of
$A_\lambda(u)=0$ in $U^X$, which  converges to $0$ in $X$ as $\lambda\to\lambda^\ast$.

\item[(iii)] For any given neighborhood $W$ of $0$ in $X$ there is an one-sided  neighborhood $\Lambda^\ast$ of $\lambda^\ast$ such that
for any $\lambda\in\Lambda^\ast\setminus\{\lambda^\ast\}$, $A_\lambda(u)=0$ has  at least two  nontrivial solutions in $W$,
which can also be required to correspond to distinct critical values
provided that  $\nu_{\lambda^\ast}>1$ and $A_\lambda(u)=0$ has only finitely many nontrivial solutions in $W$.''
\end{description}
In the conclusions of \cite[Theorems~4.6]{Lu8}, (i) is changed into
\begin{quotation}
\noindent ``{\bf (i)} $(\lambda^\ast,0)$ is not an isolated solution of the equation
$D\mathcal{L}_{\lambda^\ast}(u)=0$ in $\{\lambda^\ast\}\times U$.''
\end{quotation}
and the sentence
\begin{quotation}
\noindent ``
Therefore $(\lambda^\ast,0)\in\Lambda\times U^X$ is a bifurcation point  for the equation
(4.15); in particular $(\lambda^\ast,0)\in\Lambda\times U$ is a bifurcation point  for the equation
$$
D\mathcal{L}_\lambda(u)=0,
\quad (\lambda,u)\in \Lambda\times U.\eqno(4.16)
$$''
\end{quotation}
should be replaced by
\begin{quotation}
\noindent ``
Therefore,  $(\lambda^\ast,0)\in\Lambda\times U$ is a bifurcation point  for the equation
$$
D\mathcal{L}_\lambda(u)=0,
\quad (\lambda,u)\in \Lambda\times U.\eqno(4.16)
$$''
\end{quotation}
\end{theorem}

\begin{proof}[\bf Proof]
By the assumption f) in \cite[Theorems~3.6]{Lu10} we obtain  the first claim by the conclusion (B) in Theorem~\ref{th:A.9}.
Suppose that (i) is not true. Then $0\in U$ is an isolated critical point  of
$\mathcal{L}_{\lambda^\ast}$ in $U$. (Of course, this implies that $0\in U^X$ is an isolated zero of
$A_{\lambda^\ast}(u)=0$ in $U^X$.)
By the arguments in the first paragraph in Step 2 in the proof of Theorem~\ref{th:A.9}, $0\in U$
is also an isolated critical point of $\mathcal{L}_\lambda$ for each $\lambda\in[\lambda^\ast-\delta,\lambda^\ast+\delta]$.
The other arguments in the proofs of  \cite[Theorems~4.6]{Lu8} and \cite[Theorems~3.6]{Lu10}
are valid.
\end{proof}

\begin{theorem}\label{th:A.12}
Statement (i) of both \cite[Theorems~3.7]{Lu10} and \cite[Theorems~5.12]{Lu8}
 requires the following correction:
\begin{quotation}
\noindent ``{\bf (i)} $(\lambda^\ast,0)$ is not an isolated solution of the equation
$D\mathcal{L}_{\lambda^\ast}(u)=0$ in $\{\lambda^\ast\}\times U$.''
\end{quotation}
The following sentences should be revised:
\begin{itemize}
\item[$\bullet$] In \cite[Theorems~3.7]{Lu10}:  ``In particular, $(\lambda^\ast,0)\in\Lambda\times U^X$ is a bifurcation point of (3.23).''
\item[$\bullet$] In \cite[Theorems~5.12]{Lu8}: ``In particular, $(\lambda^\ast,0)\in\Lambda\times U^X$ is a bifurcation point of (4.15).''
\end{itemize}
They are to be changed to the:
\begin{quotation}
\noindent ``$(\lambda^\ast,0)\in\Lambda\times U$ is a bifurcation point of  the equation
$D\mathcal{L}_\lambda(u)=0$ in $\Lambda\times U$.''
\end{quotation}
\end{theorem}

\cite[Theorems~3.10]{Lu10} should be revised as follows:
\begin{theorem}\label{th:A.13}
In \cite[Theorems~3.7]{Lu10}, if the assumption ``the fixed point set of the induced $G$-action on $H^0_{\lambda^\ast}$ is $\{0\}$''
is removed, then
$(\lambda^\ast,0)\in\Lambda\times U$ is a bifurcation point  for the equation
\begin{equation*}
D\mathcal{L}_\lambda(u)=0,
\quad (\lambda,u)\in \Lambda\times U.
\end{equation*}
More precisely, one of the following alternatives occurs:
\begin{description}
\item[(i)]  $(\lambda^\ast,0)$ is not an isolated solution  of the equation
$D\mathcal{L}_{\lambda^\ast}(u)=0$ in $\{\lambda^\ast\}\times U$.

\item[(ii)]  For every $\lambda\in\Lambda\setminus\{\lambda^\ast\}$
near $\lambda^\ast$ there is a nontrivial $G$-orbit of solutions of
     \begin{equation}\label{e:Bif.2.2.1*}
A_\lambda(u)=0,\quad (\lambda, u)\in\Lambda\times U^X,
\end{equation}
which  converges to $0$ in $X$ as $\lambda\to\lambda^\ast$.

\item[(iii)] For any given $G$-invariant neighborhood $\mathcal{N}$ of $0$ in $X$
there is an one-sided neighborhood $\Lambda^0$ of $\lambda^\ast$ such that
for any $\lambda\in\Lambda^0\setminus\{\lambda^\ast\}$,
(\ref{e:Bif.2.2.1*})  has at least two nontrivial $G$-orbit of solutions in $\mathcal{N}$
 provided that the Euler-Poincar\'e characteristic of any nontrivial orbit
 near $0$ of the induced $G$-action on
 $H^0_{\lambda^\ast}$ is not equal to $1-(-1)^{\nu_{\lambda^\ast}}$, where $\nu_{\lambda^\ast}=\dim H^0_{\lambda^\ast}$
 is the nullity of $\mathcal{L}_{\lambda^\ast}$ at $0$. Moreover,
 for $\lambda\in\Lambda^0\setminus\{\lambda^\ast\}$, if (\ref{e:Bif.2.2.1*})
 has only finitely many  $G$-orbit of solutions in $\mathcal{N}$, then
it has at least two nontrivial $G$-orbit of solutions in $\mathcal{N}$ with different energy
 provided that  $\nu_{\lambda^\ast}>1$ and
 any nontrivial orbit $\mathcal{O}$ near $0$ of the induced $G$-action on $H^0_{\lambda^\ast}$ satisfies one of the following conditions:
  \begin{description}
\item[iii-1)]  $\dim\mathcal{O}=0$ or  $1\le\dim\mathcal{O}\le \nu_{\lambda^\ast}-2$.
\item[iii-2)]  $1\le \dim \mathcal{O}_1=\nu_{\lambda^\ast}-1$, either $\mathcal{O}$ is non-connected or
$\mathcal{O}$ is connected and $H_r(\mathcal{O}, \mathbb{Z}_2)\ne H_r(S^{\nu_{\lambda^\ast}-1}, \mathbb{Z}_2)$ for some $0\le r\le \nu_{\lambda^\ast}-1$.
\end{description}
\end{description}
\end{theorem}

\begin{remark}\label{rm:Final}
{\rm
Using theorems in \cite[Appendix~B]{Lu7} we can also prove corresponding results with the theorems above
as supplements to \cite[\S6]{Lu7}. They and related applications will be given elsewhere.
}
\end{remark}

The following theorem is a parameterized version of
the general Morse lemma.

\begin{theorem}\label{th:A.15}
Under the assumptions of Theorem~\ref{th:A.9},
 where we only require $\Lambda$ to be a topological space,
suppose  ${\rm Ker}(B_{\lambda^\ast}(0))=\{0\}$.
 Let  $H^+_{\lambda^\ast}$ and $H^-_{\lambda^\ast}$ be
  the spectral subspaces of $B_{\lambda^\ast}(0)$
  corresponding to the positive and negative parts of the spectrum, respectively.
Then there exist a neighborhood $\Lambda_0$ of $\lambda^\ast$ in $\Lambda$
and a positive number $\varepsilon$ such that $B_{H^+_{\lambda^\ast}}(0,\varepsilon)\oplus B_{H^-_{\lambda^\ast}}(0,\varepsilon)\subset U$, and
 \begin{eqnarray}\label{e:add-inequality}
 &&\big(D\mathcal{L}_{\lambda}(u^+ + u^-_2)-  D\mathcal{L}_{\lambda}(u^+ + u^-_1)\big)[u^-_2-u^-_1]\le
-\mathfrak{a}_1\|u^-_2-u^-_1\|^2,\nonumber\\
&&D\mathcal{L}_{\lambda}(u^++u^-)[u^+-u^-]\ge  \mathfrak{a}_2(\|u^+\|^2+ \|u^-\|^2)
\end{eqnarray}
for all $(\lambda, u^+, u^-, u^-_1, u^-_2)\in \Lambda_0\times B_{H^+_{\lambda^\ast}}(0,\varepsilon)\times
(B_{H^-_{\lambda^\ast}}(0,\varepsilon))^3$,
 where  $\mathfrak{a}_1$ and $\mathfrak{a}_2$ are positive constants.
   As a direct consequence of the second inequality, for each $\lambda\in \Lambda_0$,
  $0$ is the unique critical point of $\mathcal{L}_\lambda$ in
 $B_{H^+_{\lambda^\ast}}(0,\epsilon)\oplus B_{H^-_{\lambda^\ast}}(0, \epsilon)$.
  ({\bf Note}: Until now, it has been sufficient to only require that ${\Lambda}$ is a topological space.)

 Moreover, suppose that $\Lambda$ is either compact or a first countable and sequentially compact topological space, so that we can take $\Lambda_0$ to be a space of the same kind.
Then  there exists a number $0<\epsilon\le\varepsilon$, an open neighborhood $\mathcal{W}$ of $\Lambda_0 \times \{0\}$ in $\Lambda_0 \times H$, and a homeomorphism
\[
\phi: \Lambda_0 \times \bigl( B_{H^+_{\lambda^\ast}}(0, \epsilon) + B_{H^-_{\lambda^\ast}}(0, \epsilon) \bigr) \longrightarrow \mathcal{W} \subset\Lambda_0\times U
\]
satisfying $\phi(\lambda, 0) = (\lambda, 0)$ for all $\lambda \in \Lambda_0$, and such that:
\begin{itemize}
\item[\rm (i)] For all $(\lambda, u^+, u^-) \in \Lambda_0 \times B_{H^+_{\lambda^\ast}}(0, \epsilon) \times B_{H^-_{\lambda^\ast}}(0, \epsilon)$,
\begin{equation}\label{e:add-equality}
\mathcal{L}_\lambda\bigl( \phi(\lambda, u^+ + u^-) \bigr) = \|u^+\|^2 - \|u^-\|^2
\end{equation}
\item[\rm (ii)] Each $\phi(\lambda,\cdot)$ is a homeomorphism
 from $B_{H^+_{\lambda^\ast}}(0, \epsilon)+B_{H^-_{\lambda^\ast}}(0, \epsilon)$ onto an open neighborhood of $0$ in $H$.
\end{itemize}
Consequently,  for any Abelian group ${\bf K}$, critical groups
\begin{equation*}
 C_q(\mathcal{L}_\lambda, 0;{\bf K})=\delta^q_{\mu_{\lambda^\ast}}{\bf K}
 \quad\forall (\lambda, q)\in \Lambda_0\times\mathbb{Z},
\end{equation*}
 where $\delta^q_p=1$ if $p=q$, and $\delta^q_p=0$ if $p\ne q$.
\end{theorem}

\begin{proof}[\bf Proof]
{\it Proof of} (\ref{e:add-inequality}).
We continue to use the notation of the proof of Theorem~\ref{th:A.9}.
By the condition (v), $X^-_{\lambda^\ast}=H^-_{\lambda^\ast}$ is finite-dimensional.
Let $e_1,\cdots,e_m$ be an unit orthogonal basis of $H^-_{\lambda^\ast}$.
Then there exists a
constant $C_1>0$ such that
$$
\Bigl(\sum^m_{i=1}|t_i|^2\Bigr)^{1/2}\le C_1\|u\|,\quad\forall
u=\sum^m_{i=1}t_ie_i\in  H^-_{\lambda^\ast}.
$$
Hence for any $u=\sum^m_{i=1}t_ie_i\in  H^-_{\lambda^\ast}$ and $v\in H$ we have
\begin{eqnarray}\label{e:C.22}
&&|(B_\lambda(x)u, v)_H- (B_\lambda(0)u, v)_H | \nonumber\\
&\le &\sum^m_{i=1}|t_i|\|P_\lambda(x)e_i- P_\lambda(0)e_i\|\cdot\|v\|+
\sum^m_{i=1}|t_i|\|Q_\lambda(x)-Q_\lambda(0)\|\cdot\|v\|\nonumber\\
&\le &\left(\sum^m_{i=1}\|P_\lambda(x)e_i-
P_\lambda(0)e_i\|^2\right)^{1/2}\left(\sum^m_{i=1}|t_i|^2\right)^{1/2}\|v\|\nonumber\\
&&\quad +
\sqrt{m}\left(\sum^m_{i=1}|t_i|^2\right)^{1/2}\|Q_\lambda(x)-Q_\lambda(0)\|\cdot\|v\|\nonumber\\
&\le &\hspace{-3mm}
\left[C_4
\left(\sum^m_{i=1}\|P_\lambda(x)e_i-P_\lambda(\theta)e_i\|^2\right)^{1/2}
+ C_4\sqrt{m}\|Q_\lambda(x)-Q_\lambda(0)\| \right]\|u\|\|v\|\nonumber\\
&=&\omega_\lambda(x)\|u\|\|v\|,
\end{eqnarray}
 where
$$
\omega_\lambda(x)=\hspace{-2mm}\left[C_4 \left(\sum^m_{i=1}\|P_\lambda(x)e_i-
P_\lambda(0)e_i\|^2\right)^{1/2} + C_4\sqrt{m}\|Q_\lambda(x)-Q_\lambda(0)\|
\right].
$$
Clearly, the assumptions (i) and (iii)  in Theorem~\ref{th:A.9} imply
\begin{align} \label{e:C.23}
\text{$\omega_\lambda(x)\to 0$
as $x\in U\cap X$ approaches to $0$ in $H$ and $\lambda\in\Lambda$ converges to $\lambda^\ast$.}
\end{align}
Note that $(B_\lambda(0)u, v)_H=0$ holds for any $u\in H^+_{\lambda^\ast}$ and $v\in H^0_{\lambda^\ast}\oplus H^-_{\lambda^\ast}$. By (\ref{e:C.22}) we have
\begin{align} \label{e:C.24}
|(B_\lambda(x)u,v)_H|\le \omega_\lambda(x)\|u\|\|v\|,\quad
\forall x\in U\cap X,\;
\forall u\in H^+_{\lambda^\ast}, \;\forall
v\in H^0_{\lambda^\ast}\oplus H^-_{\lambda^\ast}.
\end{align}
Note that   there exists $a_0>0$ such that
\begin{eqnarray}\label{e:C.25}
&&(B_{\lambda^\ast}(0)u, u)_H\ge 2a_0\|u\|^2\quad\forall u\in H^+_{\lambda^\ast},\\
&&(B_{\lambda^\ast}(0)v, v)_H\le -2a_0\|u\|^2\quad\forall v\in H^-_{\lambda^\ast}.\label{e:C.26}
\end{eqnarray}
By (\ref{e:C.23}) we can take a neighborhood $V\subset U$ of $0\in H$
 and a neighborhood ${\Lambda}_0$ of $\lambda^\ast$ in ${\Lambda}$ such that
 $\omega_\lambda(x)<a_0$ for all $x\in V\cap X$ and $\lambda\in{\Lambda}_0$.
   It follows from this, (\ref{e:C.22}) and (\ref{e:C.26}) that
\begin{eqnarray}\label{e:C.27}
(B_\lambda(x)v,v)_H\le -2a_0\|v\|^2+ \omega_\lambda(x)\|v\|^2\le -a_0\|v\|^2\quad\forall v\in H^-_{\lambda^\ast}
\end{eqnarray}
for all $x\in V\cap X$ and $\lambda\in{\Lambda}_0$.
In the proof of \cite[Theorem~A.3)]{Lu8},
by shrinking $V$ and ${\Lambda}_0$ if necessary,
we proved that there exists  $a_1>0$ such that
for all $x\in V\cap X$ and $\lambda\in{\Lambda}_0$,
\begin{eqnarray}\label{e:C.28}
(B_\lambda(x)u,u)_H\ge a_1\|u\|^2\quad\forall u\in H^+_{\lambda^\ast}.
\end{eqnarray}

Take $\varepsilon>0$ so small that
$B_{H^+_{\lambda^\ast}}(0,\varepsilon)\oplus B_{H^-_{\lambda^\ast}}(0,\varepsilon)\subset V$.
Then $B_{H^+_{\lambda^\ast}}(0,\varepsilon)\oplus B_{X^-_{\lambda^\ast}}(0,\varepsilon)\subset V^X$
since $X^+_{\lambda^\ast}\stackrel{\Delta}{=}H^+_{\lambda^\ast}\cap X=H^+_{\lambda^\ast}$
and $\|x\|\le\|x\|_X$ for all $x\in X$.

Let us prove the first inequaity in (\ref{e:add-inequality}).
For $u^+\in \bar B_H(\theta,\varepsilon)\cap X^+$ and $u^-_1, u^-_2\in\bar
B_{H^-}(\theta,\varepsilon)$, by  (\ref{e:LAB}) we have
\begin{eqnarray*}
&&[D\mathcal{L}_\lambda(u^+ + u^-_2)-D\mathcal{L}_\lambda(u^++ u^-_1)](u^-_2-u^-_1)\\
&=&(A_\lambda( u^++u^-_2), u^-_2-u^-_1)_H- (A_\lambda( u^++u^-_1), u^-_2-u^-_1)_H.
\end{eqnarray*}
Since $A_\lambda$ is continuously directional differentiable, so is the
function
$$
u\mapsto (A_\lambda(u^++u), u^-_2-u^-_1)_H.
$$
By the mean value theorem we have $t\in (0, 1)$ such that
\begin{eqnarray*}
&&(A_\lambda( u^++u^-_2), u^-_2-u^-_1)_H
- (A_\lambda( u^++u^-_1), u^-_2-u^-_1)_H\\
&=&\left(DA_\lambda( u^++ u^-_1+ t(u^-_2-u^-_1))(u^-_2-u^-_1),
u^-_2-u^-_1\right)_H\\
&\stackrel{(\ref{e:LAB})}{=}&\left(B_\lambda( u^++ u^-_1+
t(u^-_2-u^-_1))(u^-_2-u^-_1),
u^-_2-u^-_1\right)_H\\
&\le& -a_0\|u^-_2-u^-_1\|^2
\end{eqnarray*}
by (\ref{e:C.27}). Hence
\begin{eqnarray*}
[D\mathcal{L}_\lambda(u^+ + u^-_2)-D\mathcal{L}_\lambda(u^++ u^-_1)](u^-_2-u^-_1)\le
-a_0\|u^-_2-u^-_1\|^2.
\end{eqnarray*}
 Since $\bar
B_H(0,\varepsilon)\cap X^+$ is dense in $\bar
B_H(0,\varepsilon)\cap H^+$, for all  $u^+\in \bar B_H(0,\varepsilon)\cap H^+$ and $u^-_i\in\bar
B_{H}(0,\varepsilon)\cap H^-$, $i=1, 2$, we get
\begin{equation}\label{e:C.29}
 [D\mathcal{L}_\lambda(u^+ + u^-_2)-D\mathcal{L}_\lambda(u^++ u^-_1)](u^-_2-u^-_1)\le
-a_0\|u^-_2-u^-_1\|^2.
\end{equation}
The first inequaity in (\ref{e:add-inequality}) is thus proved.

To prove the second inequaity in (\ref{e:add-inequality}),
let  $u^+\in \bar B_H(0,\varepsilon)\cap X^+$ and $u^-\in\bar
B_{H^-}(0,\varepsilon)$. Since $D\mathcal{L}_\lambda(0)=0$, by the mean value
theorem,
for some $t\in (0, 1)$ we have
\begin{eqnarray*}
&&D\mathcal{L}_\lambda(u^++u^-)(u^+-u^-)\\
&=&D\mathcal{L}_\lambda(u^++u^-)(u^+-u^-)- D\mathcal{L}_\lambda(0)(u^+-u^-)\\
&=&(A_\lambda( u^++u^-), u^+-u^-)_H-(A_\lambda(0), u^+-u^-)_H\\
&=&\left(B_\lambda( t(u^++u^-))(u^++u^-), u^+-u^-\right)_H\\
&=&\left(B_\lambda( t(u^++u^-))u^+, u^+\right)_H\\
&-&\left(B_\lambda(t(u^++u^-))u^-, u^-\right)_H\\
&\ge & a_1\|u^+\|^2+ a_0\|u^-\|^2
\end{eqnarray*}
by (\ref{e:C.27}) and (\ref{e:C.28}).  As above, this inequality also
holds for all $u^+\in \bar B_{H^+}(0,\varepsilon)$ because
$\bar B_H(0,\varepsilon)\cap X^+$ is dense in $\bar
B_H(0,\varepsilon)\cap H^+$ and the mapping $u^+\mapsto D\mathcal{L}_\lambda(u^++u^-)(u^+-u^-)$
is continuous (on $H^+$).
This completes the proof of the second inequality in (\ref{e:add-inequality}).
({\bf Note}: Until now, we have only needed ${\Lambda}$ to be a topological space.)

{\it Proof of the ``Moreover'' statement.}
Two inequalities in (\ref{e:add-inequality}) imply,
respectively, that conditions (ii) and (iii) of \cite[Theorem~A.1]{Lu3}
are satisfied.
By the second inequality in (\ref{e:add-inequality}) we deduce that
for all $(\lambda, u^+)\in \hat\Lambda_0\times (B_{H^+_{\lambda^\ast}}(0,\varepsilon)\setminus\{0\})$,
$$
D\mathcal{L}_{\lambda}(u^+)[u^+]\ge  \mathfrak{a}_2(\|u^+\|^2)
> p(\|u^+\|),
$$
where $p:(0, \varepsilon]\to (0, \infty)$ is a non-decreasing
function given by $p(t)=\frac{\mathfrak{a}_2}{2}t^2$.
Consequently, condition (iv) in \cite[Theorem~A.1]{Lu3} is satisfied.
Then, from Remark~\ref{rm:splitting} and \cite[Theorem~A.1]{Lu3}, it follows that (i) and (ii) hold.
\end{proof}

%
%

%


\renewcommand{\refname}{REFERENCES}

\medskip

\begin{tabular}{l}
 School of Mathematical Sciences, Beijing Normal University\\
 Laboratory of Mathematics and Complex Systems, Ministry of Education\\
 Beijing 100875, The People's Republic of China\\
 E-mail address: gclu@bnu.edu.cn\\
\end{tabular}

\end{document}